\definecolor{luh-dark-blue}{rgb}{0.0, 0.313, 0.608}
\numberwithin{equation}{section}
\newtheoremstyle{thmlemcorr}{10pt}{10pt}{\itshape}{}{\bfseries}{.}{10pt}{{\thmname{#1}\thmnumber{ #2}\thmnote{ (#3)}}}
\newtheoremstyle{thmlemcorr*}{10pt}{10pt}{\itshape}{}{\bfseries}{.}\newline{{\thmname{#1}\thmnumber{ #2}\thmnote{ (#3)}}}
\newtheoremstyle{remexample}{10pt}{10pt}{}{}{\bfseries}{.}{10pt}{{\thmname{#1}\thmnumber{ #2}\thmnote{ (#3)}}}
\newtheoremstyle{ass}{10pt}{10pt}{}{}{\bfseries}{.}{10pt}{{\thmname{#1}\thmnumber{ A#2}\thmnote{ (#3)}}}
\theoremstyle{thmlemcorr}
\newtheorem{theorem}{Theorem}
\numberwithin{theorem}{section}
\newtheorem{lemma}[theorem]{Lemma}
\newtheorem{corollary}[theorem]{Corollary}
\newtheorem{proposition}[theorem]{Proposition}
\newtheorem{definition}[theorem]{Definition}
\theoremstyle{thmlemcorr*}
\newtheorem*{theorem*}{Theorem}
\newtheorem{lemma*}[theorem]{Lemma}
\newtheorem{corollary*}[theorem]{Corollary}
\newtheorem{proposition*}[theorem]{Proposition}
\newtheorem{problem*}[theorem]{Problem}
\newtheorem{conjecture*}[theorem]{Conjecture}
\newtheorem{definition*}[theorem]{Definition}
\newtheorem{assumption*}[theorem]{Assumption}
\theoremstyle{remexample}
\newtheorem{remark}[theorem]{Remark}
\theoremstyle{ass}
\newcommand{\flux}[1]{\partial_x^3 u^{#1} - G_{#1}^{\prime\prime}(u^{#1})\partial_x u^{#1}}
\newcommand{\Fcal}{\mathcal{F}}
\DeclareMathOperator{\esssup}{ess\,sup}
\DeclareMathOperator{\dist}{dist}
\DeclareMathOperator{\supp}{supp}
\newcommand{\dd}{\;\mathrm{d}}
\newcommand{\N}{\mathbb{N}}
\newcommand{\R}{\mathbb{R}}
\newcommand{\Z}{\mathbb{Z}}
\newcommand{\loc}{\mathrm{loc}}
\newcommand{\eps}{\varepsilon}
\DeclareMathOperator*{\essinf}{ess\,inf}
\DeclareMathOperator*{\argmin}{arg\, min}
\def\div{\mathrm{div\,}}
\def\XXint#1#2#3{{\setbox0=\hbox{$#1{#2#3}{\int}$}
\vcenter{\hbox{$#2#3$}}\kern-.5\wd0}}
\renewcommand{\eps}{\varepsilon}
\renewcommand{\phi}{\varphi}
\begin{document}

%% TITLE MATTERS

\title[]{Non-Newtonian thin-film equations: global existence of solutions, gradient-flow structure and guaranteed lift-off}

\author{Peter Gladbach}
\address{\textit{Peter Gladbach:} Institute of Applied Mathematics, University of Bonn, Endenicher Allee~60, 53115 Bonn, Germany}
\email{gladbach@iam.uni-bonn.de}

\author{Jonas Jansen}
\address{\textit{Jonas Jansen:}  Centre for Mathematical Sciences, Lund University, P.O. Box 118, 221 00 Lund, Sweden}
\email{jonas.jansen@math.lth.se}

\author{Christina Lienstromberg}
\address{\textit{Christina Lienstromberg:}  Institute of Analysis, Dynamics and Modeling, University of Stuttgart, Pfaffenwaldring~57, 70569 Stuttgart, Germany}
\email{christina.lienstromberg@iadm.uni-stuttgart.de}

\begin{abstract}
We study the gradient-flow structure of a non-Newtonian thin film equation with power-law rheology. The equation is quasilinear, of fourth order and doubly-degenerate parabolic.
By adding a singular potential to the natural Dirichlet energy, we introduce a modified version of the thin-film equation.
Then, we set up a minimising-movement scheme that converges to global positive weak solutions to the modified problem. These solutions satisfy an energy-dissipation equality and follow a gradient flow. In the limit of a vanishing singularity of the potential, we obtain global non-negative weak solutions to the power-law thin-film equation
\begin{equation*}
    \partial_t u + \partial_x\bigl(m(u) |\partial_x^3 u - G^{\prime\prime}(u) \partial_x u|^{\alpha-1} \bigl(\partial_x^3 u - G^{\prime\prime}(u) \partial_x u\bigr)\bigr) = 0
\end{equation*}
with potential \(G\) in the shear-thinning (\(\alpha > 1\)), Newtonian (\(\alpha = 1\)) and shear-thickening case (\(0 <\alpha < 1\)). The latter satisfy an energy-dissipation inequality. Finally, we derive dissipation bounds in the case \(G\equiv 0\) which imply that solutions emerging from initial values with low energy lift up uniformly in finite time.
\end{abstract}
\vspace{4pt}

\maketitle

\noindent\textsc{MSC (2010): 76A05, 76A20, 35Q35, 35K35, 35K65, 35D30, 35B40}

\noindent\textsc{Keywords: non-Newtonian fluid, power-law fluid, gradient flow, degenerate parabolic equation, weak solution, thin-film equation, long-time behaviour}

%\vspace{4pt}

%\noindent\textsc{Date:} \today{}.
%\end{abstract}

%% PDF MATTERS

%% START OF CONTENT

%=============================================================================
%=============================================================================
%=============================================================================

\section{Introduction}

%--------------------------------------------------
%--------------------------------------------------

The present paper is concerned with the quasilinear doubly-degenerate parabolic evolution equation 
\begin{equation}\label{eq:PDE}
\begin{cases}
    \partial_t u + \partial_x\bigl(m(u) |\partial_x^3 u - G^{\prime\prime}(u) \partial_x u|^{\alpha-1} \bigl(\partial_x^3 u - G^{\prime\prime}(u) \partial_x u\bigr)\bigr) = 0, & t>0,\ x \in \Omega, 
    \\
    \partial_x u =m(u) |\partial_x^3 u - G^{\prime\prime}(u) \partial_x u|^{\alpha-1} \bigl(\partial_x^3 u - G^{\prime\prime}(u) \partial_x u\bigr) = 0, & t>0,\ x \in \partial\Omega, \\
    u(0,x) = u_0, & x\in \Omega,
\end{cases}
\end{equation}
describing for instance the dynamics of the height $u=u(t, x)$ of a non-Newtonian incompressible thin liquid film on a solid bottom under presence of a potential $G$. Here, $\Omega \subset \R$ is a bounded interval with boundary $\partial\Omega$ and we assume the fluid film to be homogeneous in the horizontal $y$-direction, such that the problem reduces to one spatial dimension; see Figure \ref{fig:thin-film} for a sketch of the setting. At the lateral boundaries we prescribe a zero-contact-angle condition and a no-flux condition. Moreover, $m$ denotes the so-called mobility function and $\alpha > 0$ is the flow-behaviour exponent which describes the rheological properties of the fluid.

\begin{center}
\begin{figure}[h]
\begin{tikzpicture}[domain=0:3*pi, xscale=1.2, yscale=1.2] 
\draw[ultra thick, smooth, variable=\x, luh-dark-blue!20] plot (\x,{(0.6*cos(\x r)+0.6)}); 
\fill[luh-dark-blue!20] plot[domain=0:3*pi] (\x,{0}) -- plot[domain=3*pi:0] (\x,{(0.6*cos(\x r)+0.6)});
\draw[very thick,<->] (3*pi+0.4,0) node[right] {$x$} -- (0,0) -- (0,2) node[above] {$z$};
\draw[very thick,dashed,->] (0,0) -- (-0.6,-0.6) node[below] {$y$};
\draw[-] (0,-0.3) -- (0.3, 0);
\draw[-] (0.5,-0.3) -- +(0.3, 0.3);
\draw[-] (1,-0.3) -- +(0.3, 0.3);
\draw[-] (1.5,-0.3) -- +(0.3, 0.3);
\draw[-] (2,-0.3) -- +(0.3, 0.3); 
\draw[-] (2.5,-0.3) -- +(0.3, 0.3);
\draw[-] (3,-0.3) -- +(0.3, 0.3);
\draw[-] (3.5,-0.3) -- +(0.3, 0.3);
\draw[-] (4,-0.3) -- +(0.3, 0.3);
\draw[-] (4.5,-0.3) -- +(0.3, 0.3);
\draw[-] (5,-0.3) -- +(0.3, 0.3);
\draw[-] (5.5,-0.3) -- +(0.3, 0.3);
\draw[-] (6,-0.3) -- +(0.3, 0.3);
\draw[-] (6.5,-0.3) -- +(0.3, 0.3);
\draw[-] (7,-0.3) -- +(0.3, 0.3);
\draw[-] (7.5,-0.3) -- +(0.3, 0.3);
\draw[-] (8,-0.3) -- +(0.3, 0.3);
\draw[-] (8.5,-0.3) -- +(0.3, 0.3);
\draw[-] (9,-0.3) -- +(0.3, 0.3);
\end{tikzpicture}   
\caption{Cross section of fluid film on impermeable solid bottom.}
\label{fig:thin-film}
\end{figure}
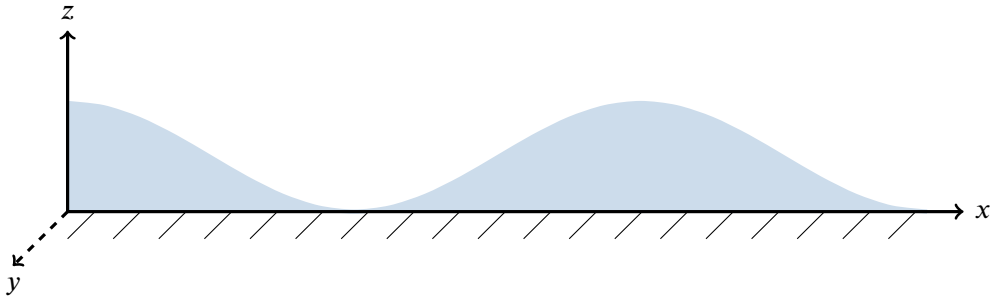 
\end{center}

%--------------------------------------------------------------
Problem \eqref{eq:PDE} is a fourth-order quasilinear equation which is doubly-degenerate parabolic in the sense that we lose uniform parabolicity if either $m(u)=0$ or $\partial_x^3u = G''(u) \partial_x u$.  

%--------------- What do we want to do in the paper?! ---------
The aim of the paper is to prove existence of non-negative global weak solutions and to study qualitative and quantitative properties via gradient-flow methods. 
% \textcolor{cyan}{
% \sout{For typical mobilities, we study the behaviour of the film close to the contact point such as the zero contact angle condition.}}
In contrast to previous works, our approach relies on the introduction of an additive singular potential which guarantees positivity of approximate solutions. Furthermore, we prove that solutions that do not oscillate too much, lift up, as soon as they are close to zero. This property implies in addition the stability of steady states. 

More precisely, we introduce a modified version of \eqref{eq:PDE} by replacing the potential $G$ by a potential $G_\sigma$ which is singular for vanishing film heights. Then, we construct a minimising-movement scheme for the modified equation which yields global existence of positive weak solutions emerging from positive initial film heights. In particular, these solutions satisfy an energy-dissipation identity. 
Using standard energy methods, we obtain existence of global non-negative weak solutions in the limit of a vanishing modification parameter. The latter still satisfy an energy-dissipation inequality.
The modification used in the present paper differs from previous approaches which rely on removing the degeneracy or modifying the mobility to force approximate solutions to preserve positivity via entropy methods (see e.g. \cite{bernis_higher_1990, bertozzi_lubrication_1996,ansini_doubly_2004,lienstromberg_analysis_2022,lienstromberg_long-time_2022} for different regularisation schemes).

%--------------------------------------------------
%--------------------------------------------------
\subsection{Physical background and main assumptions}

Problem \eqref{eq:PDE} is derived from the non-Newtonian Navier--Stokes system by lubrication approximation in the limit of a vanishing film height \cite{greenspan_motion_1978, ockendon_viscous_1995, giacomelli_rigorous_2003, gunther_justification_2008}.

We assume the fluid to be a non-Newtonian power-law fluid, i.e. the constitutive law for the strain-dependent viscosity is given by
\begin{equation*}
    \mu(|\epsilon|) = \mu_0 |\epsilon|^{\frac{1}{\alpha}-1},
\end{equation*}
where $\epsilon$ is the strain rate, $\mu_0 > 0$ is the characteristic viscosity of the fluid and $\alpha > 0$ denotes the so-called flow-behaviour exponent. Note that for $\alpha=1$ the viscosity is constant and hence the fluid is Newtonian. For $0 < \alpha < 1$ the fluid is shear-thickening, while flow-behaviour exponents $\alpha > 1$ describe a shear-thinning rheological behaviour.

The function $m$ in \eqref{eq:PDE} is the so-called mobility function. It is a locally Lipschitz continuous function
\begin{equation*}
    m\colon\R \longrightarrow [0,\infty)
    \quad \text{such that} \quad  
    m(s) = 0, \quad s \leq 0, \quad \text{and} \quad m(s) > 0, \quad s>0.
\end{equation*}
Physically, the mobility is explicitly given once one specifies slippage conditions at the solid-fluid interface. Typically, it is of the form \(m(u)= u^n, n >0\). For instance, if we prescribe a no-slip condition, we obtain \(m(u)=u^{\alpha+2}\), which in the Newtonian case becomes $m(u) = u^3$ . In the case of the Navier-slip condition \cite{navier_memoire_1823}, we obtain \(m(u) =  \lambda u^{\alpha+1} + u^{\alpha+2}\), where \(\lambda > 0\) is the characteristic slip length. Note that, in both cases, the mobility is superlinear regardless of the flow-behaviour exponent \(\alpha >0\).

In contrast to the `standard' thin-film problems, we include in this paper a potential $G$ which might incorporate effects due to gravitational, surface or intermolecular forces, see \cite{de_gennes_wetting_1985} for more details on the modelling. Mathematically, the function $G\colon \R \to [0,\infty]$ is assumed to have the following properties:
\begin{enumerate}[label=($G$\arabic*)]
    \item\label{it:G1} $G \in C^2((0,\infty))$;
    \item\label{it:G2} $G$ is convex on $(0,\infty)$;
    \item\label{it:G3} $G(s) \geq 0,\ s \in \R$;
    \item\label{it:G4} $G$ is continuous in $s=0$.
\end{enumerate}

\begin{remark} \label{rem:assummptions_G}
Condition \ref{it:G4} can be replaced by the following
\begin{enumerate}[label=($G4$\alph*)]
    \item\label{it:G4a} $G(s) = 0,\ s \leq 0$, and $G(s) \gtrsim s^{-2}$, as $s \searrow 0$.
    % \item\label{it:G4b} $G$ is upper semicontinuous in $s=0$.
\end{enumerate}
In contrast to \ref{it:G4}, assumption \ref{it:G4a} ensures that functions $u \in H^1(\Omega)$ with finite energy $E[u] < \infty$ are strictly positive. Note that both conditions exclude mildly singular potentials of the form
\begin{equation*}
    G(s) = C s^\beta \mathbf{1}_{\{s > 0\}}, \quad \beta \in (-2,0).
\end{equation*}
For potentials of this form the problem becomes a free-boundary problem for the contact point. Steady states and travelling-wave solutions of the corresponding problem are studied in \cite{durastanti_spreading_2022,durastanti_thin-film_2022}.
\end{remark}

The natural energy for \eqref{eq:PDE} is given by a sum of the Dirichlet energy $\int_{\Omega} \tfrac12 |\partial_x u|^2\dd x$ and the potential energy $\int_\Omega G(u) \dd x$, that is by
\begin{equation*}
    E[u] = \int_{\Omega} \tfrac{1}{2} |\partial_x u|^2 \dd x + G(u) \dd x.
\end{equation*}
The Dirichlet energy approximates the length of the surface to first order and encapsulates the capillary forces on the surface. The potential energy may for instance reflect gravitational or intermolecular effects.
Moreover, sufficiently smooth (positive) solutions to \eqref{eq:PDE} dissipate energy:
\begin{equation}\label{eq:EDE_intro}
    E[u](t) 
    + \int_0^t \int_\Omega |\partial_x^3 u - G^{\prime\prime}(u) \partial_x u|^{\alpha+1} \dd x \dd t
    =
    E[u_0].
\end{equation}
This corresponds to testing the equation with $\phi = \partial_x^2 u - G'(u)$. It turns out that \eqref{eq:EDE_intro} plays an important role in two respects. 
First, this balance of viscous dissipation and the rate of change of the energy is the reason why \eqref{eq:PDE} admits a gradient-flow structure, cf. Section \ref{sec:gradient-flow_intro}.
Moreover, \eqref{eq:EDE_intro} provides us with the a-priori bounds needed to construct weak solutions.

Finally, solutions to \eqref{eq:PDE} conserve their mass in the sense that
\begin{equation*}
    \bar{u} 
    \coloneq 
    \fint_{\Omega} u_0(x) \dd x
    =
    \fint_{\Omega} u(t,x) \dd x, \quad t\geq 0.
\end{equation*}
This may be seen by testing the equation with the constant function $\phi\equiv 1$.  

Note that in the case $G\equiv 0$, equation \eqref{eq:PDE} reduces to the well-known doubly-degenerate thin-film equation
\begin{equation}\label{eq: thin-film}
\begin{cases}
    \partial_t u + \partial_x\bigl(u^{n} |\partial_x^3 u|^{\alpha-1} \partial_x^3 u \bigr) = 0, & t>0,\ x \in \Omega, 
    \\
    \partial_x u = u^{n} |\partial_x^3 u|^{\alpha-1} \partial_x^3 u = 0, & t>0,\ x \in \partial\Omega, \\
    u(0,x) = u_0, & x\in \Omega,
\end{cases}
\end{equation}
which is studied in \cite{king_two_2001,king_spreading_2001,ansini_doubly_2004}.

%--------------------------------------------------
%--------------------------------------------------
\subsection{Main results of the paper}

We briefly summarise the main results and techniques of this paper. Recall that our analysis is restricted to the one-dimensional problem. We prove that:
\begin{itemize}
    \item for positive solutions $u$, the thin-film equation \eqref{eq:PDE} is a gradient flow in $H^1(\Omega)$;
    \item there exists a global non-negative weak solutions to \eqref{eq:PDE} for all flow-behaviour exponents $\alpha > 0$;
    \item for mobilities of the form $m(u) = u^n$ with $n \in (0,2\alpha+2)$ and $G\equiv 0$, solutions with initial values $u_0 \in H^1(\Omega)$ with sufficiently small energy 
    $$E[u_0] < \tfrac{9}{2} \bar{u}_0^2$$
    lift up after a fixed positive time $t_0 > 0$, independent of $E[u_0]$. In particular, these solutions converge to the constant positive steady state $\bar{u}_0$ with explicit decay rates.
    % \item \textcolor{cyan}{\sout{if the mobility has the form $m(u) = u^n$, the flow-behaviour exponent satisfies $\alpha > 1/3$ and $n \in (\alpha/2,2\alpha+1)$, \textcolor{magenta}{[Check]} then the contact angle condition $\partial_x u = 0$ is satisfied on $\partial \{u(t) > 0\}$ pointwise for almost every $t > 0$ and the no-flux condition $u^n \Psi(\partial_x^3 u - G^{\prime\prime}(u) \partial_x u) = 0$ on $\partial \{u(t) > 0\}$ holds in the average sense}}
    % $$
    %     \lim_{r \searrow 0} \int_{B_r(x_0)} u^n \Psi\bigl(\partial_x^3 u - G^{\prime\prime}(u) \partial_x u\bigr) \dd x 
    %     = 
    %     0,
    %     \quad 
    %     x_0 \in \partial \{u(t) > 0\}
    % $$
    % \textcolor{cyan}{\sout{for almost every $t > 0$.}}
\end{itemize}

\medskip
\noindent\textbf{\textsc{Gradient-flow structure. }} 
We introduce a modified potential $G_\sigma\colon \R \to [0,\infty]$ with the following properties

\begin{enumerate}[label=($G_{\sigma}$\arabic*)]
    \item\label{it:Gsigma1} $G_\sigma \in C^2((0,\infty))$;
    \item\label{it:Gsigma2} $G_\sigma$ is convex on $\R_+$;
    \item\label{it:Gsigma3} $G_\sigma(s) \geq 0, \quad s \in \R$;
    \item\label{it:Gsigma4} $G_\sigma(s) = +\infty, \quad s < 0, \quad G_\sigma(s) \gtrsim s^{-2}, \quad s \leq \sigma, \quad \text{and} \quad G_\sigma(s) = G(s), \quad s \geq 2\sigma$,
\end{enumerate}
where $\sigma \in (0,1)$. The corresponding modified energy functional is given by
\begin{equation*}
    E^\sigma[u] = \int_{\Omega} \tfrac12 |\partial_x u|^2 + G_\sigma(u) \dd x.
\end{equation*}
For this energy functional and discretising in time, we iteratively construct a minimising-movement scheme 
\begin{equation*}
    u^{\sigma,h}(t+h) = \argmin_u E^\sigma[u] + \tfrac{1}{h} \text{dissipation}[u^{\sigma,h}(t),u]
\end{equation*}
with time step $h>0$. The corresponding Euler--Lagrange equation for the minimiser $u^{\sigma,h}$ is a time-discrete version of equation \eqref{eq:PDE} with potential $G_\sigma$. Sending $h \searrow 0$, we extract an accumulation point $u^\sigma$ that solves the modified power-law thin-film equation 
\begin{equation} \label{eq:PDE_mod_intro} \tag{$P_\sigma$}
\begin{cases}
    \partial_t u^\sigma + \partial_x\bigl(m(u^\sigma) |\partial_x^3 u^\sigma - G_\sigma^{\prime\prime}(u^\sigma) \partial_x u^\sigma|^{\alpha-1} \bigl(\partial_x^3 u^\sigma - G_\sigma^{\prime\prime}(u^\sigma) \partial_x u^\sigma\bigr)\bigr) = 0, & t>0,\ x \in \Omega, 
    \\
    \partial_x u^\sigma = m(u^\sigma) |\partial_x^3 u^\sigma - G_\sigma^{\prime\prime}(u^\sigma) \partial_x u^\sigma|^{\alpha-1} \bigl(\partial_x^3 u^\sigma - G^{\prime\prime}(u^\sigma) \partial_x u^\sigma\bigr) = 0, & t>0,\ x \in \partial\Omega, \\
    u^\sigma(0,x) = u_0, & x\in \Omega,
\end{cases}
\end{equation}
and is strictly positive for all times $t \geq 0$, due to condition \ref{it:Gsigma4}. Moreover, for all times $t,s \geq 0$ the accumulation points satisfy an energy-dissipation equality
\begin{equation*}
    E^{\sigma}[u^\sigma](t) 
    +
    \tfrac{\alpha}{\alpha+1} \int_s^t \int_{\Omega} \frac{|j^{\sigma}|^{\frac{\alpha+1}{\alpha}}}{m(u^{\sigma})^{\frac{1}{\alpha}}} \dd x \dd \tau
    +
    \tfrac{1}{\alpha+1}\int_s^t \int_\Omega 
    m(u^\sigma)
    |\partial_x^3 u^\sigma - G_\sigma''(u^\sigma) \partial_x u^\sigma|^{\alpha+1}
    \dd x \dd \tau
    =
    E^{\sigma}[u^\sigma](s).
\end{equation*}
In fact, we characterise all positive weak solutions to the modified equation $\eqref{eq:PDE}_{\sigma}$ via this identity.
A detailed derivation of the heuristic for the construction of the minimising-movement scheme is provided in Section \ref{sec:gradient-flow_intro}.

\medskip
\noindent\textbf{\textsc{Existence of global non-negative weak solutions. }} 
Using the above energy-dissipation equality for the modified problem \eqref{eq:PDE_mod_intro}, we derive uniform a-priori bounds for the solutions $u^\sigma$. These bounds allow to identify an accumulation point $u$ which is a solution to the original equation \eqref{eq:PDE} and  satisfies
\begin{equation*}
    E[u](t)
    +
    \int_0^t \int_\Omega 
    m(u)
    |\partial_x^3 u - G''(u) \partial_x u|^{\alpha+1}
    \dd x \dd s
    \leq
    E[u_0].
\end{equation*}
The main issue in the proof is the identification of the nonlinear limit flux, which relies on a localised version of Minty's trick.

\medskip
\noindent\textbf{\textsc{Lift-off. }} For this purpose, we restrict our analysis to the case $G\equiv 0$. Moreover, for ease of calculation, we use w.l.o.g. $\Omega=(0,1)$. 
Constant functions $\bar{u}$ are clearly stationary solutions to \eqref{eq: thin-film}. However, the parabola
\begin{equation*}
    v\colon \Omega \to \R,
    \quad
    v(x) = 1-x^2
\end{equation*}
is formally also a stationary solution to \(\eqref{eq: thin-film}_1\) with higher energy $E[v] > E[\bar{u}] =0$. Note that \(v\) does not solve \(\partial_x v=0\) on \(\partial\Omega\), but it appears as a minimiser of the energy among all functions with fixed mass and a root.

In the case $2(\alpha+1)>n$ we show that for any initial value $u_0\in H^1(\Omega)$ with the same mass $\bar{u}_0=\bar{v}$ and $E[u_0]<E[v]$, we have $\min_{x\in \bar{\Omega}} u(t,x) > \tfrac{\bar{u}_0}{3}$ for all $t\geq t_0(\alpha,n)$, where $t_0$ does not depend on $u_0$. This shows that $v$ is at most an unstable equilibrium of $\eqref{eq: thin-film}_1$.

%------------------------------------------------------
%------------------------------------------------------

%--------------------------------------------------
%--------------------------------------------------
\subsection{Formal derivation of the gradient-flow structure} \label{sec:gradient-flow_intro}

In this subsection we provide the heuristic for the gradient-flow structure of the power-law thin-film equation \eqref{eq:PDE} with potential $G$ in $H^1(\Omega)$.
It is well-known that the energy 
\begin{equation*}
    E[u] = \int_{\Omega} \tfrac12 |\partial_x u|^2 + G(u) \dd x
\end{equation*}
is a Lyapunov functional for \eqref{eq:PDE}, i.e. $E$ is non-increasing in time along sufficiently regular solutions to \eqref{eq:PDE}.

Note that conceptually the following arguments work in general dimension $d \geq 1$. Since the rigorous analysis is restricted to $d=1$, the following heuristic is presented for the one-dimensional case.

Due to the conservation of mass property $\bar{u} = \bar{u}_0$,
$u=u(t,x)$ solves the continuity equation
\begin{equation*}
    \begin{cases}
        \partial_t u + \partial_x j = 0, & x \in \Omega,
        \\
        j=0, & x \in \partial\Omega,
    \end{cases}
\end{equation*}
for a certain flux $j$.
Formally, the time-derivative of the energy is given by 
\begin{equation*}
    \begin{split}
    \frac{d}{dt} E[u](t)
    & =
    \int_\Omega \bigl(-\partial_x^2 u  + G'(u)\bigr) \partial_t u \dd x + \int_{\partial \Omega} \partial_x u \partial_t u \nu \dd \mathcal{H}^{0}
    \\
    & =
    \langle DE[u],-\partial_x j\rangle - \int_{\partial \Omega} \partial_x u \partial_x j \nu \dd \mathcal{H}^{0}\\
    & = -\int_{\Omega} (\partial_x^3 u - G''(u)\partial_x u) j \dd x - \int_{\partial \Omega} \partial_x u \partial_x j \nu \dd \mathcal{H}^{0}
    ,
    \end{split}
\end{equation*}
where $DE[u] = -\partial_x^2 u + G'(u)$ is the chemical potential.
In the steepest-descent gradient-flow formulation, $j(t,\cdot)$ is chosen for every time $t>0$ as the minimiser of the functional
\begin{equation*}
    \int_{\Omega} \tfrac{1}{p} \frac{|j|^p}{m(u)^q}\dd x 
    + 
    \langle DE[u],-\partial_x j\rangle - \int_{\partial \Omega} \partial_x u \partial_x j \nu \dd \mathcal{H}^{0},
\end{equation*}
where the first summand is the so-called dissipation potential, see for example \cite{jordan_variational_1998,ambrosio_gradient_2008}. A minimiser exists if and only if \(\partial_x u = 0\) on \(\partial \Omega\). In this case, we find that \(j(t,\cdot)\) satisfies
\begin{equation*}
    \frac{|j|^{p-2}j}{m(u)^q} 
    = 
    -\partial_x DE[u] 
    =
    \partial_x^3 u - G^{\prime\prime}(u) \partial_x u,
\end{equation*}
and hence
\begin{equation*}
    j 
    = 
    m(u)^{\frac{q}{p-1}} |\partial_x^3 u - G^{\prime\prime}(u) \partial_x u|^{\frac{1}{p-1}-1} \bigl(\partial_x^3 u - G^{\prime\prime}(u) \partial_x u\bigr).
\end{equation*}
Choosing \(p=\frac{\alpha+1}{\alpha}\) and \(q=\frac{1}{\alpha}\) and using the continuity equation, we find that \(u\) solves \eqref{eq:PDE}.

In order to construct a steepest-descent solution, we discretise time and use a minimising-movement scheme. For a time step \(h>0\) and a given state \(u^*=u(t)\) at time \(t\), the state of the steepest-descent at time \(t+h\) is given by the pair \((u(t+h),j(t))\) which minimises the functional
\begin{equation*}
    E[u] + h \tfrac{\alpha}{\alpha+1} \int_{\Omega}\frac{|j|^{\frac{\alpha+1}{\alpha}}}{m(u^*)^{\frac{1}{\alpha}}} \dd x 
\end{equation*}
over all pairs \((u,j)\) that satisfy the time-discrete continuity equation
\begin{equation*}
    \begin{cases}
        \frac{u-u^*}{h} + \partial_x j = 0, & x\in \Omega,\\
        j = 0, & x\in \partial\Omega.
    \end{cases}
\end{equation*}
Note that the mobility is sampled from the previous time so as to simplify the minimisation. Unlike the steepest-descent problem, the time-discrete problem is well-posed. The limit points, as \(h\searrow 0\), of the time-discrete problem are solutions to the steepest-descent and weak solutions to \eqref{eq:PDE}, as long as they remain positive or under certain assumptions on \(G\), cf. Remark \ref{rem:assummptions_G}. In this case, in fact we characterise all weak solutions to \eqref{eq:PDE} via the energy-dissipation equality
\begin{equation*}
    E[u](t)
    + \tfrac{\alpha}{\alpha+1} \int_{s}^{t} \int_{\Omega} \frac{|j|^{\frac{\alpha+1}{\alpha}}}{m(u)^\frac{1}{\alpha}} \dd x \dd \tau 
    + \tfrac{1}{\alpha+1} \int_s^t \int_{\Omega} m(u) |\partial_x^3 u - G''(u)\partial_x u|^{\alpha+1} \dd x \dd \tau 
    = E[u](s)
\end{equation*}
for almost all times \(t\geq s \geq 0\). Sufficiently smooth solutions to the energy-dissipation equality are solutions to the steepest-descent problem, and vice versa. To see this, calculate
\begin{equation*}
    \begin{split}
    E[u](t) - E[u](s) & = \int_s^t \frac{d}{d\tau} E[u](\tau) \dd \tau \\
    & = \int_{s}^{t} \langle \partial_x DE[u](\tau), j(\tau)\rangle \dd \tau \\
    & =  \int_{s}^{t} \langle -\partial_x^3 u(\tau) + G''(u(\tau))\partial_x u(\tau), j(\tau)\rangle \dd \tau \\
    &\geq 
    - \tfrac{\alpha}{\alpha+1} 
    \int_{s}^{t} \int_{\Omega} \frac{|j|^{\frac{\alpha+1}{\alpha}}}{m(u)^\frac{1}{\alpha}} \dd x \dd \tau - \tfrac{1}{\alpha+1} \int_s^t \int_{\Omega} m(u) |\partial_x^3 u - G''(u)\partial_x u|^{\alpha+1} \dd x \dd \tau
    \end{split}
\end{equation*}
by Young's inequality. Equality can be achieved if and only if
\begin{equation*}
    j = m(u) |\partial_x^3 u - G''(u)\partial_x u|^{\alpha-1}\bigl(\partial_x^3 u - G''(u)\partial_x u\bigr)
\end{equation*}
for almost all times.

In \cite{lisini_cahnhilliard_2012}, the authors consider a similar problem for concave mobilities \(m(s) = s^n\) and \(n\in \bigl[\tfrac{1}{2},1\bigr]\). In that case, they show that there is an underlying metric framework for the steepest descent. We consider more general mobilities which include the relevant physical cases which are typically superlinear. In this more general framework, there is no underlying Wasserstein-type metric space, since the action functional
\begin{equation*}
    \int_{0}^{1} \int_{\Omega} \frac{|j|^{\frac{\alpha+1}{\alpha}}}{m(u)^{\frac{1}{\alpha}}} \dd x \dd t
\end{equation*}
is nonconvex. The induced metric
\begin{equation*}%\label{eq:Wasserstein-metric}
\begin{split}
    d_m^{\frac{\alpha+1}{\alpha}}(u_0, u_1) &\coloneq 
    \inf\biggl\{ \int_0^1 \int_\Omega \frac{|j|^{\frac{\alpha+1}{\alpha}}}{m(u)^{\frac{1}{\alpha}}}\dd x\dd t;\, \partial_t u + \div j = 0,\,
    j\cdot n = 0\text{ on } \partial\Omega,
    \\ 
    &  \qquad \qquad u\geq 0,\, u(0,x) = u_0(x),\, u(1,x) = u_1(x)\biggr\}
\end{split}
\end{equation*}
is identically zero, as we will demonstrate in Proposition \ref{prop:metric-degenerate} in Appendix \ref{app:BenamouBrenier}.

%--------------------------------------------------
%--------------------------------------------------
\subsection{Related results} We briefly comment on results related to our problem which are available in the literature.

\medskip
\noindent\textbf{\textsc{Thin-film equation with potential. }}
In the case $\alpha=1$ the problem \eqref{eq:PDE} reduces to the Newtonian thin-film problem
\begin{equation} \label{eq:PDE_Newtonian}
\begin{cases}
    \partial_t u + \partial_x\bigl(m(u) \bigl(\partial_x^3 u - G^{\prime\prime}(u) \partial_x u\bigr)\bigr) = 0, & t>0,\ x \in \Omega, 
    \\
    \partial_x u = m(u) \bigl(\partial_x^3 u - G^{\prime\prime}(u) \partial_x u\bigr) = 0, & t>0,\ x \in \partial\Omega, \\
    u(0,x) = u_0, & x\in \Omega,
\end{cases}
\end{equation}
with potential. Equation \eqref{eq:PDE_Newtonian} has been introduced in \cite{de_gennes_wetting_1985} with a potential describing the interplay of surface forces, gravitational forces and intermolecular forces such as Van-der-Waals forces. In \cite{durastanti_spreading_2022} the  authors study existence, uniqueness and contact-point behaviour of stationary solutions to the Newtonian thin-film equation \eqref{eq:PDE_Newtonian} with mildly singular potential. 
In \cite{durastanti_thin-film_2022} the same authors are concerned with travelling-wave solutions. In particular, the authors propose to study the Newtonian thin-film equation with mildly singular potential as a remedy for the so-called no-slip paradox \cite{huh_hydrodynamic_1971,dussan_v_motion_1974}.

%---------------------------------------------------
\medskip
\noindent\textbf{\textsc{Gradient-flow structure. }}
In the Newtonian case $\alpha=1$ and with zero potential $G\equiv 0$ the problem \eqref{eq:PDE_Newtonian} in turn reduces to the well-known thin-film equation
\begin{equation} \label{eq:Newtonian_PDE_intro}
\begin{cases}
    \partial_t u + \partial_x\bigl(m(u) \partial_x^3 u\bigr) = 0, & t>0,\ x \in \Omega, 
    \\
    \partial_x u = m(u) \partial_x^3 u = 0, & t>0,\ x \in \partial\Omega, \\
    u(0,x) = u_0, & x\in \Omega.
\end{cases}
\end{equation}
It is well-known (see \cite{almgren_singularity_1996,otto_lubrication_1998,giacomelli_rigorous_2003}, where the latter two assume that \(d=1\)) that for Newtonian fluids in the setting of Hele-Shaw flows, given by flow-behaviour exponent \(\alpha=1\) and mobility exponent \(n=1\), the thin-film equation is a gradient flow with respect to the Dirichlet energy
\begin{equation*}
    E[u](t) = \int_{\Omega} \tfrac{1}{2} |\partial_x u(t)|^2 \dd x.
\end{equation*}
A numerical gradient-flow scheme -- discrete both in time and space -- for the Newtonian thin-film equation with general mobility exponents in one and two space dimensions is studied in \cite{grun_nonnegativity_2000}. Numerical schemes for more advanced geometries are studied for instance in \cite{rumpf_numerical_2013} and \cite{vantzos_functional_2017}.

In \cite{lisini_cahnhilliard_2012}, the Newtonian thin-film equation with mobility exponents \(n\in (0,1]\) in dimension \(d=1\) is studied as a gradient flow in weighted Wasserstein spaces. All these results have in common that the dissipation potential turns out to be jointly convex in the film height and the flux, as we will see later.

In the non-Newtonian case $\alpha > 1$ and with zero potential $G\equiv 0$ the problem \eqref{eq:PDE} is the power-law thin-film equation 
\begin{equation} \label{eq:PL_intro}
\begin{cases}
    \partial_t u + \partial_x\bigl(m(u) |\partial_x^3 u|^{\alpha-1} \partial_x u\bigr) = 0, & t>0,\ x \in \Omega, 
    \\
    \partial_x u = m(u) |\partial_x^3 u|^{\alpha-1} \partial_x^3 u = 0, & t>0,\ x \in \partial\Omega, \\
    u(0,x) = u_0, & x\in \Omega.
\end{cases}
\end{equation}
To the best of our knowledge there are no results on the gradient-flow structure on \eqref{eq:PL_intro} available.

%---------------------------------------------------
\medskip
\noindent\textbf{\textsc{Weak solutions and stability for thin-film problems. }}
Weak solutions to Newtonian thin-film equation \eqref{eq:Newtonian_PDE_intro} have been extensively studied in the literature since the seminal work \cite{bernis_higher_1990}. Since in our paper we are neither concerned with the Newtonian case nor with higher dimensions, we mention only the further contributions \cite{beretta_nonnegative_1995,bertozzi_lubrication_1996,bertsch_thin_1998}.

Existence of global non-negative weak solutions to the doubly nonlinear power-law thin-film equation \eqref{eq:PL_intro} is treated in \cite{ansini_doubly_2004} for flow-behaviour exponents in the shear-thinning regime, where the authors do also (besides other qualitative properties) prove convergence to the mean as time tends to infinity. For positive initial data, short-times existence of positive weak solutions and convergence rates for all flow-behaviour exponents are provided in \cite{jansen_long-time_2022}.

Lift-off properties for the Newtonian thin-film equation in the partial-wetting regime are studied in \cite{cuesta_self-similar_2018}, where the authors show that self-similar solutions emerging from initial values with isolated touch-down point lift up. 

We briefly mention that the methods used in our work in order to treat the power-law thin-film equation with potential, does also apply to the so-called Ellis thin-film equation, see for instance \cite{weidner_contactline_1994,ansini_shear-thinning_2002,lienstromberg_local_2020,jansen_long-time_2022}.

% The problem of lack of convexity for physical mobility exponents \(n\geq 1\) can be overcome considering first a modified mobility function \(m_{\delta}\) with \(m_{\delta}\geq \delta\) in \(\R\). Using a minimising movement scheme in the space \(\{u\in H^1(\Omega) : \bar{u} = \bar{u}_0\}\), solutions to the modified thin-film equation will be constructed for general flow-behaviour exponents \(\alpha >0\) in one space dimension. In particular, this shows that positive solutions to the one-dimensional power-law thin-film equation are given by a gradient flow. Furthermore, as \(m_{\delta}\to m\), these solutions converge to a weak solution of \eqref{eq:ch6-powerlaw-generaldim} in the sense of \cite{bernis_higher_1990}.

%--------------------------------------------------
%--------------------------------------------------
\subsection{Outline of the paper}

The structure of the present paper is as follows: in Section \ref{sec:mms} we derive the minimising-movement scheme for the modified problem \eqref{eq:PDE_mod_intro}. 
Moreover, we study the properties of the interpolations of the time-discrete flow. Using the De Giorgi technique, we derive an optimal discrete energy-dissipation equality and use this to prove a-priori bounds which yield convergence to a limit.

While from the results of Section \ref{sec:mms} it can already be deduced that positive solutions to the power-law thin-film equation have a gradient-flow structure, in Section \ref{sec:limit} we study the limit \(\sigma\to 0\). Using the energy-dissipation equality, we derive a-priori bounds. Combining these with a localised version of Minty's trick, we obtain convergence to global non-negative weak solutions to \eqref{eq:PDE} that satisfy an energy-dissipation inequality.

In Section \ref{sec:lifting} we study lifting properties in the case of potential \(G\equiv 0\). We use dissipation bounds to show that solutions with low initial energy lift up after a fixed positive time.

Finally, in Appendix \ref{app:BenamouBrenier} we demonstrate that a metric framework for gradient flows degenerates in the case of superlinear mobilities.

%---------------------------------------

\subsection{Notation}

As above, $\Omega \subset \R$ denotes an open and bounded interval with boundary $\partial\Omega$. For \(k\in \N\) and \(p\in [1,\infty]\) we use the notation \(W^k_p(\Omega)\) for the standard Sobolev space with norm
\begin{equation*}
    \|v\|_{W^k_p(\Omega)} = \left(\sum_{0\leq|\alpha|\leq k} 
    \|\partial_{\alpha} v\|_{L_p(\Omega)}^p\right)^{1/p}.
\end{equation*}

Let \(\Omega\subset \R\) an interval. To account for the Neumann-type boundary conditions of the solutions to the power-law thin-film equation, we further introduce the notation
\begin{equation*}
    W^{k}_{p,B}(\Omega)
    =
    \begin{cases}
        \bigl\{v \in W^{k}_p(\Omega);\, 
        v_x = v_{xxx} = 0 \text{ on } \partial\Omega\bigr\}, 
        &k=4,
        \\[1ex]
        \bigl\{v \in W^{k}_p(\Omega);\, 
        v_x = 0 \text{ on } \partial\Omega\bigr\}, 
        & k \in \{2,3\},
        \\[1ex] 
        W^{k}_p(\Omega), & k \in \{0,1\}.
    \end{cases}
\end{equation*}
The spaces $W^{k}_{p,B}(\Omega)$, \(k\in \{0,1,\ldots,4\}\), are closed
linear subspaces of $W^{k}_{p}(\Omega)$.

%----------------------------------------------

\section{Minimising-movement scheme for the modified thin-film equation}\label{sec:mms}

In this section we define a minimising-movement scheme for the power-law thin-film equation
\begin{equation}\label{eq:PDE_mod}\tag{$P_\sigma$}
\begin{cases}
    \partial_t u^\sigma + \partial_x\bigl(m(u^\sigma) |\partial_x^3 u^\sigma - G_\sigma^{\prime\prime}(u^\sigma) \partial_x u^\sigma|^{\alpha-1} \bigl(\partial_x^3 u^\sigma - G_\sigma^{\prime\prime}(u^\sigma) \partial_x u^\sigma\bigr)\bigr) = 0, & t>0,\ x \in \Omega, 
    \\
    \partial_x u^\sigma = m(u^\sigma) |\partial_x^3 u^\sigma - G_\sigma^{\prime\prime}(u^\sigma) \partial_x u^\sigma|^{\alpha-1} \bigl(\partial_x^3 u^\sigma - G^{\prime\prime}(u^\sigma) \partial_x u^\sigma\bigr) = 0, & t>0,\ x \in \partial\Omega, \\
    u^\sigma(0,x) = u_0, & x\in \Omega,
\end{cases}
\end{equation}
with a modified potential $G_\sigma,\, \sigma \in (0,1)$, having the properties \ref{it:Gsigma1}--\ref{it:Gsigma4}. We show that, for positive initial values $u_0 \in H^1(\Omega),\, u_0 > 0$, problem \eqref{eq:PDE_mod} has a gradient-flow structure and admits global positive weak solutions.
As above, the mobility $m$ is a locally Lipschitz continuous function 
\begin{equation*}
    m\colon\R \to [0,\infty)
    \quad \text{such that} \quad  
    m(s) = 0, \quad s \leq 0,
    \quad \text{and} \quad
    m(s) > 0, \quad s > 0.
\end{equation*}
The energy functional corresponding to the modified problem \eqref{eq:PDE_mod} is given by
\begin{equation*}
    E^\sigma[u] 
    = 
    \int_{\Omega} \tfrac12 |\partial_x u|^2 + G_\sigma(u) \dd x.
\end{equation*}

For better readability, we introduce the notation
    \[\Psi(s) = |s|^{\alpha-1} s, \quad \alpha >0,\ s\in \R.\]

%---------------------------------------------------

By definition of $G_\sigma$ we obtain that any $v \in H^1(\Omega)$ with finite energy $E^\sigma[v]$ is necessarily positive.

\begin{lemma}\label{lem:positivity_sigma}
Let $\sigma \in (0,1)$ and let $G_\sigma\colon \R \to \R$ satisfy \ref{it:Gsigma1}--\ref{it:Gsigma4}. If $v \in H^1(\Omega)$ satisfies $E^\sigma[v] \leq C$ for some constant $C >0$, then there exists a constant $c_{\sigma,C} > 0$ such that
\begin{equation*}
    \min_{x \in \bar{\Omega}} v \geq c_{\sigma,C} > 0.
\end{equation*}
\end{lemma}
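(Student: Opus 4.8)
The plan is to combine the energy bound with Sobolev embeddings in one dimension and the quantitative lower bound on $G_\sigma$ near zero from \ref{it:Gsigma4}. First I would record that $E^\sigma[v] \leq C$ gives two pieces of information: $\int_\Omega \tfrac12|\partial_x v|^2\dd x \leq C$ and $\int_\Omega G_\sigma(v)\dd x \leq C$. The first, together with the fact that $v$ has some finite mass (which also follows from the energy bound once we know $G_\sigma \geq 0$ and, say, a Poincaré-type control, or simply because we may work on the sublevel set where the mass is prescribed), shows $v \in H^1(\Omega) \embed C^{1/2}(\bar\Omega)$ with $\|v\|_{C^{1/2}(\bar\Omega)} \leq C'$ for a constant depending only on $C$ and $|\Omega|$. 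In particular $v$ is $1/2$-Hölder continuous with a uniform modulus.

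Next I would argue by contradiction, or rather directly: suppose $x_0 \in \bar\Omega$ is a point where $v(x_0) = \min_{\bar\Omega} v =: \delta$. If $\delta \geq \sigma$ we are already done since $\sigma > 0$ is a fixed constant, so assume $\delta < \sigma$. By the uniform Hölder bound, there is an interval $I \subset \Omega$ around $x_0$ of length $\ell = \ell(C,|\Omega|) > 0$ (for instance $\ell$ chosen so that $C'\ell^{1/2} \leq \sigma/2$, but with $\delta$ playing no role — one can even take $I$ of length comparable to $\min\{1,(\sigma/(2C'))^2, |\Omega|\}$) on which $v(x) \leq \delta + C'\ell^{1/2} \leq 2\delta$ whenever $\delta$ is small, or at any rate $v(x) \leq \sigma$ on $I$ once $\ell$ is small enough. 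On that interval the bound $G_\sigma(s) \gtrsim s^{-2}$ for $s \leq \sigma$ from \ref{it:Gsigma4} applies, so
\begin{equation*}
    C \geq \int_\Omega G_\sigma(v)\dd x \geq \int_I G_\sigma(v)\dd x \gtrsim \int_I v(x)^{-2}\dd x \geq \ell\, \bigl(\max_I v\bigr)^{-2} \geq \ell\,(2\delta)^{-2},
\end{equation*}
which forces $\delta \geq c_{\sigma,C}$ for an explicit constant $c_{\sigma,C} = c\sqrt{\ell/C}$ depending only on $\sigma$ and $C$ (through $\ell$ and the implicit constant in \ref{it:Gsigma4}). This is the desired lower bound.

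The main obstacle is the bookkeeping around the case distinction $\delta \gtrless \sigma$ and making sure the Hölder radius $\ell$ is chosen so that $v \leq \sigma$ on $I$ \emph{uniformly} — i.e.\ independently of $\delta$ — so that the singular estimate $G_\sigma(s)\gtrsim s^{-2}$ is genuinely available on all of $I$; one must take $\ell$ small depending on $\sigma$ and $C'$ but this is harmless since both are fixed. A minor point is justifying the embedding $H^1(\Omega)\embed C^{1/2}(\bar\Omega)$ with a constant controlled by the energy alone: here one uses that $G_\sigma \geq 0$ does not by itself bound $\|v\|_{L^2}$, so one should either restrict to the class of competitors with fixed mass $\bar u_0$ (as is the case throughout the minimising-movement construction) or note that finiteness of $\int_\Omega G_\sigma(v)$ together with $G_\sigma(s) = +\infty$ for $s < 0$ forces $v \geq 0$ a.e., and then combine $\|\partial_x v\|_{L^2} \leq \sqrt{2C}$ with the mass constraint to get the full $H^1$ bound via Poincaré's inequality. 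Everything else is a routine application of the fundamental theorem of calculus and Hölder's inequality.
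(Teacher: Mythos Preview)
Your approach differs from the paper's: you attempt a direct quantitative bound, while the paper uses compactness---if a sequence $v_k$ with $E^\sigma[v_k]\leq C$ had $\min v_k\to 0$, an $H^1$-weak, hence uniform, subsequential limit $\tilde v$ would satisfy $E^\sigma[\tilde v]\leq C$ but vanish at a point, and they note separately that a $C^{1/2}$ function with a root has $\int G_\sigma(v)\gtrsim\int|x-x_0|^{-1}=\infty$. The compactness route gives no explicit $c_{\sigma,C}$; yours would, if the estimate went through.

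But your key chain of inequalities is broken. With $\ell$ chosen independently of $\delta$ (say $C'\ell^{1/2}=\sigma/2$), the claim $\max_I v\leq 2\delta$ is false for small $\delta$: you only get $\max_I v\leq \delta + C'\ell^{1/2}\sim\sigma$, so $\ell(\max_I v)^{-2}$ is a fixed constant independent of $\delta$ and no lower bound on $\delta$ follows. If instead you shrink $\ell$ so that $\max_I v\leq 2\delta$, i.e.\ $\ell\sim(\delta/C')^2$, then again $\ell(2\delta)^{-2}\sim C'^{-2}$ is a constant. The fix is to drop the crude bound $\int_I v^{-2}\geq\ell(\max_I v)^{-2}$ and integrate the pointwise estimate $v(x)\leq\delta+C'|x-x_0|^{1/2}$ directly: on a fixed interval where $v\leq\sigma$ one finds
\[
\int_I\bigl(\delta+C'|x-x_0|^{1/2}\bigr)^{-2}\,dx \;\gtrsim\; C'^{-2}\log(\sigma/\delta),
\]
which \emph{does} diverge as $\delta\to 0$ and yields the desired lower bound (this is precisely the computation behind the paper's observation that a root forces infinite energy). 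Also, your concern about needing a mass constraint is unnecessary here: only the Hölder \emph{seminorm} $[v]_{C^{1/2}}\leq\|\partial_x v\|_{L_2}\leq\sqrt{2C}$ enters the argument, not the full $H^1$ norm.
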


\begin{proof}
We first observe that finite energy implies positivity. Indeed, since $H^1(\Omega) \hookrightarrow C^\frac{1}{2}(\bar{\Omega})$, every $v \in H^1(\Omega)$ with $v(x_0) = 0$ for some $x_0 \in \bar{\Omega}$ satisfies 
\begin{equation*}
    E^\sigma[v] 
    \geq 
    \int_{B(x_0)} |v(x)|^{-2} \dd x
    =
    \int_{B(x_0)} |v(x)-v(x_0)|^{-2} \dd x 
    \geq 
    c \int_{B(x_0)} |x-x_0|^{-1} \dd x
    = \infty,
\end{equation*}
where $B(x_0)$ is a small neighbourhood of $x_0 \in \bar{\Omega}$.
Assume that the assertion is false and fix $C > 0$. Then there exist sequences $(v_k)_k \subset H^1(\Omega)$ with $E^\sigma[v_k] \leq C$ and $(x_k)_k \subset \bar{\Omega}$ with 
\begin{equation*}
    v_k(x_k) \leq \tfrac{1}{k},
    \quad 
    k \in \N.
\end{equation*}
The assumption $E^\sigma[v_k] \leq C$ implies that there is a function $\tilde{v} \in H^1(\Omega)$ and a subsequence (not relabelled) with
\begin{equation*}
    v_k\ \xrightharpoonup{\phantom{\text{wi}}} \tilde{v}  
    \quad \text{in } H^1(\Omega).
\end{equation*}
Since the embedding $H^1(\Omega) \subset C(\bar{\Omega})$ is compact, we obtain (maybe again up to a subsequence)
\begin{equation}\label{eq:unif_conv_positivity}
    v_k \longrightarrow \tilde{v} 
    \quad \text{in } C(\bar{\Omega}).
\end{equation}
Lower semicontinuity of the norm and Fatou's lemma imply
\begin{equation*}
    E^\sigma[\tilde{v}]
    \leq
    \liminf_{k \to \infty} E^\sigma[v_k]
    \leq 
    C.
\end{equation*}
Since $\Omega$ is bounded, there exists a corresponding subsequence $(x_k)_k$ with $x_k \to \tilde{x}$. Due to the uniform convergence \eqref{eq:unif_conv_positivity} we finally obtain
\begin{equation*}
    \tilde{v}(\tilde{x})
    =
    \lim_{k \to \infty} v_k(x_k)
    =
    0,
\end{equation*}
contradicting that $E^\sigma[\tilde{v}] \leq C$.
\end{proof}

%---------------------------------------------------
%---------------------------------------------------

\subsection{Existence, uniqueness and regularity for a single time step}

We start setting up one time step of the minimising-movement scheme. We fix a time-step size \(h>0\). If at a time \(t\geq 0\) we are in the state \(u^*\), we define the next iteration, that is the approximation at time \(t+h\), to be the minimiser of the functional
\begin{equation} \label{eq:functional_mms}
    \Fcal_{u^\ast}^{h,\sigma}[u,j] 
    =  
    E^\sigma[u] + h\tfrac{\alpha}{\alpha+1}\int_{\Omega} \frac{|j|^{\frac{\alpha+1}{\alpha}}}{m(u^\ast)^{\frac{1}{\alpha}}}\dd x.
\end{equation}
The minimisation runs over all pairs \((u,j) \in H^1(\Omega) \times L_{\frac{\alpha+1}{\alpha}}(\Omega)\) that satisfy
\begin{equation}
	\begin{cases}
		\partial_x j + \frac{u - u^\ast}{h} = 0, & x \in \Omega, \\
		j= 0, & x \in \partial\Omega.
	\end{cases}
\end{equation}
Note that in the second integral on the right-hand side of \eqref{eq:functional_mms} we use $m(u^\ast)$ in order to avoid a lack of convexity. Heuristically this is no major change since solutions turn out to be continuous in time, and hence \(m(u(t+h))\) and \(m(u(t))\) are very close.

Before setting up the minimising-movement scheme, we prove existence, uniqueness and regularity properties of minimisers of the functional $\Fcal_{u^\ast}^{h,\sigma}$.

\begin{definition}\label{def:flow-equation}
	Let $u^\ast \in H^1(\Omega)$ and \(h>0\).
	We say that a pair $(u,j) \in H^1(\Omega)\times L_{\frac{\alpha+1}{\alpha}}(\Omega)$ solves the \emph{flow equation}
	\begin{equation} \label{eq:flow_equation}
		\begin{cases}
			\frac{u - u^\ast}{h} + \partial_x j = 0, & x\in  \Omega, \\
			j= 0, & x \in \partial\Omega,
		\end{cases}
	\end{equation}
	if the equation
	\begin{equation}\label{eq:weak-flow-equation}
		-\int_\Omega j\cdot \partial_x \phi\dd x + \tfrac{1}{h}\int_\Omega (u - u^\ast) \phi\dd x = 0
	\end{equation}
	is satisfied for all $\phi \in W^1_{\alpha+1}(\Omega)$.
\end{definition}

%------------------------------

Now, for fixed $u^\ast \in H^1(\Omega)$ with finite energy $E^\sigma[u^\ast] < \infty$, we define the minimisation problem
\begin{equation} \label{eq:mp_sigma}
    \begin{cases}
     \text{find } (v,k) = \argmin_{(u,j)} \Fcal^{h,\sigma}_{u^\ast}[u,j] & \\
     \text{s.t. } (u,j) \text{ solves the flow equation \eqref{eq:flow_equation} in the sense of Definition \ref{def:flow-equation}.}
    \end{cases}
\end{equation}
Recall that the finite-energy assumption $E^\sigma[u^\ast] < \infty$ guarantees that $u^\ast$ and thus $m(u^\ast)$ are strictly positive.

%------------------------------

\begin{remark}[Conservation of mass and Poincar\'e inequality] \label{rem:cons_mass_poincare}
	For every $u^\ast \in H^1(\Omega)$, a solution $(u,j) \in H^1(\Omega)\times L_{\frac{\alpha+1}{\alpha}}(\Omega;\R^d)$ to the flow equation \eqref{eq:flow_equation} conserves its mass in the sense that
	\begin{equation*}
	    \bar{u} = \fint_\Omega u\dd x = \fint_\Omega u^\ast\dd x.
	\end{equation*}
	Indeed, this follows immediately by choosing \(\phi \equiv 1\) in \eqref{eq:weak-flow-equation}.
	In particular, $u$ satisfies the Poincar\'e inequality 
	\begin{equation} \label{eq:poincare}
	    \|u-\bar{u}\|_{L_2(\Omega)} \leq C \|\partial_x u\|_{L_2(\Omega)} ,
	\end{equation}
	where $C > 0$ is a positive constant that depends only on $\Omega$ and $\bar{u} = \bar{u}^\ast$. 
\end{remark}

%------------------------------

The following proposition guarantees existence and uniqueness of minimisers of the functional \(\Fcal_{u^\ast}^{h,\sigma}\) for a given initial datum \(u^* \in H^1(\Omega)\). 
Recall that $\Psi(s) = |s|^{\alpha-1} s$ for $s \in \R$ and $\alpha > 0$.

\begin{proposition}\label{prop:existence_minimiser}
	Let $u^\ast \in H^1(\Omega)$ with finite energy $E^\sigma[u^\ast] < \infty$ and fix \(h,\ \sigma >0\).
	There exists a unique solution $(u^{h,\sigma},j^{h,\sigma}) \in H^1(\Omega)\times L_{\frac{\alpha+1}{\alpha}}(\Omega)$ to the minimisation problem \eqref{eq:mp_sigma}.
	\\
	The minimiser has the additional regularity $(u^{h,\sigma},j^{h,\sigma}) \in W^3_{\alpha+1,B}(\Omega)\times L_{\frac{\alpha+1}{\alpha}}(\Omega)$ and solves the Euler--Lagrange equation
	\begin{equation}\label{eq:elliptic_bvp}
		\begin{cases}
			\frac{u^{h,\sigma} - u^\ast}{h} + \partial_x j^{h,\sigma} = 0, & x \in \Omega, 
			\\
			j^{h,\sigma} = m(u^*) \Psi\bigl(\partial_x^3 u^{h,\sigma} - G^{\prime\prime}(u^{h,\sigma}) \partial_x u^{h,\sigma}\bigr), & x \in \Omega, 
			\\
			\partial_x u^{h,\sigma} = j^{h,\sigma} = 0, & x\in \partial\Omega.
		\end{cases}
	\end{equation}  
\end{proposition}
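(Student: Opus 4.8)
The plan is to establish existence by the direct method, uniqueness by convexity, and the higher regularity together with the Euler--Lagrange system~\eqref{eq:elliptic_bvp} by a first-variation and bootstrapping argument. \emph{Existence.} Since $E^\sigma[u^\ast]<\infty$, Lemma~\ref{lem:positivity_sigma} gives $u^\ast\geq c_\ast>0$ on $\bar\Omega$, so $m(u^\ast)\in C(\bar\Omega)$ is bounded below by a positive constant. The admissible class is nonempty because $(u^\ast,0)$ solves the flow equation and $\Fcal^{h,\sigma}_{u^\ast}[u^\ast,0]=E^\sigma[u^\ast]<\infty$, so the infimum is finite. Along a minimising sequence $(u_k,j_k)$ the dissipation term is non-negative, hence $E^\sigma[u_k]$ is bounded; this controls $\|\partial_x u_k\|_{L_2(\Omega)}$, and by mass conservation (Remark~\ref{rem:cons_mass_poincare}) and the Poincar\'e inequality~\eqref{eq:poincare} the sequence $(u_k)_k$ is bounded in $H^1(\Omega)$. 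Since $m(u^\ast)$ is bounded below, the dissipation bound forces $(j_k)_k$ to be bounded in the reflexive space $L_{\frac{\alpha+1}{\alpha}}(\Omega)$. Passing to a subsequence, $u_k\toweak u$ in $H^1(\Omega)$, $u_k\to u$ in $C(\bar\Omega)$ by compact embedding, and $j_k\toweak j$ in $L_{\frac{\alpha+1}{\alpha}}(\Omega)$; the weak flow equation~\eqref{eq:weak-flow-equation} is linear in $(u,j)$, hence stable under these convergences, so $(u,j)$ is admissible. Finally $u\mapsto\int_\Omega\tfrac12|\partial_x u|^2\dd x$ and $j\mapsto\int_\Omega|j|^{\frac{\alpha+1}{\alpha}}m(u^\ast)^{-1/\alpha}\dd x$ are convex and strongly continuous, hence weakly lower semicontinuous, while $E^\sigma[u_k]\leq C$ yields a uniform bound $u_k\geq c>0$ (Lemma~\ref{lem:positivity_sigma}) on whose range $G_\sigma$ is continuous, so $\int_\Omega G_\sigma(u_k)\dd x\to\int_\Omega G_\sigma(u)\dd x$. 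Thus $\Fcal^{h,\sigma}_{u^\ast}$ is lower semicontinuous along the sequence and $(u,j)$ is a minimiser.

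\emph{Reduction and uniqueness.} Writing $\Omega=(a,b)$, the flow equation determines $j$ from $u$: every admissible pair satisfies $j(x)=\tfrac1h\int_a^x(u^\ast-u)\dd y$, and conversely this formula produces an admissible $j\in H^1(\Omega)$ for every $u\in H^1(\Omega)$ with mean $\bar u^\ast$. Hence minimising $\Fcal^{h,\sigma}_{u^\ast}$ is equivalent to minimising the reduced functional $\Gcal[u]\coloneq\Fcal^{h,\sigma}_{u^\ast}[u,j_u]$, $j_u(x)\coloneq\tfrac1h\int_a^x(u^\ast-u)\dd y$, over the affine set $\{u\in H^1(\Omega):\bar u=\bar u^\ast\}$. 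By~\ref{it:Gsigma2} the functional $\Gcal$ is convex, and it is strictly convex on this set since $u\mapsto\int_\Omega\tfrac12|\partial_x u|^2\dd x$ is (two admissible competitors with the same gradient have the same mean, hence coincide). This gives uniqueness of the minimiser $u^{h,\sigma}$, and then of $j^{h,\sigma}=j_{u^{h,\sigma}}$.

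\emph{Euler--Lagrange equation.} Write $u\coloneq u^{h,\sigma}$. Since $E^\sigma[u]\leq\Gcal[u]\leq\Gcal[u^\ast]=E^\sigma[u^\ast]<\infty$, Lemma~\ref{lem:positivity_sigma} gives $u\geq c_0>0$ on $\bar\Omega$. For $\psi\in H^1(\Omega)$ with $\bar\psi=0$ and $|t|$ small, $u+t\psi$ is admissible for the reduced problem and strictly positive, and $t\mapsto\Gcal[u+t\psi]$ is differentiable at $0$: this is dominated convergence, using $\psi\in C(\bar\Omega)$, the elementary bound $\bigl||s+r|^{\frac{\alpha+1}{\alpha}}-|s|^{\frac{\alpha+1}{\alpha}}\bigr|\lesssim(|s|+|r|)^{\frac1\alpha}|r|$ for the dissipation term (whose dominating function lies in $L_1(\Omega)$ since $|j_u|^{1/\alpha}\in L_{\alpha+1}(\Omega)$), and $G_\sigma\in C^2((0,\infty))$ together with the lower bound $u\geq c_0$ for the potential term. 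With $j\coloneq j_u$ and $\Lambda\coloneq m(u^\ast)^{-1/\alpha}|j|^{\frac1\alpha-1}j$, a short computation then gives
\begin{equation*}
    0=\frac{d}{dt}\Big|_{t=0}\Gcal[u+t\psi]=\int_\Omega\partial_x u\,\partial_x\psi+G_\sigma'(u)\,\psi\dd x-\int_\Omega\Lambda(x)\Bigl(\int_a^x\psi\dd y\Bigr)\dd x.
\end{equation*}
Integrating the last term by parts, the boundary contribution vanishes because $\int_a^a\psi=0$ and $\int_a^b\psi=|\Omega|\bar\psi=0$, leaving $+\int_\Omega R\,\psi\dd x$ with $R(x)\coloneq\int_a^x\Lambda$. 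Hence $\int_\Omega\partial_x u\,\partial_x\psi+(G_\sigma'(u)+R)\psi\dd x=0$ for all mean-zero $\psi\in H^1(\Omega)$, and subtracting the mean of $G_\sigma'(u)+R$ removes the mean-zero restriction. Therefore $u$ weakly solves the Neumann problem $\partial_x^2 u=G_\sigma'(u)+R-c$ in $\Omega$, $\partial_x u=0$ on $\partial\Omega$, with $c\coloneq\tfrac1{|\Omega|}\int_\Omega(G_\sigma'(u)+R)\dd x$.

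\emph{Regularity and conclusion.} A bootstrap now upgrades the regularity. Since $u\in H^1(\Omega)\embed C(\bar\Omega)$ with $u\geq c_0>0$, we have $G_\sigma'(u)\in C(\bar\Omega)$, and $j\in C^1(\bar\Omega)$ as a primitive of $u^\ast-u\in C(\bar\Omega)$, so $\Lambda\in C(\bar\Omega)$ and $R\in C^1(\bar\Omega)$. Hence the right-hand side of the Neumann problem is continuous, so $u\in C^2(\bar\Omega)$; then $G_\sigma'(u)\in C^1(\bar\Omega)$ with derivative $G_\sigma''(u)\partial_x u$, and differentiating the equation once more gives $u\in C^3(\bar\Omega)\embed W^3_{\alpha+1}(\Omega)$ together with $\partial_x^3 u-G_\sigma''(u)\partial_x u=\Lambda$. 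Applying $\Psi$ and simplifying, $\Psi(\partial_x^3 u-G_\sigma''(u)\partial_x u)=\Psi(\Lambda)=m(u^\ast)^{-1}j$, which is the second line of~\eqref{eq:elliptic_bvp}; the first line holds since $\partial_x j_u=\tfrac1h(u^\ast-u)$. Finally $\partial_x u=0$ on $\partial\Omega$ from the Neumann problem, while $j(a)=0$ by construction and $j(b)=\tfrac1h|\Omega|(\bar u^\ast-\bar u)=0$ by mass conservation, so $\Lambda=0$ on $\partial\Omega$ and thus $\partial_x^3 u=G_\sigma''(u)\partial_x u+\Lambda=0$ on $\partial\Omega$; hence $(u^{h,\sigma},j^{h,\sigma})\in W^3_{\alpha+1,B}(\Omega)\times L_{\frac{\alpha+1}{\alpha}}(\Omega)$ and $\partial_x u=j=0$ on $\partial\Omega$, as claimed. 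The routine parts are the direct method and the convexity argument; the delicate step is the first variation — differentiating through the doubly degenerate dissipation and the singular potential, which is exactly where the uniform positivity of Lemma~\ref{lem:positivity_sigma} and property~\ref{it:Gsigma1} enter — and then carrying the bootstrap through the non-smooth nonlinearity $\Psi$: for $\alpha>1$ the map $s\mapsto|s|^{\frac1\alpha-1}s$ is only H\"older at $s=0$, so $\Lambda$ is merely continuous, which nonetheless suffices for the claimed $W^3_{\alpha+1,B}$-regularity.
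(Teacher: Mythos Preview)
Your proof is correct and takes a genuinely different route from the paper in the derivation of the Euler--Lagrange system. The paper keeps the constrained two-variable problem $(u,j)$ and performs two separate first variations: first in $j$ alone along solenoidal fields (invoking a Helmholtz decomposition to produce a potential $\psi$ with $|j|^{\frac{1-\alpha}{\alpha}}j=m(u^\ast)^{1/\alpha}\partial_x\psi$), then in $(u,j)$ simultaneously via an auxiliary Neumann problem; it then combines these to identify $\psi=\partial_x^2 u-G_\sigma'(u)+C$ and appeals to Agmon's elliptic estimates for the $W^3_{\alpha+1,B}$-regularity. You instead exploit the one-dimensional setting to solve the constraint explicitly, $j_u(x)=\tfrac1h\int_a^x(u^\ast-u)$, reduce to a single-variable functional $\Gcal[u]$, take one variation, and then bootstrap directly on the resulting Neumann ODE. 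This is more elementary and yields slightly more (you obtain $u\in C^3(\bar\Omega)$ rather than just $W^3_{\alpha+1}$), at the price of being intrinsically one-dimensional: the paper's Helmholtz-based argument is the template that survives in higher dimensions $d>1$, where the constraint can no longer be integrated away. Your uniqueness argument (strict convexity of the Dirichlet energy on the fixed-mean hyperplane, together with the affine dependence $u\mapsto j_u$) is also a clean variant of the paper's coordinate-by-coordinate strict convexity.
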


%------------------------------

In other words, Proposition \ref{prop:existence_minimiser} characterises a minimiser of \eqref{eq:mp_sigma} as a solution $(u^{h,\sigma},j^{h,\sigma}) \in W^3_{\alpha+1,B}(\Omega)\times L_{\frac{\alpha+1}{\alpha}}(\Omega)$ to the degenerate-elliptic boundary-value problem
\begin{equation*}
	\begin{cases}
		\frac{u-u^\ast}{h} + \partial_x \bigl(m(u^\ast) \Psi\bigl(\partial_x^3 u^{h,\sigma} - G^{\prime\prime}(u^{h,\sigma}) \partial_x u^{h,\sigma}\bigr)\bigr) = 0, & x \in \Omega,  
		\\
		\partial_x u^{h,\sigma} = m(u^\ast) \Psi \bigl(\partial_x^3 u^{h,\sigma} - G^{\prime\prime}(u^{h,\sigma}) \partial_x u^{h,\sigma}\bigr) = 0, & x \in \partial\Omega,
	\end{cases}
\end{equation*}  
where we use that $m(u^\ast)$ is strictly positive.

%------------------------------
\begin{proof}
    \noindent \textbf{(i) Existence.}
	The minimisation problem \eqref{eq:mp_sigma} is strictly convex with a linear constraint. Existence of a unique minimiser follows from the direct method of the calculus of variations. Indeed, \(H^1(\Omega)\times L_{\frac{\alpha+1}{\alpha}}(\Omega)\) is a reflexive Banach space and the set of all pairs $(u,j) \in H^1(\Omega)\times L_{\frac{\alpha+1}{\alpha}}(\Omega)$ satisfying the flow equation \eqref{eq:flow_equation} is a closed, non-empty and affine subspace, containing the point $(u^\ast,0)$. Thus, since the functional $\Fcal^{h,\sigma}_{u^\ast}$ in \eqref{eq:functional_mms} is non-negative, there exists a minimising sequence. In view of the Poincar\'e inequality \eqref{eq:poincare} and the strict positivity of $m(u^\ast)$, we may extract a subsequence that converges weakly in $H^1(\Omega)\times L_{\frac{\alpha+1}{\alpha}}(\Omega)$. 
	The strict convexity of $\Fcal^{h,\sigma}_{u^\ast}$ in \((u,j)\in H^1(\Omega)\times L_{\frac{\alpha+1}{\alpha}}(\Omega)\) yields weak lower semicontinuity and therewith the existence of a minimiser $(u^{h,\sigma},j^{h,\sigma})\in H^1(\Omega)\times L_{\frac{\alpha+1}{\alpha}}(\Omega)$ solving the flow equation \eqref{eq:flow_equation}. 
	
	Note that since $E^\sigma[u^{h,\sigma}] \leq E^\sigma[u^\ast] < \infty$, each minimiser $u^{h,\sigma}$ is strictly positive and uniformly bounded.
	
    \noindent \textbf{(ii) Uniqueness.} In order to prove uniqueness, assume that $(u_1,j_1)$ and $(u_2,j_2)$ are two minimisers of \eqref{eq:functional_mms}. By the strict convexity of the functions 
	\begin{equation*}
	    s \longmapsto \left|s\right|^2
	    \quad \text{and} \quad
	    s\longmapsto |s|^{\frac{\alpha+1}{\alpha}},
        \quad s \in \R,
	\end{equation*}
	and the convexity of $G_\sigma$,
	we deduce that
	\begin{equation*}
	    \partial_x u_1 = \partial_x u_2 \quad \text{and} \quad j_1 = j_2 \quad \text{a.e. in } \Omega.
	\end{equation*}
	Since $\bar{u}_1 = \bar{u}_2$, it follows that $(u_1,j_1)=(u_2,j_2)$.
	
	\noindent \textbf{(iii) Euler--Lagrange equation.} In order to derive the Euler--Lagrange equation for the minimiser $(u^{h,\sigma},j^{h,\sigma}) \in H^1(\Omega)\times L_{\frac{\alpha+1}{\alpha}}(\Omega)$, we first consider  solenoidal vector fields $k \in L_{\frac{\alpha+1}{\alpha}}(\Omega)$, i.e. $k$  satisfies $\langle k,\partial_x \phi\rangle = 0$ for all $\phi \in C^1(\bar{\Omega})$. Then, we take the first variation
	\begin{equation*}
	    0
	    =
	    \frac{d}{d\eps}\Fcal^{h,\sigma}_{u^\ast}\bigl[u^{h,\sigma},j^{h,\sigma}+\eps k\bigr]_{|\eps =0} 
	    = 
	    h\int_{\Omega} \frac{|j^{h,\sigma}|^{\frac{1-\alpha}{\alpha}}j^{h,\sigma}k}{m(u^*)^{\frac{1}{\alpha}}} \dd x,
	\end{equation*}
	for all solenoidal vector fields \(k\in L_{\frac{\alpha+1}{\alpha}}(\Omega)\). Using the Helmholtz decomposition \cite{solonnikov_estimates_1977}, this implies the existence of some $\psi\in W^1_{\alpha+1}(\Omega)$ such that
	\begin{equation} \label{eq:j_psi}
	    |j^{h,\sigma}|^{\frac{1-\alpha}{\alpha}}j^{h,\sigma} 
	    = 
	    m(u^\ast)^{\frac{1}{\alpha}} \partial_x \psi.
	\end{equation}
	Now, pick a perturbation $w \in C^1(\bar{\Omega})$ with average $\bar{w} = 0$ for the first component $u^{h,\sigma}$ and choose $k \in L_\frac{\alpha+1}{\alpha}(\Omega)$ such that $k = \partial_x \Phi$, where $\Phi \in W^1_{\alpha+1}(\Omega)$ solves the Neumann problem
	\begin{equation}\label{eq:ch6-k-psi}
	\begin{cases}
	    -\partial_x^2 \Phi = \frac{w}{h}, & x \in\Omega, \\
	    \partial_x \Phi = 0, & x \in \partial\Omega.
	\end{cases}
	\end{equation}
	Since then
		\[\frac{u^{h,\sigma}+\eps w - u^*}{h} + \partial_x\bigl(j^{h,\sigma}+ \eps k\bigr) = 0,\]
	we may take the first variation
	\begin{align*}
	    0 &= 
	    \frac{d}{d\eps}\Fcal^{h,\sigma}_{u^\ast}\bigl[u^{h,\sigma} 
	    + \eps w,j^{h,\sigma}+\eps k\bigr]|_{\eps =0} 
	    \\
	    &=
	    \int_{\Omega} \partial_x u^{h,\sigma} \partial_x w \dd x 
	    +
	    \int_\Omega
	    G_\sigma^\prime(u^{h,\sigma}) w \dd x
	    + h\int_{\Omega} \partial_x\psi k \dd x \\
	    & = 
	    \int_{\Omega} \partial_x u^{h,\delta} \partial_x w \dd x 
	    +
	    \int_\Omega
	    G_\sigma^\prime(u^{h,\sigma}) w \dd x
	    + \int_{\Omega} \psi w \dd x.
	\end{align*} 
	In the last step, we have used \eqref{eq:ch6-k-psi} and the fact that \(\psi\in W^1_{\alpha+1}(\Omega)\) is a valid test function.
	Since this equation holds true for arbitrary $w \in C^1(\bar{\Omega})$ with average $\bar{w} = 0$,  we obtain in particular that
	\begin{equation} \label{eq:Laplace_u}
		G_\sigma^\prime(u^{h,\sigma}) + \psi = \partial_x^2 u^{h,\sigma} + C,
	\end{equation}
	for some constant $C$, in the sense of distributions. This shows that $u^{h,\sigma} \in H^1(\Omega)$ satisfies 
	\begin{equation*}
    	\langle \partial_x u^{h,\sigma}, \partial_x v \rangle
	    =
	    -\langle (\psi + G_\sigma^\prime(u^{h,\sigma}) - C), v \rangle,
	    \quad
	    v \in H^1(\Omega). 
	\end{equation*}
	Using \cite[Theorem 3.3]{agmon_estimates_1959} together with the fact that $G^\prime_\sigma(u^{h,\sigma}) \in W^1_{\alpha+1}(\Omega)$ yields
	\begin{equation*}
	    u^{h,\sigma} \in W^{3}_{\alpha+1,B}(\Omega) \quad \text{and} \quad \partial_x u^{h,\sigma} = 0 \quad \text{a.e. on } \partial\Omega.
	\end{equation*}
	Summarising, we find that $u^{h,\sigma} \in W^{3}_{\alpha+1,B}(\Omega)$ satisfies the boundary-value problem
	\begin{equation*}
		\begin{cases}
			\psi = \partial_x^2 u^{h,\sigma} - G_\sigma^\prime(u^{h,\sigma}) + C, & x \in \Omega, 
			\\
			\partial_x u^{h,\sigma} = 0, & x \in \partial\Omega.
		\end{cases}
	\end{equation*}
	Consequently, the minimiser $(u^{h,\sigma},j^{h,\sigma})$ satisfies
	\begin{equation*}
	    |j^{h,\sigma}|^{\frac{1-\alpha}{\alpha}}j^{h,\sigma} = m(u^\ast)^{\frac{1}{\alpha}} \bigl(\partial_x^3 u^{h,\sigma} - G^{\prime\prime}(u^{h,\sigma}) \partial_x u^{h,\sigma}\bigr).
	\end{equation*}
	Since the function \(s\mapsto |s|^{\frac{1-\alpha}{\alpha}}s=\Psi^{-1}(s)\) is invertible, this implies that the flux $j^{h,\sigma}$ is given by
	\begin{equation*}
	    j^{h,\sigma} 
	    = 
	    m(u^*) \Psi \bigl(\partial_x^3 u^{h,\sigma} - G^{\prime\prime}(u^{h,\sigma}) \partial_x u^{h,\sigma}\bigr)
	\end{equation*}
	%Inserting this in the Euler--Lagrange equation, we obtain 
	and we obtain the Euler--Lagrange equation
	\begin{equation*}
	    \frac{u-u^\ast}{h} 
	    +
	    \partial_x\bigl(m(u^\ast) \Psi \bigl(\partial_x^3 u^{h,\sigma} - G^{\prime\prime}(u^{h,\sigma}) \partial_x u^{h,\sigma}\bigr)\bigr)=0.
	\end{equation*}
	This completes the proof.
\end{proof}

\begin{corollary}\label{cor:Laplace_u}
Under the assumptions of Proposition \ref{prop:existence_minimiser} we obtain the estimate
\begin{equation*}
    \|\partial_x^2 u^{h,\sigma}\|_{L_{\alpha+1}(\Omega)}
    \leq
    C \bigl\|\frac{j^{h,\sigma}}{m(u^\ast)}\bigr\|_{L_\frac{\alpha+1}{\alpha}(\Omega)} + C \|G^{\prime}_\sigma(u^{h,\sigma})\|_{L_\infty(\Omega)}
\end{equation*}for the second derivative of the minimiser of \eqref{eq:mp_sigma}.
\end{corollary}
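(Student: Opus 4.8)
The plan is to read the estimate off the structure already exhibited in the proof of Proposition~\ref{prop:existence_minimiser}. There the minimiser $(u^{h,\sigma},j^{h,\sigma})$ of \eqref{eq:mp_sigma} was shown to satisfy, in the sense of distributions, the identity \eqref{eq:Laplace_u}, which I rewrite as
\begin{equation*}
  \partial_x^2 u^{h,\sigma} = \psi + G_\sigma'(u^{h,\sigma}) - C ,
\end{equation*}
where $\psi \in W^1_{\alpha+1}(\Omega)$ is the potential produced by the Helmholtz decomposition in \eqref{eq:j_psi} and $C \in \R$ is a constant. Since the pair $(\psi,C)$ is determined only up to a common additive constant, I would first normalise the gauge by requiring $\fint_\Omega \psi \dd x = 0$. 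Because $u^{h,\sigma} \in W^3_{\alpha+1,B}(\Omega)$, the boundary condition $\partial_x u^{h,\sigma} = 0$ on $\partial\Omega$ together with the fundamental theorem of calculus gives $\fint_\Omega \partial_x^2 u^{h,\sigma} \dd x = 0$; averaging the identity above then pins down $C = \fint_\Omega G_\sigma'(u^{h,\sigma}) \dd x$, so that $\abs{C} \le \|G_\sigma'(u^{h,\sigma})\|_{L_\infty(\Omega)}$, a norm which is finite since $G_\sigma'(u^{h,\sigma}) \in W^1_{\alpha+1}(\Omega) \hookrightarrow C(\bar\Omega)$ (a fact established in the proof of Proposition~\ref{prop:existence_minimiser}).

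With the gauge fixed, the triangle inequality in $L_{\alpha+1}(\Omega)$ gives
\begin{equation*}
  \|\partial_x^2 u^{h,\sigma}\|_{L_{\alpha+1}(\Omega)}
  \le \|\psi\|_{L_{\alpha+1}(\Omega)} + \|G_\sigma'(u^{h,\sigma})\|_{L_{\alpha+1}(\Omega)} + \abs{C}\,\abs{\Omega}^{1/(\alpha+1)}
  \le \|\psi\|_{L_{\alpha+1}(\Omega)} + 2\abs{\Omega}^{1/(\alpha+1)} \|G_\sigma'(u^{h,\sigma})\|_{L_\infty(\Omega)} .
\end{equation*}
For the first summand, the normalisation $\fint_\Omega \psi \dd x = 0$ and the Poincaré--Wirtinger inequality on the interval $\Omega$ yield $\|\psi\|_{L_{\alpha+1}(\Omega)} \le C_\Omega \|\partial_x \psi\|_{L_{\alpha+1}(\Omega)}$. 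It then remains to rewrite $\partial_x\psi$ via \eqref{eq:j_psi}: pointwise almost everywhere $m(u^\ast)^{1/\alpha}\partial_x\psi = \abs{j^{h,\sigma}}^{\frac{1-\alpha}{\alpha}}j^{h,\sigma}$, hence $\abs{\partial_x\psi} = \bigl(\abs{j^{h,\sigma}}/m(u^\ast)\bigr)^{1/\alpha}$ and therefore
\begin{equation*}
  \|\partial_x\psi\|_{L_{\alpha+1}(\Omega)} = \Bigl\| \tfrac{j^{h,\sigma}}{m(u^\ast)} \Bigr\|_{L_{\frac{\alpha+1}{\alpha}}(\Omega)}^{1/\alpha} = \|\partial_x^3 u^{h,\sigma} - G_\sigma''(u^{h,\sigma})\partial_x u^{h,\sigma}\|_{L_{\alpha+1}(\Omega)} ,
\end{equation*}
the second equality being the Euler--Lagrange relation \eqref{eq:elliptic_bvp}, i.e. $\Psi\bigl(\partial_x^3 u^{h,\sigma} - G_\sigma''(u^{h,\sigma})\partial_x u^{h,\sigma}\bigr) = j^{h,\sigma}/m(u^\ast)$. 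Chaining the three displays delivers the asserted bound.

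The only genuinely delicate point — and the step I expect to be the main obstacle — is this last identification: the pointwise algebra forces the flux term to enter with exponent $1/\alpha$, i.e. as $\bigl\| j^{h,\sigma}/m(u^\ast) \bigr\|_{L_{(\alpha+1)/\alpha}(\Omega)}^{1/\alpha}$ (equivalently as $\|\partial_x^3 u^{h,\sigma} - G_\sigma''(u^{h,\sigma})\partial_x u^{h,\sigma}\|_{L_{\alpha+1}(\Omega)}$), which literally equals the displayed linear expression exactly when $\alpha=1$ and in general is to be read in that form. This causes no loss downstream: in Section~\ref{sec:mms} the estimate is combined with the discrete energy--dissipation equality, which controls $\int_\Omega \abs{j^{h,\sigma}/m(u^\ast)}^{(\alpha+1)/\alpha}\dd x$ directly, and, via Lemma~\ref{lem:positivity_sigma} together with the $H^1$-bound on $u^\ast$ and the local Lipschitz continuity and strict positivity of $m$ on $(0,\infty)$, the lower bound $m(u^\ast) \ge c_\sigma > 0$; hence the right-hand side is uniformly bounded irrespective of the presence of the exponent $1/\alpha$. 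All the remaining ingredients — the gauge fixing of $(\psi,C)$, the use of the zero-contact-angle boundary condition to make $\partial_x^2 u^{h,\sigma}$ mean-free, the $L_{\alpha+1}$ Poincaré--Wirtinger inequality, and the Sobolev embedding $W^1_{\alpha+1}(\Omega) \hookrightarrow C(\bar\Omega)$ — are routine.
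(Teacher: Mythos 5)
Your argument is correct and follows the paper's own proof step for step: the identity \eqref{eq:Laplace_u}, the mean-zero normalisation of $\psi$, the Poincar\'e--Wirtinger inequality, and the substitution of \eqref{eq:j_psi}. The one substantive point you raise is real: since \eqref{eq:j_psi} gives $|\partial_x\psi| = \bigl(|j^{h,\sigma}|/m(u^\ast)\bigr)^{1/\alpha}$ pointwise, the flux term enters as
\begin{equation*}
  \|\partial_x\psi\|_{L_{\alpha+1}(\Omega)}
  = \Bigl\|\tfrac{j^{h,\sigma}}{m(u^\ast)}\Bigr\|_{L_{\frac{\alpha+1}{\alpha}}(\Omega)}^{1/\alpha},
\end{equation*}
and the literal inequality stated in the corollary (with exponent $1$ on the flux norm) cannot hold with a universal constant for $\alpha\neq 1$ — the paper's own proof silently drops this exponent in its final chain of estimates. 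Your reading with the exponent $1/\alpha$ is the correct one, and, as you note, it is exactly what is used downstream: in Lemma \ref{lem:uniform_estimates}(iv) the bound is raised to the power $\alpha+1$, turning the flux contribution into $\int_\Omega |j^{h,\sigma}/m(u^\ast)|^{(\alpha+1)/\alpha}\dd x$, which is controlled by the energy-dissipation inequality. So nothing breaks, but the corollary should be stated with the power $1/\alpha$ on the flux term (or equivalently with $\|\partial_x^3 u^{h,\sigma}-G_\sigma''(u^{h,\sigma})\partial_x u^{h,\sigma}\|_{L_{\alpha+1}(\Omega)}$ on the right-hand side).
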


\begin{proof}
In \eqref{eq:Laplace_u} in the proof of Proposition \ref{prop:existence_minimiser} we have seen that
\begin{equation*}
    \partial_x^2 u^{h,\sigma} 
    =
    \psi + G^\prime_\sigma(u^{h,\sigma}) - C,
    \quad
    \text{where}
    \quad
    C = \fint_\Omega \psi + G^\prime_\sigma(u^{h,\sigma}) \dd x.
\end{equation*}
Moreover, we know that $\psi \in W^1_{\alpha+1}(\Omega)$ with (w.l.o.g.) $\fint_\Omega \psi \dd x = 0$. Using \eqref{eq:j_psi}, we obtain 
\begin{equation*}
    \begin{split}
        \|\partial_x^2 u^{h,\sigma}\|_{L_{\alpha+1}(\Omega)}
        & \leq
        \|\psi\|_{L_{\alpha+1}(\Omega)} 
        + 
        C \|G^{\prime}_\sigma(u^{h,\sigma})\|_{L_\infty(\Omega)}
        \\
        &\leq
        C \|\partial_x \psi\|_{L_{\alpha+1}(\Omega)} 
        + 
        C \|G^{\prime}_\sigma(u^{h,\sigma})\|_{L_\infty(\Omega)}
        \\
        &\leq
        C \left\|\frac{j^{h,\sigma}}{m(u^\ast)}\right\|_{L_\frac{\alpha+1}{\alpha}(\Omega)}
        + 
        C \|G^{\prime}_\sigma(u^{h,\sigma})\|_{L_\infty(\Omega)}
    \end{split}
\end{equation*}
and the proof is complete.
\end{proof}

%------------------------------------------------------------
%------------------------------------------------------------

\subsection{Minimising-movement scheme and energy-dissipation inequality}

The Euler--Lagrange equation \eqref{eq:elliptic_bvp} corresponding to the functional \(\Fcal_{u^\ast}^{h,\sigma}\) is in fact a time-discretised version of the power-law thin-film equation with modified potential. We can exploit this and define a minimising-movement scheme with step size $h > 0$ as follows. Given an initial value $u_0 \in H^1(\Omega)$ such that $E^\sigma[u_0] < \infty$, we recursively define the sequence
\begin{equation*}
    \begin{cases}
        u^{h,\sigma}_0 \coloneq u_0,
        \\
        \bigl(u^{h,\sigma}_{k+1},j^{h,\sigma}_{k+1}\bigr) \text{ is a solution to the minimisation problem \eqref{eq:mp_sigma} with } u^\ast = u^{h,\sigma}_k.
    \end{cases}
\end{equation*}
Note that as long as we work with a fixed positive regularisation parameter $\sigma > 0$, we consider strictly positive initial values $u_0^\sigma = u_0$ and drop the superscript. Later, in the limit $\sigma \searrow 0$, we may approximate general non-negative initial values $u_0 \geq 0$ by sequences of strictly positive $u^\sigma_0 >0$.

\medskip

Comparing $\bigl(u^{h,\sigma}_{k+1},j^{h,\sigma}_{k+1}\bigr)$ with $(u_{k}^{h,\sigma}, 0)$, we obtain the weak energy-dissipation inequality
\begin{align}\label{eq:ch6-weak-energy-dissipation}
    \Fcal_{u_k^{h,\sigma}}^{h,\sigma}[u_k^{h,\sigma},0] 
    = 
    E^\sigma[u_k^{h,\sigma}]
    %\int_{\Omega}\frac{1}{2} \left|\partial_x u_{k}^{h,\sigma}\right|^2\dd x
    % +
    % \int_\Omega G_\sigma(u^{h,\sigma}_{k}) \dd x
    %\\
    \geq
    E^\sigma[u_{k+1}^{h,\sigma}]
    % \int_{\Omega}\frac{1}{2} \left|\partial_x u_{k+1}^{h,\sigma}\right|^2\dd x 
    % +
    % \int_\Omega G_\sigma(u^{h,\sigma}_{k+1}) \dd x
    +
    h\tfrac{\alpha}{\alpha+1}\int_{\Omega}
    \frac{|j^{h,\sigma}_{k+1}|^{\frac{\alpha+1}{\alpha}}}{ m\bigl(u_{k}^{h,\sigma}\bigr)^{\frac{1}{\alpha}}}\dd x
    %\\
    = 
    \Fcal_{u_k^{h,\sigma}}^{h,\sigma}\bigl[u^{h,\sigma}_{k+1},j^{h,\sigma}_{k+1}\bigr].
\end{align}
This inequality may even be improved. Indeed, using the elementary identity 
\begin{equation*}
    \left|x\right|^2 - \left|y\right|^2 
    = 2 y\cdot(x-y) + \left|x - y\right|^2, \quad x,y \in \R,
\end{equation*}
leads to
\begin{equation} \label{eq:ei_Dirichlet}
    \begin{split}
        \int_{\Omega}\tfrac{1}{2} \left|\partial_x u_{k}^{h,\sigma}\right|^2\dd x 
        &= 
        \int_{\Omega}\tfrac{1}{2} \left|\partial_x u_{k+1}^{h,\sigma}\right|^2\dd x 
        -
        \int_\Omega \partial_x^2 u_{k+1}^{h,\sigma} \bigl(u_{k}^{h,\sigma} - u_{k+1}^{h,\sigma}\bigr)\dd x 
        +
        \int_{\Omega} \tfrac{1}{2} \left|\partial_x \bigl(u_{k}^{h,\sigma} - u_{k+1}^{h,\sigma}\bigr)\right|^2\dd x
        \\
        &\geq
        \int_{\Omega}\tfrac{1}{2} \left|\partial_x u_{k+1}^{h,\sigma}\right|^2\dd x 
        +
        h\int_{\Omega}\partial_x^3 u_{k+1}^{h,\sigma} j_{k+1}^{h,\sigma}\dd x.
        % +
        % \int_{\Omega} \frac{1}{2} \left|\partial_x \bigl(u_{k}^{h,\sigma} - u_{k+1}^{h,\sigma}\bigr)\right|^2\dd x.
% \\
% &= 
% \int_{\Omega}\frac{1}{2} \left|\partial_x u_{(k+1)h}^{h,\delta}\right|^2\dd x 
% +
% h\int_{\Omega}
% \frac{\left|j^{h,\delta}_{(k+1)h}\right|^{\frac{\alpha+1}{\alpha}}}{m_\delta\bigl(u_{kh}^{h,\delta}\bigr)^{\frac{1}{\alpha}}}\dd x
% +
% \int_{\Omega} \frac{1}{2} \left|\partial_x \bigl(u_{kh}^{h,\delta} - u_{(k+1)h}^{h,\delta}\bigr)\right|^2\dd x.
    \end{split}
\end{equation}
By convexity of $G_\sigma$, we have 
\begin{equation*}
    G_\sigma(x) - G_\sigma(y) \geq G^\prime_\sigma(y) (x-y),
    \quad 
    x,y \in \R,
\end{equation*}
and hence
\begin{equation} \label{eq:ei_G_sigma}
    \begin{split}
        \int_\Omega G_\sigma(u^{h,\sigma}_k) \dd x
        & \geq 
        \int_\Omega G_\sigma(u^{h,\sigma}_{k+1}) \dd x
        +
        \int_\Omega G_\sigma^\prime(u^{h,\sigma}_{k+1}) (u^{h,\sigma}_{k} - u^{h,\sigma}_{k+1}) \dd x
        \\
        & \geq 
        \int_\Omega G_\sigma(u^{h,\sigma}_{k+1}) \dd x
        -
        h \int_\Omega G_\sigma^{\prime\prime}(u^{h,\sigma}_{k+1}) \partial_x u^{h,\sigma}_{k+1} j^{h,\sigma}_{k+1} \dd x.
    \end{split}
\end{equation}
Combining \eqref{eq:ei_Dirichlet} and \eqref{eq:ei_G_sigma} and using \eqref{eq:elliptic_bvp}, we obtain the energy-dissipation inequality 
\begin{equation}\label{eq:one-step_energy-diss_inequality}
    \begin{split}
        E^\sigma[u^{h,\sigma}_k] 
        & \geq 
        E^\sigma[u^{h,\sigma}_{k+1}]
        +
        h \int_\Omega \bigl(\partial_x^3 u^{h,\sigma}_{k+1} - G_\sigma^{\prime\prime}(u^{h,\sigma}_{k+1}) \partial_x u^{h,\sigma}_{k+1}\bigr) j^{h,\sigma}_{k+1} \dd x
        \\
        & \geq 
        E^\sigma[u^{h,\sigma}_{k+1}]
        +
        h\int_{\Omega}
        \frac{|j^{h,\sigma}_{k+1}|^{\frac{\alpha+1}{\alpha}}}{ m\bigl(u_{k}^{h,\sigma}\bigr)^{\frac{1}{\alpha}}}\dd x.
    % \int_{\Omega}\frac{1}{2} \left|\partial_x u_{kh}^{h,\delta}\right|^2\dd x 
    % \geq\int_{\Omega}\frac{1}{2} \left|\partial_x u_{(k+1)h}^{h,\delta}\right|^2\dd x 
    % +
    \end{split}
\end{equation}

%-------------------
Now, we define the different interpolations of the minimising-movement scheme which are used in the following.
First, the piecewise constant interpolation $\bigl(\bar{u}^{h,\sigma},\bar{j}^{h,\sigma}\bigr)$ is given by
\begin{equation*}
    \begin{cases}
        \bar{u}^{h,\sigma}(t) = u_{k}^{h,\sigma},  &  t \in [kh,(k+1)h),
        \\
        \bar{j}^{h,\sigma}(t) = j_{k+1}^{h,\sigma}, & t \in (kh,(k+1)h].
    \end{cases}
\end{equation*}
Note that $\bar{u}^{h,\sigma}(t)$ takes the value of $u^{h,\sigma}$ at the current time, while $\bar{j}^{h,\sigma}$ takes the value of $j^{h,\sigma}$ after one time step in the future.
%\begin{equation*}
%	\bigl(\bar{u}_t^{h,\delta},\bar{j}_t^{h,\delta}\bigr)
%	=
%	\bigl(u_{kh}^{h,\delta},\frac{1}{h}j_{\textcolor{magenta}{(k+1)h}}^{h,\delta}\bigr), \quad t \in (kh,(k+1)h].
%\end{equation*}
Moreover, the piecewise affine interpolation $\hat{u}^{h,\sigma}$ is given by
\begin{equation*}
    \hat{u}^{h,\sigma}((k+s)h) 
    = 
    (1-s) u_{k}^{h,\sigma} + s u_{k+1}^{h,\sigma},
    \quad k \in \N,\ s \in [0,1).
\end{equation*}
There is no need to define a piecewise affine interpolation for $j^{h,\sigma}$ as becomes clear in the following lemma.

%-----------------------

\begin{lemma}[Discrete energy-dissipation inequality] \label{lem:Discrete_energy-dissipation}
Given an initial value $u_0 \in H^1(\Omega)$ with finite energy $E^\sigma[u_0] < \infty$, the following holds true.
	\begin{itemize}	 
		\item[(i)] The pair $\bigl(\bar{u}^{h,\sigma},\bar{j}^{h,\sigma}\bigr)$ satisfies the energy-dissipation inequality
		\begin{equation}\label{eq:energy-dissipation_inequ}
		\begin{split}
		    E^\sigma[\bar{u}^{h,\sigma}](t)
			&+
			\tfrac{\alpha}{\alpha+1}\int_s^t \int_\Omega \frac{|\bar{j}^{h,\sigma}(\tau)|^{\frac{\alpha+1}{\alpha}}}{ m\bigl(\bar{u}^{h,\sigma}(\tau)\bigr)^{\frac{1}{\alpha}}}\dd x \dd \tau   
			\\
			& 
			+ \tfrac{1}{\alpha+1}
			\int_s^t \int_\Omega 
			m\bigl(\bar{u}^{h,\sigma}(\tau)\bigr) \left|\partial_x^3 \bar{u}^{h,\sigma}(\tau+h) - G_\sigma^{\prime\prime}(\bar{u}^{h,\sigma}(\tau+h)) \partial_x \bar{u}^{h,\sigma}(\tau+h)\right|^{\alpha+1}\dd x\dd \tau
			\\
			&\leq 
			E^\sigma[\bar{u}^{h,\sigma}](s) 
		\end{split}
		\end{equation}
		for all $kh = s < t = lh$ and $k,l \in \N_0$. 
		\item[(ii)] The pair $\bigl(\hat{u}^{h,\sigma},\bar{j}^{h,\sigma}\bigr)$ solves the continuity equation
		\begin{equation} \label{eq:ch6-continuity_equation}
		    \begin{cases}
		    \partial_t \hat{u}^{h,\sigma} + \partial_x \bar{j}^{h,\sigma} = 0, 
		    & t > 0,\ x \in \Omega,
		    \\
		    \partial_x \hat{u}^{h,\sigma} 
		    = 0,
		    & t > h,\ x \in \partial\Omega,
		    \\
		    \bar{j}^{h,\sigma}
		    = 0, 
		    & t > 0,\ x \in \partial\Omega,
		    \end{cases}
		\end{equation}
		in sense of distributions, that is, the equation
		\begin{equation*}
		    \int_0^T \int_\Omega \partial_t \hat{u}^{h,\sigma} \phi
		    - \bar{j}^{h,\sigma} \partial_x \phi \dd x\dd t 
		    = 0
		\end{equation*}
		holds true for all $\phi \in C^\infty\bigl([0,T]\times\bar{\Omega}\bigr)$ and all $T > 0$. 
	\end{itemize}	 
\end{lemma}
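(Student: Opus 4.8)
The plan is to establish the two assertions of Lemma \ref{lem:Discrete_energy-dissipation} by telescoping the one-step estimates already obtained in \eqref{eq:one-step_energy-diss_inequality} and by carefully bookkeeping the definitions of the piecewise-constant and piecewise-affine interpolants.

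\textbf{Part (i).} I would start from the improved one-step energy-dissipation inequality \eqref{eq:one-step_energy-diss_inequality}, which reads
\begin{equation*}
    E^\sigma[u^{h,\sigma}_k]
    \geq
    E^\sigma[u^{h,\sigma}_{k+1}]
    + h \int_\Omega \bigl(\partial_x^3 u^{h,\sigma}_{k+1} - G_\sigma^{\prime\prime}(u^{h,\sigma}_{k+1}) \partial_x u^{h,\sigma}_{k+1}\bigr) j^{h,\sigma}_{k+1} \dd x
    \geq
    E^\sigma[u^{h,\sigma}_{k+1}]
    + h \int_\Omega \frac{|j^{h,\sigma}_{k+1}|^{\frac{\alpha+1}{\alpha}}}{m(u^{h,\sigma}_k)^{\frac{1}{\alpha}}} \dd x.
\end{equation*}
The key observation is that the Euler--Lagrange relation \eqref{eq:elliptic_bvp}, namely $j^{h,\sigma}_{k+1} = m(u^{h,\sigma}_k) \Psi\bigl(\partial_x^3 u^{h,\sigma}_{k+1} - G_\sigma^{\prime\prime}(u^{h,\sigma}_{k+1}) \partial_x u^{h,\sigma}_{k+1}\bigr)$, lets one rewrite the middle term: writing $f_{k+1} \coloneq \partial_x^3 u^{h,\sigma}_{k+1} - G_\sigma^{\prime\prime}(u^{h,\sigma}_{k+1}) \partial_x u^{h,\sigma}_{k+1}$, one has $\int_\Omega f_{k+1} j^{h,\sigma}_{k+1} \dd x = \int_\Omega m(u^{h,\sigma}_k) |f_{k+1}|^{\alpha+1} \dd x = \int_\Omega |j^{h,\sigma}_{k+1}|^{\frac{\alpha+1}{\alpha}} / m(u^{h,\sigma}_k)^{\frac{1}{\alpha}} \dd x$, so in fact all three quantities above coincide and the single-step drop in energy equals $\frac{\alpha}{\alpha+1} \cdot (\text{that integral}) + \frac{1}{\alpha+1} \cdot (\text{that integral})$. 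Thus for $s = kh < t = lh$ I would sum \eqref{eq:one-step_energy-diss_inequality} over indices $k, k+1, \dots, l-1$; the energy terms telescope to $E^\sigma[u^{h,\sigma}_k] - E^\sigma[u^{h,\sigma}_l] = E^\sigma[\bar u^{h,\sigma}](s) - E^\sigma[\bar u^{h,\sigma}](t)$, and each summand $h \int_\Omega (\cdots) \dd x$ becomes the time-integral over $[(k+i)h, (k+i+1)h)$ of the corresponding interpolated integrand, since $\bar u^{h,\sigma}$ is constant $= u^{h,\sigma}_{k+i}$ there while $\bar j^{h,\sigma} = j^{h,\sigma}_{k+i+1}$ and $\bar u^{h,\sigma}(\tau + h) = u^{h,\sigma}_{k+i+1}$ there. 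Splitting the equal integral as $\frac{\alpha}{\alpha+1} + \frac{1}{\alpha+1}$ and matching the two pieces against the two dissipation terms in \eqref{eq:energy-dissipation_inequ} then gives exactly the claimed inequality.

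\textbf{Part (ii).} This is essentially a computation. On each interval $((k)h, (k+1)h)$ one has $\partial_t \hat u^{h,\sigma} = \frac{u^{h,\sigma}_{k+1} - u^{h,\sigma}_k}{h}$ (constant in $t$) and $\bar j^{h,\sigma} = j^{h,\sigma}_{k+1}$, so the flow equation \eqref{eq:flow_equation} from Definition \ref{def:flow-equation}, i.e. $\frac{u^{h,\sigma}_{k+1} - u^{h,\sigma}_k}{h} + \partial_x j^{h,\sigma}_{k+1} = 0$ with $j^{h,\sigma}_{k+1} = 0$ on $\partial\Omega$, is precisely the continuity equation \eqref{eq:ch6-continuity_equation} restricted to that time slab. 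To get the distributional identity for test functions $\phi \in C^\infty([0,T] \times \bar\Omega)$, I would slice the time integral over the slabs $[(k)h,(k+1)h]$, use the weak form \eqref{eq:weak-flow-equation} of the flow equation with the frozen-in-time test function $\phi(t,\cdot) \in W^1_{\alpha+1}(\Omega)$ on each slab, and integrate in $t$; since $\hat u^{h,\sigma}$ is continuous (indeed piecewise affine) in $t$ across slab boundaries, no boundary contributions in $t$ arise, and summing the slab identities yields $\int_0^T \int_\Omega \partial_t \hat u^{h,\sigma} \phi - \bar j^{h,\sigma} \partial_x \phi \dd x \dd t = 0$. The boundary conditions in \eqref{eq:ch6-continuity_equation} hold because $\bar j^{h,\sigma}(t) = j^{h,\sigma}_{k+1} = 0$ on $\partial\Omega$ for every $k$ (so for all $t > 0$), and $\partial_x \hat u^{h,\sigma}(t) = 0$ on $\partial\Omega$ for $t > h$ is a convex combination of $\partial_x u^{h,\sigma}_k = 0$ and $\partial_x u^{h,\sigma}_{k+1} = 0$ on $\partial\Omega$, which hold for $k \geq 1$ by the regularity $u^{h,\sigma}_k \in W^3_{\alpha+1,B}(\Omega)$ from Proposition \ref{prop:existence_minimiser} (for $k=0$ the initial datum need not satisfy the boundary condition, which is why the condition is stated only for $t > h$).

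\textbf{Main obstacle.} I expect the only genuinely delicate point to be making the bookkeeping in Part (i) airtight: one must verify that the middle integral in \eqref{eq:one-step_energy-diss_inequality} really does equal the viscous-dissipation integrand evaluated at the correct interpolants --- that is, that $\int_\Omega (\partial_x^3 \bar u^{h,\sigma}(\tau+h) - G_\sigma^{\prime\prime}(\bar u^{h,\sigma}(\tau+h)) \partial_x \bar u^{h,\sigma}(\tau+h)) \bar j^{h,\sigma}(\tau) \dd x$ on $\tau \in ((k)h,(k+1)h)$ is the $k$-th one-step term --- and that, via the Euler--Lagrange equation \eqref{eq:elliptic_bvp}, this common value splits as the sum of the two dissipation integrands appearing in \eqref{eq:energy-dissipation_inequ} with the stated constants $\frac{\alpha}{\alpha+1}$ and $\frac{1}{\alpha+1}$. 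Everything else --- telescoping, the distributional continuity equation, the boundary conditions --- is routine given Proposition \ref{prop:existence_minimiser} and the one-step estimate already derived above. I would also remark that \eqref{eq:energy-dissipation_inequ} in fact holds with equality at the discrete level, though only the inequality is needed downstream.
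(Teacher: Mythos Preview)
Your proposal is correct and follows essentially the same route as the paper: telescope the one-step inequality \eqref{eq:one-step_energy-diss_inequality}, use the Euler--Lagrange relation \eqref{eq:elliptic_bvp} to identify and split the dissipation term with weights $\tfrac{\alpha}{\alpha+1}$ and $\tfrac{1}{\alpha+1}$, and read off the distributional continuity equation from the definitions of the interpolants and the flow equation. One small correction to your closing aside: \eqref{eq:energy-dissipation_inequ} does \emph{not} hold with equality at the discrete level --- the first inequality in \eqref{eq:one-step_energy-diss_inequality} arises from \eqref{eq:ei_Dirichlet} and \eqref{eq:ei_G_sigma}, where the non-negative term $\tfrac{1}{2}\int_\Omega |\partial_x(u_k^{h,\sigma}-u_{k+1}^{h,\sigma})|^2\dd x$ is dropped and the convexity of $G_\sigma$ is used, so the discrete inequality is generically strict.
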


%-----------------------

\begin{proof}
	\textbf{(i)} The energy-dissipation inequality may be derived from \eqref{eq:one-step_energy-diss_inequality}, using that
	\begin{align*}
	    &
	    h\int_{\Omega}
	    \frac{|j^{h,\sigma}_{k+1}|^{\frac{\alpha+1}{\alpha}}}{ m\bigl(u_{k}^{h,\sigma}\bigr)^{\frac{1}{\alpha}}}\dd x
	    =
	    \tfrac{\alpha}{\alpha+1}\int_{kh}^{(k+1)h} \int_\Omega \frac{|\bar{j}^{h,\sigma}(\tau)|^{\frac{\alpha+1}{\alpha}}}{m\bigl(\bar{u}^{h,\sigma}(\tau)\bigr)^{\frac{1}{\alpha}}}\dd x \dd \tau \\
	    & +
	    \tfrac{1}{\alpha+1}\int_{kh}^{(k+1)h} \int_\Omega m\bigl(\bar{u}^{h,\sigma}(\tau)\bigr) \bigl|\partial_x^3 \bar{u}^{h,\sigma}(\tau+h) - G_\sigma^{\prime\prime}(\bar{u}^{h,\sigma}(\tau+h)) \partial_x \bar{u}^{h,\sigma}(\tau+h)\bigr|^{\alpha+1}\dd x\dd \tau.
	\end{align*}	
	Here we used that from \eqref{eq:elliptic_bvp} we know that for $\tau > 0$
	\begin{equation*}
	    \bar{j}^{h,\sigma}(\tau)
		= 
		m\bigl(\bar{u}^{h,\sigma}(\tau)\bigr)\Psi
		\bigl(\partial_x^3 \bar{u}^{h,\sigma}(\tau+h) - G_\sigma^{\prime\prime}(\bar{u}^{h,\sigma}(\tau+h)) \partial_x \bar{u}^{h,\sigma}(\tau+h)\bigr).
	\end{equation*}
	
	\noindent\textbf{(ii)} This follows immediately from 
	\begin{equation*}
	    \partial_t \hat{u}^{h,\sigma}(t) 
	    =  
	    u_{k+1}^{h,\sigma} - u_{k}^{h,\sigma}, 
	   \quad
	    t \in (kh,(k+1)h),\, x \in \Omega,
	\end{equation*}
	and the constraints
	\begin{equation*}
	\begin{cases}
	    \frac{u_{k+1}^{h,\sigma} - u_{k}^{h,\sigma}}{h} + \partial_x j_{k+1}^{h,\sigma} 
	    = 0, & k \geq 0,\, x \in \Omega,
	    \\
	    \partial_x u_{k}^{h,\sigma} = 0, & k \geq 1,\, x \in \partial\Omega, 
	    \\
	    j_{k+1}^{h,\sigma} = 0, & k \geq 0,\, x \in \partial\Omega,
	\end{cases}
	\end{equation*}
	cf. the Euler--Lagrange equation \eqref{eq:elliptic_bvp}. 
\end{proof}

%---------------------------
%---------------------------

In general dimension \(d > 1\) the previous result holds true on bounded Lipschitz domains with the restriction that we only obtain non-negative instead of positive solutions. To this end, we need to replace \(\partial_x^3\) by \(\nabla \Delta\) and \(\partial_x\) by \(\nabla\) and interpret the boundary conditions to hold true in normal direction and in the sense of traces. 
However, in order to evaluate the limit \(h\to 0\) for \(\bar{j}^{h,\sigma}\), we have to guarantee that \(m(\bar{u}^{h,\sigma})\) converges strongly. This requires that the sequence \(\bar{u}^{h,\sigma}\) converges uniformly to a continuous function which is only valid in dimension \(d=1\).

We first provide the necessary uniform (in $h$) a-priori estimates. 

\begin{lemma}[Uniform (in $h$) a-priori estimates] \label{lem:uniform_estimates}
Given an initial value $u_0 \in H^1(\Omega)$ with finite energy $E^\sigma[u_0] < \infty$, the following holds true. For each $\sigma > 0$ and each $h > 0$, the families $\bigl(\bar{u}^{h,\sigma}, \bar{j}^{h,\sigma}\bigr)_h$ and $\bigl(\hat{u}^{h,\sigma}\bigr)_h$ satisfy the regularity properties  
	\begin{itemize}
		\item[(i)]$\bar{u}^{h,\sigma} \in L_{\infty}\bigl([0,\infty);H^1(\Omega)\bigr)$;
		\item[(ii)] $\bar{j}^{h,\sigma} \in L_{\frac{\alpha+1}{\alpha}}\bigl([0,\infty)\times\Omega\bigr)$;
		\item[(iii)] $\partial_x^3 u^{h,\sigma}-G^{\prime\prime}_{\sigma}(u^{h,\sigma}) \partial_xu^{h,\sigma} \in L_{\alpha+1}\bigl([h,\infty)\times\Omega\bigr)$;
		\item[(iv)] $\bar{u}^{h,\sigma} \in L_{\alpha+1,\loc}\bigl([h,\infty);W^{3}_{\alpha+1,B}(\Omega)\bigr)$;
		\item[(v)] $\hat{u}^{h,\sigma} \in L_{\alpha+1,\loc}\bigl([h,\infty);W^3_{\alpha+1,B}(\Omega)\bigr) \cap L_{\infty}\bigl([0,\infty);H^1(\Omega)\bigr)$;
		\item[(vi)] $\partial_t\hat{u}^{h,\sigma} \in L_{\frac{\alpha+1}{\alpha}}\bigl([0,\infty);\bigl(W^1_{\alpha+1,B}(\Omega)\bigr)'\bigr)$,
		%	\item[(v)] $\hat{u}^{h,\delta} \in C\bigl([0,T];H^1(\Omega)\bigr)$.\\
		%	\textcolor{cyan}{We should even get $C^\nu$ by Aubin--Lions version in Bernis paper; as usual.}
		%	\item \textcolor{cyan}{Case $d > 1$?!} \\
	\end{itemize}
	with bounds that are uniform in $h$. That is, there exists a constant $C > 0$, independent of $h$ such that the families $\bigl(\bar{u}^{h,\sigma}, \bar{j}^{h,\sigma}\bigr)_h$ and $\bigl(\hat{u}^{h,\sigma}\bigr)_h$ are, for each $h > 0$, bounded by $C$ in the respective norms. 
\end{lemma}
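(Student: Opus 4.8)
The plan is to derive all six estimates from the discrete energy-dissipation inequality \eqref{eq:energy-dissipation_inequ}, the uniform positivity bound of Lemma~\ref{lem:positivity_sigma}, and the elliptic identity \eqref{eq:Laplace_u} (equivalently, Corollary~\ref{cor:Laplace_u}). First I would evaluate \eqref{eq:energy-dissipation_inequ} with \(s=0\) and \(t=lh\) arbitrary and use \(G_\sigma\geq 0\): this immediately gives the uniform-in-\(h\) bounds
\begin{equation*}
    \sup_{t\geq 0} E^\sigma[\bar{u}^{h,\sigma}(t)] \leq E^\sigma[u_0],
    \qquad
    \int_0^\infty\!\!\int_\Omega \frac{|\bar{j}^{h,\sigma}|^{\frac{\alpha+1}{\alpha}}}{m(\bar{u}^{h,\sigma})^{\frac{1}{\alpha}}}\dd x\dd t \leq \tfrac{\alpha+1}{\alpha}E^\sigma[u_0],
\end{equation*}
together with, after the substitution \(\tau\mapsto\tau+h\),
\begin{equation*}
    \int_h^\infty\!\!\int_\Omega m(\bar{u}^{h,\sigma})\,\bigl|\partial_x^3\bar{u}^{h,\sigma} - G_\sigma''(\bar{u}^{h,\sigma})\partial_x\bar{u}^{h,\sigma}\bigr|^{\alpha+1}\dd x\dd t \leq (\alpha+1)E^\sigma[u_0].
\end{equation*}
From the first bound and \(G_\sigma\geq 0\) one gets \(\|\partial_x\bar{u}^{h,\sigma}(t)\|_{L_2(\Omega)}^2\leq 2E^\sigma[u_0]\); combined with conservation of mass and the Poincar\'e inequality (Remark~\ref{rem:cons_mass_poincare}) this yields (i), and Morrey's embedding \(H^1(\Omega)\hookrightarrow C^{1/2}(\bar{\Omega})\) moreover gives a uniform bound \(\|\bar{u}^{h,\sigma}\|_{L_\infty([0,\infty)\times\bar{\Omega})}\leq M\). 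Since \(\hat{u}^{h,\sigma}(t)\) is a convex combination of two consecutive iterates it inherits the same \(L_\infty([0,\infty);H^1(\Omega))\) bound, which is the second half of (v).

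The crucial step is to bound \(m(\bar{u}^{h,\sigma})\) from below. From \(E^\sigma[\bar{u}^{h,\sigma}(t)]\leq E^\sigma[u_0]=:C\) and Lemma~\ref{lem:positivity_sigma} we obtain \(\min_{\bar{\Omega}}\bar{u}^{h,\sigma}(t)\geq c_{\sigma,C}>0\) uniformly in \(t\geq 0\) and \(h>0\). Together with the upper bound \(\bar{u}^{h,\sigma}\leq M\), the continuity and strict positivity of \(m\) on \((0,\infty)\), and \(G_\sigma\in C^2((0,\infty))\), this produces uniform constants \(0<\mu_\sigma\leq m(\bar{u}^{h,\sigma})\leq M_\sigma\) and \(\|G_\sigma'(\bar{u}^{h,\sigma})\|_{L_\infty(\Omega)}+\|G_\sigma''(\bar{u}^{h,\sigma})\|_{L_\infty(\Omega)}\leq K_\sigma\). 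Inserting the upper bound on \(m\) into the second display above gives (ii), and the lower bound on \(m\) in the third display gives (iii).

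For (iv) I would combine (iii) with the identity from the proof of Proposition~\ref{prop:existence_minimiser}: for \(t\geq h\) one has \(\partial_x^2\bar{u}^{h,\sigma}(t)=\psi(t)+G_\sigma'(\bar{u}^{h,\sigma}(t))-C_t\), where \(\psi(t)\) has mean zero and \(\partial_x\psi(t)=\partial_x^3\bar{u}^{h,\sigma}(t)-G_\sigma''(\bar{u}^{h,\sigma}(t))\partial_x\bar{u}^{h,\sigma}(t)\) by \eqref{eq:Laplace_u}--\eqref{eq:j_psi}; hence, by the Poincar\'e inequality for \(\psi\) and step (iii),
\begin{equation*}
    \int_h^T\bigl\|\partial_x^2\bar{u}^{h,\sigma}(t)\bigr\|_{L_{\alpha+1}(\Omega)}^{\alpha+1}\dd t
    \leq C\int_h^\infty\bigl\|\partial_x^3\bar{u}^{h,\sigma}-G_\sigma''(\bar{u}^{h,\sigma})\partial_x\bar{u}^{h,\sigma}\bigr\|_{L_{\alpha+1}(\Omega)}^{\alpha+1}\dd t + C K_\sigma^{\alpha+1}T < \infty
\end{equation*}
uniformly in \(h\) for every finite \(T\). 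A Gagliardo--Nirenberg interpolation between this bound and \(\|\partial_x\bar{u}^{h,\sigma}(t)\|_{L_2(\Omega)}\leq\sqrt{2E^\sigma[u_0]}\), followed by Young's inequality, controls \(\|\partial_x\bar{u}^{h,\sigma}(t)\|_{L_{\alpha+1}(\Omega)}\) in \(L_{\alpha+1}\) on bounded time intervals; writing \(\partial_x^3\bar{u}^{h,\sigma}=(\partial_x^3\bar{u}^{h,\sigma}-G_\sigma''(\bar{u}^{h,\sigma})\partial_x\bar{u}^{h,\sigma})+G_\sigma''(\bar{u}^{h,\sigma})\partial_x\bar{u}^{h,\sigma}\) then places \(\partial_x^3\bar{u}^{h,\sigma}\) in \(L_{\alpha+1,\loc}([h,\infty)\times\Omega)\) uniformly in \(h\) as well. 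Together with \(\|\bar{u}^{h,\sigma}\|_{L_\infty}\leq M\) and the boundary conditions \(\partial_x\bar{u}^{h,\sigma}=\partial_x^3\bar{u}^{h,\sigma}=0\) on \(\partial\Omega\) for \(t\geq h\) (Proposition~\ref{prop:existence_minimiser}) this is (iv). For (v), since \(\hat{u}^{h,\sigma}(t)\) is a convex combination of \(u_k^{h,\sigma}\) and \(u_{k+1}^{h,\sigma}\), one has \(\|\hat{u}^{h,\sigma}(t)\|_{W^3_{\alpha+1}(\Omega)}\leq\|\bar{u}^{h,\sigma}(t)\|_{W^3_{\alpha+1}(\Omega)}+\|\bar{u}^{h,\sigma}(t+h)\|_{W^3_{\alpha+1}(\Omega)}\), and convexity of \(s\mapsto s^{\alpha+1}\) transfers the bound from (iv). Finally, for (vi), the distributional continuity equation \eqref{eq:ch6-continuity_equation} together with \(\bar{j}^{h,\sigma}=0\) on \(\partial\Omega\) gives \(\langle\partial_t\hat{u}^{h,\sigma}(t),\phi\rangle=\int_\Omega\bar{j}^{h,\sigma}(t)\,\partial_x\phi\dd x\) for every \(\phi\in W^1_{\alpha+1,B}(\Omega)\), so \(\|\partial_t\hat{u}^{h,\sigma}(t)\|_{(W^1_{\alpha+1,B}(\Omega))'}\leq\|\bar{j}^{h,\sigma}(t)\|_{L_{\frac{\alpha+1}{\alpha}}(\Omega)}\), and integrating the \(\frac{\alpha+1}{\alpha}\)-th power in time and using (ii) yields (vi).

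The main obstacle is the uniform-in-\(h\) lower bound on \(\bar{u}^{h,\sigma}\) coming from Lemma~\ref{lem:positivity_sigma}: without it the mobility \(m(\bar{u}^{h,\sigma})\) may degenerate, and then the flux bound (ii) and the third-derivative bounds (iii)--(iv) are lost. Note that the constants \(\mu_\sigma,M_\sigma,K_\sigma\) (and hence the bounds in (ii)--(vi)) are allowed to deteriorate as \(\sigma\searrow 0\), which is exactly why the passage \(\sigma\to 0\) of Section~\ref{sec:limit} requires a separate argument. A secondary, genuinely analytic point is the Gagliardo--Nirenberg interpolation used to upgrade the control of \(\partial_x^3\bar{u}^{h,\sigma}-G_\sigma''(\bar{u}^{h,\sigma})\partial_x\bar{u}^{h,\sigma}\) to a full \(W^3_{\alpha+1}\) bound; this, and the strong convergence of \(m(\bar{u}^{h,\sigma})\) needed later, is where the restriction \(d=1\) and the only-locally-in-time statement of (iv) enter.
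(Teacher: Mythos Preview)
Your proposal is correct and follows essentially the same route as the paper: both derive (i)--(iii) directly from the discrete energy-dissipation inequality together with the uniform positivity of Lemma~\ref{lem:positivity_sigma}, then bootstrap to (iv) via the elliptic identity behind Corollary~\ref{cor:Laplace_u}, and (v)--(vi) follow exactly as you write. The only minor variation is in the middle of (iv): the paper bounds \(\|\partial_x\bar{u}^{h,\sigma}\|_{L_{\alpha+1}}\) by \(\|\partial_x^2\bar{u}^{h,\sigma}\|_{L_{\alpha+1}}\) using the Poincar\'e inequality (exploiting the Neumann boundary condition \(\partial_x\bar{u}^{h,\sigma}=0\) on \(\partial\Omega\) for \(t\geq h\)), whereas you use a Gagliardo--Nirenberg interpolation against the \(L_2\) bound on \(\partial_x\bar{u}^{h,\sigma}\); both are valid and yield the same result, the paper's being a touch more direct.
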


%---------------------------
%---------------------------

The proof strongly relies on the energy-dissipation inequality \eqref{eq:energy-dissipation_inequ}.

%---------------------------
%---------------------------

\begin{proof}
\textbf{(i)} 
First, the energy-dissipation inequality \eqref{eq:energy-dissipation_inequ} immediately implies that
\begin{equation} \label{eq:H^1_estimate}
	\int_\Omega \tfrac{1}{2} |\partial_x \bar{u}^{h,\sigma}(t)|^2 dx 
	\leq 
	E^\sigma[\bar{u}^{h,\sigma}](t)
	\leq 
	E^\sigma[u_0],
	\quad t \geq 0,
\end{equation}
where the right-hand side is bounded due to the finite-energy assumption on the initial value. That is,
\begin{equation*}
    \partial_x \bar{u}^{h,\sigma} \in L_\infty\bigl([0,\infty);L_2(\Omega)\bigr).
\end{equation*}
By the Poincaré	inequality, cf. Remark \ref{rem:cons_mass_poincare}, this implies
\begin{equation*}
    \bar{u}^{h,\sigma} \in L_\infty\bigl([0,\infty);H^1(\Omega)\bigr).
\end{equation*}

\noindent \textbf{(ii)} By the Sobolev embedding, Step (i) implies in particular that $\bar{u}^{h,\sigma} \in L_\infty\bigl([0,\infty)\times \Omega\bigr)$. This provides a uniform upper bound for the mobility $m(\bar{u}^{h,\sigma})$ and (ii) follows from the energy-dissipation inequality \eqref{eq:energy-dissipation_inequ}.

\noindent \textbf{(iii)} We know from \eqref{eq:H^1_estimate} together with Lemma \ref{lem:positivity_sigma} that $\bar{u}^{h,\sigma} > c_{\sigma,u_0} > 0$ and hence $m(\bar{u}^{h,\sigma}) \geq c_{m,\sigma,u_0} > 0$. Consequently, there exists a constant $C_{\sigma,\alpha} > 0$ such that
\begin{equation*}
	\int_h^{\infty} \int_\Omega |\partial_x^3 \bar{u}^{h,\sigma}(t) - G^{\prime\prime}_\sigma(\bar{u}^{h,\sigma}(t)) \partial_x \bar{u}^{h,\sigma}(t)|^{\alpha+1}\dd t\dd x 
	\leq 
	C_{\sigma,\alpha} E^\sigma[u_0].
\end{equation*}

\noindent\textbf{(iv)} Fix \(T>h\). Using the inverse triangle inequality, step (ii), the uniform boundedness of $\bar{u}^{h,\sigma}$ of step (i) and the boundary condition $\partial_x \bar{u}^{h,\sigma}(t)=0,\, t > h$, on $\partial\Omega$ together with the Poincar\'{e} inequality, we obtain
\begin{equation*}
    \begin{split}
       \int_h^T \int_\Omega |\partial_x^3 \bar{u}^{h,\sigma}(t)|^{\alpha+1}\dd t\dd x 
	    &\leq 
	    C_{\sigma,\alpha} E^\sigma[u_0]
	    +
	    \int_h^T \int_\Omega |G^{\prime\prime}_\sigma(\bar{u}^{h,\sigma}(t)) \partial_x \bar{u}^{h,\sigma}(t)|^{\alpha+1}\dd t\dd x
	    \\
	    &\leq
	    C_{\sigma,\alpha} E^\sigma[u_0]
	    +
	    C_{u_0}
	    \int_h^T \int_\Omega |\partial_x \bar{u}^{h,\sigma}(t)|^{\alpha+1}\dd t\dd x
	    \\
	    &\leq
	    C_{\sigma,\alpha} E^\sigma[u_0]
	    +
	    C_{u_0}
	    \int_h^T \int_\Omega |\partial_x^2 \bar{u}^{h,\sigma}(t)|^{\alpha+1}\dd t\dd x.
    \end{split}
\end{equation*}
It remains to bound the last integral uniformly in $h$. To this end, we use Corollary \ref{cor:Laplace_u} and step (ii) to find that
\begin{equation*}
    \begin{split}
        \int_h^T \int_\Omega \left|\partial_x^2 \bar{u}^{h,\sigma}(t)\right|^{\alpha+1}\dd t\dd x
        &\leq
        C \int_h^T \|G^{\prime}_\sigma(\bar{u}^{h,\sigma}(t))\|_{L_\infty(\Omega)}^{\alpha+1} \dd t
        +
        C \int_h^T \int_\Omega \frac{|\bar{j}^{h,\sigma}(t)|^\frac{\alpha+1}{\alpha}}{m(\bar{u}^{h,\sigma}(t))^\frac{\alpha+1}{\alpha}} \dd x \dd t
        \\
        &\leq
        C_{u_0} T + C \int_h^T \int_\Omega \frac{|\bar{j}^{h,\sigma}(t)|^\frac{\alpha+1}{\alpha}}{m(\bar{u}^{h,\sigma}(t))^\frac{\alpha+1}{\alpha}} \dd x \dd t.
    \end{split}
\end{equation*}
Now we conclude as in step (ii). This proves that $\partial_x^3 \bar{u}^{h,\sigma} \in L_{\alpha+1}\bigl([h,T];L_{\alpha+1}(\Omega)\bigr)$, and therefore
\begin{equation*}
	\bar{u}^{h,\sigma} \in L_{\alpha+1}\bigl([h,T];W^3_{\alpha+1,B}(\Omega)\bigr)
\end{equation*}
for every \(T>h\) with a uniform bound only dependent on the initial energy $E^\sigma[u_0]$ and $T$.

\noindent\textbf{(v)} By definition of the piecewise affine interpolation $\hat{u}^{h,\sigma}$ we have that
\begin{equation*}
	\|\hat{u}^{h,\sigma}(t)\|_{W^3_{\alpha+1,B}(\Omega)}^{\alpha+1} 
	\leq C 
	\left(\|\bar{u}^{h,\sigma}(t)\|_{W^3_{\alpha+1,B}(\Omega)}^{\alpha+1} 
	+ 
	\|\bar{u}^{h,\sigma}(t+h)\|_{W^3_{\alpha+1,B}(\Omega)}^{\alpha+1}\right), \quad t > 0.
\end{equation*}
Integration with respect to $t$ and the a-priori bound obtained in (iv) yield the desired bound. The same argument applies to the \(H^1\)-bound.
	
\noindent\textbf{(vi)} The regularity of the flux $\bar{j}^{h,\sigma}$ obtained in (iii) implies in particular that $\partial_x \bar{j}^{h,\sigma} \in L_{\frac{\alpha+1}{\alpha}}\bigl([0,\infty);\bigl(W^1_{\alpha+1,B}(\Omega)\bigr)'\bigr)$. Therefore, the continuity equation $\partial_t \hat{u}^{h,\sigma} = - \partial_x \bar{j}^{h,\sigma}$, obtained in Lemma \ref{lem:Discrete_energy-dissipation} (ii), yields \begin{equation*}
	\partial_t \hat{u}^{h,\sigma} \in L_{\frac{\alpha+1}{\alpha}}\bigl([0,\infty);\bigl(W^1_{\alpha+1,B}(\Omega)\bigr)'\bigr),
\end{equation*}
where the corresponding uniform bound follows from (ii).
\end{proof}

%-----------------------------------------

Using the previous lemma, an application of the Aubin--Lions--Simon lemma yields  the following convergence result.

\begin{proposition}[The limit $h \searrow 0$] \label{prop:convergences}
    Fix $\sigma > 0$ and let $u_0 \in H^1(\Omega)$ with finite energy $E^\sigma[u_0] < \infty$.
	There exist subsequences  $\bigl(\bar{u}^{h,\sigma}, \bar{j}^{h,\sigma}\bigr)_h$ and $\bigl(\hat{u}^{h,\sigma}\bigr)_h$ (not relabelled) and limit functions 
\begin{equation*}
    \bigl(u^\sigma,j^\sigma\bigr) 
    \in 
	L_{\alpha+1,\loc}\bigl((0,\infty);W^3_{\alpha+1,B}(\Omega)\bigr) 
	\times 
	L_\frac{\alpha+1}{\alpha}\bigl((0,\infty)\times\Omega\bigr)
	\quad
	\text{with}
	\quad
	\partial_t u^\sigma 
	\in L_{\frac{\alpha+1}{\alpha}}\bigl([0,T];\bigl(W^1_{\alpha+1,B}(\Omega)\bigr)'\bigr)
\end{equation*}
	with \(u^{\sigma}\in C_b\bigl([0,\infty);H^1(\Omega)\bigr)\) such that
	\begin{equation*}
	\begin{cases}
	    \bar{u}^{h,\sigma} \xrightharpoonup{\phantom{\text{wii}}} u^\sigma & \text{weakly in } L_{\alpha+1,\loc}\bigl((\eps,\infty);W^3_{\alpha+1,B}(\Omega)\bigr); 
	    \\
	    \bar{u}^{h,\sigma} \xrightharpoonup{\phantom{\text{i}}*\phantom{\text{i}}} u^\sigma & \text{weakly-star in } L_{\infty}\bigl((0,\infty);H^1(\Omega)\bigr); 
	    \\
	    \bar{j}^{h,\sigma}\, \xrightharpoonup{\phantom{\text{wii}}} j^\sigma & \text{weakly in } L_{\frac{\alpha+1}{\alpha}}\bigl((0,\infty)\times\Omega\bigr);
	    \\
	    \hat{u}^{h,\sigma} \xrightharpoonup{\phantom{\text{wii}}} u^\sigma & \text{weakly in } L_{\alpha+1,\loc}\bigl((\eps,\infty);W^3_{\alpha+1,B}(\Omega)\bigr);  
	    \\
	    \partial_t\hat{u}^{h,\sigma} \xrightharpoonup{\phantom{\text{wii}}} \partial_t u^\sigma & \text{weakly in }  L_{\frac{\alpha+1}{\alpha}}\bigl((0,\infty);\bigl(W^1_{\alpha+1,B}(\Omega)\bigr)'\bigr);  
	    \\
	    \hat{u}^{h,\sigma} \longrightarrow u^\sigma & \text{strongly in } C_{loc}\bigl([0,\infty);C^{\nu}(\bar{\Omega})\bigr)
	\end{cases}
	\end{equation*} 
	for all $\eps >0$ and all \(0<\nu <\frac{1}{2}\). Furthermore, it holds that \(u^{\sigma}\in C([0,T];H^1(\Omega))\) and there is a constant $c_{\sigma,u_0} > 0$ such that
	\begin{equation*}
	    u^\sigma \geq c_{\sigma,u_0} > 0,
	    \quad
	    t \geq 0,\, x \in \bar{\Omega}.
	\end{equation*}
\end{proposition}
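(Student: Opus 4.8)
The plan is to produce $(u^\sigma,j^\sigma)$ by soft compactness only, postponing every nonlinear identification (of the flux $j^\sigma$ and of the energy--dissipation equality) to the sequel. First I would record, from Lemma~\ref{lem:uniform_estimates}, that for each $0<\eps<T$ the families $(\bar u^{h,\sigma})_h$ and $(\hat u^{h,\sigma})_h$ are bounded in $L_{\alpha+1}([\eps,T];W^3_{\alpha+1,B}(\Omega))$ and in $L_\infty([0,\infty);H^1(\Omega))$, that $(\bar j^{h,\sigma})_h$ is bounded in $L_{\frac{\alpha+1}{\alpha}}([0,\infty)\times\Omega)$, and that $(\partial_t\hat u^{h,\sigma})_h$ is bounded in $L_{\frac{\alpha+1}{\alpha}}([0,\infty);(W^1_{\alpha+1,B}(\Omega))')$. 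Since $\alpha>0$ all exponents involved exceed $1$ and all these spaces are reflexive or duals of separable Banach spaces, so Banach--Alaoglu together with a diagonal argument over $\eps\searrow 0$ and $T\nearrow\infty$ extracts a single subsequence along which the first five (weak and weak-$*$) convergences of the statement hold; the limit $u^\sigma$ then lies in the asserted spaces by weak lower semicontinuity of the norms, and $\partial_t u^\sigma=-\partial_x j^\sigma$.

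The only input beyond Banach--Alaoglu is the strong convergence of $\hat u^{h,\sigma}$, for which I would apply the Aubin--Lions--Simon lemma with the chain $H^1(\Omega)\cembed C^\nu(\bar{\Omega})\embed(W^1_{\alpha+1,B}(\Omega))'$, valid in one space dimension for $0<\nu<\tfrac12$: the bound on $(\hat u^{h,\sigma})_h$ in $L_\infty([0,T];H^1(\Omega))$ and on $(\partial_t\hat u^{h,\sigma})_h$ in $L_{\frac{\alpha+1}{\alpha}}([0,T];(W^1_{\alpha+1,B}(\Omega))')$ give relative compactness in $C([0,T];C^\nu(\bar{\Omega}))$, and a further diagonalisation in $T$ yields $\hat u^{h,\sigma}\to u^\sigma$ in $C_{\loc}([0,\infty);C^\nu(\bar{\Omega}))$. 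To see that $\bar u^{h,\sigma}$ and the shifted $\bar u^{h,\sigma}(\cdot+h)$ have the same limit, I would use that on $[kh,(k+1)h)$ one has $\hat u^{h,\sigma}(t)-\bar u^{h,\sigma}(t)=(t-kh)\,\partial_t\hat u^{h,\sigma}(t)$, whence $\|\hat u^{h,\sigma}-\bar u^{h,\sigma}\|_{L_{\frac{\alpha+1}{\alpha}}([0,T];(W^1_{\alpha+1,B})')}\leq h\,\|\partial_t\hat u^{h,\sigma}\|_{L_{\frac{\alpha+1}{\alpha}}([0,T];(W^1_{\alpha+1,B})')}\to 0$, which, combined with the weak limits above, forces all three families to share the limit $u^\sigma$.

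Next I would pass to the limit in the linear relations and extract the qualitative properties. The continuity equation and the boundary conditions $\partial_x\hat u^{h,\sigma}=0$ ($t>h$), $\bar j^{h,\sigma}=0$ on $\partial\Omega$ from Lemma~\ref{lem:Discrete_energy-dissipation}(ii) are linear and pass to the limit using $\hat u^{h,\sigma}\to u^\sigma$ and $\bar j^{h,\sigma}\rightharpoonup j^\sigma$. For the time regularity I would note that $u^\sigma\in L_{\alpha+1}([\eps,T];W^3_{\alpha+1,B}(\Omega))$ with $\partial_t u^\sigma=-\partial_x j^\sigma\in L_{\frac{\alpha+1}{\alpha}}([\eps,T];(W^3_{\alpha+1,B}(\Omega))')$ (using $j^\sigma=0$ on $\partial\Omega$), so that the Lions--Magenes interpolation lemma applied to the Gelfand triple $W^3_{\alpha+1,B}(\Omega)\embed H^1(\Omega)\embed(W^3_{\alpha+1,B}(\Omega))'$ gives $u^\sigma\in C([\eps,T];H^1(\Omega))$ for every $\eps>0$, while $u^\sigma\in L_\infty([0,\infty);H^1(\Omega))$ with $\partial_t u^\sigma\in L_{\frac{\alpha+1}{\alpha}}([0,T];(W^1_{\alpha+1,B})')$ yields weak $H^1$-continuity up to $t=0$. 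Using that $E^\sigma$ is convex and non-increasing along the scheme one has $E^\sigma[\hat u^{h,\sigma}(t)]\leq E^\sigma[u_0]$ for all $t$, and weak lower semicontinuity of $E^\sigma$ (Fatou for the $G_\sigma$-term, via the pointwise convergence just obtained) gives $E^\sigma[u^\sigma](t)\leq E^\sigma[u_0]$ for all $t\geq 0$; combined with $u^\sigma(0)=u_0$ (read off from the $C^\nu$-convergence), weak $H^1$-continuity at $0$, and dominated convergence for $\int_\Omega G_\sigma(u^\sigma(t))$ (recall $u^\sigma$ stays bounded below, where $G_\sigma$ is continuous), it upgrades the continuity to $u^\sigma\in C([0,T];H^1(\Omega))$ for every $T$, hence $u^\sigma\in C_b([0,\infty);H^1(\Omega))$. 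Finally, $E^\sigma[u^\sigma](t)\leq E^\sigma[u_0]$ and Lemma~\ref{lem:positivity_sigma} (with $C=E^\sigma[u_0]$) give the uniform lower bound $u^\sigma\geq c_{\sigma,u_0}>0$ on $[0,\infty)\times\bar{\Omega}$.

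I expect the delicate points to be the Aubin--Lions--Simon step together with the behaviour near $t=0$: one must match precisely the time-derivative bound of Lemma~\ref{lem:uniform_estimates}(vi) with the loss of $W^3_{\alpha+1}$-regularity at $t=0$, verify that $\bar u^{h,\sigma}$, $\hat u^{h,\sigma}$ and the shifted $\bar u^{h,\sigma}(\cdot+h)$ all converge to the same limit, and then promote weak to strong $H^1$-continuity at the initial time through continuity of the energy. The remaining arguments are a routine compactness-and-lower-semicontinuity exercise, and the genuinely nonlinear work is deferred to the identification of the flux and to the energy--dissipation equality in the subsequent results.
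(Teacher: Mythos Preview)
Your proposal is correct and follows essentially the same route as the paper: uniform bounds from Lemma~\ref{lem:uniform_estimates} feed into weak compactness, Aubin--Lions--Simon gives strong convergence of $\hat u^{h,\sigma}$, the identity $\hat u^{h,\sigma}-\bar u^{h,\sigma}=sh\,\partial_x\bar j^{h,\sigma}$ forces the limits to coincide, and positivity follows from the energy bound via Lemma~\ref{lem:positivity_sigma}. The only difference is cosmetic: the paper invokes the Lions--Magenes-type result of \cite[Remark~3.4]{bernis_existence_1988} directly for $u^\sigma\in C([0,\infty);H^1(\Omega))$, whereas you spell out the interpolation on $[\eps,T]$ and then upgrade to strong $H^1$-continuity at $t=0$ via the energy argument; also, by feeding the $L_\infty([0,T];H^1)$ bound into Aubin--Lions--Simon you get compactness in $C([0,T];C^\nu)$ all the way down to $t=0$, which is slightly cleaner than the paper's $C([\eps,T];C^\nu)$.
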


Note that the constant $c_{\sigma,u_0}$ in general tends to zero as $\sigma \searrow 0$, as long as the potential $G$ is \lq not very singular\rq.

\begin{proof}
\textbf{(i) Compactness. }
Combining the uniform bounds proved in step 1, an application of the Eberlein--\v{S}mulian theorem provides the existence of the subsequences and weak accumulation points
	\begin{equation*}
		\begin{cases}
			\bar{u}^{h,\sigma} \xrightharpoonup{\phantom{\text{wii}}} u^\sigma & \text{weakly in } L_{\alpha+1,\loc}\bigl((\eps,\infty);W^3_{\alpha+1,B}(\Omega)\bigr); 
	    \\
	    \bar{u}^{h,\sigma} \xrightharpoonup{\phantom{\text{i}}*\phantom{\text{i}}} u^\sigma & \text{weakly-star in } L_{\infty}\bigl((0,\infty);H^1(\Omega)\bigr); 
	    \\
	    \bar{j}^{h,\sigma}\, \xrightharpoonup{\phantom{\text{wii}}} j^\sigma & \text{weakly in } L_{\frac{\alpha+1}{\alpha}}\bigl((0,\infty)\times\Omega\bigr);
	    \\
	    \hat{u}^{h,\sigma} \xrightharpoonup{\phantom{\text{wii}}} u^\sigma & \text{weakly in } L_{\alpha+1,\loc}\bigl((\eps,\infty);W^3_{\alpha+1,B}(\Omega)\bigr);  
	    \\
	    \partial_t\hat{u}^{h,\sigma} \xrightharpoonup{\phantom{\text{wii}}} \partial_t u^\sigma & \text{weakly in }  L_{\frac{\alpha+1}{\alpha}}\bigl((0,\infty);\bigl(W^1_{\alpha+1,B}(\Omega)\bigr)'\bigr).
		\end{cases}
	\end{equation*}
By the Aubin--Lions--Simon lemma \cite{simon_compact_1986}, we find that \(\hat{u}^{h,\sigma}\) converges strongly (up to a subsequence) in \(C([\eps,T];C^{\nu}(\bar{\Omega}))\) for every \(0<\eps < T<\infty\) and \(0<\nu< \frac{1}{2}\). Moreover, by \cite[Remark 3.4]{bernis_existence_1988}, every function \(u^{\sigma}\in L_{\alpha+1,\loc}\bigl((0,\infty);W^3_{\alpha+1,B}(\Omega)\bigr)\) with \(\partial_t \hat{u}^{\sigma}\in L_{\frac{\alpha+1}{\alpha}}\bigl((0,\infty);\bigl(W^1_{\alpha+1,B}(\Omega)\bigr)'\bigr)\) satisfies \(u^{\sigma}\in C([0,\infty);H^1(\Omega))\) and we also conclude \(u^{\sigma}(0)=u_0\).

\noindent\textbf{(ii) Positivity of the limit function. } 
We know that $u^{h,\sigma} \geq c_{\sigma,u_0} > 0$. By the uniform convergence obtained in (i), we conclude that also the limit function satisfies $u^{\sigma} \geq c_{\sigma,u_0} > 0$.

\noindent\textbf{(iii) Uniqueness of the limit function. } 
We have claimed above that both \(\hat{u}^{h,\sigma}\) and \(\bar{u}^{h,\sigma}\) converge to the same limit \(u^{\sigma}\). Indeed, we observe that
\begin{equation*}
	\begin{split}
	\hat{u}^{h,\sigma}((k+s)h) -\bar{u}^{h,\delta}((k+s)h)
	&= 
	(1-s) u_{k}^{h,\sigma} + s u_{k+1}^{h,\sigma} - u_{k}^{h,\sigma}
	\\
	&=
	s\bigl(u_{k+1}^{h,\sigma} - u_{k}^{h,\sigma}\bigr)
	\\
	&=
	sh\, \partial_x j^{h,\sigma}_{k+1}
	\\
	&=
	s\, h\, \partial_x \bar{j}^{h,\sigma}((k+s)h),
	\quad k \in \N_0,\ s \in [0,1).
	\end{split}
\end{equation*}
This implies
\begin{align*}
	\|\hat{u}^{h,\sigma} -\bar{u}^{h,\sigma}\|_{L_{\frac{\alpha+1}{\alpha}}((0,T];(W^1_{\alpha+1,B}(\Omega)'))}
	& \leq 
	h\, \|\partial_x \bar{j}^{h,\sigma}\|_{L_{\frac{\alpha+1}{\alpha}}((0,T];(W^1_{\alpha+1,B}(\Omega)'))}
	\leq 
	h\, \|\bar{j}^{h,\sigma}\|_{L_{\frac{\alpha+1}{\alpha}}((0,T)\times\Omega)}
	\leq C\, h.
\end{align*}
This proves that the limit functions coincide.
\end{proof}

%---------------------------
%---------------------------

\begin{proposition}[Uniform convergence of the energy]\label{prop:uniform-convergence-energy}
Fix \(\sigma >0\) and $u_0 \in H^1(\Omega)$ with $E^\sigma[u_0] < \infty$. There is a subsequence (not relabelled) of \((\bar{u}^{h,\sigma})_h\) such that
	\[\bar{u}^{h,\sigma} \longrightarrow u^{\sigma} \quad \text{strongly in } L_{\alpha+1,\loc}\bigl([0,\infty);H^1(\Omega)\bigr)\]
	and
	\[E^\sigma[\bar{u}^{h,\sigma}] \longrightarrow E^\sigma[u^{\sigma}] \quad \text{ uniformly on compact subsets of }[0,\infty).\]
\end{proposition}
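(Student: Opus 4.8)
The plan is to upgrade the weak and weak-star convergences of $\bar u^{h,\sigma}$ from Proposition~\ref{prop:convergences} to strong convergence in $L_{\alpha+1,\loc}([0,\infty);H^1(\Omega))$ by combining a strong convergence already available for the affine interpolant with the smallness of $\|\hat u^{h,\sigma}-\bar u^{h,\sigma}\|$, and then to use this strong $H^1$-convergence together with the uniform pointwise bounds on $\bar u^{h,\sigma}$ to pass to the limit in the potential term, obtaining convergence of $E^\sigma[\bar u^{h,\sigma}]$. The key quantitative input is the liminf inequality for the energy combined with a matching limsup inequality coming from the energy--dissipation inequality \eqref{eq:energy-dissipation_inequ}.

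First I would establish strong convergence of the Dirichlet part. From Proposition~\ref{prop:convergences} we already have $\hat u^{h,\sigma}\to u^\sigma$ strongly in $C_{\loc}([0,\infty);C^\nu(\bar\Omega))$ and, more to the point, $\hat u^{h,\sigma}$ and $\bar u^{h,\sigma}$ are bounded in $L_{\alpha+1,\loc}([h,\infty);W^3_{\alpha+1,B}(\Omega))$ with $\partial_t\hat u^{h,\sigma}$ bounded in $L_{\frac{\alpha+1}{\alpha}}([0,\infty);(W^1_{\alpha+1,B}(\Omega))')$; by Aubin--Lions--Simon the interpolant $\hat u^{h,\sigma}$ converges strongly in $L_{\alpha+1,\loc}([0,\infty);H^1(\Omega))$ (interpolating $W^3_{\alpha+1}$ and the dual space against which the time derivative is controlled, the intermediate space $H^1$ embeds compactly). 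Since $\|\hat u^{h,\sigma}-\bar u^{h,\sigma}\|_{L_{\frac{\alpha+1}{\alpha}}((0,T);(W^1_{\alpha+1,B})')}\le Ch$ as shown in the proof of Proposition~\ref{prop:convergences}, and both families are bounded in $L_{\alpha+1}((0,T);W^3_{\alpha+1,B})$, an interpolation inequality of the form $\|v\|_{H^1}\le C\|v\|_{W^3_{\alpha+1}}^\theta\|v\|_{(W^1_{\alpha+1,B})'}^{1-\theta}$ (valid on $[h,T]$) forces $\|\hat u^{h,\sigma}-\bar u^{h,\sigma}\|_{L_{\alpha+1}((h,T);H^1)}\to 0$; combined with the strong convergence of $\hat u^{h,\sigma}$ and the already-known boundedness near $t=0$, this yields $\bar u^{h,\sigma}\to u^\sigma$ strongly in $L_{\alpha+1,\loc}([0,\infty);H^1(\Omega))$.

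Next, for the energy convergence I would argue pointwise in $t$ along a suitable subsequence. Write $E^\sigma[\bar u^{h,\sigma}](t)=\tfrac12\|\partial_x\bar u^{h,\sigma}(t)\|_{L_2}^2+\int_\Omega G_\sigma(\bar u^{h,\sigma}(t))\dd x$. The strong $L_{\alpha+1}((0,T);H^1)$-convergence gives, along a subsequence, $\bar u^{h,\sigma}(t)\to u^\sigma(t)$ strongly in $H^1(\Omega)$ for a.e. $t$, hence $\|\partial_x\bar u^{h,\sigma}(t)\|_{L_2}^2\to\|\partial_x u^\sigma(t)\|_{L_2}^2$ for a.e. $t$; since $\bar u^{h,\sigma}\ge c_{\sigma,u_0}>0$ uniformly and is uniformly bounded above, $G_\sigma$ is continuous and bounded on the relevant compact range of values, so dominated convergence gives $\int_\Omega G_\sigma(\bar u^{h,\sigma}(t))\to\int_\Omega G_\sigma(u^\sigma(t))$. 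Thus $E^\sigma[\bar u^{h,\sigma}](t)\to E^\sigma[u^\sigma](t)$ for a.e. $t$. To upgrade this to \emph{uniform} convergence on compact subsets, I would use monotonicity/regularity in time: from \eqref{eq:energy-dissipation_inequ}, $t\mapsto E^\sigma[\bar u^{h,\sigma}](t)$ is non-increasing, and its limit $E^\sigma[u^\sigma](\cdot)$ is (by the energy--dissipation equality satisfied by $u^\sigma$, or directly by lower semicontinuity plus the dissipation bound) continuous and non-increasing; a sequence of monotone functions converging pointwise a.e. to a continuous monotone limit converges uniformly on compacts (a Dini/Pólya-type argument). One also checks the endpoint $t=0$ using $\bar u^{h,\sigma}(0)=u_0$ and continuity of $E^\sigma[u^\sigma]$ at $0$ (from $u^\sigma\in C([0,T];H^1)$ and $u^\sigma(0)=u_0$).

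The main obstacle I anticipate is the passage from pointwise-a.e. convergence of the energies to uniform convergence on compact sets, which requires genuinely knowing that the limit energy $t\mapsto E^\sigma[u^\sigma](t)$ is continuous — and for that one needs the energy--dissipation \emph{equality} for $u^\sigma$ (or at least enough regularity of $u^\sigma$ to justify the chain rule $\frac{d}{dt}E^\sigma[u^\sigma]=\langle DE^\sigma[u^\sigma],\partial_t u^\sigma\rangle$, which is legitimate because $u^\sigma\in L_{\alpha+1,\loc}(W^3_{\alpha+1,B})$, $\partial_t u^\sigma\in L_{\frac{\alpha+1}{\alpha}}((W^1_{\alpha+1,B})')$, and $u^\sigma$ is bounded away from $0$). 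A secondary technical point is the behaviour near $t=0$, where the $W^3_{\alpha+1}$-bound degenerates; there I would rely only on the $H^1$-bound and the already-established strong convergence of $\hat u^{h,\sigma}$ in $C([0,T];C^\nu)$, noting $\bar u^{h,\sigma}(0)=\hat u^{h,\sigma}(0)=u_0$, and use that $t\mapsto E^\sigma[\bar u^{h,\sigma}](t)$ and its limit are both non-increasing and agree at $t=0$.
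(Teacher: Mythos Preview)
Your overall strategy is sound and closely matches the paper's: strong $L_{\alpha+1,\loc}(H^1)$ convergence of $\bar u^{h,\sigma}$ gives pointwise-a.e.\ $H^1$ convergence, hence pointwise convergence of the energies; then monotonicity of $t\mapsto E^\sigma[\bar u^{h,\sigma}](t)$ plus continuity of the limit yields uniform convergence via the Dini/P\'olya argument you describe (this is exactly the paper's Lemma~\ref{lem:monotone-functions}).

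Where you differ is in part~(i). The paper does not go through the affine interpolant at all: it applies a discrete Aubin--Lions lemma due to Dreher--J\"ungel \cite{dreher_compact_2012} \emph{directly} to the piecewise constant $\bar u^{h,\sigma}$, using the bound on the discrete time-shift
\[
\tfrac{1}{h}\bigl\|\bar u^{h,\sigma}(\cdot+h)-\bar u^{h,\sigma}(\cdot)\bigr\|_{L_{\frac{\alpha+1}{\alpha}}((0,T);(W^1_{\alpha+1})')}\leq C
\]
in lieu of a time derivative. Your alternative route --- standard Aubin--Lions on $\hat u^{h,\sigma}$, then transfer to $\bar u^{h,\sigma}$ via the spatial interpolation $\|v\|_{H^1}\le C\|v\|_{W^3_{\alpha+1}}^{\theta}\|v\|_{(W^1_{\alpha+1})'}^{1-\theta}$ --- is viable, but as stated it has a small gap. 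With the natural exponent $\theta=\tfrac12$, combining the $L_{\alpha+1}$-in-time bound on $\|v\|_{W^3_{\alpha+1}}$ with the $L_{\frac{\alpha+1}{\alpha}}$-in-time bound on $\|v\|_{(W^1_{\alpha+1})'}$ via H\"older only closes for the target $L_r((h,T);H^1)$ when $r\le 2$; for $\alpha>1$ this does not reach $r=\alpha+1$. The repair is immediate: first obtain $\|\hat u^{h,\sigma}-\bar u^{h,\sigma}\|_{L_1((h,T);H^1)}\to 0$ by your interpolation, then use the uniform $L_\infty((0,\infty);H^1)$ bound from Lemma~\ref{lem:uniform_estimates}(i) to upgrade to every finite $L_r$. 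The paper's route via \cite{dreher_compact_2012} avoids this two-step argument.

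One simplification for part~(ii): you do not need the energy--dissipation equality or a chain rule to obtain continuity of $t\mapsto E^\sigma[u^\sigma](t)$. It follows directly from $u^\sigma\in C([0,\infty);H^1(\Omega))$ (already established in Proposition~\ref{prop:convergences}) together with $u^\sigma\geq c_{\sigma,u_0}>0$, which confines $u^\sigma(t,\cdot)$ to a compact interval on which $G_\sigma$ is bounded and continuous, so that $\int_\Omega G_\sigma(u^\sigma(t))\,\dd x$ is continuous in $t$ by dominated convergence.
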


The proof of the second part of Proposition \ref{prop:uniform-convergence-energy} relies on the following result from basic calculus which we prove here for the convenience of the reader.

\begin{lemma}\label{lem:monotone-functions}
	Let \(0<T<\infty\) and \(f_k\colon[0,T] \to \R\) be a sequence of non-increasing real-valued functions. Assume that \(f_k(t) \to f(t)\) pointwise for \(t\in [0,T]\), where \(f\colon [0,T]\to \R\) is a continuous function. Then \(f_k \to f\) uniformly.
\end{lemma}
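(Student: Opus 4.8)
The plan is to prove this by a Dini-type argument (this is essentially P\'olya's theorem on uniform convergence of monotone functions towards a continuous limit). First I would record the elementary fact that the limit $f$ is itself non-increasing: for $s\le t$ one has $f_k(s)\ge f_k(t)$ for every $k$, and letting $k\to\infty$ gives $f(s)\ge f(t)$. Since $f$ is continuous on the compact interval $[0,T]$, it is uniformly continuous there; so given $\eps>0$ there is $\delta>0$ with $|f(s)-f(t)|<\eps$ whenever $|s-t|<\delta$.

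Fix such an $\eps>0$ and choose a partition $0=t_0<t_1<\dots<t_N=T$ of mesh smaller than $\delta$, so that in particular $0\le f(t_{i-1})-f(t_i)<\eps$ for every $i\in\{1,\dots,N\}$ (the first inequality because $f$ is non-increasing). Since there are only finitely many nodes, pointwise convergence yields an index $K$ such that $|f_k(t_i)-f(t_i)|<\eps$ for all $k\ge K$ and all $i\in\{0,\dots,N\}$.

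Now let $t\in[0,T]$ be arbitrary and pick $i$ with $t\in[t_{i-1},t_i]$. Monotonicity of $f_k$ gives $f_k(t_i)\le f_k(t)\le f_k(t_{i-1})$, and monotonicity of $f$ gives $f(t_i)\le f(t)\le f(t_{i-1})$, so that $0\le f(t_{i-1})-f(t)\le f(t_{i-1})-f(t_i)<\eps$ and likewise $0\le f(t)-f(t_i)<\eps$. Hence for $k\ge K$
\[
f_k(t)\le f_k(t_{i-1})<f(t_{i-1})+\eps<f(t)+2\eps,\qquad f_k(t)\ge f_k(t_i)>f(t_i)-\eps>f(t)-2\eps,
\]
and therefore $\sup_{t\in[0,T]}|f_k(t)-f(t)|\le 2\eps$ for all $k\ge K$. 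As $\eps>0$ was arbitrary, $f_k\to f$ uniformly on $[0,T]$.

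There is no genuine obstacle here; the only point that needs care is to exploit the monotonicity of \emph{both} the $f_k$ and the limit $f$ at the same time, so that the control obtained at the finitely many partition points propagates to the entire interval. Compactness of $[0,T]$ (entering through uniform continuity of $f$ and through the finiteness of the partition) is what makes the argument work, which is why the statement is phrased on $[0,T]$ rather than on $[0,\infty)$; in the application to Proposition~\ref{prop:uniform-convergence-energy} it will be invoked on arbitrary compact time intervals.
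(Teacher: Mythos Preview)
Your proof is correct and follows essentially the same approach as the paper's: use uniform continuity of $f$ to choose a finite partition, invoke pointwise convergence at the nodes, and propagate control to the whole interval via the monotonicity of the $f_k$. The only cosmetic difference is that you also record and use the monotonicity of the limit $f$, whereas the paper bounds $|f_k(t)-f(t)|$ directly by $\max\{|f_k(t_{i-1})-f(t)|,|f_k(t_i)-f(t)|\}$ and then applies the triangle inequality; both lead to the same conclusion with constants $2\eps$ versus $\eps$.
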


\begin{proof}
	Fix \(\eps >0\). Since \(f\) is continuous on the compact interval \([0,T]\), \(f\) is uniformly continuous. So there is \(\delta >0\) and a subdivision \(0=t_0 < t_1 < \ldots < t_n = T\) with \(t_{i+1}-t_i < \delta\) for every \(i=0,\ldots,n-1\), such that
	\begin{equation*}
	|f(t)-f(s)| < \frac{\eps}{2} \quad \text{for all }t,s \in [t_i,t_{i+1}] \text{ and all } i =0,\ldots,n-1.
	\end{equation*}
	Since \(f_n(t_i) \to f(t_i)\), we find \(N\in \N\) such that
	\[|f_n(t_i) - f(t_i)| \leq \frac{\eps}{2} \quad \text{for all } n \geq N \text{ and } i=0,\ldots,n.\]
	We claim that \(|f_n(t)-f(t)| \leq \eps\) for all \(n \geq N\) and \(t\in [0,T]\). Fix \(t\in [0,T]\), then there is \(i\in \{1,\ldots,n\}\) such that \(t\in [t_i,t_{i+1}]\). Since \(f_n\) is non-increasing, we know that
	\[f_n(t_{i+1})\leq f_n(t) \leq f_n(t_i)\]
	and hence
	\begin{align*}
	|f_n(t)-f(t)| & \leq \max\{|f_n(t_{i+1})-f(t)|,|f_n(t_i)-f(t)|\} \\
	& \leq \max\{|f_n(t_{i+1})-f(t_{i+1})|,|f_n(t_i)-f(t_i)|\}  + \max\{|f(t_{i+1})-f(t)|,|f(t_i)-f(t)|\} \\
	& \leq \eps
	\end{align*}
	for every \(n\geq N\). This proves the lemma.
\end{proof}

Now we turn to the proof of the uniform convergence of the energy stated in Proposition \ref{prop:uniform-convergence-energy}. The compactness result follows from a modification of the Aubin--Lions--Simon lemma to piecewise constant functions which can be found in \cite{dreher_compact_2012}.

\begin{proof}[\textbf{Proof of Proposition \ref{prop:uniform-convergence-energy}}]

\textbf{(i) Strong convergence of $(\bar{u}^{h,\sigma})_h$. } 
Recall that \(\bar{u}^{h,\sigma}\) is a sequence of piecewise constant functions. In order to obtain compactness in \(L_{\alpha+1,\loc}\bigl([0,\infty];H^1(\Omega)\bigr)\), we apply \cite[Theorem 1]{dreher_compact_2012}. Note that \(W^3_{\alpha+1,B}(\Omega)\) embeds compactly in \(H^1(\Omega)\) and that the embedding of \(H^1(\Omega)\) into \(\bigl(W^1_{\alpha+1,B}(\Omega)\bigr)'\) is continuous. Furthermore, by Lemma \ref{lem:uniform_estimates}, we know that \((\bar{u}^{h,\sigma})_{h}\) is uniformly bounded in \(L_{\alpha+1}\bigl([\eps,T];W^3_{\alpha+1,B}(\Omega)\bigr)\) and that
\begin{equation*}
	\frac{1}{h}\|\bar{u}^{h,\sigma}(\cdot +h) - \bar{u}^{h,\sigma}(\cdot)\|_{L_{\frac{\alpha+1}{\alpha}}([0,T];(W^1_{\alpha+1}(\Omega))')} 
	= 
	\|\partial_x \bar{j}^{h,\sigma}\|_{L_{\frac{\alpha+1}{\alpha}}([0,T];(W^1_{\alpha+1}(\Omega))')}
	\leq 
	C \|\bar{j}^{h,\sigma}\|_{L_{\frac{\alpha+1}{\alpha}}([0,T]\times\Omega)} 
	\leq 
	C
\end{equation*}
is also uniformly bounded in \(h\). Hence, there is a subsequence (not relabelled) which converges strongly in \(L_{\alpha+1}\bigl([0,T];H^1(\Omega))\). Taking \(T=T_n =n\) and a diagonal subsequence ensures that 
\begin{equation*}
    \bar{u}^{h,\sigma} \longrightarrow u^{\sigma}
	\quad
	\text{strongly in } 
	L_{\alpha+1,\loc}\bigl([0,\infty);H^1(\Omega)\bigr).
\end{equation*}

\noindent \textbf{(ii) Uniform convergence of the energy. }
We apply Lemma \ref{lem:monotone-functions}. By the strong convergence proved in part (i), we know that there is a subsequence (not relabelled) such that
 \[\bar{u}^{h,\sigma}(t) \longrightarrow u^{\sigma}(t) \quad \text{in } H^1(\Omega) \text{ for almost every } t\in [0,\infty).\]
Since the limit function \(u^{\sigma}\in C\bigl([0,\infty);H^1(\Omega)\bigr)\) is defined for every \(t\in [0,\infty)\), we may assume (after potentially modifying \(\bar{u}^{h,\sigma}\) on a set of measure zero) that \(\bar{u}^{h,\sigma}(t)\) converges to \(u^{\sigma}(t)\) for every \(t\in [0,\infty)\). This guarantees that
\begin{equation*}
    E^\sigma[\bar{u}^{h,\sigma}](t) \longrightarrow E^\sigma[u^\sigma](t)
\end{equation*}
for all $t\in [0,\infty)$.
By continuity of the limit function, we also obtain that \(t\mapsto E^\sigma[u^{\sigma}](t)\) is continuous. Finally, monotonicity of \(t\mapsto E^\sigma[\bar{u}^{h,\sigma}](t)\) in $t$ follows from the energy-dissipation inequality \eqref{eq:energy-dissipation_inequ} for \(\bar{u}^{h,\sigma}\).
Hence, we may apply Lemma \ref{lem:monotone-functions} to obtain uniform convergence on compact subsets.
\end{proof}

As the following proposition shows, we are able to preserve the energy-dissipation inequality \eqref{eq:energy-dissipation_inequ} in the limit $h \to 0$ for every \(0\leq s,t <\infty\). The proof is based on Lemma \ref{lem:uniform_estimates}, Proposition \ref{prop:uniform-convergence-energy}, the uniform convergence $\bar{u}^{h,\sigma} \to u^\sigma$ together with the local Lipschitz continuity of $m$, and lower semicontinuity of the dissipation. 

%---------------------------
%---------------------------

\begin{proposition}[Energy-dissipation inequality] \label{prop:energy_inequ}
Fix $\sigma > 0$ and let $u_0 \in H^1(\Omega)$ with $E^\sigma[u_0] < \infty$.
Any weak limit point 
\begin{equation*}
    \bigl(u^\sigma,j^\sigma\bigr) 
    \in 
	L_{\alpha+1,\loc}\bigl((0,\infty);W^3_{\alpha+1,B}(\Omega)\bigr) 
	\times 
	L_\frac{\alpha+1}{\alpha}\bigl((0,\infty)\times\Omega\bigr)
	\quad
	\text{with}
	\quad
	\partial_t u^\sigma 
	\in L_{\frac{\alpha+1}{\alpha}}\bigl((0,\infty);\bigl(W^1_{\alpha+1,B}(\Omega)\bigr)'\bigr)
\end{equation*}
with \(u^{\sigma}\in C_b\bigl([0,\infty);H^1(\Omega)\bigr)\) of the family $(\bar{u}^{h,\sigma},\bar{j}^{h,\sigma})_h$ has the following properties.
	\begin{itemize}
		\item[(i)] For all $0 \leq s < t <\infty$, the energy-dissipation inequality
		\begin{equation}\label{eq:energy_inequ}
		\begin{split}
		E^\sigma[u^\sigma](t)
		& + 
		\tfrac{\alpha}{\alpha+1}\int_{s}^{t} \int_\Omega \frac{\left|j^{\sigma}(\tau)\right|^{\frac{\alpha+1}{\alpha}}}{ m\bigl(u^{\sigma}(\tau)\bigr)^{\frac{1}{\alpha}}}\dd x \dd \tau\\
        & +
        \tfrac{1}{\alpha+1}
		\int_{s}^{t} \int_\Omega 
		m\bigl(u^{\sigma}(\tau)\bigr) \left|\partial_x^3 u^{\sigma}(\tau) - G_\sigma^{\prime\prime}(u^{\sigma}(\tau)) \partial_x u^{\sigma}(\tau)\right|^{\alpha+1}\dd x\dd \tau \\
		& \leq  
		E^\sigma[u^\sigma](s)
		\end{split}
		\end{equation}
		is satisfied.
		\item[(ii)] The pair $\bigl(u^{\sigma},j^{\sigma}\bigr)$ solves the continuity equation
		\begin{equation} \label{eq:limit_continuity_equation}
		\begin{cases}
		\partial_t u^{\sigma} + \partial_x j^{\sigma} = 0, & t > 0,\ x \in \Omega,
		\\
		\partial_x u^{\sigma} = j^{\sigma} =  0, & t > 0,\ x \in \partial\Omega,
		\end{cases}
		\end{equation}
		in the sense of distributions, that is, the equation
		\begin{equation*}
		\int_0^{\infty} \langle \partial_t u^{\sigma}, \phi\rangle_{W^1_{\alpha+1}}\dd t
		- \int_0^{\infty} \int_\Omega j^{\sigma} \partial_x \phi\dd x\dd t = 0
		\end{equation*}
		holds true for all $\phi \in L_{\alpha+1}\bigl((0,\infty);W^1_{\alpha+1}(\Omega)\bigr)$ and all $T > 0$.
	\end{itemize} 
\end{proposition}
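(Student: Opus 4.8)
The plan is to pass to the limit $h\searrow 0$ in the discrete energy-dissipation inequality \eqref{eq:energy-dissipation_inequ} and in the discrete continuity equation \eqref{eq:ch6-continuity_equation} of Lemma \ref{lem:Discrete_energy-dissipation}, working along the subsequence fixed in Propositions \ref{prop:convergences} and \ref{prop:uniform-convergence-energy} (and passing, if needed, to a further subsequence so that all strong and weak convergences there hold simultaneously --- this does not change the limit since the bounds of Lemma \ref{lem:uniform_estimates} are uniform in $h$).

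Part (ii) is immediate. The pair $(\hat u^{h,\sigma},\bar j^{h,\sigma})$ solves the weak continuity equation of Lemma \ref{lem:Discrete_energy-dissipation}(ii), and by Proposition \ref{prop:convergences} we have $\partial_t\hat u^{h,\sigma}\rightharpoonup\partial_t u^\sigma$ in $L_{\frac{\alpha+1}{\alpha}}((0,\infty);(W^1_{\alpha+1,B}(\Omega))')$ and $\bar j^{h,\sigma}\rightharpoonup j^\sigma$ in $L_{\frac{\alpha+1}{\alpha}}((0,\infty)\times\Omega)$, so the weak formulation passes term by term to the limit for every admissible $\phi$; admitting test functions that do not vanish on $\partial\Omega$ encodes the no-flux condition $j^\sigma=0$ on $\partial\Omega$, while $\partial_x u^\sigma=0$ on $\partial\Omega$ is contained in $u^\sigma(t)\in W^3_{\alpha+1,B}(\Omega)$ for a.e.\ $t$.

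For part (i), fix $0\le s<t$, apply \eqref{eq:energy-dissipation_inequ} with $s_h:=\lfloor s/h\rfloor h\to s$ and $t_h:=\lfloor t/h\rfloor h\to t$, and pass to the limit. The endpoint terms $E^\sigma[\bar u^{h,\sigma}](s_h)$ and $E^\sigma[\bar u^{h,\sigma}](t_h)$ converge to $E^\sigma[u^\sigma](s)$ and $E^\sigma[u^\sigma](t)$ by the uniform convergence of the energy on compacta and the continuity of $t\mapsto E^\sigma[u^\sigma](t)$ from Proposition \ref{prop:uniform-convergence-energy}. For the two dissipation integrals it suffices to prove lower semicontinuity, and here the key ingredients are: \emph{(a)} strong convergence of the mobility, $m(\bar u^{h,\sigma})\to m(u^\sigma)$ in $L_\infty$ on compact time intervals with a uniform lower bound $m(\bar u^{h,\sigma})\ge c_{m,\sigma,u_0}>0$ --- using the uniform-in-space convergence of $\bar u^{h,\sigma}$ to $u^\sigma$ at a.e.\ time, the positivity bound of Lemma \ref{lem:positivity_sigma}, Proposition \ref{prop:convergences}, and local Lipschitz continuity of $m$ away from zero; the same applies to the $h$-shifted family $\bar u^{h,\sigma}(\cdot+h)$, which has the same limit --- and \emph{(b)} identification of the weak limit of the time-shifted flux quantity $w^{h,\sigma}:=\partial_x^3\bar u^{h,\sigma}(\cdot+h)-G_\sigma''(\bar u^{h,\sigma}(\cdot+h))\partial_x\bar u^{h,\sigma}(\cdot+h)$, which is bounded in $L_{\alpha+1}([\eps,T]\times\Omega)$ by Lemma \ref{lem:uniform_estimates}(iii), with $\partial_x^3 u^\sigma-G_\sigma''(u^\sigma)\partial_x u^\sigma$: the third-order part converges weakly in $L_{\alpha+1,\loc}$ (the time shift being harmless), while the lower-order part $G_\sigma''(\bar u^{h,\sigma}(\cdot+h))\partial_x\bar u^{h,\sigma}(\cdot+h)$ converges strongly in $L_{\alpha+1,\loc}$, by continuity of $G_\sigma''$ on $(0,\infty)$ and $\partial_x\bar u^{h,\sigma}(\cdot+h)\to\partial_x u^\sigma$ strongly in $L_{\alpha+1,\loc}$, which follows from the strong $H^1$-convergence of Proposition \ref{prop:uniform-convergence-energy} interpolated against the $W^3_{\alpha+1}$-bound. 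Granting (a) and (b), the scalar--vector products $m(\bar u^{h,\sigma})^{-1/(\alpha+1)}\bar j^{h,\sigma}$ and $m(\bar u^{h,\sigma})^{1/(\alpha+1)}w^{h,\sigma}$ converge weakly in $L_{\frac{\alpha+1}{\alpha}}$ resp.\ $L_{\alpha+1}$ as products of a strongly convergent scalar with a weakly convergent vector field, and weak lower semicontinuity of the $L^p$-norm, applied after shrinking $[s_h,t_h]$ slightly to a fixed subinterval of $(s,t)$ and then letting it exhaust $(s,t)$, gives
\[
\liminf_{h\to0}\int_{s_h}^{t_h}\!\!\int_\Omega\frac{|\bar j^{h,\sigma}|^{\frac{\alpha+1}{\alpha}}}{m(\bar u^{h,\sigma})^{\frac1\alpha}}\dd x\dd\tau\ \ge\ \int_s^t\!\!\int_\Omega\frac{|j^\sigma|^{\frac{\alpha+1}{\alpha}}}{m(u^\sigma)^{\frac1\alpha}}\dd x\dd\tau,
\]
and analogously for the second dissipation term with limit $\int_s^t\int_\Omega m(u^\sigma)|\partial_x^3 u^\sigma-G_\sigma''(u^\sigma)\partial_x u^\sigma|^{\alpha+1}$. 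Since the $\liminf$ of a sum dominates the sum of the $\liminf$'s, combining these bounds with the endpoint convergences yields \eqref{eq:energy_inequ}; the case $s=0$ is included, using $\bar u^{h,\sigma}(0)=u_0$ and $u^\sigma(0)=u_0$.

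The main obstacle is step (b): identifying the weak limit of the nonlinear, time-shifted quantity $w^{h,\sigma}$, which forces one to split off the highest-order derivative (only weakly convergent) from the lower-order term $G_\sigma''(\bar u^{h,\sigma})\partial_x\bar u^{h,\sigma}$ (which must be shown to converge strongly), and to track the $h$-shifts in time carefully so that both $\bar u^{h,\sigma}(\cdot)$ and $\bar u^{h,\sigma}(\cdot+h)$ may be replaced by the common limit $u^\sigma$. Note that strict positivity of $u^\sigma$, hence of $m(u^\sigma)$ --- which is what makes the dissipation functionals finite and their integrands continuous in $u$ --- is available precisely because $\sigma>0$, so this is where the modification of the potential enters.
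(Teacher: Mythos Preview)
Your proposal is correct and follows essentially the same approach as the paper: pass to the limit in the discrete continuity equation via the weak convergences of Proposition~\ref{prop:convergences}, and in the discrete energy-dissipation inequality by combining the uniform convergence of the energy from Proposition~\ref{prop:uniform-convergence-energy} for the endpoint terms with weak lower semicontinuity of the dissipation integrals, the latter relying on uniform convergence of $m(\bar u^{h,\sigma})$ (hence of its powers, bounded away from zero) together with the weak convergence of $\bar j^{h,\sigma}$ and of the shifted flux quantity $w^{h,\sigma}$. The only cosmetic differences are that the paper takes $t_h=\lceil t/h\rceil h$ so that $[s,t]\subset[s_h,t_h]$ directly (avoiding your exhaustion step), and identifies the weak limit of $w^{h,\sigma}$ in one stroke via ``bounded plus distributional convergence'' rather than splitting off the lower-order part.
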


%---------------------------
%---------------------------

\begin{proof}
	\textbf{(i) Energy-dissipation inequality. } From the uniform convergence of \(E^\sigma[\bar{u}^{h,\sigma}]\) to \(E^\sigma[u^{\sigma}]\) proved in Proposition \ref{prop:uniform-convergence-energy} and continuity of \(t\mapsto E^\sigma[u^{\sigma}](t)\), we obtain
	\begin{equation} \label{eq:variational_convergence}
	    E^\sigma[\bar{u}^{h,\sigma}](t_h) \longrightarrow E^\sigma[u^{\sigma}](t) \quad \text{ as } h\to 0 \text{ and } t_h \to t
	\end{equation}
	for any $0 \leq t \leq T$. 
	Let now $s_h = \lfloor s/h\rfloor h$ and $t_h = \lceil t/h\rceil h$. Then, in view of \eqref{eq:energy-dissipation_inequ} we know that
	\begin{equation*}
		\begin{split}
		    E^\sigma[\bar{u}^{h,\sigma}](t_h)
			&+
			\tfrac{\alpha}{\alpha+1}\int_{s_h}^{t_h} \int_\Omega \frac{|\bar{j}^{h,\sigma}(\tau)|^{\frac{\alpha+1}{\alpha}}}{ m\bigl(\bar{u}^{h,\sigma}(\tau)\bigr)^{\frac{1}{\alpha}}}\dd x \dd \tau   
			\\
			& 
			+ \tfrac{1}{\alpha+1}
			\int_{s_h}^{t_h} \int_\Omega 
			m\bigl(\bar{u}^{h,\sigma}(\tau)\bigr) |\partial_x^3 \bar{u}^{h,\sigma}(\tau+h) - G_\sigma^{\prime\prime}(\bar{u}^{h,\sigma}(\tau+h)) \partial_x \bar{u}^{h,\sigma}(\tau+h)|^{\alpha+1}\dd x\dd \tau
			\\
			&\leq 
			E^\sigma[\bar{u}^{h,\sigma}](s_h).
		\end{split}
	\end{equation*}
	
	Taking the \(\liminf\) on both sides and using \eqref{eq:variational_convergence} which guarantees convergence of the energy, we obtain
	\begin{equation*}
		\begin{split}
		    E^\sigma[u^{\sigma}](t)
			&+ \liminf\limits_{h\searrow 0}\left[
			\tfrac{\alpha}{\alpha+1}\int_{s_h}^{t_h} \int_\Omega \frac{|\bar{j}^{h,\sigma}(\tau)|^{\frac{\alpha+1}{\alpha}}}{ m\bigl(\bar{u}^{h,\sigma}(\tau)\bigr)^{\frac{1}{\alpha}}}\dd x \dd \tau  \right.
			\\
			& 
			\left.+ \tfrac{1}{\alpha+1}
			\int_{s_h}^{t_h} \int_\Omega 
			m\bigl(\bar{u}^{h,\sigma}(\tau)\bigr) |\partial_x^3 \bar{u}^{h,\sigma}(\tau+h) - G_\sigma^{\prime\prime}(\bar{u}^{h,\sigma}(\tau+h)) \partial_x \bar{u}^{h,\sigma}(\tau+h)|^{\alpha+1}\dd x\dd \tau\right]
			\\
			&\leq 
			E^\sigma[u^{\sigma}](s).
		\end{split}
	\end{equation*}
	It remains to show
	\begin{equation*}
	    \begin{split}
	        &\tfrac{\alpha}{\alpha+1}\int_{s}^{t} \int_\Omega \frac{|j^{\sigma}(\tau)|^{\frac{\alpha+1}{\alpha}}}{ m\bigl(u^{\sigma}(\tau)\bigr)^{\frac{1}{\alpha}}}\dd x \dd \tau
	        +
	        \tfrac{1}{\alpha+1}
			\int_{s}^{t} \int_\Omega 
			m\bigl(u^{\sigma}(\tau)\bigr) |\partial_x^3 u^{\sigma}(\tau) - G_\sigma^{\prime\prime}(u^{\sigma}(\tau)) \partial_x u^{\sigma}(\tau)|^{\alpha+1}\dd x\dd \tau \\
			\leq
			&\liminf\limits_{h\searrow 0}\left[
			\tfrac{\alpha}{\alpha+1}\int_{s_h}^{t_h} \int_\Omega \frac{|\bar{j}^{h,\sigma}(\tau)|^{\frac{\alpha+1}{\alpha}}}{ m\bigl(\bar{u}^{h,\sigma}(\tau)\bigr)^{\frac{1}{\alpha}}}\dd x \dd \tau  \right.
			\\
			& 
			\left.+ \tfrac{1}{\alpha+1}
			\int_{s_h}^{t_h} \int_\Omega 
			m\bigl(\bar{u}^{h,\sigma}(\tau)\bigr) |\partial_x^3 \bar{u}^{h,\sigma}(\tau+h) - G_\sigma^{\prime\prime}(\bar{u}^{h,\sigma}(\tau+h)) \partial_x \bar{u}^{h,\sigma}(\tau+h)|^{\alpha+1}\dd x\dd \tau\right].
	    \end{split}
	\end{equation*}
	Since \(s_h \leq s<t \leq t_h\) for every \(h>0\) and by non-negativity of the integrand, it suffices to prove
	\begin{equation}\label{eq:liminf}
	    \begin{split}
	        &\tfrac{\alpha}{\alpha+1}\int_{s}^{t} \int_\Omega \frac{|j^{\sigma}(\tau)|^{\frac{\alpha+1}{\alpha}}}{ m\bigl(u^{\sigma}(\tau)\bigr)^{\frac{1}{\alpha}}}\dd x \dd \tau
	        +
	        \tfrac{1}{\alpha+1}
			\int_{s}^{t} \int_\Omega 
			m\bigl(u^{\sigma}(\tau)\bigr) |\partial_x^3 u^{\sigma}(\tau) - G_\sigma^{\prime\prime}(u^{\sigma}(\tau)) \partial_x u^{\sigma}(\tau)|^{\alpha+1}\dd x\dd \tau \\
			\leq &  \liminf\limits_{h\searrow 0}\left[
			\tfrac{\alpha}{\alpha+1}\int_{s}^{t} \int_\Omega \frac{|\bar{j}^{h,\sigma}(\tau)|^{\frac{\alpha+1}{\alpha}}}{ m\bigl(\bar{u}^{h,\sigma}(\tau)\bigr)^{\frac{1}{\alpha}}}\dd x \dd \tau  \right.
			\\
			& 
			\left.+ \tfrac{1}{\alpha+1}
			\int_{s}^{t} \int_\Omega 
			m\bigl(\bar{u}^{h,\sigma}(\tau)\bigr) |\partial_x^3 \bar{u}^{h,\sigma}(\tau+h) - G_\sigma^{\prime\prime}(\bar{u}^{h,\sigma}(\tau+h)) \partial_x \bar{u}^{h,\sigma}(\tau+h)|^{\alpha+1}\dd x\dd \tau\right].
	    \end{split}
	\end{equation}
	In Lemma \ref{lem:uniform_estimates} we showed that
	\begin{equation*}
	    \hat{u}^{h,\sigma} \longrightarrow u^\sigma \quad \text{strongly in } C\bigl([0,T]\times\bar{\Omega}\bigr).
	\end{equation*} 
	In virtue of the Arzelà--Ascoli theorem, this implies equicontinuity and thus also the uniform convergence
	\begin{equation*}
    	\bar{u}^{h,\sigma} \longrightarrow u^\sigma \quad\text{in } [0,T]\times\bar{\Omega} \text{ as } h\to 0,
	\end{equation*}
	since \(\bar{u}^{h,\sigma}\) is a piecewise constant approximation of \(\hat{u}^{h,\sigma}\). Using the local Lipschitz continuity of the mobility function $m$, we find that
	\begin{equation*}
    	m(\bar{u}^{h,\sigma}) \longrightarrow m(u^\sigma) \quad \text{uniformly in } [0,T]\times\bar{\Omega} \text{ as } h\to 0.
	\end{equation*}
	Since $u^{\sigma} \geq c_{\sigma,u_0}$ by Proposition \ref{prop:convergences} and $m(s) > 0,\, s > 0$ by assumption, we find that there is a constant $c_{m,\sigma,u_0}>0$ such that
	\begin{equation*}
	    m(u^{\sigma}) > c_{m,\sigma,u_0} > 0, \quad t\geq 0,\, x\in \bar{\Omega}.
	\end{equation*}
	Combining this with
	\begin{equation*}
	\bar{j}^{h,\sigma}\, \xrightharpoonup{\phantom{\text{wii}}} j^\sigma \quad \text{weakly in } L_{\frac{\alpha+1}{\alpha}}\bigl((0,\infty)\times\Omega\bigr)
	\end{equation*}
	by Lemma \ref{lem:uniform_estimates}, this implies by weak lower semicontinuity of the norm that
	\begin{equation*}
	\int_s^t \int_\Omega \frac{|j^\sigma(\tau)|^{\frac{\alpha+1}{\alpha}}}{m(u^\sigma(\tau))^{\frac{1}{\alpha}}}\dd x\dd \tau
	\leq 
	\liminf_{h \to 0} 
	\int_{s}^{t} \int_\Omega \frac{|\bar{j}^{h,\sigma}(\tau)|^{\frac{\alpha+1}{\alpha}}}{ m\bigl(\bar{u}^{h,\sigma}(\tau)\bigr)^{\frac{1}{\alpha}}}\dd x \dd \tau.
	\end{equation*}
	For the second term in \eqref{eq:liminf} we use that 
	\begin{equation*}
	    \partial_x^3 \bar{u}^{h,\sigma}(\cdot+h) - G_\sigma^{\prime\prime}(\bar{u}^{h,\sigma}(\cdot+h)) \partial_x \bar{u}^{h,\sigma}(\cdot+h)\, \xrightharpoonup{\phantom{\text{wii}}} \partial_x^3 u^{\sigma} - G_\sigma^{\prime\prime}(u^{\sigma}) \partial_x u^{\sigma} \quad \text{weakly in } L_{\alpha+1}\bigl((0,\infty)\times\Omega\bigr),
	\end{equation*}
	since $\partial_x^3 \bar{u}^{h,\sigma}(\cdot+h) - G_\sigma^{\prime\prime}(\bar{u}^{h,\sigma}(\cdot+h)) \partial_x \bar{u}^{h,\sigma}(\cdot+h)$ is uniformly bounded in $L_{\alpha+1}\bigl((\eps,\infty)\times\Omega\bigr)$ for every \(\eps >0\) and converges to $\partial_x^3 u^{\sigma} - G_\sigma^{\prime\prime}(u^{\sigma}) \partial_x u^{\sigma}$ in the sense of distributions. Combining this with the uniform convergence of \(m(\bar{u}^{h,\sigma})\) to \(m(u^{\sigma})\) and by weak lower semicontinuity of the norm, we deduce that
	\begin{equation*}
	\begin{split}
	&\int_s^t \int_\Omega  m\bigl(u^\sigma(\tau)\bigr)|\partial_x^3 u_\tau^\delta|^{\alpha+1}\dd x\dd \tau\\
	&\leq 
	\liminf_{h \to 0} 
	\int_{s}^{t} \int_{\Omega}  
	m\bigl(\bar{u}^{h,\sigma}(\tau)\bigr)|\partial_x^3 \bar{u}^{h,\sigma}(\tau+h) - G_\sigma^{\prime\prime}(\bar{u}^{h,\sigma}(\tau+h)) \partial_x \bar{u}^{h,\sigma}(\tau+h)|^{\alpha+1}\dd x\dd \tau.
    \end{split}
	\end{equation*}   
	
\noindent\textbf{(ii) Continuity equation. } That the continuity equation is satisfied for all $\phi \in C^\infty\bigl([0,\infty)\times\bar{\Omega}\bigr)$ with compact support in time in the limit $h\to 0$ follows from the time-discrete continuity equation in Lemma \ref{lem:Discrete_energy-dissipation} (ii) and the weak convergence results of Lemma \ref{prop:convergences}. By density, we may extend this to $\phi \in L_{\alpha+1}\bigl((0,\infty);W^1_{\alpha+1}(\Omega)\bigr)$. This completes the proof.
\end{proof}

%-----------------------------
%-----------------------------

\subsection{Energy-dissipation equality}

In this subsection we prove that the accumulation points $(u^\sigma,j^\sigma)_\sigma$ of the minimising-movement scheme constructed in the previous subsection are weak solutions to the modified thin-film equation
\begin{equation} \label{eq:PDE_mod_2}
    \begin{cases}
        \partial_t u^\sigma + \partial_x \bigl(\Psi
        \bigl(\partial_x^3 u^\sigma  - G^{\prime\prime}_\sigma(u^\sigma) \partial_x u^\sigma\bigr)
        \bigr) 
        = 0, & t > 0,\ x \in \Omega,
        \\
        \partial_x u^{\sigma} 
        =
        \partial_x^3 u^\sigma  - G^{\prime\prime}_\sigma(u^\sigma) \partial_x u^\sigma
        = 0, & t > 0,\ x \in \partial\Omega,
    \end{cases}
\end{equation}
and satisfy the energy-dissipation equality
\begin{equation*}
    E^\sigma[u^\sigma](t)
    +
    \int_s^t \int_\Omega m(u^\sigma(\tau)) 
    |\partial_x^3 u^\sigma(\tau) - G^{\prime\prime}_\sigma(u^\sigma(\tau)) \partial_x u^\sigma(\tau)|^{\alpha+1} \dd x \dd \tau
    =
    E^\sigma[u^\sigma](s)
\end{equation*}
for all $0 \leq s < t < \infty$.
In fact, we show that any pair
\begin{equation*}
    (u^\sigma,j^\sigma) 
    \in 
    L_{\alpha+1,\text{loc}}\bigl((0,\infty);W^3_{\alpha+1,B}(\Omega)\bigr) 
    \times L_{\frac{\alpha+1}{\alpha},\text{loc}}\bigl((0,\infty)\times\Omega\bigr)
\end{equation*}
with \(\partial_t u^\sigma
    \in 
    L_{\frac{\alpha+1}{\alpha},\text{loc}}\bigl((0,\infty);\bigl(W^1_{\alpha+1,B}(\Omega)\bigr)'\bigr)\) satisfies the energy-dissipation inequality \eqref{eq:energy_inequ} and the
continuity equation
\begin{equation} \label{eq:continuity_equation_v}
    \begin{cases}
        \partial_t u^{\sigma} + \partial_x j^{\sigma} 
        = 0, 
        & t > 0,\ x \in \Omega,
        \\
        \partial_x u^{\sigma} 
        = 
        j^{\sigma} 
        =
        0, & t > 0,\ x \in \partial\Omega,
    \end{cases}
\end{equation}
then, $u^\sigma$ is already a solution to the modified thin-film equation \eqref{eq:PDE_mod_2}. This is the content of the following proposition.

%------------------------------------

\begin{proposition}\label{prop:modified_thin-film_v}
If a pair
\begin{equation*}
	(v,k) 
	\in 
    L_{\alpha+1,\loc}\bigl((0,\infty);W^3_{\alpha+1,B}(\Omega)\bigr) 
    \times L_{\frac{\alpha+1}{\alpha},\loc}\bigl((0,\infty)\times\Omega\bigr)
\end{equation*}
with \(
    \partial_t v
    \in 
    L_{\frac{\alpha+1}{\alpha},\loc}\bigl((0,\infty);\bigl(W^1_{\alpha+1,B}(\Omega)\bigr)'\bigr)\) satisfies the continuity equation \eqref{eq:continuity_equation_v} in the sense that
\begin{equation} \label{eq:weak-continuity_equation_v}
	\int_0^\infty \langle \partial_t v_t, \phi\rangle_{W^1_{\alpha+1}}\dd t
	- 
	\int_0^\infty \int_\Omega k_t\cdot \partial_x \phi\dd x\dd t = 0
\end{equation}
for all $\phi \in L_{\alpha+1}\bigl([0,\infty);W^1_{\alpha+1}(\Omega)\bigr)$ compactly supported in time, then 
\begin{equation}\label{eq:rev_energy_inequ}
	E^\sigma[v](t)
	+ 
	\tfrac{\alpha}{\alpha+1}\int_s^t\int_\Omega \frac{|k|^{\frac{\alpha+1}{\alpha}}}{m(v)^{\frac{1}{\alpha}}}\dd x\dd \tau 
	+ 
	\tfrac{1}{\alpha+1}\int_s^t\int_\Omega m(v)|\partial_x^3 v - G_\sigma^{\prime\prime}(v) \partial_x v|^{\alpha+1}\dd x\dd \tau 
	\geq 
	E^\sigma[v](s)
\end{equation}
holds for all $0 \leq s < t < \infty$.
Moreover, provided that $E^\sigma[v_0] < \infty$, equality holds if and only if $v$ is a weak solution to the modified thin-film equation
\begin{equation*} 
	\begin{cases}
	    \partial_t v + \partial_x\bigl(m(v) \Psi
	    \bigl(\partial_x^3 v - G_\sigma^{\prime\prime}(v) \partial_x v\bigr)\bigr) = 0, & t > 0,\ x \in \Omega,
	    \\
	    \partial_x v 
	    = 
	    \partial_x^3 v - G_\sigma^{\prime\prime}(v) \partial_x v
	    =
	    0, & t > 0,\ x \in \partial\Omega,
	\end{cases}
\end{equation*}
	in the sense that $v$ satisfies the equation
\begin{equation} \label{eq:modified_thin-film_weak_v}
	\int_0^\infty \langle \partial_t v,\phi\rangle_{W^1_{\alpha+1}}\dd t 
	- 
	\int_0^\infty \int_{\Omega} 
	m(v) \Psi
	\bigl(\partial_x^3 v - G_\sigma^{\prime\prime}(v) \partial_x v\bigr) \partial_x \phi\dd x\dd t 
	= 0
\end{equation}
for all $\phi \in L_{\alpha+1}\bigl([0,\infty);W^1_{\alpha+1}(\Omega)\bigr)$ with compact support in time.
\end{proposition}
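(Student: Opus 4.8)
The plan is to turn the formal computation of Section~\ref{sec:gradient-flow_intro} into a rigorous argument: differentiate $\tau\mapsto E^\sigma[v(\tau)]$ along the continuity equation by a chain rule, estimate the resulting cross term by Young's inequality to obtain \eqref{eq:rev_energy_inequ}, and then identify the equality case with the pointwise Euler--Lagrange identity $k=m(v)\Psi(\partial_x^3 v-G_\sigma''(v)\partial_x v)$. First I would dispose of the degenerate situations: if $E^\sigma[v]\equiv+\infty$ on $[s,t]$, or the right-hand side of \eqref{eq:rev_energy_inequ} is infinite, there is nothing to prove, so assume $E^\sigma[v](\tau)<\infty$ for all $\tau\in[s,t]$. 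Since $v\in L_{\alpha+1,\loc}((0,\infty);W^3_{\alpha+1,B}(\Omega))$ with $\partial_t v\in L_{\frac{\alpha+1}{\alpha},\loc}((0,\infty);(W^1_{\alpha+1,B}(\Omega))')$, the interpolation result \cite[Remark~3.4]{bernis_existence_1988} yields $v\in C((0,\infty);H^1(\Omega))$, hence $v$ is jointly continuous on $(0,\infty)\times\bar\Omega$; combined with Lemma~\ref{lem:positivity_sigma} and condition \ref{it:Gsigma4} this produces a constant $c>0$ with $v\geq c$ on $[s,t]\times\bar\Omega$, so $m(v)\geq m(c)>0$ and $G_\sigma',G_\sigma''$ are continuous and bounded along $v$. (For the accumulation points of the minimising-movement scheme this positivity is already part of Proposition~\ref{prop:convergences}.)

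The heart of the proof is the chain rule
\begin{equation*}
    E^\sigma[v](t)-E^\sigma[v](s)
    = -\int_s^t\!\!\int_\Omega\bigl(\partial_x^3 v-G_\sigma''(v)\partial_x v\bigr)k\dd x\dd\tau,
\end{equation*}
which I would establish by mollifying in time. Setting $v^\eps=v\conv\rho_\eps$ and $k^\eps=k\conv\rho_\eps$, the pair still satisfies $\partial_t v^\eps=-\partial_x k^\eps$, obeys $\partial_x v^\eps=k^\eps=0$ on $\partial\Omega$, stays $\geq c/2$ on a slightly smaller time interval, and is smooth in time with values in $W^3_{\alpha+1,B}(\Omega)$. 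For such $v^\eps$ the map $\tau\mapsto E^\sigma[v^\eps(\tau)]$ is $C^1$, and using the Euler--Lagrange derivative $DE^\sigma[v^\eps]=-\partial_x^2 v^\eps+G_\sigma'(v^\eps)\in W^1_{\alpha+1}(\Omega)$, one integration by parts in space, and $k^\eps|_{\partial\Omega}=0$,
\begin{equation*}
    \tfrac{d}{d\tau}E^\sigma[v^\eps]
    = \bigl\langle DE^\sigma[v^\eps],\,-\partial_x k^\eps\bigr\rangle
    = -\int_\Omega\bigl(\partial_x^3 v^\eps-G_\sigma''(v^\eps)\partial_x v^\eps\bigr)k^\eps\dd x .
\end{equation*}
Integrating over $[s,t]$ and letting $\eps\to0$ --- using $v^\eps\to v$ in $L_{\alpha+1,\loc}(W^3_{\alpha+1,B})$, $k^\eps\to k$ in $L_{\frac{\alpha+1}{\alpha},\loc}$, $v^\eps(s),v^\eps(t)\to v(s),v(t)$ in $H^1$, and uniform convergence $v^\eps\to v$ together with continuity of $G_\sigma$ on $[c/2,\infty)$ for the potential part --- gives the stated identity.

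It then remains to combine this with the pointwise Young inequality with exponents $\tfrac{\alpha+1}{\alpha}$ and $\alpha+1$, peeling off the weight $m(v)$,
\begin{equation*}
    -\bigl(\partial_x^3 v-G_\sigma''(v)\partial_x v\bigr)k
    \geq -\tfrac{\alpha}{\alpha+1}\frac{|k|^{\frac{\alpha+1}{\alpha}}}{m(v)^{\frac1\alpha}}
          -\tfrac{1}{\alpha+1}\,m(v)\bigl|\partial_x^3 v-G_\sigma''(v)\partial_x v\bigr|^{\alpha+1},
\end{equation*}
which, integrated and inserted into the chain rule, yields \eqref{eq:rev_energy_inequ}. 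For the characterisation: equality in \eqref{eq:rev_energy_inequ} for some $0\leq s<t$ forces equality in the Young inequality for a.e.\ $(\tau,x)\in(s,t)\times\Omega$, and equality there holds exactly when $k=m(v)\Psi(\partial_x^3 v-G_\sigma''(v)\partial_x v)$ a.e.; substituting this into the weak continuity equation \eqref{eq:weak-continuity_equation_v} reproduces precisely \eqref{eq:modified_thin-film_weak_v}. Conversely, if $v$ solves \eqref{eq:modified_thin-film_weak_v}, then subtracting it from \eqref{eq:weak-continuity_equation_v} gives $\int_0^\infty\!\int_\Omega\bigl(k-m(v)\Psi(\partial_x^3 v-G_\sigma''(v)\partial_x v)\bigr)\partial_x\phi\dd x\dd\tau=0$ for all admissible $\phi$; since in one space dimension $\partial_x\phi$ runs over all of $L_{\alpha+1}((0,\infty)\times\Omega)$ with compact support in time, this forces $k=m(v)\Psi(\partial_x^3 v-G_\sigma''(v)\partial_x v)$ a.e., so the Young inequality is an equality and hence so is \eqref{eq:rev_energy_inequ} on every $[s,t]$.

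The main obstacle is the chain rule: $v$ has only the borderline space--time regularity $L_{\alpha+1,\loc}(W^3_{\alpha+1,B})$ with $\partial_t v\in L_{\frac{\alpha+1}{\alpha},\loc}((W^1_{\alpha+1,B})')$, so $\tau\mapsto E^\sigma[v(\tau)]$ cannot be differentiated naively; the mollification argument goes through only because the boundary conditions $\partial_x v=k=0$ on $\partial\Omega$ make the spatial integration by parts boundary-term free and because the a~priori lower bound $v\geq c>0$ keeps $m(v)^{-1/\alpha}$ and $G_\sigma''(v)$ under control, so that every term converges in its natural Lebesgue space as $\eps\to0$. One could alternatively avoid mollification by exploiting the convexity of $G_\sigma$, treating $\tfrac12\|\partial_x v\|_{L_2(\Omega)}^2$ with the Hilbert-space integration-by-parts-in-time lemma and $\int_\Omega G_\sigma(v)$ with the one-sided estimates $G_\sigma'(a)(b-a)\leq G_\sigma(b)-G_\sigma(a)\leq G_\sigma'(b)(b-a)$ summed along a partition of $[s,t]$.
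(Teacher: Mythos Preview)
Your proposal is correct and follows essentially the same route as the paper: establish the chain rule $E^\sigma[v](t)-E^\sigma[v](s)=-\int_s^t\int_\Omega(\partial_x^3 v-G_\sigma''(v)\partial_x v)\,k\dd x\dd\tau$ by mollifying in time and passing to the limit (using $v\in C((0,\infty);H^1(\Omega))$ from \cite[Remark~3.4]{bernis_existence_1988} and the positivity from Lemma~\ref{lem:positivity_sigma}), then apply Young's inequality pointwise and read off the equality case. The only organisational difference is that the paper mollifies $v$ alone to first prove the abstract identity $E^\sigma[v](t)-E^\sigma[v](s)=-\int_s^t\langle\partial_\tau v,\partial_x^2 v-G_\sigma'(v)\rangle\dd\tau$ and afterwards inserts the continuity equation by testing with $\mathbf{1}_{[s,t]}(\partial_x^2 v-G_\sigma'(v))$, whereas you mollify the pair $(v,k)$ together; the paper also reaches $s=0$ by a limit $s\searrow0$ after first working on $(0,\infty)$, a point you should make explicit since the $W^3_{\alpha+1,B}$-regularity is only available on the open interval.
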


\begin{proof}
	\noindent\textbf{(i) Absolute continuity of $E^{\sigma}$ in time. } We prove that for all $0 \leq s < t < \infty$, we have
	\begin{equation} \label{eq:abs_cont}
	E^{\sigma}[v](t) - E^{\sigma}[v](s)
	=
	-\int_s^t \int_\Omega k(\tau) \bigl(\partial_x^3v(\tau) - G^{\prime\prime}_{\sigma}(v(\tau))\partial_xv(\tau)\bigr) \dd x\dd \tau. 
	\end{equation} 
	As a first step, we show that
	\begin{equation} \label{eq:abs_continuity}
	E^{\sigma}[v](t) - E^{\sigma}[v](s)
	=
	-\int_s^t \langle \partial_\tau v(\tau),\partial_x^2 v(\tau) - G^{\prime}_{\sigma}(v(\tau))\rangle_{W^1_{\alpha+1}}\dd \tau
	\end{equation}
	for all $v \in L_{\alpha+1}\bigl([0,T];W^3_{\alpha+1,B}(\Omega)\bigr)$ with $\partial_t v \in L_{\frac{\alpha+1}{\alpha}}\bigl([0,T];\bigl(W^1_{\alpha+1,B}(\Omega)\bigr)'\bigr)$ and $0 \leq s < t < \infty$. To this end, we mollify in time by introducing, for $\eps > 0$, 
	\begin{equation*}
	v^\eps = \eta_\eps \ast v \in C^\infty\bigl([\eps,\infty);W^3_{\alpha+1,B}(\Omega)\bigr).
	\end{equation*}
	Then, by the fundamental theorem of calculus, $v^\eps$ satisfies 
	\begin{equation}\label{eq:abs_continuity_eps}
	\begin{split}
	   	E^{\sigma}[v^{\eps}](t) - E^{\sigma}[v^{\eps}](s)
	    &= \int_{s}^{t} \partial_{\tau} E^{\sigma}[v^{\eps}](\tau) \dd \tau \\
	    & = \int_{s}^{t} \int_{\Omega} \partial_{\tau} \partial_x v^{\eps}(\tau)\partial_xv^{\eps}(\tau) + \partial_{\tau}v^{\eps} G'(v^{\eps}(\tau)) \dd x \dd t \\
	    &  = -\int_s^t \langle \partial_\tau v^\eps(\tau),\partial_x^2 v^\eps(\tau)- G'(v^{\eps}(\tau))\rangle_{W^1_{\alpha+1}}\dd \tau
	\end{split}
	\end{equation}
	for all $0 < \eps \leq s < t < \infty$. Moreover, we know that, for every $s > 0$,
	\begin{equation*}
	\begin{cases}
	v^\eps \longrightarrow v & \text{strongly in } C\bigl([s,\infty);H^1(\Omega)\bigr),
	\\
	\partial_x^2 v^\eps \longrightarrow \partial_x^2 v & \text{strongly in } L_{\alpha+1}\bigl([s,\infty);W^1_{\alpha+1}(\Omega)\bigr),
	\\
	\partial_t v^\eps \longrightarrow \partial_t v & \text{strongly in } L_{\frac{\alpha+1}{\alpha}}\bigl([s,\infty);\bigl(W^1_{\alpha+1,B}(\Omega)\bigr)'\bigr)
	\end{cases}
	\end{equation*} 
	as $\eps \to 0$. Here, the first convergence stated follows again from the generalised Lions--Magenes theorem, \cite[Remark 3.4]{bernis_existence_1988},  that is
	\begin{equation*}
	    v\in L_{\alpha+1}\bigl([0,T];W^3_{\alpha+1,B}(\Omega)\bigr)
	    \quad \text{with} \quad
	    \partial_t v \in  L_{\frac{\alpha+1}{\alpha}}\bigl([0,T];\bigl(W^1_{\alpha+1,B}(\Omega)\bigr)'\bigr)
	\end{equation*}
 	implies that \(v\) has a continuous representative
 		\[\tilde{v}\in C\bigl([0,T];H^1(\Omega)\bigr)\]
 	with \(\tilde{v} = v\) almost everywhere. Hence, we may take the limit in \eqref{eq:abs_continuity_eps} to obtain \eqref{eq:abs_continuity} for $0 < s < t < \infty$. The case $s=0$ follows by taking the limit $s \searrow 0$.
	
    By testing the continuity equation \eqref{eq:continuity_equation_v} with $\mathbf{1}_{[s,t]} \bigl(\partial_x^2 v(\tau) - G^{\prime}_{\sigma}(v(\tau))\bigr)\in L_{\alpha+1}\bigl([0,\infty);W^1_{\alpha+1}(\Omega)\bigr)$, where $\mathbf{1}_{[s,t]}$ denotes the characteristic function of the interval $[s,t]$, we note that 
	\begin{equation*}
	    \int_s^t \langle \partial_\tau v(\tau),\partial_x^2 v(\tau) - G^{\prime}_{\sigma}(v(\tau))\rangle_{W^1_{\alpha+1}}\dd \tau
	    =
	    \int_s^t \int_{\Omega} k(\tau) \bigl(\partial_x^3 v(\tau) - G^{\prime\prime}_{\sigma}(v(\tau))\partial_xv(\tau)\bigr) \dd x \dd \tau.
	\end{equation*}
	
	\noindent\textbf{(ii) Reverse energy-dissipation inequality. } By an application of Young's inequality, we have
	\begin{equation} \label{eq:Young}
	-k(\tau)\bigl(\partial_x^3 v(\tau)-G^{\prime\prime}_{\sigma}(v(\tau))\partial_xv(\tau)\bigr)
	\geq
	- \tfrac{\alpha}{\alpha+1} \frac{|k(\tau)|^{\frac{\alpha+1}{\alpha}}}{m(v(\tau))^{\frac{1}{\alpha}}} 
	- \tfrac{1}{\alpha+1} m(v(\tau)) |\partial_x^3 v(\tau)-G^{\prime\prime}_{\sigma}(v(\tau))\partial_xv(\tau)|^{\alpha+1}
	\end{equation}
	almost everywhere in $[0,\infty)\times\Omega$. Together with \eqref{eq:abs_cont}, this proves \eqref{eq:rev_energy_inequ}.
	
	\noindent\textbf{(iii) Energy-dissipation equality and the modified thin-film equation. }
	Recall that equality in \eqref{eq:Young} holds if and only if 
	\begin{equation*}
	k(\tau) = \pm m(v(\tau)) \Psi\bigl(\partial_x^3 v(\tau) - G^{\prime\prime}_{\sigma}(v(\tau))\partial_xv(\tau)\bigr)
	\quad \text{a.e. in } [0,\infty)\times\Omega.
	\end{equation*}
	Since the energy $E^\sigma[v]$ may not increase, we obtain that equality in \eqref{eq:rev_energy_inequ} holds for all $0\leq s < t < \infty$ if and only if 
	\begin{equation} \label{eq:flux_k}
	k(\tau) = m(v(\tau)) \Psi\bigl(\partial_x^3 v(\tau) - G^{\prime\prime}_{\sigma}(v(\tau))\partial_xv(\tau)\bigr)
	\quad \text{a.e. in } [0,\infty)\times\Omega.
	\end{equation}
	Inserting this in the continuity equation \eqref{eq:continuity_equation_v} proves \eqref{eq:modified_thin-film_weak_v}. 
	
	To prove that, if $E^\sigma[v_0] < \infty$, solutions to the modified thin-film equation \eqref{eq:modified_thin-film_weak_v} satisfy the energy-dissipation equality, observe that the pair $(v,k)$ satisfies the continuity equation \eqref{eq:continuity_equation_v} with the choice \eqref{eq:flux_k}.
\end{proof}

Since by Proposition \ref{prop:energy_inequ} any accumulation point $(u^\sigma,j^\sigma)$ of the family $(\hat{u}^{h,\sigma},\bar{j}^{h,\sigma})_h$ satisfies the conditions of Proposition \ref{prop:modified_thin-film_v}, we find that $u^\sigma$ is a weak solution to the modified thin-film equation \eqref{eq:PDE_mod_2}. For flow-behaviour exponents \(\alpha\neq 1\), this equations degenerates in the third derivative and hence we cannot claim uniqueness of solutions. For the Newtonian case \(\alpha=1\) though, uniqueness of positive solutions is well-known by standard parabolic theory.

%-----------------------------
%-----------------------------

\begin{theorem} \label{thm:well-posedness}
	Given $u_0 \in H^1(\Omega)$ with finite energy \(E^{\sigma}[u_0]<\infty\), there exists
	\begin{equation*}
	    u^\sigma \in C_b\bigl([0,\infty);H^1(\Omega)\bigr) \cap L_{\alpha+1,\loc}\bigl((0,\infty);W^3_{\alpha+1,B}(\Omega)\bigr) \quad \text{with}\quad \partial_t u^{\sigma}\in L_{\frac{\alpha+1}{\alpha}}\bigl([0,\infty);\bigl(W^{1}_{\alpha+1}(\Omega)\bigr)'\bigr)
	\end{equation*} 
	such that a subsequence of $(\hat{u}^{h,\sigma},\bar{j}^{h,\sigma})_h$ converges as follows:
	\begin{equation} \label{eq:convergence_h}
	\begin{cases}
	\hat{u}^{h,\sigma} \xrightharpoonup{\phantom{\text{wii}}} u^\sigma & \text{weakly in } L_{\alpha+1,\loc}\bigl([0,\infty);W^3_{\alpha+1,B}(\Omega)\bigr), 
	\\
	\partial_t\hat{u}^{h,\sigma} \xrightharpoonup{\phantom{\text{wii}}} \partial_t u^\sigma & \text{weakly in }
	L_{\frac{\alpha+1}{\alpha}}\bigl([0,\infty);\bigl(W^1_{\alpha+1,B}(\Omega)\bigr)'\bigr),
	\\
	\hat{u}^{h,\sigma} \longrightarrow u^\sigma & 
	\text{strongly in } C_{\loc}\bigl([0,\infty);C^{\nu}(\bar{\Omega})\bigr) \text{ for every } 0\leq \nu < \frac{1}{2},
	\\
	\bar{j}^{h,\sigma} \xrightharpoonup{\phantom{\text{wii}}} j^\sigma & \text{weakly in } L_{\frac{\alpha+1}{\alpha}}\bigl([0,\infty)\times\Omega\bigr).
	\end{cases}
	\end{equation}
	Furthermore, it holds
	\begin{equation} \label{eq:j_sigma}
	    j^\sigma
	    =
	    m(u^\sigma) \Psi\bigl(\partial_x^3 u^\sigma - G^{\prime\prime}_{\sigma}(u^{\sigma})\partial_x u^{\sigma} \bigr) \quad  \text{a.e. in } [0,\infty)\times\Omega. 
	\end{equation}
	Moreover, $u^\sigma$ is a positive weak solution to the initial-boundary-value problem
	\begin{equation}\label{eq:thin-film-reg}
	\begin{cases}
	\partial_t u^\sigma + \partial_x\bigl(m(u^\sigma) \Psi\bigl(\partial_x^3 u^\sigma - G^{\prime\prime}_{\sigma}(u^{\sigma})\partial_x u^{\sigma} \bigr)\bigr) = 0, & t > 0,\ x \in \Omega,
	\\
	\partial_x u^\sigma= m(u^\sigma) \Psi\bigl(\partial_x^3 u^\sigma - G^{\prime\prime}_{\sigma}(u^{\sigma})\partial_x u^{\sigma} \bigr) = 0, & t > 0,\ x \in \partial\Omega,
	\\
	u^\sigma(0,x) = u_0(x), & x \in \Omega,
	\end{cases}
	\end{equation}
	in the sense that
	\begin{equation}\label{eq:weak-formulation-mod}
	    \int_0^{\infty} \langle \partial_t u^{\sigma}, \phi \rangle_{W^1_{\alpha+1}} \dd t - \int_{0}^{\infty} \int_{\Omega} m(u^{\sigma}) \Psi\bigl(\partial_x^3 u^{\sigma}-G^{\prime\prime}_{\sigma}(u^{\sigma}) \partial_xu^{\sigma}\bigr) \partial_x \phi \dd x \dd t = 0
	\end{equation}
	holds for all \(\phi\in L_{\alpha+1}\bigl([0,\infty);W^1_{\alpha+1}(\Omega)\bigr)\). Furthermore, \(u^{\sigma}\) satisfies the energy-dissipation equality
	\begin{equation} \label{eq:EDI_sigma}
	E^{\sigma}[u^{\sigma}](t) + \int_s^t \int_\Omega  m(u^\sigma(\tau))|\partial_x^3 u^\sigma(\tau) - G^{\prime\prime}_{\sigma}(u^{\sigma}(\tau))\partial_xu^{\sigma}(\tau)|^{\alpha+1}\dd x\dd\tau 
	=  
	E^{\sigma}[u^{\sigma}](s)
	\end{equation}
	for all $0\leq s < t <\infty$. Furthermore, if \(\alpha=1\), i.e. if the fluid is Newtonian, there is exactly one accumulation point \(u^{\sigma}\) of the sequence \((\hat{u}^{h,\sigma})\) and \(u^{\sigma}\) is the unique weak solution to \eqref{eq:thin-film-reg}.
\end{theorem}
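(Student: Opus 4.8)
The plan is to obtain the theorem by assembling the results of this section. First I would invoke Proposition~\ref{prop:convergences}, which already produces the subsequence and the limit pair $(u^\sigma,j^\sigma)$ in the stated function spaces, together with all the convergence modes in \eqref{eq:convergence_h} (in fact a few more, involving $\bar u^{h,\sigma}$) and the uniform lower bound $u^\sigma \ge c_{\sigma,u_0} > 0$ on $[0,\infty)\times\bar\Omega$; this settles at once positivity and the memberships $u^\sigma \in C_b([0,\infty);H^1(\Omega)) \cap L_{\alpha+1,\loc}((0,\infty);W^3_{\alpha+1,B}(\Omega))$ with $\partial_t u^\sigma \in L_{\frac{\alpha+1}{\alpha}}([0,\infty);(W^1_{\alpha+1}(\Omega))')$. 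Proposition~\ref{prop:energy_inequ} (whose proof rests on Proposition~\ref{prop:uniform-convergence-energy}) then tells us that this limit pair solves the continuity equation \eqref{eq:limit_continuity_equation} distributionally and satisfies the energy-dissipation \emph{in}equality \eqref{eq:energy_inequ} for every $0 \le s < t < \infty$.

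Next I would feed $(v,k) = (u^\sigma,j^\sigma)$ into Proposition~\ref{prop:modified_thin-film_v}: its hypotheses are exactly the regularity coming from Proposition~\ref{prop:convergences} together with the continuity equation \eqref{eq:continuity_equation_v}, which here is \eqref{eq:limit_continuity_equation}, so the proposition returns the reverse energy-dissipation inequality \eqref{eq:rev_energy_inequ}. Comparing \eqref{eq:rev_energy_inequ} with \eqref{eq:energy_inequ} forces equality for all $0 \le s < t < \infty$, and the equality clause of Proposition~\ref{prop:modified_thin-film_v} (applicable since $E^\sigma[u_0] < \infty$) yields simultaneously the flux representation $j^\sigma = m(u^\sigma)\Psi(\partial_x^3 u^\sigma - G''_\sigma(u^\sigma)\partial_x u^\sigma)$ a.e., which is \eqref{eq:j_sigma}, and the weak formulation \eqref{eq:weak-formulation-mod} of \eqref{eq:thin-film-reg}, obtained by substituting this flux into the continuity equation. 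To pass from the two-term equality to \eqref{eq:EDI_sigma} I would use $|\Psi(s)| = |s|^\alpha$ to compute $|j^\sigma|^{\frac{\alpha+1}{\alpha}}/m(u^\sigma)^{\frac{1}{\alpha}} = m(u^\sigma)|\partial_x^3 u^\sigma - G''_\sigma(u^\sigma)\partial_x u^\sigma|^{\alpha+1}$, so that $\tfrac{\alpha}{\alpha+1} + \tfrac{1}{\alpha+1} = 1$ merges the two dissipation terms into the single one in \eqref{eq:EDI_sigma}.

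The only step requiring genuinely new work is the uniqueness statement for $\alpha = 1$, where \eqref{eq:thin-film-reg} reduces to $\partial_t u^\sigma + \partial_x(m(u^\sigma)(\partial_x^3 u^\sigma - G''_\sigma(u^\sigma)\partial_x u^\sigma)) = 0$. Along any of the weak solutions constructed above this problem is uniformly parabolic: the solution is bounded, lies in $L_{2,\loc}((0,\infty);W^3_{2,B}(\Omega))$ with $\partial_t u^\sigma \in L_2((0,\infty);(W^1_2(\Omega))')$, and the mobility is bounded below by $c_{m,\sigma,u_0} > 0$. For two such solutions $u_1,u_2$ with the same initial datum I would subtract the weak formulations \eqref{eq:weak-formulation-mod}, test with a suitable admissible regularisation of $u_1 - u_2$ (equivalently, estimate the difference in a negative-order norm), exploit positivity and the uniform Lipschitz bounds of $m$, $G'_\sigma$, $G''_\sigma$ on the common compact range of $u_1,u_2$ to control all lower-order contributions, and close by Gronwall's inequality; since every accumulation point of $(\hat u^{h,\sigma})_h$ is such a weak solution, they all coincide. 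I expect the main obstacle to be precisely this Gronwall step — choosing a test function regular enough to be admissible yet strong enough to dominate the quasilinear lower-order terms — while a minor bookkeeping nuisance throughout is reconciling the spaces $(W^1_{\alpha+1,B}(\Omega))'$ and $(W^1_{\alpha+1}(\Omega))'$ when quoting Propositions~\ref{prop:energy_inequ} and~\ref{prop:modified_thin-film_v}; the rest is assembly of results already in hand.
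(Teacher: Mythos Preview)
Your proposal is correct and follows essentially the same route as the paper: invoke Proposition~\ref{prop:convergences} for the convergences, regularity, and positivity; Proposition~\ref{prop:energy_inequ} for the continuity equation and the energy-dissipation inequality; then Proposition~\ref{prop:modified_thin-film_v} for the reverse inequality, which forces equality and yields both the flux representation \eqref{eq:j_sigma} and the weak formulation \eqref{eq:weak-formulation-mod}. For $\alpha=1$ the paper simply appeals to standard parabolic theory (Pazy, Amann, Lunardi) via the uniform lower bound on $m(u^\sigma)$, which is the same mechanism you outline; your bookkeeping worry about $(W^1_{\alpha+1,B}(\Omega))'$ versus $(W^1_{\alpha+1}(\Omega))'$ is moot since, by the paper's convention, $W^1_{\alpha+1,B}(\Omega)=W^1_{\alpha+1}(\Omega)$.
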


%-----------------------------
%-----------------------------

\begin{proof}
	\textbf{(i)} The convergence results and the regularity of the limit function $u^\sigma$ have been proved in Lemma \ref{lem:uniform_estimates}.
	Note that the weak convergences in \eqref{eq:convergence_h} are obtained in $[0,\infty)$ by using a diagonal-sequence argument. The strong convergence in \eqref{eq:convergence_h} is obtained only locally since it relies on the Arzelà--Ascoli theorem. However, the limit $u^\sigma$ is contained in $C_b\bigl([0,\infty);H^1(\Omega)\bigr)$ thanks to the energy-dissipation equality and the Lions--Magenes theorem.
	
	\noindent\textbf{(ii)} The energy-dissipation equality is satisfied in view of Proposition \ref{prop:energy_inequ} and Proposition \ref{prop:modified_thin-film_v}.
	
	\noindent\textbf{(iii)} That $u^\sigma$ satisfies the thin-film equation has been shown in Proposition \ref{prop:modified_thin-film_v}. Positivity of $u^{\sigma}$ follows from finite energy $E^{\sigma}[u^{\sigma}](t) \leq E^{\sigma}[u_0]<\infty$ for all $t\geq 0$ and Lemma \ref{lem:positivity_sigma}.
	
	\noindent\textbf{(iv)} Uniqueness of weak solutions --  and thus of the limit point -- follows from standard parabolic theory, see e.g. \cite{pazy_semigroups_1983,amann_nonhomogeneous_1993,lunardi_analytic_2012}, using that $m(u) \geq c_{m,\sigma,u_0}$ for all $(t,x)\in [0,\infty)\times \bar{\Omega}$.
\end{proof}

%-----------------------------
%-----------------------------

Finally, we observe that from the energy-dissipation equality \eqref{eq:EDI_sigma} for \(u^{\sigma}\), one obtains a uniform Hölder bound in \(C^{\frac{1}{5\alpha+3},\frac{1}{2}}([0,T]\times\bar{\Omega})\). The proof follows an argument given by \cite[Lemma 4.2]{grun_nonnegativity_2000} or \cite[Lemma 3.1]{otto_lubrication_1998} for the case $\alpha=1$ and $G_\sigma \equiv 0$. By the Arzelà--Ascoli theorem, this guarantees uniform convergence of \(m(u^{\sigma})\) to \(m(u)\).

\begin{lemma} \label{lem:Hoelder}
	Given an initial datum $u_0 \in H^1(\Omega)$ with finite energy $E^\sigma[u_0] < \infty$, let
	\begin{equation*}
	    (u^\sigma,j^\sigma)
	    \in L_{\alpha+1,\loc}\bigl((0,\infty);W^3_{\alpha+1,B}(\Omega)\bigr) 
	    \times
	    L_{\frac{\alpha+1}{\alpha}}\bigl([0,\infty)\times\Omega\bigr)
	    \quad
	    \text{with}
	    \quad
	    \partial_t u^\sigma \in 
	    L_{\frac{\alpha+1}{\alpha}}\bigl([0,\infty);\bigl(W^{1}_{\alpha+1,B}(\Omega)\bigr)'\bigr) 
	\end{equation*}
	satisfy the energy-dissipation equality \eqref{eq:EDI_sigma}.
	Then, for all \(T>0\), we have $u^\sigma \in  C^{\frac{1}{5\alpha+3},\frac{1}{2}}([0,T]\times\bar{\Omega})$ with a bound\vspace{-12pt}
	\begin{equation}\label{eq:Holder_bound}
	    \|u^\sigma\|_{C^{\frac{1}{5\alpha+3},\frac{1}{2}}([0,T]\times\bar{\Omega})}
	    \leq
	    C E^\sigma[u_0]^{\frac{1}{2}},
	\end{equation}
	where the constant $C > 0$ may depend on $T$ and $\alpha$, but not on $\sigma$.
\end{lemma}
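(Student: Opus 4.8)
The plan is to obtain the two Hölder exponents separately. The spatial exponent $\tfrac12$ is Morrey's embedding applied to the $\sigma$-uniform $H^1$-bound that is built into the energy-dissipation equality \eqref{eq:EDI_sigma}, and the temporal exponent $\tfrac1{5\alpha+3}$ comes from testing the continuity equation against a spatially localised bump and balancing its length scale against the dissipation, following the argument of \cite[Lemma~3.1]{otto_lubrication_1998} and \cite[Lemma~4.2]{grun_nonnegativity_2000} for $\alpha=1$ and $G_\sigma\equiv0$; the only new features are the $\alpha$-dependent exponents and the extra lower-order term $G_\sigma''(u^\sigma)\partial_xu^\sigma$, which is absorbed into the dissipation term exactly as in \eqref{eq:EDI_sigma}. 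I would start with the spatial bound: taking $s=0$ in \eqref{eq:EDI_sigma} and using $G_\sigma\ge0$ gives $\tfrac12\|\partial_xu^\sigma(t)\|_{L_2(\Omega)}^2\le E^\sigma[u^\sigma](t)\le E^\sigma[u_0]$ for all $t\in[0,T]$. Since $u^\sigma$ conserves mass (Remark~\ref{rem:cons_mass_poincare}), the Poincaré inequality and $H^1(\Omega)\hookrightarrow C^{1/2}(\bar\Omega)$ yield $\sup_{t\in[0,T]}\|u^\sigma(t)\|_{C^{1/2}(\bar\Omega)}\le C\,E^\sigma[u_0]^{1/2}$ with $C$ independent of $\sigma$ (only the Dirichlet part of $E^\sigma$ is used). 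In particular $u^\sigma$ is uniformly bounded on $[0,T]\times\bar\Omega$, so $M:=\|m(u^\sigma)\|_{L_\infty([0,T]\times\Omega)}$ is finite with a $\sigma$-uniform bound.

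For the temporal bound, fix $0\le s<t\le T$, $x_0\in\bar\Omega$ and $\rho\in(0,1]$, and choose a tent-type cut-off $\phi=\phi_{\rho,x_0}\in W^1_{\alpha+1}(\Omega)$ with $\phi\ge0$, $\supp\phi\subset\bar\Omega\cap(x_0-\rho,x_0+\rho)$, $\int_\Omega\phi\dd x=1$ and $\|\partial_x\phi\|_{L_{\alpha+1}(\Omega)}\le C\rho^{-\frac{2\alpha+1}{\alpha+1}}$ (near $\partial\Omega$ one uses a half-tent, with the same scaling). Testing the weak formulation \eqref{eq:weak-formulation-mod} with $\mathbf{1}_{[s,t]}(\tau)\phi(x)$ and using the absolute continuity of $\tau\mapsto u^\sigma(\tau)$ in $(W^1_{\alpha+1,B}(\Omega))'$ (established along the lines of Proposition~\ref{prop:modified_thin-film_v}(i)), one obtains, writing $z^\sigma=\partial_x^3u^\sigma-G_\sigma''(u^\sigma)\partial_xu^\sigma$ and $j^\sigma=m(u^\sigma)\Psi(z^\sigma)$,
\begin{equation*}
  u^\sigma(t,x_0)-u^\sigma(s,x_0) = \int_s^t\!\!\int_\Omega j^\sigma\,\partial_x\phi\dd x\dd\tau + R ,
\end{equation*}
where $R$ collects the spatial oscillation of $u^\sigma$ on $\supp\phi$ and, since $\int_\Omega\phi\dd x=1$, satisfies $|R|\le 2\,[u^\sigma]_{C^{1/2}(\bar\Omega)}\,\rho^{1/2}\le C\,E^\sigma[u_0]^{1/2}\rho^{1/2}$ by the first step.

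The flux integral is estimated by Hölder's inequality in $x$ and then in $\tau$, using the identity $|j^\sigma|^{\frac{\alpha+1}{\alpha}}=m(u^\sigma)^{1/\alpha}\,m(u^\sigma)|z^\sigma|^{\alpha+1}$ and the energy-dissipation equality \eqref{eq:EDI_sigma}:
\begin{equation*}
  \Bigl|\int_s^t\!\!\int_\Omega j^\sigma\partial_x\phi\dd x\dd\tau\Bigr| \le \|\partial_x\phi\|_{L_{\alpha+1}(\Omega)}\,|t-s|^{\frac1{\alpha+1}}\Bigl(M^{1/\alpha}\bigl(E^\sigma[u^\sigma](s)-E^\sigma[u^\sigma](t)\bigr)\Bigr)^{\frac\alpha{\alpha+1}} \le C\,\rho^{-\frac{2\alpha+1}{\alpha+1}}\,|t-s|^{\frac1{\alpha+1}} ,
\end{equation*}
where the remaining constant depends only on $E^\sigma[u_0]$, $M$ and $\alpha$, all $\sigma$-uniform. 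Combining the last two displays, $|u^\sigma(t,x_0)-u^\sigma(s,x_0)|\le C\bigl(\rho^{1/2}+\rho^{-\frac{2\alpha+1}{\alpha+1}}|t-s|^{\frac1{\alpha+1}}\bigr)$.

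It remains to optimise in $\rho$. Since $\tfrac12+\tfrac{2\alpha+1}{\alpha+1}=\tfrac{5\alpha+3}{2(\alpha+1)}$, the choice $\rho\sim|t-s|^{2/(5\alpha+3)}$ balances the two terms (and may be taken $\le1$ on $[0,T]$ after enlarging $C=C(T)$), which gives $|u^\sigma(t,x_0)-u^\sigma(s,x_0)|\le C\,|t-s|^{1/(5\alpha+3)}$; collecting the powers of $E^\sigma[u_0]$ appearing in the constants puts this into the form \eqref{eq:Holder_bound}, and together with the spatial bound of the first step this proves the lemma. I expect the main obstacle to be not analytical but organisational: constructing the localised test function with the exact $\rho$-scaling (in particular near $\partial\Omega$) and, above all, verifying that every constant is genuinely independent of $\sigma$. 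The latter works precisely because the degenerating positivity constant $c_{\sigma,u_0}$ from Lemma~\ref{lem:positivity_sigma} is never invoked — the flux is controlled through the dissipation and the $\sigma$-uniform $L_\infty$-bound on $u^\sigma$, never through a lower bound on $m(u^\sigma)$ — and because only the nonnegative, $\sigma$-independent Dirichlet part of $E^\sigma$ enters the a priori estimates.
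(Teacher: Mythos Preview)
Your proposal is correct and follows essentially the same strategy as the paper: the spatial $C^{1/2}$-bound from Morrey plus the $H^1$-control built into \eqref{eq:EDI_sigma}, and the temporal exponent from testing the continuity equation against a localised bump of width $\rho$, then optimising $\rho\sim|t-s|^{2/(5\alpha+3)}$ to balance the $\rho^{1/2}$ spatial oscillation against the $\rho^{-(2\alpha+1)/(\alpha+1)}|t-s|^{1/(\alpha+1)}$ flux term. The only cosmetic difference is the boundary treatment: the paper extends $u^\sigma$ evenly and $j^\sigma$ oddly across $\partial\Omega$ and uses a standard mollifier $\eta_\eps$ (so that the three-term split $|U^\sigma(t)-\eta_\eps*U^\sigma(t)|+|\eta_\eps*U^\sigma(t)-\eta_\eps*U^\sigma(s)|+|\eta_\eps*U^\sigma(s)-U^\sigma(s)|$ replaces your decomposition into the flux integral and the remainder $R$), whereas you work directly on $\Omega$ with a half-tent near $\partial\Omega$. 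Both choices produce identical scalings and the same $\sigma$-independent constants, and your remark that no lower bound on $m(u^\sigma)$ is ever invoked is exactly the point that makes the estimate uniform in $\sigma$.
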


%-----------------------------

\begin{proof}
	We already know from the Lions--Magenes theorem that $u^\sigma \in C_b\bigl([0,\infty);H^1(\Omega)\bigr)$. In view of the Sobolev embedding theorem, we thus find that $u^\sigma \in C_b\bigl([0,\infty);C^{1/2}(\bar{\Omega})\bigr)$.
	
	It remains to prove the H\"older continuity in time. Let $T > 0$ and consider $(U^\sigma,J^\sigma)$ such that $U^\sigma$ is the even extension of $u^\sigma$ and $J^\sigma$ is the odd extension of $j^\sigma$ about $\partial\Omega$. Moreover, let  $\eta_\eps$ be a standard mollifier in space and consider for $x \in \bar{\Omega}$ and $0 \leq s < t \leq T$
	\begin{equation*}
	\begin{split}
	   |u^\sigma(t,x) - u^\sigma(s,x)|
	   &\leq
	   |U^\sigma(t,x) - \eta_\eps \ast U^\sigma(t,x)|
	   +
	   |\eta_\eps \ast U^\sigma(t,x) - \eta_\eps \ast U^\sigma(s,x)|
	   \\
	   &\quad
	   +
	   |\eta_\eps \ast U^\sigma(s,x) - U^\sigma(s,x)|
	   \\
	   &=
	   (I) + (II) + (III).
	\end{split}
	\end{equation*}
	Since $u^\sigma(t,\cdot) \in C^\frac{1}{2}(\bar{\Omega})$, we find that $(I)$ and $(III)$ satisfy
	\begin{equation*}
	    (I) + (III)
	    \leq
	    \eps^{\frac{1}{2}} \left(
	    \left[U^\sigma(t,\cdot)\right]_{C^{1/2}(\bar{\Omega})}
	    +
	    \left[U^\sigma(s,\cdot)\right]_{C^{1/2}(\bar{\Omega})}
	    \right)
	    \leq
	    2 \eps^{\frac{1}{2}} E^\sigma[u_0]^\frac{1}{2}.
	\end{equation*}
	Testing the continuity equation with $\phi(\tau,y) = \textbf{1}_{[s,t]}(\tau) \eta_\eps(y-x)$, the second term may be estimated using the bound 
	\(
	\|\eta_{\eps}'\|_{L^{\alpha+1}(\Omega)} \leq C\eps^{-(2\alpha+1)/(\alpha+1)} \) and Hölder's inequality
	\begin{align*}
	(II) 
	&=
	|\eta_\eps \ast U^\sigma(t,x) - \eta_\eps \ast U^\sigma(s,x)| 
	\\
	&=
	\left|\int_s^t \langle \partial_\tau U^\sigma(\tau,\cdot), \eta_\eps(x-\cdot) \rangle_{W^1_{\alpha+1}} \dd \tau\right|
	\\
	&=
	\left|\int_s^t \int_{\R} \eta^\prime_\eps(x-y) J^\sigma(\tau,y)\dd y\dd \tau\right|
	\\
	&\leq
	C \eps^{\frac{-2\alpha+1}{\alpha+1}} |t - s|^\frac{1}{\alpha+1}
	\|J^\sigma\|_{L_{\frac{\alpha+1}{\alpha}}((0,T)\times\Omega)}
	\\
	&\leq
	C \eps^{\frac{-2\alpha+1}{\alpha+1}} |t - s|^\frac{1}{\alpha+1} \left(\int_0^T \int_\Omega \frac{|j^\sigma|^{\frac{\alpha+1}{\alpha}}}{m(u^\sigma)^{\frac{1}{\alpha}}}\dd x\dd t\right)^{\frac{\alpha}{\alpha+1}} \|m(u^\sigma)\|_{L_\infty((0,T)\times \Omega)}^\frac{1}{\alpha+1}
	\\
	&\leq
	C \eps^{\frac{-2\alpha+1}{\alpha+1}} |t - s|^\frac{1}{\alpha+1}
	E^\sigma[u_0],
	\end{align*}
	where we use that $m$ is locally Lipschitz and $u^\sigma$ is uniformly bounded, so that $\|m(u^\sigma)\|_{L_\infty((0,T)\times\Omega)} \leq C E^\sigma[u_0]^{\frac{1}{2}}$. Combining the estimates for $(I),(II)$ and $(III)$, we obtain
	\begin{align*}
	    |u^\sigma(t,x) - u^\sigma(s,x)| 
	    \leq 
	    2 \eps^{\frac{1}{2}} E^\sigma[u_0]^\frac{1}{2}
	    +
	    C \eps^{\frac{-2\alpha+1}{\alpha+1}} |t - s|^\frac{1}{\alpha+1}
	    E^\sigma[u_0]^\frac{1}{2}
	\end{align*}
	for every \(\eps >0\). Optimising in $\eps$, we may choose $\eps = |t-s|^{\frac{2}{5\alpha+3}}$ and obtain
	\begin{equation*}
	    |u^\sigma(t,x) - u^\sigma(s,x)|
    	\leq
	    2 \eps^{\frac{1}{2}} E^\sigma[u_0]^\frac{1}{2}
	    +
	    C \eps^{\frac{-2\alpha+1}{\alpha+1}} |t - s|^\frac{1}{\alpha+1}
	    E^\sigma[u_0]^\frac{1}{2}
	    \leq
	    C |t - s|^{\frac{1}{5\alpha+3}}E^\sigma[u_0]^\frac{1}{2}.
	\end{equation*}
	This concludes the proof.
\end{proof}

\section{The limit $\sigma \to 0$: weak solutions to the thin-film equation with potential}\label{sec:limit}

Now we investigate the limit \(\sigma \to 0\) and show that, for a non-negative initial datum \(u_0 \geq 0\) with finite energy $E[u_0] < \infty$, every accumulation point \(u\) of the family \(u^{\sigma}\) is a global-in-time non-negative weak solution to the power-law thin-film equation with potential $G$
\begin{align}\label{eq:power-law-equation}\tag{$P$}
	\begin{cases}
		\partial_t u + \partial_x\bigl( m(u) \Psi\bigl(\partial_x^3 u - G^{\prime\prime}(u) \partial_x u\bigr) \bigr) = 0, & t\in (0,\infty),\ x \in \Omega, \\
		\partial_x u = 
		 m(u) \Psi \bigl(\partial_x^3 u - G^{\prime\prime}(u) \partial_x u\bigr)
		= 0, & t\in (0,\infty),\ x \in \partial\Omega, \\
		u(0) = u_0, & x\in \Omega
	\end{cases}
\end{align}
in the sense that
\begin{equation*}
	\int_0^\infty  \langle \partial_t u, \phi\rangle_{W^{1}_{\alpha+1}}\dd t
	-
	\iint_{\{u> 0\}} m(u) \Psi\bigl(\partial_x^3 u - G^{\prime\prime}(u) \partial_x u\bigr)\, \partial_x \phi\dd x\dd t
	=
	0
\end{equation*}
for all \(\phi\in L_{\alpha+1}\bigl([0,\infty);W^1_{\alpha+1}(\Omega)\bigr)\). In the Newtonian case $\alpha=1$ the solution concept agrees with the one in \cite{bernis_higher_1990}. 

The proof relies on uniform (in $\sigma$) a-priori estimates based on the energy-dissipation equality \eqref{eq:EDI_sigma}. Fix an initial condition \(u_0\in H^1(\Omega)\) with \(u_0\geq 0\) and
\begin{equation*}
    E[u_0] = \int_{\Omega} \tfrac{1}{2}|\partial_x u_0|^2\dd x + G(u_0) \dd x < \infty.
\end{equation*}
Since the existence result Theorem \ref{thm:well-posedness} for the modified problem \eqref{eq:PDE_mod} is only valid for positive initial values $u^\sigma_0 \in H^1(\Omega),\, u^\sigma_0 > 0$, we now approximate non-negative $u_0 \geq 0$ by a sequence of positive initial data
\begin{equation} \label{eq:assumptions_u_sigma}
    u^\sigma_0(x) > 0, \quad x \in \Omega,
    \quad
    u^\sigma_0 \longrightarrow u_0
    \quad \text{strongly in } H^1(\Omega) 
    \quad \text{and} \quad 
    G_\sigma(u^\sigma_0) \longrightarrow G(u_0) \quad \text{in } L_1(\Omega).
    %E^{\sigma}[u_0^{\sigma}] \longrightarrow E[u_0].
\end{equation}

The strong $H^1(\Omega)$-convergence $u^\sigma_0 \longrightarrow u_0$ in \eqref{eq:assumptions_u_sigma} guarantees the convergence of the Dirichlet energy. Together with the $L_1(\Omega)$-convergence $G_\sigma(u^\sigma_0) \longrightarrow G(u_0)$, following from the continuity of $G$ in zero, cf. \ref{it:G4}, this yields convergence of the energy functionals
\begin{equation*}
    E^{\sigma}[u_0^{\sigma}] \longrightarrow E[u_0].
\end{equation*}
This in turn is needed in order to preserve the energy-dissipation inequality in the limit $\sigma \to 0$, cf. Theorem \ref{thm:solution_to_tfe} below.

%-----------------------------

Recall that given such initial data, by Theorem \ref{thm:well-posedness} and for every \(\sigma >0\), there exists a positive function
\begin{equation*}
    u^{\sigma} \in C_b\bigl([0,\infty);H^1(\Omega)\bigr) \cap L_{\alpha+1,\text{loc}}\bigl((0,\infty);W^3_{\alpha+1,B}(\Omega)\bigr) \quad \text{with} \quad \partial_t u^{\sigma} \in L_{\frac{\alpha+1}{\alpha}}\bigl([0,\infty);\bigl(W^1_{\alpha+1,B}(\Omega)\bigr)'\bigr)
\end{equation*}
which is a weak solution to the initial-boundary value problem
\begin{equation*}
    \begin{cases}
	\partial_t u^\sigma + \partial_x\bigl(m(u^\sigma) \Psi\bigl(\partial_x^3 u^\sigma - G^{\prime\prime}_{\sigma}(u^{\sigma})\partial_x u^{\sigma} \bigr)\bigr) = 0, & t > 0,\ x \in \Omega,
	\\
	\partial_x u^\sigma= m(u^\sigma) \Psi\bigl(\partial_x^3 u^\sigma - G^{\prime\prime}_{\sigma}(u^{\sigma})\partial_x u^{\sigma} \bigr) = 0, & t > 0,\ x \in \partial\Omega,
	\\
	u^\sigma(0,x) = u_0^{\sigma}(x), & x \in \Omega,
	\end{cases}
\end{equation*}
and satisfies the energy-dissipation equality
\begin{equation*}
	E^{\sigma}[u^{\sigma}](t) + \int_s^t \int_\Omega  m(u^\sigma(\tau))|\partial_x^3 u^\sigma(\tau) - G^{\prime\prime}_{\sigma}(u^{\sigma}(\tau))\partial_xu^{\sigma}(\tau)|^{\alpha+1}\dd x\dd\tau 
	=  
	E^{\sigma}[u^\sigma](s).
\end{equation*}

Applying the Arzelà--Ascoli theorem and using that \(E^{\sigma}[u_0^{\sigma}]\) is uniformly bounded, Lemma \ref{lem:Hoelder} implies that there exists an accumulation point \(u\in C^{\frac{1}{5\alpha+3},\frac{1}{2}}\bigl([0,\infty)\times\bar{\Omega}\bigr)\) of the sequence \((u^{\sigma})_{\sigma}\) such that a (non-relabelled) subsequence satisfies
\begin{equation}\label{eq:Hoelder-accumulation-point}
    u^{\sigma} \longrightarrow u \quad \text{in } C^{\gamma,\nu}_{\loc}\bigl([0,\infty)\times\bar{\Omega}\bigr)\quad  \text{for every } 0\leq\gamma < \frac{1}{5\alpha+3} \text{ and } 0\leq \nu < \frac{1}{2}.
\end{equation}

We stress again that we actually work with one accumulation point and one converging subsequence $(u^{\sigma_k})_k \subset (u^\sigma)_\sigma$. This is important in the following since uniform bounds will depend on this accumulation point and hence are only valid for elements $u^{\sigma_k}$ with $k$ large enough. However, in order to simplify notation, we suppress the subscript $k$.

For \(\eta\in \R\), we fix the notation
\begin{equation*}
    \{u > \eta\} \coloneq \left\{(t,x) \in [0,\infty) \times \bar{\Omega}\,\ u(t,x) > \eta\right\}.
\end{equation*}

In order to prove that \(u\) is a weak solution to the thin-film equation with potential \eqref{eq:power-law-equation} on the set \(\{u > 0\}\), we need further uniform bounds.

\begin{proposition}\label{prop:uniform-bounds-sigma}
	Let \(u_0\in H^1(\Omega)\) with \(u_0 \geq 0\) in \(\bar{\Omega}\) and \(E[u_0]<\infty\). Let \((u^{\sigma})_{\sigma}\) be the sequence of weak solutions to the modified thin-film equation obtained in Theorem \ref{thm:well-posedness} with initial datum \(u_0^{\sigma}\) as above. Let \(u \in C^{\frac{1}{5\alpha+3},\frac{1}{2}}\bigl([0,\infty)\times\Bar{\Omega}\bigr)\) be any accumulation point of the sequence \(u^{\sigma}\). Then there is \(\sigma_0>0\) small enough such that we have the following uniform bounds for every \(0< \sigma < \sigma_0\):
	\begin{enumerate}
		\item[(i)] \((u^{\sigma})_{\sigma}\) is uniformly bounded in \(L_{\infty}\bigl((0,\infty);H^1(\Omega)\bigr)\);
		\item[(ii)] \(m(u^{\sigma})^{\frac{1}{\alpha+1}}|\partial_x^3 u^{\sigma}- G^{\prime\prime}_{\sigma}(u^{\sigma})\partial_x u^{\sigma}|\) is uniformly bounded in \(L_{\alpha+1}\bigl((0,\infty)\times\Omega\bigr)\);
		\item[(iii)]  \((\partial_x^3 u^{\sigma})_{\sigma}\) is uniformly bounded in \(L_{\alpha+1,\loc}(\{u> 0\})\);
		\item[(iv)] \((\partial_t u^{\sigma})_\sigma\) is uniformly bounded in \(L_{\frac{\alpha+1}{\alpha}}\bigl((0,\infty);\bigl(W^1_{\alpha+1,B}(\Omega)\bigr)'\bigr)\);
		\item[(v)] \((\partial_t\partial_x u^{\sigma})_\sigma\) is uniformly bounded in \(L_{\frac{\alpha+1}{\alpha}}\bigl((0,\infty);\bigl(W^1_{\alpha+1,0}(\Omega)\cap W^2_{\alpha+1}(\Omega)\bigr)'\bigr)\).
	\end{enumerate}
\end{proposition}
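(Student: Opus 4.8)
The plan is to derive everything from the energy--dissipation equality \eqref{eq:EDI_sigma} together with the convergence $E^\sigma[u_0^\sigma]\to E[u_0]$ built into \eqref{eq:assumptions_u_sigma}, which makes $E^\sigma[u_0^\sigma]\le E[u_0]+1=:C_0$ for all sufficiently small $\sigma$. For \textbf{(i)} I would set $s=0$ in \eqref{eq:EDI_sigma} and use $G_\sigma\ge 0$ to obtain $\tfrac12\|\partial_x u^\sigma(t)\|_{L_2(\Omega)}^2\le E^\sigma[u^\sigma](t)\le C_0$, uniformly in $t$ and $\sigma$; since the mass of $u^\sigma$ is conserved in time and convergent as $\sigma\to 0$, the Poincaré inequality upgrades this to a uniform bound in $L_\infty((0,\infty);H^1(\Omega))$, and the one-dimensional Sobolev embedding $H^1(\Omega)\hookrightarrow L_\infty(\Omega)$ yields a uniform pointwise bound $0\le u^\sigma\le M$ and hence $m(u^\sigma)\le M_m$ by local Lipschitz continuity of $m$. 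For \textbf{(ii)} there is nothing to prove: it is precisely \eqref{eq:EDI_sigma} with $s=0$ and $t\to\infty$. For \textbf{(iv)} I would combine the weak form of the continuity equation with the representation \eqref{eq:j_sigma} of $j^\sigma$, so that $\langle\partial_t u^\sigma(\tau),\phi\rangle=\int_\Omega j^\sigma(\tau)\,\partial_x\phi\dd x$ gives $\|\partial_t u^\sigma(\tau)\|_{(W^1_{\alpha+1,B}(\Omega))'}\le\|j^\sigma(\tau)\|_{L_{(\alpha+1)/\alpha}(\Omega)}$, and then use the identity $|j^\sigma|^{(\alpha+1)/\alpha}=m(u^\sigma)^{1/\alpha}\,m(u^\sigma)\,|\partial_x^3 u^\sigma-G_\sigma''(u^\sigma)\partial_x u^\sigma|^{\alpha+1}\le M_m^{1/\alpha}\,m(u^\sigma)\,|\partial_x^3 u^\sigma-G_\sigma''(u^\sigma)\partial_x u^\sigma|^{\alpha+1}$, whose space--time integral is bounded by (ii). For \textbf{(v)} I would integrate by parts in space, $\langle\partial_t\partial_x u^\sigma,\psi\rangle=-\langle\partial_t u^\sigma,\partial_x\psi\rangle=-\int_\Omega j^\sigma\,\partial_x^2\psi\dd x$ for test functions $\psi\in W^1_{\alpha+1,0}(\Omega)\cap W^2_{\alpha+1}(\Omega)$ (the vanishing traces of $\psi$ and of $\partial_x u^\sigma$ on $\partial\Omega$ kill the boundary terms and make $\partial_x\psi$ an admissible test function in $W^1_{\alpha+1,B}(\Omega)$), and close the estimate with the same $L_{(\alpha+1)/\alpha}$-bound on $j^\sigma$.

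The one genuinely non-trivial item is \textbf{(iii)}. I would fix a compact $K\Subset\{u>0\}$ and interpolate compacts $K\Subset K'\Subset K''\Subset\{u>0\}$. Since $u$ is continuous and positive, $2\delta:=\min_{K''}u>0$, and by the locally uniform Hölder convergence \eqref{eq:Hoelder-accumulation-point} there is $\sigma_0>0$ with $u^\sigma\ge\delta$ on $K''$ for every $\sigma<\sigma_0$; combined with $u^\sigma\le M$ from (i), this confines $u^\sigma|_{K''}$ to the fixed interval $[\delta,M]\subset(0,\infty)$. Shrinking $\sigma_0$ so that $2\sigma_0<\delta$, property \ref{it:Gsigma4} forces $G_\sigma=G$ on $[\delta,\infty)$, hence $G_\sigma''(u^\sigma)=G''(u^\sigma)$ on $K''$ with $|G''(u^\sigma)|\le C_\delta$ (as $G\in C^2((0,\infty))$), and $m(u^\sigma)\ge c_\delta>0$ there. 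Inserting this into (ii) yields a uniform bound for $f^\sigma:=\partial_x^3 u^\sigma-G''(u^\sigma)\partial_x u^\sigma$ in $L_{\alpha+1}((0,T)\times K'')$ for each $T>0$. It then remains to peel off the lower-order term: writing $w^\sigma:=\partial_x u^\sigma$, the identity $\partial_x^2 w^\sigma=f^\sigma+G''(u^\sigma)w^\sigma$ is, at almost every fixed time, a one-dimensional linear elliptic equation with bounded coefficient and right-hand side in $L_{\alpha+1}(K'')$; since $w^\sigma$ is bounded in $L_\infty((0,\infty);L_2(\Omega))$ by (i), interior elliptic estimates on $K'\Subset K''$ give $\|w^\sigma(t)\|_{W^2_{\min(2,\alpha+1)}(K')}\lesssim\|f^\sigma(t)\|_{L_{\alpha+1}(K'')}+\|w^\sigma(t)\|_{L_2(K'')}$, the one-dimensional Sobolev embedding then gives $\|w^\sigma(t)\|_{L_\infty(K')}\lesssim\|f^\sigma(t)\|_{L_{\alpha+1}(K'')}+\|w^\sigma(t)\|_{L_2(K'')}$, and re-inserting into $\partial_x^3 u^\sigma=f^\sigma+G''(u^\sigma)w^\sigma$ bounds $\partial_x^3 u^\sigma(t)$ in $L_{\alpha+1}(K)$; raising to the power $\alpha+1$ and integrating in $t$ over $[0,T]$ then gives the asserted uniform $L_{\alpha+1,\loc}(\{u>0\})$-bound.

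The hard part is exactly this localization-plus-bootstrap in (iii): the energy--dissipation equality only controls $\partial_x^3 u^\sigma$ in the \emph{degenerate, $\sigma$-dependent} norm weighted by $m(u^\sigma)$ and corrected by $G_\sigma''$, so one must first pass to the open set $\{u>0\}$, where the strong Hölder convergence of $u^\sigma$ from \eqref{eq:Hoelder-accumulation-point} restores a uniform lower bound on $u^\sigma$ and allows $G_\sigma''$ to be replaced by the smooth $G''$, and then run the elliptic bootstrap that trades the merely $L_2$ a priori control of $\partial_x u^\sigma$ for genuine $L_{\alpha+1}$-control of $\partial_x^3 u^\sigma$. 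A minor bookkeeping point is that the threshold $\sigma_0$ in (iii) depends on the chosen compact set; this is harmless because the bound is asserted only locally, and one can choose $\sigma_0$ compatibly along an exhausting sequence $K_n\uparrow\{u>0\}$.
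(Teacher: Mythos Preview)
Your proposal is correct and follows essentially the same approach as the paper: items (i), (ii), (iv), (v) are derived from the energy--dissipation equality \eqref{eq:EDI_sigma}, the uniform bound $E^\sigma[u_0^\sigma]\le C_0$, and the weak formulation via duality, exactly as in the paper (which for (v) simply writes ``this follows similarly as in (iv) by the same duality argument''). For (iii) both arguments localise to the positivity set, use the uniform Hölder convergence \eqref{eq:Hoelder-accumulation-point} to get $u^\sigma\ge\delta$ there, freeze $G_\sigma''=G''$ and bound $m(u^\sigma)$ from below, and then run a one-step bootstrap from the $L_2$-control of $\partial_x u^\sigma$ to $L_{\alpha+1}$-control of $\partial_x^3 u^\sigma$; the only cosmetic difference is that the paper writes the equation as $\partial_x^3 u^\sigma=\Psi^{-1}(j^\sigma/m(u^\sigma))+G_\sigma''(u^\sigma)\partial_x u^\sigma$, cites \cite{agmon_estimates_1959} to control $\partial_x^2 u^\sigma$, and works on the superlevel sets $\{u>\eta\}\cap[0,T]\times\bar\Omega$, whereas you phrase it as an interior elliptic estimate for $w^\sigma=\partial_x u^\sigma$ on nested compact sets---the underlying argument is the same.
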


\begin{proof}
	Fix \(\eta>0\). Given an accumulation point \(u\) of the sequence \(u^{\sigma}\), from the local uniform convergence we infer that for every \(T>0\) there is \(\sigma_0>0\) such that
	\begin{equation*}
	    u^{\sigma}(t,x)\geq \frac{\eta}{2}, \quad 
	    (t,x)\in \{u>\eta\} \cap \bigl([0,T]\times \bar{\Omega}\bigr)
	\end{equation*}
	and every \(\sigma< \sigma_0\).
	
	Furthermore, \(u^{\sigma}\) satisfies the energy-dissipation identity \eqref{eq:EDI_sigma}. With the choice \(s=0\) in \eqref{eq:EDI_sigma}, we obtain
	\begin{equation}\label{eq:EDI-sigma-0} 
	E^{\sigma}[u^\sigma](t) + \int_0^t \int_\Omega  m(u^\sigma(\tau))|\partial_x^3 u^\sigma(\tau) - G_{\sigma}^{\prime\prime}(u^{\sigma}(\tau))\partial_xu^{\sigma}(\tau)|^{\alpha+1}\dd x\dd\tau 
	=  
	E^{\sigma}[u^{\sigma}_0].
	\end{equation}
	\noindent\textbf{(i)} Recall that since \(G_{\sigma}(u^{\sigma}_0)\) converges uniformly to \(G(u_0)\) and \(u_0^{\sigma}\) converges to \(u_0\) in \(H^1(\Omega)\), we find that \(E^{\sigma}[u_0^{\sigma}]\) is uniformly bounded in \(\sigma\). From \eqref{eq:EDI-sigma-0}, we directly obtain
	\begin{equation*}
	    \int_\Omega \tfrac{1}{2} |\partial_x u^{\sigma}(t)|^2 \dd x \leq E^{\sigma}[u^{\sigma}](t) \leq E^{\sigma}[u_0^{\sigma}] \leq C
	\end{equation*}
	for every \(0< \sigma < \sigma_0\) and every \(t\geq 0\). Again relying on the Poincaré inequality, cf. Remark \ref{rem:cons_mass_poincare}, we conclude that \((u^{\sigma})_{\sigma}\) is uniformly bounded in \(L_{\infty}\bigl((0,\infty);H^1(\Omega)\bigr)\).
	
	\noindent\textbf{(ii)} This follows from \eqref{eq:EDI-sigma-0} via
	\begin{equation*}
	    \int_{0}^\infty \int_\Omega m(u^\sigma(\tau))|\partial_x^3 u^\sigma(\tau) - G_{\sigma}^{\prime\prime}(u^{\sigma}(\tau))\partial_xu^{\sigma}(\tau)|^{\alpha+1}\dd x\dd\tau \leq E^{\sigma}[u_0^{\sigma}] \leq C.
	\end{equation*}
	
% 	\noindent\textbf{(iii)} Let \(\eta >0\) as above and \(T>0\). It is enough to prove the uniform bound on the set \(\{u>\eta\} \cap [0,T]\times\bar{\Omega}\).
	
% 	Note that since \(m\) is locally Lipschitz and \(u^{\sigma}\) converges to \(u\) locally uniformly on \([0,\infty)\times \bar{\Omega}\), we deduce that \(m(u^{\sigma})\) converges to \(m(u)\) uniformly on \(\{u>\eta\} \cap [0,T]\times\bar{\Omega}\). Combining this convergence with \(u^\sigma > \frac{\eta}{2}\) on the set \(\{u>\eta\} \cap [0,T]\times\bar{\Omega}\) for all \(\sigma < \sigma_0\), there is a constant \(c_{\eta}>0\) independent of \(\sigma\) such that \(m(u^{\sigma}) \geq c_{\eta}\) on the set \(\{u>\eta\}\cap [0,T]\times \bar{\Omega}\). We conclude that
% 	\begin{align*}
% 	    \iint_{\{u>\eta\} \cap [0,T]\times\bar{\Omega}} |\partial_x^3 u^{\sigma}-G_{\sigma}^{\prime\prime}(u^{\sigma})\partial_xu^{\sigma}|^{\alpha+1} \dd x\dd t & \leq \frac{1}{c_{\eta}} \iint_{\{u>\eta\}} m(u^{\sigma}) |\partial_x^3 u^{\sigma}-G_{\sigma}^{\prime\prime}(u^{\sigma})\partial_xu^{\sigma}|^{\alpha+1} \dd x \dd t \\
% 	    & \leq 
% 	    C E^{\sigma}[u_0^{\sigma}] \leq C.
% 	\end{align*}
% 	\textcolor{magenta}{How to bound the second term? Can one use the regularity of \(j\) together with \cite{agmon_estimates_1959}?}
% 	Hence, we obtain the desired uniform bound.
	
	\noindent\textbf{(iii)} Let \(\eta >0\) as above and \(T>0\). It is enough to prove the uniform bound on the set \(\{u>\eta\} \cap \bigl([0,T]\times\bar{\Omega}\bigr)\). Recall from \eqref{eq:j_sigma} that
	\begin{equation*}
	    j^\sigma
	    =
	    m(u^\sigma) \Psi\bigl(\partial_x^3 u^\sigma - G^{\prime\prime}_{\sigma}(u^{\sigma})\partial_x u^{\sigma} \bigr) \quad  \text{a.e. in } [0,\infty)\times\Omega,
	\end{equation*}
	implying that
	\begin{equation}\label{eq:third-derivative-sigma}
	    \partial_x^3 u^\sigma
	    =
	    \frac{|j^\sigma|^\frac{1-\alpha}{\alpha} j^\sigma}{m(u^\sigma)^\frac{1}{\alpha}}
	    +
	    G^{\prime\prime}_{\sigma}(u^{\sigma})\partial_x u^{\sigma}
	    \quad  \text{a.e. in } [0,\infty)\times\Omega.
	\end{equation}
	Since $G^{\prime\prime}_{\sigma}(u^{\sigma})$ is uniformly bounded on $\{u > \eta\}$ and $\partial_x u^{\sigma} \in L_\infty\bigl((0,\infty);L_2(\Omega)\bigr)$ we have that
	\begin{equation*}
	    G^{\prime\prime}_{\sigma}(u^{\sigma})\partial_x u^{\sigma} \in L_2\bigl(\{u(t) > \eta\}\bigr)
	\end{equation*}
	for almost every $t \geq 0$ with a bound that is independent of $t$. Moreover, due to (ii) and since $m(u^\sigma)^\frac{1}{\alpha}$ is uniformly bounded we have that $j^\sigma \in L_\frac{\alpha+1}{\alpha}\bigl((0,\infty)\times\Omega\bigr)$
	and hence
	\begin{equation*}
	    \frac{|j^\sigma|^\frac{1-\alpha}{\alpha} j^\sigma}{m(u^\sigma)^\frac{1}{\alpha}} \in L_{\alpha+1}\bigl(\{u > \eta\}\bigr)
	\end{equation*}
	with a uniform bound. But then, by \cite{agmon_estimates_1959}, it follows
	\begin{equation*}
	    \|\partial_x^2 u^{\sigma}(t)\|_{L_2(\{u(t)> \eta\})} \leq C\|\partial_x u^{\sigma}(t)\|_{L_2(\Omega)} + C \|j^{\sigma}(t)\|_{L_{\frac{\alpha+1}{\alpha}}(\{u(t)>\eta\})}
	\end{equation*}
	for almost every \(t\in [0,\infty)\) and every \(\sigma < \sigma_0\). By the Sobolev embedding this implies that
	\begin{equation}
	    \|\partial_x u^{\sigma}(t) \|_{L_{\alpha+1}(\{u(t) > \eta\})} \leq C\|\partial_x u^{\sigma}(t)\|_{L_2(\Omega)} + C \|j^{\sigma}(t)\|_{L_{\frac{\alpha+1}{\alpha}}(\{u(t)>\eta\})}
	\end{equation}
	for almost every \(t\in [0,\infty)\) and every \(\sigma < \sigma_0\). But this together with \eqref{eq:third-derivative-sigma} implies that
	\begin{equation*}
	    \|\partial_x^3 u^{\sigma}\|_{L_{\alpha+1}(\{u>\eta\}\cap [0,T]\times\Omega)} \leq C\|j^{\sigma}\|_{L_{\frac{\alpha+1}{\alpha}}((0,T)\times \Omega)} + CTE^{\sigma}[u_0^{\sigma}],
	\end{equation*}
	which is the desired uniform bound.
	
	\noindent \textbf{(iv)} Since \(u^{\sigma}\) is a weak solution to \eqref{eq:thin-film-reg} in the sense of \eqref{eq:weak-formulation-mod}, we have
	\begin{equation*}
    	\int_0^{\infty} \langle \partial_t u^{\sigma}, \varphi \rangle_{W^1_{\alpha+1}} \dd t = \int_{0}^{\infty} \int_{\Omega} m(u^{\sigma}) \Psi\bigl(\partial_x^3 u^{\sigma} - G_{\sigma}^{\prime\prime}(u^{\sigma})\partial_x u^{\sigma}\bigr) \partial_x \varphi \dd x \dd t
	\end{equation*}
	for all \(\varphi\in L_{\alpha+1}\bigl((0,\infty);W^1_{\alpha+1}(\Omega)\bigr)\). Applying the Hölder inequality, using (ii) and that $(u^{\sigma})_{\sigma}$ is uniformly bounded in \(L_{\infty}\bigl([0,\infty)\times \Omega\bigr)\), we conclude
	\begin{align*}
    	&\left|\int_0^{\infty} \langle \partial_t u^{\sigma}, \varphi \rangle_{W^1_{\alpha+1}} \dd t\right|\\
    	& \leq \int_{0}^{T} \int_{\Omega} m(u^{\sigma})|\partial_x^3 u^{\sigma} - G_{\sigma}^{\prime\prime}(u^{\sigma})\partial_x u^{\sigma}|^{\alpha}|\partial_x \varphi| \dd x \dd t \\
    	& \leq C\|m(u^{\sigma})\|_{L_{\infty}((0,\infty)\times\Omega)}^{\frac{1}{\alpha+1}} \left(\int_{0}^{\infty} \int_{\Omega} m(u^{\sigma}) |\partial_x^3 u^{\sigma} - G_{\sigma}^{\prime\prime}(u^{\sigma})\partial_x u^{\sigma}|^{\alpha+1}\dd x \dd t\right)^{\frac{\alpha}{\alpha+1}}\left(\int_{0}^{\infty} |\partial_x\varphi|^{\alpha+1}\dd x \dd t\right)^{\frac{1}{\alpha+1}} \\
	    & \leq C_{u_0}\|\partial_x\varphi\|_{L_{\alpha+1}((0,T)\times \Omega)}.
	\end{align*}
	This  proves that \((\partial_tu^{\sigma})_{\sigma}\) is uniformly bounded in \(L_{\frac{\alpha+1}{\alpha}}\bigl([0,\infty);\bigl(W^1_{\alpha+1,B}(\Omega)\bigr)'\bigr)\).
	
	\noindent\textbf{(v)} This follows similarly as in (iv) by the same duality argument.
\end{proof}

%------------------------------

Using the uniform bound, provided in Proposition \ref{prop:uniform-bounds-sigma}, we may extract a subsequence $(u^\sigma)_\sigma$, not relabelled, that converges in the sense of the following proposition.

%------------------------------

\begin{proposition}\label{prop:convergence-u}
	Given a non-negative initial datum $u_0 \in H^1(\Omega),\ u_0 \geq 0$, with finite energy $E[u_0]<\infty$, the following holds true. There exists a subsequence of $(u^\sigma)_\sigma$ (not relabelled) and a limit
	\begin{equation*}
	u \in L_{\infty}\bigl((0,\infty);H^1(\Omega)\bigr) \cap C^{\frac{1}{5\alpha+3},\frac{1}{2}}\bigl([0,\infty]\times\bar{\Omega}\bigr)
	\end{equation*}
	with \(\partial_x^3 u \in L_{\alpha+1,\loc}(\{u > 0\})\) and \(\partial_tu\in L_{\frac{\alpha+1}{\alpha}}\bigl((0,\infty);\bigl(W^1_{\alpha+1}(\Omega)\bigr)'\bigr)\)
	such that we have convergence in the following sense:
	\begin{enumerate}
		\item[(i)] \(u^{\sigma}\to u\) strongly in \(C_{\loc}^{\gamma,\nu}\bigl([0,\infty)\times\bar{\Omega}\bigr)\) for all \(0\leq\gamma<\frac{1}{5\alpha+3}\) and \(0\leq \nu < \frac{1}{2}\);
		\item[(ii)] \(\partial_x^3 u^{\sigma} \rightharpoonup \partial_x^3 u\) weakly in \(L_{\alpha+1,\loc}(\{u> 0\})\);
		\item[(iii)] \(\partial_x u^{\sigma} \to \partial_x u\) and \(\partial_x^2 u^{\sigma} \to \partial_x^2 u\) strongly in \(L_{\alpha+1,\loc}(\{u>0\})\);
		\item[(iv)] \(\partial_t u^{\sigma} \rightharpoonup \partial_t u\) weakly in \(L_{\alpha+1}\bigl([0,\infty);\bigl(W^1_{\alpha+1}(\Omega)\bigr)'\bigr)\);
		\item[(v)] \(m(u^{\sigma})\Psi\bigl(\partial_x^3 u^{\sigma} - G_{\sigma}^{\prime\prime}(u^{\sigma})\partial_xu^{\sigma}\bigr) \rightharpoonup \chi\) weakly in  \(L_{\frac{\alpha+1}{\alpha}}\bigl((0,\infty)\times \Omega\bigr)\) for some limit function \(\chi\).
		%\item[(v)] \(m(u^{\sigma})|\partial_x^3 u^{\sigma}-G^{\prime\prime}_{\sigma}(u^{\sigma})\partial_xu^{\sigma}|^{\alpha-1}\bigl(\partial_x^3 u^{\sigma}-G^{\prime\prime}_{\sigma}(u^{\sigma})\partial_xu^{\sigma}\bigr) \rightharpoonup m(u) |\partial_x^3 u-G^{\prime\prime}(u)\partial_x u|^{\alpha-1}\bigl(\partial_x^3 u-G^{\prime\prime}(u)\partial_x u\bigr) \) weakly in \(L_{\frac{\alpha+1}{\alpha}}\bigl([0,\infty)\times\Omega\bigr)\);
		
	\end{enumerate}
\end{proposition}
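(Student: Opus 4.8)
\emph{Overall plan.} I would deduce each of the six convergences from the matching uniform bound in Proposition~\ref{prop:uniform-bounds-sigma} by a weak-compactness argument in a reflexive space, supplemented for the local statements by a diagonal extraction over an exhaustion of $\{u>0\}$; all the $L_p$- and Bochner-spaces that occur carry exponents in $(1,\infty)$ and are therefore reflexive. Statement~(i) is already recorded in \eqref{eq:Hoelder-accumulation-point}: Lemma~\ref{lem:Hoelder} bounds $(u^\sigma)_\sigma$ in $C^{\frac1{5\alpha+3},\frac12}([0,T]\times\bar\Omega)$ for every $T$, so Arzel\`a--Ascoli yields a subsequence converging locally in $C^{\gamma,\nu}$ to a limit $u\in C^{\frac1{5\alpha+3},\frac12}([0,\infty)\times\bar\Omega)$; passing to a further subsequence, Proposition~\ref{prop:uniform-bounds-sigma}(i) and weak-$*$ compactness place $u$ in $L_\infty((0,\infty);H^1(\Omega))$. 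For statement~(iv), since $W^1_{\alpha+1,B}(\Omega)=W^1_{\alpha+1}(\Omega)$, Proposition~\ref{prop:uniform-bounds-sigma}(iv) lets me extract $\partial_t u^\sigma\rightharpoonup w$ weakly in $L_{\frac{\alpha+1}{\alpha}}((0,\infty);(W^1_{\alpha+1}(\Omega))')$, and testing against products $\phi(t)\psi(x)$ with $\phi\in C_c^\infty((0,\infty))$, $\psi\in W^1_{\alpha+1}(\Omega)$, using (i) to pass to the limit in $-\int\phi'(t)\langle u^\sigma(t),\psi\rangle\,dt$, forces $w=\partial_t u$. For statement~(v), by \eqref{eq:j_sigma} the term in question is $j^\sigma$; since $m(u^\sigma)$ is uniformly bounded ($u^\sigma$ is uniformly bounded and $m$ is locally Lipschitz), writing $|j^\sigma|^{\frac{\alpha+1}{\alpha}}=\bigl(|j^\sigma|^{\frac{\alpha+1}{\alpha}}/m(u^\sigma)^{1/\alpha}\bigr)\,m(u^\sigma)^{1/\alpha}$ and invoking Proposition~\ref{prop:uniform-bounds-sigma}(ii) gives a uniform bound for $j^\sigma$ in $L_{\frac{\alpha+1}{\alpha}}((0,\infty)\times\Omega)$, so a further subsequence converges weakly to some $\chi$, which is not identified here.

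\emph{The local statements (ii)--(iii).} These are the substantive part. I would exhaust the open set $\{u>0\}\subset[0,\infty)\times\bar\Omega$ by countably many space-time boxes $Q_m=(a_m,b_m)\times I_m$ with $\overline{Q_m}\subset\{u>0\}$ and $\bigcup_m Q_m=\{u>0\}$; since $u^\sigma\to u$ locally uniformly by (i), for each $m$ there is $\eta_m>0$ with $\overline{Q_m}\subset\{u>\eta_m\}$ for all small $\sigma$. On such a box Proposition~\ref{prop:uniform-bounds-sigma}(iii) bounds $\partial_x^3 u^\sigma$ in $L_{\alpha+1}(Q_m)$, and combining this with Proposition~\ref{prop:uniform-bounds-sigma}(i) and one-dimensional Sobolev/interpolation estimates on $I_m$ yields a uniform bound for $\partial_x u^\sigma$ in $L_{\alpha+1}((a_m,b_m);W^2_{\alpha+1}(I_m))$. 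Reflexivity then produces, along a subsequence, a weak limit of $\partial_x^3 u^\sigma$ in $L_{\alpha+1}(Q_m)$; since $u^\sigma\to u$ in the sense of distributions, distributional derivatives converge distributionally, forcing this limit to equal $\partial_x^3 u$ on $Q_m$ — which in particular proves $\partial_x^3 u\in L_{\alpha+1,\loc}(\{u>0\})$ and gives (ii).

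\emph{Upgrading to strong convergence.} To get (iii) I would pair the space bound just obtained with a time-regularity bound for $\partial_x u^\sigma$ on $Q_m$: from the continuity equation, $\partial_t\partial_x u^\sigma=-\partial_x^2 j^\sigma$, so the uniform $L_{\frac{\alpha+1}{\alpha}}$-bound on $j^\sigma$ controls $\partial_t\partial_x u^\sigma$ in $L_{\frac{\alpha+1}{\alpha}}((a_m,b_m);(W^2_{\alpha+1}(I_m)\cap W^1_{\alpha+1,0}(I_m))')$ (this is the localisation of Proposition~\ref{prop:uniform-bounds-sigma}(v)). Since $W^2_{\alpha+1}(I_m)\cembed W^1_{\alpha+1}(I_m)\embed (W^2_{\alpha+1}(I_m)\cap W^1_{\alpha+1,0}(I_m))'$ in one dimension, the Aubin--Lions--Simon lemma gives $\partial_x u^\sigma\to\partial_x u$ strongly in $L_{\alpha+1}((a_m,b_m);W^1_{\alpha+1}(I_m))$, i.e. $\partial_x u^\sigma\to\partial_x u$ and $\partial_x^2 u^\sigma\to\partial_x^2 u$ strongly in $L_{\alpha+1}(Q_m)$. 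A Cantor diagonal argument over $m$ then yields a single subsequence along which (ii)--(iii) hold on every compact subset of $\{u>0\}$, and (since the diagonal sequence is a sub-subsequence of the one fixed in (i)) also (i), (iv), (v) remain valid.

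\emph{Main obstacle.} The delicate step is (iii): one must localise every a priori bound to interior space-time boxes with matching function spaces so that Aubin--Lions--Simon applies — in particular using the dual control on $\partial_t\partial_x u^\sigma$ coming from $\partial_t\partial_x u^\sigma=-\partial_x^2 j^\sigma$ rather than the PDE directly, and checking the compact embedding chain both near and away from $\partial\Omega$ — and then carry out the diagonal extraction to cope with the noncompact, geometrically unstructured set $\{u>0\}$. This strong convergence of $\partial_x u^\sigma$ and $\partial_x^2 u^\sigma$ is exactly what is needed afterwards to pass to the limit in the nonlinear flux $m(u^\sigma)\Psi(\partial_x^3 u^\sigma-G_\sigma''(u^\sigma)\partial_x u^\sigma)$ via a localised Minty-type monotonicity argument and thereby identify $\chi$.
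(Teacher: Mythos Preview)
Your proposal is correct and follows essentially the same route as the paper: each item is deduced from the matching uniform bound in Proposition~\ref{prop:uniform-bounds-sigma} via weak compactness, Arzel\`a--Ascoli for (i), and Aubin--Lions--Simon for (iii), with a diagonal extraction over an exhaustion of $\{u>0\}$ that the paper leaves implicit.

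The only noteworthy difference is in (iii). The paper applies Aubin--Lions--Simon directly to $u^\sigma$, pairing the $L_{\alpha+1,\loc}$-bound on $\partial_x^3 u^\sigma$ from Proposition~\ref{prop:uniform-bounds-sigma}(iii) with the $L_{\frac{\alpha+1}{\alpha}}$-bound on $\partial_t u^\sigma$ from part~(iv); since $W^3_{\alpha+1}\cembed W^2_{\alpha+1}\embed (W^1_{\alpha+1})'$, this yields $u^\sigma\to u$ strongly in $L_{\alpha+1,\loc}(W^2_{\alpha+1})$ in one step. You instead apply Aubin--Lions to $\partial_x u^\sigma$, invoking the bound on $\partial_t\partial_x u^\sigma$ from part~(v). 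Both work, but the paper's variant is slightly cleaner: the time-regularity bound on $\partial_t u^\sigma$ localises to a subinterval $I_m\subset\Omega$ by testing against $\phi\in W^1_{\alpha+1,0}(I_m)$ extended by zero, whereas your localisation of $\partial_t\partial_x u^\sigma=-\partial_x^2 j^\sigma$ against $\phi\in W^2_{\alpha+1}(I_m)\cap W^1_{\alpha+1,0}(I_m)$ produces a boundary term $[j^\sigma\partial_x\phi]_{\partial I_m}$ that does not vanish without an extra cutoff. This is easily repaired, but it explains why the paper does not need part~(v) at this stage.
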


\begin{proof}
	The proof is divided into several steps.
	
	\noindent\textbf{(i)} This follows directly from Lemma \eqref{lem:Hoelder} combined with the Arzelà--Ascoli theorem.
	
	\noindent\textbf{(ii)} This follows directly from the uniform bound in Proposition \ref{prop:uniform-bounds-sigma} (iii).
	
	\noindent\textbf{(iii)} Due to the Aubin--Lions--Simon theorem, this is an immediate consequence of Proposition \ref{prop:uniform-bounds-sigma} (iii) and (iv).
	
	\noindent\textbf{(iv)} This follows from the corresponding uniform bound in Proposition \ref{prop:uniform-bounds-sigma} (iv).
	
	\noindent\textbf{(v)} By Proposition \ref{prop:uniform-bounds-sigma} (ii), we know that
	\(m(u^{\sigma})|\flux{\sigma}|^{\alpha-1}(\flux{\sigma})\) is uniformly bounded in \(L_{\frac{\alpha+1}{\alpha}}\bigl((0,\infty)\times\Omega\bigr)\). Hence, there exists a function  \(\chi \in L_{\frac{\alpha+1}{\alpha}}\bigl((0,\infty)\times\Omega\bigr)\) such that
	\begin{equation*}
	    m(u^{\sigma})\Psi\bigl(\flux{\sigma}\bigr) \xrightharpoonup{\phantom{\text{wii}}} \chi \quad \text{weakly in } L_{\frac{\alpha+1}{\alpha}}\bigl((0,\infty)\times\Omega\bigr).
	\end{equation*}
This completes the proof.
\end{proof}

It remains to identify the nonlinear limit flux \(\chi\). The proof relies on a localised version of Minty's trick. In order to apply Minty's trick, a natural choice for the test function would be \(\partial_x^2 u - G^{\prime}(u)\), but with the regularisation we have considered, we fail to give control of the second derivative of $u$ on \(\Omega\). Hence, we need to localise the standard argument to compact subsets of \(\{u>0\}\).
	
Observe that the method of identifying the limit via a local strong convergence result as presented in \cite[Prop. 3.1]{ansini_doubly_2004} is restricted to the case \(\alpha \geq 1\) of Newtonian and shear-thinning fluids, due to the failure of the inequality
\begin{equation*}
    |s - t|^{\alpha+1} \leq C \bigl(\Psi(s) - \Psi(t)\bigr)(s-t)
\end{equation*}
for \(p=\alpha+1<2\), cf. \cite[Lemma 4.4]{dibenedetto_degenerate_1993}. In fact, in the case \(\alpha \geq 1\), we are able to obtain the strong convergence of \(\partial_x^3 u^{\sigma}\) to \(\partial_x^3 u\) locally in \(\{u>0\}\) by the same argument as in \cite{ansini_doubly_2004}. Since we are interested in the general case including also shear-thickening fluids, we rely on Minty's trick.

\begin{proposition}\label{prop:identification_flux}
    Under the assumptions of Proposition we have \ref{prop:convergence-u}
    \begin{equation*}
        m(u^{\sigma})\Psi\bigl(\partial_x^3 u^{\sigma} - G_{\sigma}^{\prime\prime}(u^{\sigma})\partial_xu^{\sigma}\bigr) \rightharpoonup m(u)\Psi\bigl(\partial_x^3 u - G^{\prime\prime}(u)\partial_x u\bigr)\quad \text{weakly in } L_{\frac{\alpha+1}{\alpha}}\bigl((0,\infty)\times \Omega\bigr)
    \end{equation*}
    as \(\sigma \to 0\).
\end{proposition}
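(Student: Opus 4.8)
The plan is to identify the limit flux $\chi$ by a localised Minty monotonicity argument on compact subcylinders of the open set $\{u>0\}$, and to handle $\{u=0\}$ separately by showing $\chi=0$ there. First I would fix a cylinder $Q=(t_1,t_2)\times(a,b)$ with $\bar Q\subset\{u>0\}$, so that $u\geq 2\eta>0$ on $\bar Q$ for some $\eta>0$; the local uniform convergence $u^\sigma\to u$ then gives $u^\sigma\geq\eta$ on $\bar Q$ for all small $\sigma$, whence $G_\sigma=G$ on the relevant range and $\flux{\sigma}=\partial_x^3 u^\sigma-G^{\prime\prime}(u^\sigma)\partial_x u^\sigma$ on $\bar Q$. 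Writing $w:=\partial_x^3 u-G^{\prime\prime}(u)\partial_x u$ and $j^\sigma:=m(u^\sigma)\Psi(\flux{\sigma})$ (so $j^\sigma\rightharpoonup\chi$ weakly in $L_{\frac{\alpha+1}{\alpha}}$ by Proposition~\ref{prop:convergence-u}(v)), Proposition~\ref{prop:convergence-u}(ii)--(iii) together with the uniform convergence of $u^\sigma$ and continuity of $G^{\prime},G^{\prime\prime}$ away from $0$ yields $\flux{\sigma}\rightharpoonup w$ weakly in $L_{\alpha+1}(Q)$, $m(u^\sigma)\to m(u)$ uniformly on $\bar Q$, and $\partial_x^2 u^\sigma-G^{\prime}(u^\sigma)\to\partial_x^2 u-G^{\prime}(u)$ strongly in $L_{\alpha+1}(Q)$. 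Fixing a cut-off $\zeta(t,x)=\theta(t)\psi(x)\geq0$ with $\theta\in C_c^\infty((t_1,t_2))$, $\psi\in C_c^\infty((a,b))$, monotonicity of $\Psi$ and $m\geq0$ give
\[
0\leq\int_Q\zeta\,m(u^\sigma)\bigl(\Psi(\flux{\sigma})-\Psi(\phi)\bigr)\bigl(\flux{\sigma}-\phi\bigr)\dd x\dd t\qquad\text{for all }\phi\in L_{\alpha+1}(Q),
\]
and every term of the expansion passes to the limit by weak--strong pairing except the diagonal term $\int_Q\zeta\,m(u^\sigma)|\flux{\sigma}|^{\alpha+1}\dd x\dd t=\int_Q\zeta\,j^\sigma\flux{\sigma}\dd x\dd t$.

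For that term I would use $\flux{\sigma}=\partial_x(\partial_x^2 u^\sigma-G^{\prime}(u^\sigma))$ on $Q$ and test the continuity equation $\partial_t u^\sigma+\partial_x j^\sigma=0$ with $\zeta(\partial_x^2 u^\sigma-G^{\prime}(u^\sigma))\in L_{\alpha+1}((0,\infty);W^1_{\alpha+1,B}(\Omega))$, so that
\[
\int_Q\zeta\,j^\sigma\flux{\sigma}\dd x\dd t=\bigl\langle\partial_t u^\sigma,\zeta(\partial_x^2 u^\sigma-G^{\prime}(u^\sigma))\bigr\rangle-\int_Q\partial_x\zeta\;j^\sigma\,(\partial_x^2 u^\sigma-G^{\prime}(u^\sigma))\dd x\dd t.
\]
The last integral converges by weak--strong pairing. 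In the first term, the potential part equals $-\bigl\langle\partial_t u^\sigma,\zeta G^{\prime}(u^\sigma)\bigr\rangle=\int_Q\partial_t\zeta\,G(u^\sigma)\to\int_Q\partial_t\zeta\,G(u)$ (chain rule and integration by parts in time, using the local uniform convergence), while for the remaining $\bigl\langle\partial_t u^\sigma,\theta\psi\,\partial_x^2 u^\sigma\bigr\rangle$ I would integrate by parts once in $x$ and once in $t$,
\[
\bigl\langle\partial_t u^\sigma,\theta\psi\,\partial_x^2 u^\sigma\bigr\rangle=-\bigl\langle\partial_t u^\sigma,\theta\psi'\,\partial_x u^\sigma\bigr\rangle+\tfrac12\int_Q\theta'\psi\,|\partial_x u^\sigma|^2\dd x\dd t.
\]
The quadratic term converges since $\partial_x u^\sigma\to\partial_x u$ uniformly on $\bar Q$ (Proposition~\ref{prop:convergence-u}(iii) gives strong $L_{\alpha+1,\loc}(\{u>0\})$-convergence of $\partial_x u^\sigma$ and $\partial_x^2 u^\sigma$, hence of $\partial_x u^\sigma$ in $W^1_{\alpha+1,\loc}\hookrightarrow C_{\loc}$), and $\bigl\langle\partial_t u^\sigma,\theta\psi'\partial_x u^\sigma\bigr\rangle$ converges by weak--strong pairing because $\theta\psi'\partial_x u^\sigma\to\theta\psi'\partial_x u$ \emph{strongly} in $L_{\alpha+1}((0,\infty);W^1_{\alpha+1}(\Omega))$ --- the crucial input being the strong $L_{\alpha+1,\loc}$-convergence of $\partial_x^2 u^\sigma$ --- while $\partial_t u^\sigma\rightharpoonup\partial_t u$ weakly. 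Performing the identical manipulations with $(u^\sigma,j^\sigma)$ replaced by $(u,\chi)$, now using $\partial_x\chi=-\partial_t u$ from the limiting continuity equation, identifies $\lim_{\sigma\to0}\int_Q\zeta\,m(u^\sigma)|\flux{\sigma}|^{\alpha+1}\dd x\dd t=\int_Q\zeta\,\chi\,w\dd x\dd t$.

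Feeding this back into the monotonicity inequality gives $\int_Q\zeta\,(\chi-m(u)\Psi(\phi))(w-\phi)\dd x\dd t\geq0$ for all $\phi\in L_{\alpha+1}(Q)$; the standard Minty step (set $\phi=w-\lambda v$, divide by $\lambda>0$, let $\lambda\downarrow0$, then replace $v$ by $-v$) yields $\chi=m(u)\Psi(w)$ a.e.\ on $\{\zeta>0\}$, hence on all of $\{u>0\}$ since that set is open and exhausted by such cylinders. On $\{u=0\}$ I would use the identity $|j^\sigma|^{(\alpha+1)/\alpha}/m(u^\sigma)^{1/\alpha}=m(u^\sigma)|\flux{\sigma}|^{\alpha+1}$, uniformly bounded in $L_1\bigl((0,\infty)\times\Omega\bigr)$ by the energy--dissipation equality \eqref{eq:EDI_sigma}, together with $m(u^\sigma)\to0$ uniformly on $\{u<\eta\}$, to obtain $\int_{\{u<\eta\}}|j^\sigma|^{(\alpha+1)/\alpha}\dd x\dd t\leq\bigl(\sup_{[0,2\eta]}m\bigr)^{1/\alpha}C$; weak lower semicontinuity of the $L_{(\alpha+1)/\alpha}$-norm and then $\eta\downarrow0$ force $\chi=0$ a.e.\ on $\{u=0\}$, where $m(u)\Psi(\partial_x^3 u-G^{\prime\prime}(u)\partial_x u)$ also vanishes. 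Together this gives the claimed identification on $(0,\infty)\times\Omega$.

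I expect the main obstacle to be the time-derivative term $\bigl\langle\partial_t u^\sigma,\theta\psi\,\partial_x^2 u^\sigma\bigr\rangle$: $\partial_t u^\sigma$ converges only weakly in a negative-order Bochner space and $\partial_x^3 u^\sigma$ only weakly in $L_{\alpha+1,\loc}(\{u>0\})$, so a direct passage to the limit would be an inadmissible weak--weak pairing. The remedy is the double integration by parts above, which arranges that what is ultimately paired against $\partial_t u^\sigma$ is $\partial_x u^\sigma$ --- strongly convergent in $L_{\alpha+1}((0,\infty);W^1_{\alpha+1}(\Omega))$ on $Q$ precisely because Proposition~\ref{prop:convergence-u}(iii) upgrades $\partial_x^2 u^\sigma$ to strong convergence --- with the leftover $\partial_t$ landing on the cut-off. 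This is also why the localisation to $\{u>0\}$, where the mobility is nondegenerate and this interior second-derivative compactness is available, cannot be avoided; every other term is a routine weak--strong limit.
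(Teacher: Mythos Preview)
Your proposal is correct and follows essentially the same strategy as the paper: a localised Minty monotonicity argument on compact subsets of $\{u>0\}$, with the diagonal term handled by testing the continuity equation against a cut-off of $\partial_x^2 u^\sigma - G'(u^\sigma)$ so as to land on the localised energy, and a separate smallness argument for $\{u=0\}$ based on the uniform dissipation bound. The only differences are cosmetic: the paper uses a purely spatial cut-off $\eta^2$ together with a hard time interval $\mathbf{1}_{[t_0,t_1]}$ (so the energy appears as endpoint values $E^\sigma_\eta[u^\sigma](t_i)$), whereas you use a smooth space--time cut-off $\zeta=\theta\psi$ (so the time derivative lands on $\theta'$); and the paper compares against $\Psi(\partial_x^3\phi - G''_\sigma(u^\sigma)\partial_x u^\sigma)$ for $\phi\in L_{\alpha+1}(W^3_{\alpha+1,B})$, while you compare directly against $\Psi(\phi)$ for arbitrary $\phi\in L_{\alpha+1}(Q)$. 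Both variants rely on the same two crucial ingredients---the strong $L_{\alpha+1,\loc}(\{u>0\})$-convergence of $\partial_x^2 u^\sigma$ from Proposition~\ref{prop:convergence-u}(iii), and the localised chain rule for the energy---and the endgame on $\{u=0\}$ is identical.
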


\begin{proof}
    We identify \(\chi\) on the set \(\{u>0\}\). Let \(K=I\times V \subset \{u>0\}\) be a compact set, where \(I\) and \(V\) are intervals. Choose closed, bounded intervals \(\tilde{I}\) and \(\tilde{V}\) such that \(\tilde{K} = \tilde{I}\times\tilde{V}\subset \{u>0\}\) and \(K\subset \operatorname{int}(\tilde{K})\). Finally, fix a spatial cut-off \(\eta \in C^{\infty}_c(\tilde{V};[0,1])\) such that \(\eta\equiv 1\) on \(V\).
	
	For \(t_0,t_1\in I\), we define the test functions
	\begin{equation*}
	    \begin{split}
	        \rho^{\sigma} & = \mathbf{1}_{[t_0,t_1]}
	        \bigl(
	        \partial_x[\eta^2 \partial_x u^{\sigma}] - \eta^2 G^{\prime}_\sigma(u^{\sigma})
	        \bigr)
	        \in L_{\alpha+1}\bigl((0,\infty);W^1_{\alpha+1,B}(\Omega)\bigr), \\
	        \rho & = \mathbf{1}_{[t_0,t_1]}
	        \bigl(
	        \partial_x[\eta^2 \partial_x u] - \eta^2 G^{\prime}(u)
	        \bigr)
	        \in L_{\alpha+1}\bigl((0,\infty);W^1_{\alpha+1,B}(\Omega)\bigr).
	   \end{split}
	\end{equation*}
	Define
	    \[E_{\eta}[u](t) = \int_{\Omega} \eta^2 \bigl[\tfrac{1}{2} |\partial_x u|^2 + G(u)\bigr]\dd x\quad \text{and}\quad E^{\sigma}_{\eta}[u](t) = \int_{\Omega} \eta^2 \bigl[\tfrac{1}{2} |\partial_x u|^2 + G_{\sigma}(u)\bigr]\dd x.\]
    Then, for almost every \(t_0,t_1\in I\) it holds
    \begin{equation}\label{eq:Minty-localised-EDE}
        E_{\eta}[u](t_1) - E_{\eta}[u](t_0) + \int_{t_0}^{t_1} \int_{\Omega} \chi \bigl[\partial_x^2(\eta^2 \partial_x u) - \partial_x(\eta^2G'(u))\bigr] \dd x \dd t = 0.
    \end{equation}
    
    The claim follows by using \(\rho\) as test function in \eqref{eq:weak-formulation-mod} and sending \(\sigma \to 0\). Indeed, we find that
    \begin{equation*}
       \int_{0}^{\infty} \langle \partial_t u^{\sigma}, \rho\rangle_{W^{1}_{\alpha+1}} \dd t \longrightarrow \int_{0}^{\infty} \langle \partial_t u, \rho\rangle_{W^{1}_{\alpha+1}} \dd t
    \end{equation*}
    as \(\sigma \to 0\) and
    \begin{equation*}
        \begin{split}
            \int_{0}^{\infty} \langle \partial_t u, \rho\rangle_{W^{1}_{\alpha+1}} \dd t & 
            = 
            -\int_{t_0}^{t_1} \frac{d}{dt} \int_{\Omega} \eta^2(x)\bigl[\tfrac{1}{2} |\partial_x u|^2 + G(u)\bigr]\dd x \dd t 
            = - \bigl(E_{\eta}[u](t_1)- E_{\eta}[u](t_0)\bigr).
        \end{split}
    \end{equation*}
    Further,
    \begin{equation*}
        \int_{0}^{\infty}\int_{\Omega} m(u^{\sigma}) \Psi\bigl(\flux{\sigma}\bigr) \partial_x \rho \dd x \dd t \longrightarrow \int_{t_0}^{t_1}\int_{\Omega} \chi \partial_x \rho \dd x\dd t.
    \end{equation*}
    Using \eqref{eq:weak-formulation-mod} gives \eqref{eq:Minty-localised-EDE}.
    
    As a preparation for Minty's trick, note that
	\begin{equation}\label{eq:Minty-test-function-split}
	\begin{split}
	    \int_{t_0}^{t_1} \int_{\Omega} \chi \bigl[\partial_x^2(\eta^2 \partial_x u) - \partial_x(\eta^2G'(u)\bigr] \dd x \dd t
	    = \int_{t_0}^{t_1} \int_{\Omega} \eta^2 \chi \bigl(\partial_x^3 u - G^{\prime\prime}(u)\partial_x u\bigr) \dd x \dd t + \int_{t_0}^{t_1} \int_{\Omega} \chi \psi \dd x \dd t,
	\end{split}
	\end{equation}
	where we denote
	\begin{equation*}
	    \psi = 4\eta\partial_x \eta \partial_x^2 u - 2 \eta \partial_x \eta G^{\prime}(u) + 2\partial_x(\eta\partial_x \eta) \partial_x u.
	\end{equation*}
	
	We are now in the position to set up Minty's trick. Let \(\phi \in L_{\alpha+1}\bigl((0,\infty);W^3_{\alpha+1,B}(\Omega)\bigr)\). Then, by monotonicity of \(\Psi\) and positivity of \(m\) and \(\eta\), we obtain
	\begin{equation}\label{eq:Minty-monotonicity}
	    \begin{split}
	        0 
	        \leq &
	        \int_{t_0}^{t_1} \int_{\Omega} 
	        \eta^2 m(u^{\sigma})\bigl[
	        \Psi\bigl(\partial_x^3 u^{\sigma} - G^{\prime\prime}_{\sigma}(u^{\sigma})\partial_x u^{\sigma}\bigr) 
	        - 
	        \Psi\bigl(\partial_x^3 \phi - G^{\prime\prime}_{\sigma}(u^{\sigma})\partial_x u^{\sigma}\bigr)\bigr]\cdot
	        \\
	        & \qquad\quad
	        \cdot
	        \bigl[\bigl(\partial_x^3 u^{\sigma}-G^{\prime\prime}_{\sigma}(u^{\sigma})\partial_x u^{\sigma}\bigr) - \bigl(\partial_x^3\phi-G^{\prime\prime}_{\sigma}(u^{\sigma})\partial_xu^{\sigma}\bigr)\bigr] \dd x \dd t.
	    \end{split}
	\end{equation}
	
	We split the integral on the right-hand side into the four terms
	\begin{equation*}
	    \begin{split}
	        & \int_{t_0}^{t_1} \int_{\Omega} 
	        \eta^2 m(u^{\sigma})\bigl[
	        \Psi\bigl(\partial_x^3 u^{\sigma} - G^{\prime\prime}_{\sigma}(u^{\sigma})\partial_x u^{\sigma}\bigr) 
	        - 
	        \Psi\bigl(\partial_x^3 \phi - G^{\prime\prime}_{\sigma}(u^{\sigma})\partial_x u^{\sigma}\bigr)\bigr]\cdot
	        \\
	        & \qquad\quad
	        \cdot
	        \bigl[\bigl(\partial_x^3 u^{\sigma}-G^{\prime\prime}_{\sigma}(u^{\sigma})\partial_x u^{\sigma}\bigr) - \bigl(\partial_x^3\phi-G^{\prime\prime}_{\sigma}(u^{\sigma})\partial_xu^{\sigma}\bigr)\bigr] \dd x \dd t\\
	        = & \int_{t_0}^{t_1} \int_{\Omega} \eta^2 m(u^{\sigma}) \Psi(\partial_x^3 u^{\sigma} - G^{\prime\prime}_{\sigma}(u^{\sigma})\partial_xu^{\sigma})\bigl(\partial_x^3 u^{\sigma}-G^{\prime\prime}_{\sigma}(u^{\sigma})\partial_xu^{\sigma}\bigr) \dd x \dd t \\
	        & - \int_{t_0}^{t_1} \int_{\Omega} \eta^2 m(u^{\sigma})\Psi(\partial_x^3 u^{\sigma} - G^{\prime\prime}_{\sigma}(u^{\sigma})\partial_xu^{\sigma})\bigl(\partial_x^3\phi-G^{\prime\prime}_{\sigma}(u^{\sigma})\partial_xu^{\sigma}\bigr) \dd x \dd t \\
	        & -\int_{t_0}^{t_1} \int_{\Omega} \eta^2 m(u^{\sigma}) \Psi(\partial_x^3 \phi - G^{\prime\prime}_{\sigma}(u^{\sigma})\partial_xu^{\sigma}) \bigl(\partial_x^3 u^{\sigma}-G^{\prime\prime}_{\sigma}(u^{\sigma})\partial_xu^{\sigma}\bigr) \dd x \dd t \\
	        & +\int_{t_0}^{t_1} \int_{\Omega} \eta^2 m(u^{\sigma}) \Psi(\partial_x^3 \phi - G^{\prime\prime}_{\sigma}(u^{\sigma})\partial_xu^{\sigma}) \bigl(\partial_x^3\phi-G^{\prime\prime}_{\sigma}(u^{\sigma})\partial_xu^{\sigma}\bigr) \dd x \dd t.
	    \end{split}
	\end{equation*}
	We study the limit as \(\sigma\to 0\) for each of the four summands. We start with the first summand: therefore, note again that
	    \[\partial_x\rho^{\sigma} = \eta^2 \bigl(\partial_x^3 u^{\sigma} - G^{\prime\prime}_{\sigma}(u^{\sigma})\partial_xu^{\sigma}\bigl) + \psi^{\sigma},\]
	where we denote
	    \[\psi^{\sigma} = 4\eta\partial_x \eta \partial_x^2u^{\sigma} - 2\eta \partial_x \eta G'_{\sigma}(u_{\sigma}) + 2\partial_x(\eta\partial_x\eta) \partial_xu^{\sigma}.\]
	The crucial observation is that since \(\psi^{\sigma}\) only contains lower-order derivatives and due to steps (i) and (iii) combined with the local uniform convergence of \(G_{\sigma}\) to \(G\), we obtain strong convergence
	    \[\psi^{\sigma} \longrightarrow \psi \quad \text{strongly in } L_{\alpha+1,\loc}(I\times\Omega)\]
	as $\sigma \to 0$. Then, using \(\rho^{\sigma}\) as a test function in \eqref{eq:weak-formulation-mod}, we may write
	\begin{equation*}
	    \begin{split}
	        & \int_{t_0}^{t_1} \int_{\Omega} \eta^2 m(u^{\sigma}) \Psi(\partial_x^3 u^{\sigma} - G^{\prime\prime}_{\sigma}(u^{\sigma})\partial_xu^{\sigma})\bigl(\partial_x^3 u^{\sigma}-G^{\prime\prime}_{\sigma}(u^{\sigma})\partial_xu^{\sigma}\bigr) \dd x \dd t \\
	        = & \int_{t_0}^{t_1} \int_{\Omega} m(u^{\sigma}) \Psi(\partial_x^3 u^{\sigma} - G^{\prime\prime}_{\sigma}(u^{\sigma})\partial_xu^{\sigma})\partial_x \rho^{\sigma} \dd x \dd t - \int_{t_0}^{t_1} \int_{\Omega} m(u^{\sigma}) \Psi(\partial_x^3 u^{\sigma} - G^{\prime\prime}_{\sigma}(u^{\sigma})\partial_xu^{\sigma})\psi^{\sigma} \dd x \dd t \\
	        = & \int_{t_0}^{t_1} \langle \partial_t u^{\sigma}, \rho^{\sigma}\rangle_{W^1_{\alpha+1}} \dd t -  \int_{t_0}^{t_1} \int_{\Omega} m(u^{\sigma}) \Psi(\partial_x^3 u^{\sigma} - G^{\prime\prime}_{\sigma}(u^{\sigma})\partial_xu^{\sigma})\psi^{\sigma} \dd x \dd t \\
	        = & -\int_{t_0}^{t_1} \frac{d}{dt} \int_{\Omega}\eta^2 \bigl[\tfrac{1}{2} |\partial_x u^{\sigma}|^2 + G_{\sigma}(u^{\sigma})\bigr] \dd x \dd t - \int_{t_0}^{t_1} \int_{\Omega} m(u^{\sigma}) \Psi(\partial_x^3 u^{\sigma} - G^{\prime\prime}_{\sigma}(u^{\sigma})\partial_xu^{\sigma})\psi^{\sigma} \dd x \dd t \\
	        = & -\bigl(E^{\sigma}_{\eta}[u^{\sigma}](t_1) - E^{\sigma}_{\eta}[u^{\sigma}](t_0)\bigr) - \int_{t_0}^{t_1} \int_{\Omega} m(u^{\sigma}) \Psi(\partial_x^3 u^{\sigma} - G^{\prime\prime}_{\sigma}(u^{\sigma})\partial_xu^{\sigma})\psi^{\sigma} \dd x \dd t.
	    \end{split}
	\end{equation*}
	Since by (i), we have \(u^{\sigma}(t)\to u(t)\) in \(H^1(\Omega)\) for almost every \(t\in I\), we find that
	\begin{equation}\label{eq:Minty-I1-energy}
	    E^{\sigma}_{\eta}[u^{\sigma}](t_1) - E^{\sigma}_{\eta}[u^{\sigma}](t_0) \longrightarrow E_{\eta}[u](t_1) - E_{\eta}[u](t_0)
	\end{equation}
    as \(\sigma \to 0\) for almost every \(t_0,t_1\in I\). Furthermore, by the strong convergence of \(\psi^{\sigma}\to \psi\), we obtain the convergence
	 \begin{equation}\label{eq:Minty-I1-remainder}
	     \lim\limits_{\sigma\to 0} \int_{t_0}^{t_1} \int_{\Omega} m(u^{\sigma}) \Psi(\partial_x^3 u^{\sigma} - G^{\prime\prime}_{\sigma}(u^{\sigma})\partial_xu^{\sigma})\psi^{\sigma} \dd x \dd t = \int_{t_0}^{t_1} \int_{\Omega} \chi \psi \dd x \dd t.
	 \end{equation}
	 Using \eqref{eq:Minty-I1-energy} and \eqref{eq:Minty-I1-remainder}, we find that
	 \begin{equation}\label{eq:Minty-I1}
	    \begin{split}
	     &\lim_{\sigma \to 0} \int_{t_0}^{t_1} \int_{\Omega} \eta^2 m(u^{\sigma}) \Psi(\partial_x^3 u^{\sigma} - G^{\prime\prime}_{\sigma}(u^{\sigma})\partial_xu^{\sigma})\bigl(\partial_x^3 u^{\sigma}-G^{\prime\prime}_{\sigma}(u^{\sigma})\partial_xu^{\sigma}\bigr) \dd x \dd t\\
	    & = -\bigl(E_{\eta}[u](t_1) - E_{\eta}[u](t_0)\bigr) -\int_{t_0}^{t_1} \int_{\Omega} \chi \psi \dd x \dd t =  \int_{t_0}^{t_1} \int_{\Omega} \eta^2 \chi\bigl(\partial_x^3 u - G^{\prime\prime}(u)\partial_x u\bigr) \dd x \dd t
	     \end{split}
	 \end{equation}
	 for almost every \(t_0,t_1\in I\). The last equality follows from \eqref{eq:Minty-localised-EDE} and \eqref{eq:Minty-test-function-split}.
	 
	 For the second term, observe that
	 \begin{equation}\label{eq:G-sigma-partial-strong-convergence}
	     G_{\sigma}^{\prime\prime}(u^{\sigma})\partial_xu^{\sigma} \longrightarrow G^{\prime\prime}(u) \partial_x u\quad \text{strongly in }L_{\alpha+1}\bigl(I\times \tilde{V}\bigr).
	 \end{equation}
	 So, we obtain
	 \begin{equation}\label{eq:Minty-I2}
     \begin{split}
	        &\lim_{\sigma \to 0} \int_{t_0}^{t_1} \int_{\Omega} \eta^2 m(u^{\sigma})\Psi(\partial_x^3 u^{\sigma} - G^{\prime\prime}_{\sigma}(u^{\sigma})\partial_xu^{\sigma})\bigl(\partial_x^3\phi-G^{\prime\prime}_{\sigma}(u^{\sigma})\partial_xu^{\sigma}\bigr) \dd x \dd t \\
	        &=  \int_{t_0}^{t_1} \int_{\Omega} \eta^2 \chi \bigl(\partial_x^3\phi-G^{\prime\prime}(u)\partial_xu\bigr) \dd x \dd t.
     \end{split}
	 \end{equation}
	 
	 For the third and fourth term note that due to the continuity of the Nemytskii operator and the strong convergence in \eqref{eq:G-sigma-partial-strong-convergence}, we have
	 \begin{equation*}
	     \Psi\bigl(\partial_x^3\phi - G_{\sigma}^{\prime\prime}(u^{\sigma})\partial_x u^{\sigma}\bigr) 
	     \longrightarrow 
	     \Psi\bigl(\partial_x^3\phi - G^{\prime\prime}(u) \partial_x u\bigr) 
	     \quad 
	     \text{strongly in }L_{\frac{\alpha+1}{\alpha}}(I\times \tilde{V})
	 \end{equation*}
	 as \(\sigma\to 0\). Applying this in the third integral and using that
	 \begin{equation*}
	     \partial_x^3 u^{\sigma}-G^{\prime\prime}_{\sigma}(u^{\sigma})\partial_xu^{\sigma} \rightharpoonup \partial_x^3 u - G^{\prime\prime}(u)\partial_x u \quad \text{weakly in } L_{\frac{\alpha+1}{\alpha}}(I\times{V}),
	 \end{equation*}
	 we obtain
	 \begin{equation}\label{eq:Minty-I3}
	 \begin{split}
	     &\lim_{\sigma \to 0}  \int_{t_0}^{t_1} \int_{\Omega} \eta^2 m(u^{\sigma}) \Psi\bigl(\partial_x^3 \phi - G^{\prime\prime}_{\sigma}(u^{\sigma})\partial_x u^{\sigma}\bigr) \bigl(\partial_x^3 u^{\sigma}-G^{\prime\prime}_{\sigma}(u^{\sigma})\partial_xu^{\sigma}\bigr) \dd x \dd t \\
	     & = 
	     \int_{t_0}^{t_1} \int_{\Omega} 
	     \eta^2 m(u) \Psi\bigl(\partial_x^3 \phi - G^{\prime\prime}(u)\partial_x u\bigr) \bigl(\partial_x^3 u-G^{\prime\prime}(u)\partial_xu\bigr) \dd x \dd t.
     \end{split}
	 \end{equation}
  
	 The same argument applied to the fourth integral gives
	 \begin{equation}\label{eq:Minty-I4}
	 \begin{split}
	     &\lim_{\sigma \to 0}  \int_{t_0}^{t_1} \int_{\Omega} \eta^2 m(u^{\sigma}) \Psi(\partial_x^3 \phi - G^{\prime\prime}_{\sigma}(u^{\sigma})\partial_xu^{\sigma}) \bigl(\partial_x^3\phi-G^{\prime\prime}_{\sigma}(u^{\sigma})\partial_xu^{\sigma}\bigr) \dd x \dd t \\
	     & = \int_{t_0}^{t_1} \int_{\Omega} \eta^2 m(u) \Psi(\partial_x^3 \phi - G^{\prime\prime}(u)\partial_xu) \bigl(\partial_x^3 \phi-G^{\prime\prime}(u)\partial_xu\bigr) \dd x \dd t.
    \end{split}
	 \end{equation}
	 By \eqref{eq:Minty-I1}, \eqref{eq:Minty-I2}, \eqref{eq:Minty-I3} and \eqref{eq:Minty-I4}, we obtain, when sending \(\sigma\to 0\) in \eqref{eq:Minty-monotonicity},
	 \begin{equation}\label{eq:Minty-main}
	     0 \leq \int_{t_0}^{t_1} \int_{\Omega} \eta^2 \bigl(\chi - m(u) \Psi(\partial_x^3\phi - G^{\prime\prime}(u)\partial_xu\bigr)\bigl(\partial_x^3(u-\phi)\bigr) \dd x \dd t
	 \end{equation}
	 for almost every \(t_0,t_1\in I\). Now, choose \(v\in L_{\alpha+1}\bigl((0,\infty);W^3_{\alpha+1}(\Omega)\bigr)\) and use
	    \[\phi = \tilde{\eta}^2 u + \lambda v, \quad \lambda >0,\]
	  where \(\tilde{\eta}\in C_c^{\infty}(\{u>0\})\) with \(\tilde{\eta}\equiv 1\) on \(\tilde{V}\). Then, from \eqref{eq:Minty-main}, we obtain
	  \begin{equation*}
	      0 \leq - \lambda \int_{t_0}^{t_1} \int_{\Omega} \eta^2 \bigl(\chi - m(u) \Psi(\partial_x^3(u+\lambda v) - G^{\prime\prime}(u)\partial_xu)\bigr)\bigl(\partial_x^3 v\bigr) \dd x \dd t.
	  \end{equation*}
	  Dividing by \(\lambda\), sending \(\lambda \to 0\) and using the continuity of the Nemitski operator once again, we find
	  \begin{equation*}
	      0 \leq - \int_{t_0}^{t_1} \int_{\Omega} \eta^2 \bigl(\chi - m(u) \Psi(\partial_x^3u - G^{\prime\prime}(u)\partial_xu)\bigr)\partial_x^3 v \dd x \dd t.
	  \end{equation*}
	  Using instead \(\phi = u - \lambda v\), \(\lambda >0\), gives
	  \begin{equation*}
	      0 \leq \int_{t_0}^{t_1} \int_{\Omega} \eta^2 \bigl(\chi - m(u) \Psi(\partial_x^3u - G^{\prime\prime}(u)\partial_xu)\bigr)\partial_x^3 v \dd x \dd t.
	  \end{equation*}
	  This finally proves that
	  \begin{equation*}
	      0 = \int_{t_0}^{t_1} \int_{\Omega} \eta^2 \bigl(\chi - m(u) \Psi(\partial_x^3u - G^{\prime\prime}(u)\partial_xu)\bigr)\partial_x^3 v \dd x \dd t
	  \end{equation*}
	 for all \(v\in L_{\alpha+1}\bigl((0,\infty);W^3_{\alpha+1}(\Omega)\bigr)\) and hence
	 \begin{equation*}
	     \chi - m(u) \Psi(\partial_x^3u - G^{\prime\prime}(u)\partial_xu) = 0 \quad \text{almost everywhere in } I \times V.
	 \end{equation*}
	 Since the compact set \(K=I\times V \subset \{u>0\}\) was arbitrary, this proves
	 \begin{equation}
	     \chi = m(u) \Psi(\partial_x^3u - G^{\prime\prime}(u)\partial_xu)\quad \text{almost everywhere in } \{u>0\}.
	 \end{equation}
	 To conclude the weak convergence on the set \((0,\infty)\times \Omega\), note that given \(\phi \in L_{\alpha+1}\bigl((0,\infty)\times \Omega\bigr)\), we have
	 \begin{equation*}
	 \begin{split}
	   & \int_0^{\infty}\int_{\Omega} m(u^{\sigma}) \Psi\bigl(\flux{\sigma}\bigr) \phi \dd x \dd t\\
	   = & \int_{\{u\geq 1/n\}} m(u^{\sigma}) \Psi\bigl(\flux{\sigma}\bigr) \phi \dd x \dd t + \int_{\{u<1/n\}} m(u^{\sigma}) \Psi\bigl(\flux{\sigma}\bigr) \phi \dd x \dd t.
	 \end{split}
	 \end{equation*}
	 Now, by the previous argument
	 \begin{equation*}
	 \begin{split}
	    \int_{\{u\geq 1/n\}} m(u^{\sigma}) \Psi\bigl(\flux{\sigma}\bigr) \phi \dd x \dd t
	     \longrightarrow \int_{\{u\geq 1/n\}} m(u) \Psi\bigl(\flux{}\bigr) \phi \dd x \dd t
	 \end{split}
	 \end{equation*}
	 as \(\sigma \to 0\).
	 Furthermore,
	 \begin{equation*}
	 \begin{split}
	    & \left|\int_{\{u<1/n\}} m(u^{\sigma}) \Psi\bigl(\flux{\sigma}\bigr) \phi \dd x \dd t\right| \\
	    &\leq \|m(u^{\sigma})\|_{L_{\infty}(\{u<1/n\})}^{\frac{1}{\alpha+1}}\|m(u^{\sigma})^{\frac{\alpha}{\alpha+1}}\Psi\bigl(\flux{\sigma}\bigr)\|\|\phi\|_{L_{\alpha+1}((0,\infty)\times \Omega)}
	 \end{split}
	 \end{equation*}
	 is uniformly small, when \(\sigma < \sigma_{0,n}\). Hence, first sending \(\sigma \to 0\) and then \(n\to \infty\), we obtain the weak convergence.
\end{proof}

With the convergence results of Propositions \ref{prop:convergence-u} and \ref{prop:identification_flux} we are now in the position to show that the limit $u$ is a non-negative global-in-time weak solution to the power-law thin-film equation on the positivity set $\{u > 0\}$. %

\begin{theorem} \label{thm:solution_to_tfe}
	Fix a non-negative initial datum $u_0 \in H^1(\Omega),\ u_0 \geq 0$ with finite energy $E[u_0] < \infty$. Assume that $G: \R \to [0,\infty]$ has the properties \ref{it:G1}--\ref{it:G4}.  Then, if \(u\) is an accumulation point of the sequence \((u^{\sigma})_{\sigma}\), as obtained in Proposition \ref{prop:convergence-u}, then
	\begin{equation*}
	    u \in L_{\infty}\bigl([0,\infty);H^1(\Omega)\bigr) \cap C^{\frac{1}{5\alpha+3},\frac{1}{2}}\bigl([0,\infty]\times\bar{\Omega}\bigr)
	\end{equation*}
	with \(\partial_x^3 u \in L_{\alpha+1,\loc}(\{u > 0\})\) and \(\partial_tu\in L_{\alpha+1}\bigl([0,\infty);\bigl(W^1_{\alpha+1}(\Omega)\bigr)'\bigr)\) is a global-in-time non-negative weak solution to the power-law thin-film equation with potential \(G\)
	\begin{equation*} 
	\begin{cases}
	\partial_t u + \partial_x\bigl(m(u) |\partial_x^3 u - G^{\prime\prime}(u) \partial_x u|^{\alpha-1} \bigl(\partial_x^3 u - G^{\prime\prime}(u) \partial_x u\bigr)\bigr) = 0,
	&
	(t,x)\in \{u > 0\},\\
	\partial_x u = 
    m(u) |\partial_x^3 u - G^{\prime\prime}(u) \partial_x u|^{\alpha-1} \bigl(\partial_x^3 u - G^{\prime\prime}(u) \partial_x u\bigr)
    = 0, 
	&
	t > 0,\ x \in \partial\Omega,
	\\
	u(0,x) = u_0(x),
	&
	x \in \Omega
	\end{cases}
	\end{equation*}
	on the set \(\{u > 0\}\)
	in the sense that $u$ satisfies the equation
	\begin{equation*}
	\int_0^\infty  \langle \partial_t u, \phi\rangle_{W^{1}_{\alpha+1}}\dd t
	-
	\iint_{\{u> 0\}} m(u) |\partial_x^3 u-G^{\prime\prime}(u)\partial_xu|^{\alpha-1}\bigl(\partial_x^3 u-G^{\prime\prime}(u)\partial_xu\bigr) \partial_x \phi\dd x\dd t
	=
	0
	\end{equation*}
	for all \(\phi\in L_{\alpha+1}\bigl((0,\infty);W^1_{\alpha+1,B}(\Omega)\bigr)\) and the energy-dissipation inequality
	\begin{equation*}
	E[u](t)
	+
	\int_0^t \int_{\{u_s > 0\}} m(u) |\partial_x^3 u-G^{\prime\prime}(u)\partial_x u|^{\alpha+1}\dd x\dd s
	\leq
	E[u_0],
	\quad t \in [0,\infty).
	\end{equation*}
\end{theorem}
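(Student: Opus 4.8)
The plan is to obtain $u$ by passing to the limit $\sigma\to 0$ in the weak formulation \eqref{eq:weak-formulation-mod} and in the energy-dissipation equality \eqref{eq:EDI_sigma} for the modified solutions $u^\sigma$, using the compactness from Proposition~\ref{prop:convergence-u} and the flux identification from Proposition~\ref{prop:identification_flux}; as noted before the theorem, we fix one accumulation point $u$ and one subsequence $(u^\sigma)_\sigma$ realising it. The stated regularity of $u$ is precisely the content of Proposition~\ref{prop:convergence-u}, while non-negativity $u\geq 0$ and the initial condition $u(0,\cdot)=u_0$ follow from the locally uniform convergence $u^\sigma\to u$, the positivity of each $u^\sigma$, and $u_0^\sigma\to u_0$ in $H^1(\Omega)$ from \eqref{eq:assumptions_u_sigma}.

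First I would derive the weak equation. Each $u^\sigma$ satisfies \eqref{eq:weak-formulation-mod}, so it suffices to let $\sigma\to 0$ there. For a fixed $\phi\in L_{\alpha+1}([0,\infty);W^1_{\alpha+1}(\Omega))$, the term $\int_0^\infty\langle\partial_t u^\sigma,\phi\rangle_{W^1_{\alpha+1}}\dd t$ converges to $\int_0^\infty\langle\partial_t u,\phi\rangle_{W^1_{\alpha+1}}\dd t$ by the weak convergence in Proposition~\ref{prop:convergence-u}(iv), and the flux term converges to $\iint_{(0,\infty)\times\Omega}m(u)\Psi(\partial_x^3 u-G^{\prime\prime}(u)\partial_x u)\,\partial_x\phi\dd x\dd t$ by Proposition~\ref{prop:identification_flux}; since $m(u)$ vanishes on $\{u=0\}$, this last integral is effectively over $\{u>0\}$ and agrees with the one in the statement. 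This yields the weak formulation on $\{u>0\}$.

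Next I would pass to the limit in \eqref{eq:EDI_sigma} with $s=0$,
\begin{equation*}
E^\sigma[u^\sigma](t)+\int_0^t\int_\Omega m(u^\sigma)\bigl|\partial_x^3 u^\sigma-G_\sigma^{\prime\prime}(u^\sigma)\partial_x u^\sigma\bigr|^{\alpha+1}\dd x\dd\tau=E^\sigma[u_0^\sigma],
\end{equation*}
taking $\liminf_{\sigma\to 0}$. The right-hand side converges to $E[u_0]$ by the choice \eqref{eq:assumptions_u_sigma} of approximating data. For the energy at fixed $t$, weak $H^1$-lower semicontinuity of the Dirichlet part (using that $u^\sigma(t)\rightharpoonup u(t)$ in $H^1(\Omega)$, which holds since $u^\sigma$ is bounded in $L_\infty([0,\infty);H^1(\Omega))$ and $u^\sigma(t)\to u(t)$ uniformly), together with Fatou's lemma and the pointwise convergence $G_\sigma(u^\sigma(t,x))\to G(u(t,x))$ where $u(t,x)>0$ (and non-negativity of $G_\sigma$ combined with continuity \ref{it:G4} of $G$ at zero on the contact set), gives $E[u](t)\leq\liminf_\sigma E^\sigma[u^\sigma](t)$. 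For the dissipation term the crucial point is that $\partial_x^3 u^\sigma-G_\sigma^{\prime\prime}(u^\sigma)\partial_x u^\sigma$ converges only locally on $\{u>0\}$, so I would instead rewrite the integrand via \eqref{eq:j_sigma} as
\begin{equation*}
m(u^\sigma)\bigl|\partial_x^3 u^\sigma-G_\sigma^{\prime\prime}(u^\sigma)\partial_x u^\sigma\bigr|^{\alpha+1}=\frac{|j^\sigma|^{\frac{\alpha+1}{\alpha}}}{m(u^\sigma)^{\frac{1}{\alpha}}},
\end{equation*}
and exploit that $(v,w)\mapsto |w|^{\frac{\alpha+1}{\alpha}}/v^{\frac{1}{\alpha}}$ is the perspective function of the convex map $w\mapsto|w|^{\frac{\alpha+1}{\alpha}}$, hence jointly convex and lower semicontinuous on $[0,\infty)\times\R$ (with value $+\infty$ for $v=0$, $w\neq 0$). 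Since $m(u^\sigma)\to m(u)$ uniformly on $[0,T]\times\bar\Omega$ by Lemma~\ref{lem:Hoelder} and the local Lipschitz continuity of $m$, while $j^\sigma\rightharpoonup j$ weakly in $L_{\frac{\alpha+1}{\alpha}}((0,\infty)\times\Omega)$ by Propositions~\ref{prop:convergence-u}(v) and \ref{prop:identification_flux}, the standard lower-semicontinuity theory for convex normal integrands (Ioffe's theorem) yields
\begin{equation*}
\iint_{(0,t)\times\Omega}\frac{|j|^{\frac{\alpha+1}{\alpha}}}{m(u)^{\frac{1}{\alpha}}}\dd x\dd\tau\leq\liminf_{\sigma\to 0}\int_0^t\int_\Omega m(u^\sigma)\bigl|\partial_x^3 u^\sigma-G_\sigma^{\prime\prime}(u^\sigma)\partial_x u^\sigma\bigr|^{\alpha+1}\dd x\dd\tau.
\end{equation*}
Finiteness of the right-hand side forces $j=0$ a.e.\ on $\{u=0\}$, and on $\{u>0\}$ one has $j=m(u)\Psi(\partial_x^3 u-G^{\prime\prime}(u)\partial_x u)$, so the left-hand side equals $\iint_{\{u>0\}\cap(0,t)\times\Omega}m(u)|\partial_x^3 u-G^{\prime\prime}(u)\partial_x u|^{\alpha+1}\dd x\dd\tau$. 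Adding the two lower-semicontinuity estimates and passing to a subsequence along which both $E^\sigma[u^\sigma](t)$ and the dissipation integral converge (recall their sum tends to $E[u_0]$) gives the energy-dissipation inequality.

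The step I expect to be the main obstacle is the lower semicontinuity of the dissipation across the degeneracy set $\{m(u)=0\}$: one must verify both the joint convexity and lower semicontinuity of the perspective integrand and, crucially, that no dissipation is lost on $\{u=0\}$, i.e.\ that the weak limit of $j^\sigma$ vanishes there — which is exactly why the rewriting through $j^\sigma$ rather than through the third derivative is essential. A secondary point requiring care is the lower semicontinuity of $\int_\Omega G_\sigma(u^\sigma(t))\dd x$ near the contact set, which relies on the construction \ref{it:Gsigma1}--\ref{it:Gsigma4} of $G_\sigma$ together with the continuity \ref{it:G4} of $G$ at zero.
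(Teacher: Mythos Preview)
Your proposal is correct and follows the paper's overall route: pass to the limit in the weak formulation using Proposition~\ref{prop:convergence-u}(iv) and Proposition~\ref{prop:identification_flux}, and then take $\liminf$ in the energy-dissipation equality \eqref{eq:EDI_sigma}. The treatment of the energy term and of the right-hand side is the same as in the paper.

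The one genuine difference is in the dissipation term. The paper simply restricts the integral to the positivity set $\{u(s)>0\}$ and invokes weak lower semicontinuity there, using that on compact subsets of $\{u>0\}$ the mobility $m(u^\sigma)$ converges uniformly to a positive limit while $\partial_x^3 u^\sigma-G_\sigma''(u^\sigma)\partial_x u^\sigma$ converges weakly in $L_{\alpha+1}$ (Proposition~\ref{prop:convergence-u}(ii),(iii)). You instead rewrite the dissipation globally as the perspective integrand $|j^\sigma|^{(\alpha+1)/\alpha}/m(u^\sigma)^{1/\alpha}$ and appeal to the joint convexity/lower semicontinuity of the perspective function together with Ioffe's theorem; the conclusion $j=0$ on $\{u=0\}$ then falls out for free. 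Both arguments are valid. The paper's is shorter since the restriction to $\{u>0\}$ avoids any discussion of the degeneracy set, while yours is more self-contained and explains why no dissipation is lost there; it also does not rely on the local $L_{\alpha+1}$ control of $\partial_x^3 u^\sigma$ on $\{u>0\}$, only on the global weak convergence of the flux. Your remark about the potential term near the contact set is a real point that the paper treats just as informally; the justification via convexity of $G_\sigma$ and continuity of $G$ at zero that you sketch is the right one.
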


The concept of weak solutions obtained in Theorem \ref{thm:solution_to_tfe} is `very weak'. It is the same concept of weak solutions that is used in \cite{bernis_higher_1990} and, if $G\equiv 0$, it allows for steady-state solutions of the form \(u(x) = [x-b]_+ \cdot [c- x]_+\), where $b, c \in \Omega$.

\begin{proof}[Proof of Theorem \ref{thm:solution_to_tfe}]
	In Theorem \ref{thm:well-posedness} we showed that \(u^{\sigma}\) is a weak solution to the modified thin-film equation \eqref{eq:PDE_mod}, that is \(u^{\sigma}\) satisfies
	\begin{align*}
	\int_0^\infty \langle \partial_t u^{\sigma},\phi\rangle_{W^1_{\alpha+1}}\dd t 
	- 
	\int_0^\infty \int_{\Omega} 
	m(u^{\sigma}) |\partial_x^3 u^{\sigma} - G^{\prime\prime}_{\sigma}(u^{\sigma})|^{\alpha-1}\bigl(\partial_x^3 u^{\sigma} - G^{\prime\prime}_{\sigma}(u^{\sigma})\bigr) \partial_x \phi\dd x\dd t 
	= 0
	\end{align*}
	for all $\phi \in L_{\alpha+1}\bigl([0,\infty);W^1_{\alpha+1}(\Omega)\bigr)$. Using Proposition \ref{prop:convergence-u} (iii) and Proposition \ref{prop:identification_flux}, we obtain
	\begin{align*}
	\int_{0}^{\infty} \langle \partial_t u^{\sigma},\phi\rangle_{W^1_{\alpha+1}} \dd t & \longrightarrow \int_{0}^{\infty} \langle \partial_t u,\phi\rangle_{W^1_{\alpha+1}} \dd t, \\
	\int_0^\infty \int_{\Omega} 
	m(u^{\sigma}) \Psi\bigl(\partial_x^3 u^{\sigma}-G_{\sigma}^{\prime\prime}(u^{\sigma})\partial_x u^{\sigma}\bigr) \partial_x \phi\dd x\dd t & \longrightarrow \iint_{\{u>0\}}
	m(u) \Psi\bigl(\partial_x^3 u-G^{\prime\prime}(u)\partial_x u\bigr) \partial_x \phi\dd x\dd t,
	\end{align*}
	as \(\sigma \to 0\) for all \(\phi\in L_{\alpha+1}\bigl([0,\infty);W^1_{\alpha+1}(\Omega)\bigr)\). This proves that \(u\) is a weak solution to \eqref{eq:power-law-equation} on \(\{u>0\}\).
	
	To obtain the energy-dissipation inequality, note that since \(u^{\sigma}(t)\to u(t)\) uniformly, by \ref{it:Gsigma4} we obtain that \(G_{\sigma}(u^{\sigma})\) converges to \(G(u)\) pointwise on the set \(\{u>0\}\). By Fatou's lemma, we then obtain
    \begin{equation*}
        \int_{\Omega} G(u(t)) \dd x \leq \liminf\limits_{\sigma\to 0} \int_{\Omega} G_{\sigma}(u_{\sigma}(t))\dd x,
        \quad 
        t \geq 0.
    \end{equation*}
	
	Futhermore, from Proposition \ref{prop:uniform-bounds-sigma} (i) it follows that \(\partial_x u^{\sigma}(t) \rightharpoonup \partial_x u(t)\) in \(L_2(\Omega)\).  Hence, by lower semicontinuity and using the energy-dissipation identity\eqref{eq:EDI_sigma} for the modified problem, we obtain
	\begin{align*}
		& E[u](t)
		+
		\int_0^t \int_{\{u(s)> 0\}} m(u) |\partial_x^3 u-G^{\prime\prime}(u)\partial_xu|^{\alpha+1}\dd x\dd s \\
		&\leq \liminf\limits_{\sigma \to 0}\left[E^{\sigma}[u^{\sigma}](t) + \int_0^t \int_\Omega  m(u^{\sigma})|\partial_x^3 u^{\sigma} - G_{\sigma}^{\prime\prime}(u^{\sigma})\partial_x u^{\sigma}|^{\alpha+1}\dd x\dd s\right] \\
		& = \liminf\limits_{\sigma\to 0} E^{\sigma}[u_0^{\sigma}] \\
		& = E[u_0],	\quad t \in [0,\infty).
	\end{align*}
	This completes the proof.
\end{proof}

%----------------------------------------------
%----------------------------------------------

Finally, we characterise positive steady states of \eqref{eq:PDE} by positive constants.

\begin{lemma}
Let $G\colon \R \to [0,\infty]$ satisfy \ref{it:G1}--\ref{it:G4}. Then, a positive function $0 < u \in W^3_{\alpha+1}(\Omega)$ is a steady-state solution to \eqref{eq:PDE} if and only if $u \equiv \bar{u}$ is a positive constant.
\end{lemma}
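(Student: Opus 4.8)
The plan is to treat the two implications separately, the ``if'' direction being trivial and the ``only if'' direction carrying all the content. For the ``if'' direction: if $u\equiv\bar{u}$ is a positive constant then $\partial_x u=\partial_x^2 u=\partial_x^3 u=0$, so $\partial_t u=0$, the flux $m(u)\Psi\bigl(\partial_x^3 u-G''(u)\partial_x u\bigr)$ vanishes identically, and both boundary conditions in \eqref{eq:PDE} hold trivially; hence $u$ is a steady state.

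For the ``only if'' direction, I would first set up the regularity framework: since $\Omega\subset\R$ is an interval and $\alpha+1>1$, the Sobolev embedding gives $W^3_{\alpha+1}(\Omega)\hookrightarrow C^2(\bar{\Omega})$, so $u\in C^2(\bar{\Omega})$, and since $u>0$ on the compact set $\bar{\Omega}$ we have $u\geq c>0$, whence $m(u)\geq m(c)>0$ and $G'(u),G''(u)$ are continuous and bounded on $\bar{\Omega}$. Being a steady state, $u$ satisfies $\partial_x j=0$ in $\Omega$ in the distributional sense for the flux $j:=m(u)\Psi\bigl(\partial_x^3 u-G''(u)\partial_x u\bigr)\in L_{(\alpha+1)/\alpha}(\Omega)$, together with $j=0$ on $\partial\Omega$; hence $j$ is constant and equal to zero. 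Since $m(u)>0$ and $\Psi$ is a homeomorphism of $\R$ with $\Psi(0)=0$, this forces $\partial_x^3 u-G''(u)\partial_x u=0$ a.e.\ in $\Omega$.

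Next I would rewrite this identity using the chain rule $G''(u)\partial_x u=\partial_x\bigl(G'(u)\bigr)$ (valid since $u\in W^3_{\alpha+1}(\Omega)$ with $\partial_x u\in L_\infty(\Omega)$ and $G'\in C^1((0,\infty))$), so that $\partial_x\bigl(\partial_x^2 u-G'(u)\bigr)=0$, i.e.\ $\partial_x^2 u-G'(u)\equiv\mathrm{const}$. Testing this against $u-\bar{u}$ (which has zero mean), integrating over $\Omega$ and integrating by parts using the boundary condition $\partial_x u=0$ on $\partial\Omega$, I expect to arrive at
\begin{equation*}
    \int_\Omega |\partial_x u|^2\dd x = -\int_\Omega \bigl(G'(u)-G'(\bar{u})\bigr)(u-\bar{u})\dd x,
\end{equation*}
where the constant $G'(\bar{u})$ (well-defined since $\bar{u}>0$) is inserted using $\int_\Omega(u-\bar{u})\dd x=0$. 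Convexity of $G$ on $(0,\infty)$ makes $G'$ non-decreasing, so the right-hand side is $\leq 0$ while the left-hand side is $\geq 0$; hence $\partial_x u\equiv 0$, $u$ is constant, and the mass constraint gives $u\equiv\bar{u}$.

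The only genuinely delicate point is the passage from ``$u$ is a steady-state solution of \eqref{eq:PDE}'' to the pointwise identity $\partial_x^3 u-G''(u)\partial_x u=0$: one must combine the distributional equation $\partial_x j=0$ with the vanishing-flux boundary condition to conclude $j\equiv 0$, and use the strict positivity of $u$ --- hence of $m(u)$ --- to divide it out. Everything after that is elementary, and the monotonicity of $G'$ coming from convexity is precisely what closes the argument.
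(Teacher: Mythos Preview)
Your proof is correct. It takes a different route from the paper, so a brief comparison is in order.

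The paper argues variationally: it asserts that a positive steady state is the minimiser of $E[\cdot]$ on the mass-constrained set $\{v\in H^1(\Omega):\bar v=\bar u\}$, then uses Jensen's inequality (from convexity of $G$) to see $E[\bar u]\leq E[u]$, and concludes by strict convexity of $E$ on that set that the minimiser is unique, hence $u=\bar u$. The step ``steady state $\Rightarrow$ energy minimiser'' is really ``steady state $\Rightarrow$ critical point of the constrained energy'', which, combined with strict convexity, gives the minimiser; the paper compresses this.

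Your approach is a direct ODE argument: from $j\equiv 0$ and $m(u)>0$ you extract $\partial_x^3 u - G''(u)\partial_x u=0$, integrate once to $\partial_x^2 u - G'(u)=\mathrm{const}$, and then test against $u-\bar u$. Convexity of $G$ enters through the monotonicity of $G'$, forcing $(G'(u)-G'(\bar u))(u-\bar u)\geq 0$ pointwise. This is more elementary and self-contained: it does not appeal to the gradient-flow structure or to the identification of steady states with energy minimisers. The paper's version is shorter and emphasises the underlying variational picture, but your argument makes every step explicit and would transfer verbatim to settings where the variational characterisation is less immediate.
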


%----------------------------------------------

\begin{proof}
\noindent\textbf{(i)} If  $u \equiv \bar{u}$ is a positive constant, then clearly $u$ is a solution \eqref{eq:PDE} with $\partial_t u = 0$.

\noindent\textbf{(ii)} Let now $0 < u \in W^3_{\alpha+1}(\Omega)$ be a positive steady state of \eqref{eq:PDE}. Then 
\begin{equation*}
    u = \argmin_{\substack{v \in H^1(\Omega), \\ \bar{v}=\bar{u}}} E[v].
\end{equation*}
Since $G$ is convex, by Jensen's inequality we find that
\begin{equation*}
    \int_{\Omega} G(\bar{u}) \dd x
    \leq
    \int_{\Omega} G(u) \dd x.
\end{equation*}
This implies in particular that $E[\bar{u}] \leq E[u]$. Since $E[\cdot]$ is strictly convex on the set $\{v \in H^1(\Omega);\ \bar{v}=\bar{u}\}$, the minimiser is unique. Hence $u = \bar{u}$.
\end{proof}

\section{Guaranteed liftoff} \label{sec:lifting}

In this section, we study the solutions to the power-law thin-film equation
\begin{equation}\label{eq:power-law-tfe-lifting}
\begin{cases}
    \partial_t u + \partial_x\bigl(u^{n} |\partial_x^3 u|^{\alpha-1} \partial_x^3 u \bigr) = 0, & t>0,\ x \in \Omega, 
    \\
    \partial_x u = u^{n} |\partial_x^3 u|^{\alpha-1} \partial_x^3 u = 0, & t>0,\ x \in \partial\Omega, \\
    u(0,x) = u_0, & x\in \Omega,
\end{cases}
\end{equation}
obtained in Section \ref{sec:limit}, i.e. for \(G\equiv 0\) and \(m(u)=u^n\). We show that, under certain conditions, solutions emerging from positive initial data with low energy lift up after fixed time. This means that they are close to a constant after a fixed time. In particular, the initial value may be arbitrarily close to zero at one point and will lift up after a fixed time that is independent of the closeness to zero.

The following results are stated for the unit interval $\Omega = (0,1)$ in order to simplify calculations. The same results hold true for any interval $\Omega = (0,L)$ by rescaling.

\begin{theorem}\label{theorem: liftoff}
	Let $\Omega = (0,1)$. Let $n,\alpha>0$ with $2(\alpha+1)>n$. For a mass \(M>0\), define $v(x)= \frac{3M}{2}(1 - x^2)\in H^1(\Omega)$, with
    \begin{equation*}
        M = \bar{v} = \int_{\Omega} v(x) \dd x \quad \text{and} \quad E_0 \coloneq \int_\Omega |v'(x)|^2 \dd x = \frac{9M^2}{2}. 
    \end{equation*}
    Then there exists $t_0=t_0(M,n,\alpha)>0$ such that
	\begin{itemize}
		\item [(i)] whenever $u_0\in H^1(\Omega)$ with $\bar{u}_0 = M$ and $\int_\Omega|u_0'|^2\dd x < E_0$, then any solution $u$ to \eqref{eq:power-law-tfe-lifting} in the sense of Theorem \ref{thm:solution_to_tfe} with initial value $u_0$ satisfies
            $$\min\limits_{x\in \bar{\Omega}}\, u(t,x)\geq \frac{M}{2}, \quad t\geq t_0;$$
		\item [(ii)] whenever $u_0\in H^1(\Omega)$ with $\bar{u}_0 = M$ and $\int_\Omega |u_0'|^2\dd x = E_0$, then 
        $$u_0=v\quad \text{or} \quad u_0 = v(1-\cdot)$$
        and there exists a solution $u$ to \eqref{eq:power-law-tfe-lifting} in the sense of Theorem \ref{thm:solution_to_tfe} with initial value $u_0$ satisfying 
            $$\min\limits_{x\in \bar{\Omega}}\, u(t,x)\geq \frac{M}{2},\quad t\geq t_0.$$
	\end{itemize}
\end{theorem}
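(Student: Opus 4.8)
The plan is to separate the statement into a soft part, governed by the sharp one--dimensional Poincar\'e--Sobolev inequality on $(0,1)$, and a quantitative part, the energy decay. The soft ingredient is
\begin{equation*}
\|w-\bar w\|_{L_\infty(0,1)}\;\le\;\tfrac{1}{\sqrt3}\,\|w'\|_{L_2(0,1)},\qquad w\in H^1(0,1),
\end{equation*}
where, among functions of mean $M$, equality holds precisely when $w-M$ is a scalar multiple of $v-M$ or of $v(1-\cdot)-M$; in particular $\|v-M\|_{L_\infty}=M=\tfrac{1}{\sqrt3}\|v'\|_{L_2}$, so $E_0=\|v'\|_{L_2}^2$ is exactly the threshold. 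Applying this to $w=u(t,\cdot)$, which has mean $M$ by conservation of mass (Remark \ref{rem:cons_mass_poincare}), gives two things: if $\|u(t)'\|_{L_2}^2<E_0$ then $\|u(t,\cdot)-M\|_{L_\infty}<M$, so $u(t,\cdot)>0$ on $\bar\Omega$; and if $\|u(t)'\|_{L_2}^2\le\tfrac14 E_0$ then $\|u(t,\cdot)-M\|_{L_\infty}\le M/2$, so $\min_{\bar\Omega}u(t,\cdot)\ge M/2$. Since $E[u](t)=\tfrac12\|u(t)'\|_{L_2}^2$ is non-increasing by the energy--dissipation inequality of Theorem \ref{thm:solution_to_tfe}, the hypothesis $\int_\Omega|u_0'|^2\dd x<E_0$ already yields $u(t,\cdot)>0$ for all $t\ge0$; what remains is to show the energy drops to $\tfrac18 E_0$ after a time $t_0$ depending only on $M,n,\alpha$.

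For the decay I would argue on the regularised level. By Theorem \ref{thm:well-posedness} the approximations $u^\sigma$ are strictly positive, satisfy the no-flux condition $\partial_x u^\sigma=0$ on $\partial\Omega$ for $t>0$, and obey the energy--dissipation \emph{equality} \eqref{eq:EDI_sigma}; moreover $E^\sigma[u_0^\sigma]\to E[u_0]<\tfrac12 E_0$ and $E^\sigma[M]=G_\sigma(M)=0$ for $\sigma$ small (here $G\equiv0$). The core is a coercivity estimate: there exist $\sigma_0>0$ and $c=c(M,n,\alpha)>0$ such that for every $\sigma<\sigma_0$ and every $t$ with $E^\sigma[u^\sigma](t)\ge\tfrac18 E_0$,
\begin{equation*}
\int_\Omega (u^\sigma)^n\,\bigl|\partial_x^3 u^\sigma-G_\sigma''(u^\sigma)\,\partial_x u^\sigma\bigr|^{\alpha+1}\dd x\;\ge\;c.
\end{equation*}
Granting this, \eqref{eq:EDI_sigma} gives $\frac{d}{dt}E^\sigma[u^\sigma](t)\le-c$ for a.e.\ such $t$; since $E^\sigma[u^\sigma]$ is non-increasing, the set of such times is an initial interval $[0,T^\sigma)$ with $T^\sigma\le(E^\sigma[u_0^\sigma]-\tfrac18 E_0)/c<\tfrac{3E_0}{8c}=:t_0$. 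For $t\ge t_0$ we then have $\tfrac12\|u^\sigma(t)'\|_{L_2}^2\le E^\sigma[u^\sigma](t)\le\tfrac18 E_0$, hence $\min_{\bar\Omega}u^\sigma(t,\cdot)\ge M/2$ by the Poincar\'e step; letting $\sigma\to0$ and using the locally uniform convergence $u^\sigma\to u$ of Proposition \ref{prop:convergence-u} transfers this to $\min_{\bar\Omega}u(t,\cdot)\ge M/2$ for all $t\ge t_0$, with $t_0$ depending only on $E_0$ and $c$, i.e.\ only on $M,n,\alpha$. It is essential to work with the constructed solutions (limits of the genuinely parabolic flows $u^\sigma$), not with arbitrary `very weak' solutions of Theorem \ref{thm:solution_to_tfe}: the latter class contains stationary profiles such as $v$ itself ($\partial_x^3 v\equiv 0$), which never lift off.

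Proving the coercivity estimate is the main obstacle, and it is where $2(\alpha+1)>n$ enters. When $E^\sigma[u^\sigma](t)$ lies in the critical band, $u^\sigma(t,\cdot)$ is far from the constant $M$: either $\|\partial_x u^\sigma(t)\|_{L_2}$ is bounded below, or $\int_\Omega G_\sigma(u^\sigma(t))$ is, forcing $u^\sigma(t,\cdot)$ down to scale $\sigma$ somewhere. Using $\partial_x u^\sigma(t)=0$ at both endpoints, so that $\partial_x^2 u^\sigma(t)$ has zero mean, a chain of one-dimensional inequalities ($\|\partial_x u^\sigma(t)\|_{L_2}\lesssim\|\partial_x^2 u^\sigma(t)\|_{L_2}$, and $\|\partial_x^2 u^\sigma(t)\|_{L_2}\le\|\partial_x^2 u^\sigma(t)\|_{L_\infty}\le\|\partial_x^3 u^\sigma(t)\|_{L_1}\le\|\partial_x^3 u^\sigma(t)\|_{L_{\alpha+1}}$) yields a lower bound on $\|\partial_x^3 u^\sigma(t)\|_{L_{\alpha+1}}$ away from touchdown configurations. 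Near a touchdown configuration (near $v$, or near a profile $[x-b]_+[c-x]_+$) the weight $(u^\sigma)^n$ degenerates exactly where $\partial_x^3 u^\sigma$ concentrates, so the estimate needs a layer analysis: proximity to such a profile forces, over a layer of width $\ell$, a turn of the slope from the bulk value (for $v$, $\approx-3M$) to $0$ at the boundary, and balancing the weighted dissipation produced there against $\ell$ and $\sigma$, together with the uniform bound $\|u^\sigma\|_{L_\infty}\le 2M$ from finite energy, keeps it bounded below precisely because $2(\alpha+1)>n$. Combining the non-degenerate estimate with the layer estimate gives the uniform $c$.

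Finally, for part (ii): if $\int_\Omega|u_0'|^2\dd x=E_0$ and $u_0\ge0$ attains the value $0$, then $M\le\|u_0-M\|_{L_\infty}\le\tfrac1{\sqrt3}\|u_0'\|_{L_2}=M$, so equality holds in the sharp inequality and the extremality characterisation forces $u_0=v$ or $u_0=v(1-\cdot)$. For these data one may still pick an admissible sequence $u_0^\sigma\to u_0$ with $G_\sigma(u_0^\sigma)\to G(u_0)$ in $L_1(\Omega)$, so that $E^\sigma[u_0^\sigma]\to\tfrac12 E_0$; the flow $u^\sigma$ of \eqref{eq:PDE_mod} is not stationary (since $v$ violates $\partial_x u=0$ on $\partial\Omega$, it is not a steady state), so the decay estimate of the previous paragraph applies starting from energy $\tfrac12 E_0$ and produces a solution $u$, obtained as in Proposition \ref{prop:convergence-u}, with $\min_{\bar\Omega}u(t,\cdot)\ge M/2$ for $t\ge t_0$, the same $t_0$ as in (i).
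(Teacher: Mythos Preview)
The central claim that fails is your uniform coercivity estimate: a constant $c=c(M,n,\alpha)>0$ bounding the dissipation from below whenever $E^\sigma[u^\sigma](t)\ge\tfrac18E_0$, uniformly in $\sigma$. This is false in the (nonempty) range $1+2\alpha<n<2(\alpha+1)$. For any small $\delta>0$ one can write down $u_\delta\in W^3_{\alpha+1,B}(0,1)$ with mean $M$, Neumann data at both endpoints, $\min u_\delta=\delta$, and
\[
\int_0^1 u_\delta^{\,n}\,|\partial_x^3 u_\delta|^{\alpha+1}\,\dd x\;\le\;C\,\delta^{\,n-1-2\alpha}\,:
\]
take a parabola with vertex at $x=\tfrac12$ and smooth its second derivative over a window of width $\sim\delta$ (this is the upper-bound construction in Lemma~\ref{lemma: dissipation lower bound}). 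Since $n-1-2\alpha>0$ the right-hand side tends to $0$, while the Dirichlet energy of $u_\delta$ is close to $E_0$ and hence well above $E_0/8$. Your layer heuristic, carried out carefully, gives the same exponent $n-1-2\alpha$, not one governed by $2(\alpha+1)-n$. Even when the estimate does produce some lower bound (e.g.\ for $n\le 1+2\alpha$ one still has the logarithmic factor), that bound depends on $\min u$ and hence on $E[v]-E[u_0]$; the resulting $t_0$ would depend on $u_0$, which is exactly what the theorem rules out.

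The paper's argument embraces the degeneracy instead of trying to avoid it. Lemma~\ref{lemma: dissipation lower bound} gives dissipation $\ge f_{n,\alpha}(\delta)$ with $\delta=\min u(t)$ and $f_{n,\alpha}(\delta)=\min\{1,\delta^{n-1-2\alpha}\}/\log^{\alpha+1}(M/\delta)$; Lemma~\ref{lemma: energy vs min} (equivalent to your sharp Poincar\'e--Sobolev step) gives $\delta\ge C\bigl(E[v]-E[u](t)\bigr)$. Setting $y(t)=E[v]-E[u](t)>0$, the energy-dissipation inequality yields
\[
y(t)\;\ge\;y(0)+c\int_0^t y(\tau)^{1-\eps}\,\dd\tau,\qquad \eps=\tfrac{2(\alpha+1)-n}{2}>0,
\]
and comparison with $z'=cz^{1-\eps}$ forces $y(t)\ge(c\eps t)^{1/\eps}$ \emph{independently of} $y(0)$. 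This is precisely where $2(\alpha+1)>n$ enters: not to make the dissipation uniformly positive, but to make its degeneracy sub-linear enough that the ODE reaches any prescribed level in a time depending only on $M,n,\alpha$. Note also that, for part~(i), the paper runs this directly on the limit solution $u$ (which is positive by the energy hypothesis), not on $u^\sigma$; working at the regularised level is only needed for the borderline case~(ii), where your approximation idea matches the paper's.
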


\begin{center}
    \begin{figure}[H]
        \includegraphics[scale=1]{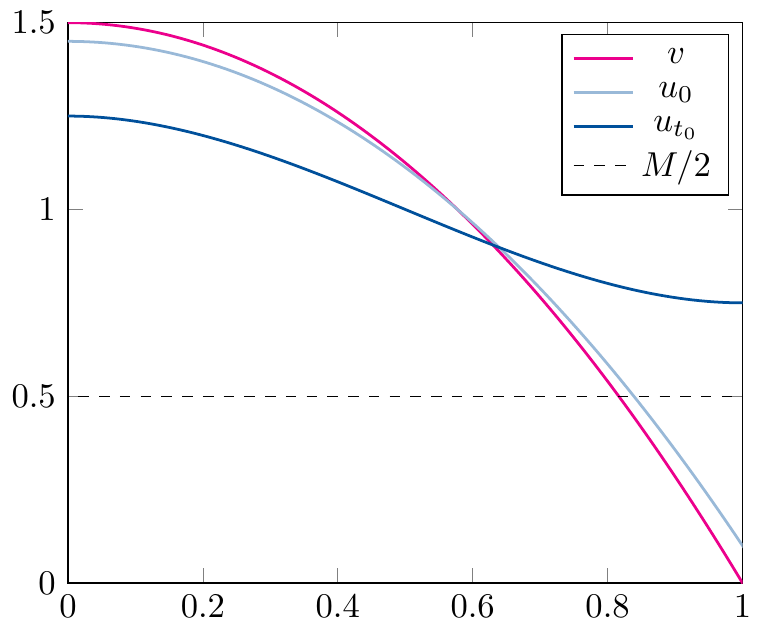}
        \caption{The function $v\in H^1((0,1))$ has a root at $x=1$. Any initial value $u_0$ with less energy produces an almost constant solution after time $t_0>0$ independent of $u_0$. }
        \label{fig: liftoff}
    \end{figure}
\end{center}

\begin{remark}
Theorem \ref{theorem: liftoff} holds true for any interval $\Omega=(0,L)$ by replacing $v$ with $v(\cdot/L)/L$, and $E_0$ with $2E[v(\cdot/L)/L] = \frac{9M^2}{2L^3}$.
A similar result holds for the circle $\R/\Z$, in which case $v(x)= \frac{3M}{2}\max_{z\in \Z}(1-4(x-z)^2)$.
\end{remark}

We note that the function $v$ has a root at $x=1$. We have thus constructed a solution to \eqref{eq:power-law-tfe-lifting} that starts with a root but loses that root immediately. The function $v$ is significant as the lowest-energy function that touches zero, due to the following lemma:

\begin{lemma}\label{lemma: energy vs min}
	Let \(\Omega =(0,1)\). Let $u\in H^1(\Omega)$ with $\bar{u} = M$ and $\min_{x\in\bar{\Omega}} u = \delta \geq 0$. Then
	\begin{equation*}
		\int_\Omega |u'|^2\dd x \geq \int_\Omega |(1-\tfrac{\delta}{M})v'|^2\dd x.
	\end{equation*}
	In other words, $v_\delta = \delta+(1-\frac{\delta}{M})v$ is the minimiser of the Dirichlet energy among all functions $u\in H^1(\Omega)$ with $\min_x u = \delta$ and $\bar{u} = M$.
\end{lemma}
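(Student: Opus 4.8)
The plan is to reduce the statement to a one-parameter constrained minimisation and solve it by an explicit Euler--Lagrange computation. First I would observe that the claim is equivalent to: among all $u\in H^1(\Omega)$ with $\bar u = M$ and $\min_{\bar\Omega} u = \delta$, the Dirichlet energy $\int_\Omega |u'|^2\dd x$ is minimised by $v_\delta := \delta + (1-\tfrac{\delta}{M})v$, whose derivative is $(1-\tfrac{\delta}{M})v'$, giving exactly the right-hand side. Note $v_\delta$ indeed satisfies the constraints: $\overline{v_\delta} = \delta + (1-\tfrac{\delta}{M})M = M$, and since $v(x)=\tfrac{3M}{2}(1-x^2)$ attains its minimum $0$ at $x=1$, we get $\min v_\delta = \delta$. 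So it suffices to show $\int_\Omega |u'|^2 \ge \int_\Omega |v_\delta'|^2$ for every competitor $u$.

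Next I would handle this by a reduction: replacing $u$ by $\tilde u := \tfrac{M}{M-\delta}(u-\delta)$ (assuming $\delta<M$; the case $\delta=M$ forces $u\equiv M$ and is trivial) transforms the problem to minimising $\int_\Omega |w'|^2$ over $w\in H^1(\Omega)$ with $\bar w = M$ and $\min_{\bar\Omega} w = 0$. Indeed $\int|u'|^2 = (1-\tfrac{\delta}{M})^2\int |\tilde u'|^2$ and $v_\delta$ corresponds to $v$ under this map, so the general case follows from the case $\delta=0$. Thus I am reduced to proving: $v(x)=\tfrac{3M}{2}(1-x^2)$ is the minimiser of $\int_0^1 |w'|^2\dd x$ subject to $\int_0^1 w\,\dd x = M$ and $w\ge 0$ with $\min w = 0$.

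For this core minimisation I would argue as follows. Let $w$ be any competitor, and let $x_\ast\in[0,1]$ be a point where $w(x_\ast)=0$. The energy $\int|w'|^2$ only sees $w'$, so I may use the following convexity/rearrangement idea: among all $H^1$ functions with prescribed average $M$ and a prescribed zero at $x_\ast$, the energy minimiser is the solution of $-w''=\lambda$ (a constant, from the Lagrange multiplier for the average constraint) with the natural boundary condition $w'=0$ at any endpoint not equal to $x_\ast$ and the pointwise constraint active only at $x_\ast$. Since $w\ge 0$ with equality at $x_\ast$, at an interior $x_\ast$ we would need $w'(x_\ast)=0$ too, forcing $w$ to be a downward parabola with vertex at $x_\ast$; matching $w'=0$ at both endpoints is then impossible unless $x_\ast\in\{0,1\}$. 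Hence the minimiser has its zero at an endpoint, say $x_\ast=1$ (by symmetry $x\mapsto 1-x$), satisfies $-w''=\lambda$, $w'(0)=0$, $w(1)=0$; this gives $w(x)=\tfrac{\lambda}{2}(1-x^2)$, and the average constraint $\int_0^1 w = \tfrac{\lambda}{3}=M$ fixes $\lambda = 3M$, i.e. $w=v$. A cleaner way to make this rigorous without invoking the full Euler--Lagrange machinery: for an arbitrary competitor $w$ with $w(x_\ast)=0$, write $w = v_{x_\ast} + \varphi$ where $v_{x_\ast}$ is the corresponding parabola-type minimiser with zero at $x_\ast$, note $\int_0^1 \varphi = 0$ and $\varphi(x_\ast)=0$, and compute $\int |w'|^2 = \int|v_{x_\ast}'|^2 + 2\int v_{x_\ast}'\varphi' + \int|\varphi'|^2$; the cross term vanishes after integrating by parts using $-v_{x_\ast}''=\mathrm{const}$, $\int\varphi=0$, and the boundary conditions (with $\varphi(x_\ast)=0$ killing the boundary contribution when $x_\ast$ is an endpoint), leaving $\int|w'|^2 \ge \int|v_{x_\ast}'|^2 \ge \int|v'|^2$, the last inequality because $\int|v_{x_\ast}'|^2$ as a function of $x_\ast\in[0,1]$ is minimised at the endpoints (an explicit elementary computation).

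The main obstacle I anticipate is making the last step fully rigorous and clean: the competitor $w$ need only be $H^1$ with no sign or monotonicity structure away from $x_\ast$, so one must be careful that the orthogonality argument genuinely goes through (the cross term $\int v_{x_\ast}'\varphi'$ must vanish, which requires that $v_{x_\ast}$ be affine-quadratic with the right boundary behaviour and that $\varphi$ satisfies $\varphi(x_\ast)=0$ and $\int\varphi=0$), and that one correctly identifies which position $x_\ast$ of the zero is worst — i.e. explicitly minimising $x_\ast\mapsto \int_0^1 |v_{x_\ast}'|^2\dd x$ over $[0,1]$ and checking the minimum sits at $x_\ast\in\{0,1\}$ with value $\int_0^1|v'|^2 = \tfrac{9M^2}{2}$. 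This is elementary calculus but it is where all the actual content lies; everything before it is just bookkeeping reductions.
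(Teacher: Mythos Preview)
Your argument is correct and is considerably more explicit than the paper's. The paper gives a two-line sketch: it invokes the Euler--Lagrange equation to say that any minimiser must have $u''=C$ on $\{u>\delta\}$ with $u'=0$ on $\partial\Omega\cap\{u>\delta\}$, concludes that the minimiser is built out of parabolic arcs, and then asserts without further computation that among all such configurations $v_\delta$ has the lowest Dirichlet energy. Your route---reducing to $\delta=0$ by the affine rescaling, then for each location $x_*$ of the zero orthogonally decomposing against the explicit piecewise-parabolic profile $v_{x_*}$, and finally computing $\int|v_{x_*}'|^2=3M^2/\bigl(x_*^3+(1-x_*)^3\bigr)$ and minimising over $x_*\in[0,1]$---bypasses any existence theory for the constrained problem and makes the last comparison step quantitative rather than asserted. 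One small point worth tightening in your write-up: for an interior $x_*$ the profile $v_{x_*}$ is necessarily a \emph{piecewise} parabola with a kink at $x_*$ (a single downward parabola cannot satisfy both Neumann conditions), so when you integrate the cross term $\int v_{x_*}'\varphi'$ by parts you pick up the jump $[v_{x_*}'](x_*)\,\varphi(x_*)$; it is precisely $\varphi(x_*)=0$ that annihilates this contribution, so that hypothesis is needed for \emph{all} $x_*$, not only when $x_*\in\partial\Omega$ as your parenthetical suggests. You correctly flag this in your final paragraph, so the plan stands.
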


\begin{proof}
The Euler--Lagrange equations tell us that $u''=C$ in $\{u>\delta\}$ and $u'= 0$ on $\partial \Omega \cap \{u>\delta\}$. This means that $u$ consists of at most countably many parabolae with constant $u''$. Among these we pick $v$ as the function with the lowest Dirichlet energy.
\end{proof}

The following lemma tells us that the dissipation $\int_\Omega u^n|\partial_x^3 u|^{\alpha+1}\dd x$ is bounded from below by a power of the minimum of $u$.

\begin{lemma}\label{lemma: dissipation lower bound}
Let \(\Omega=(0,1)\) and  $M, n, \alpha>0$. Then there is a constant $C=C(M,n,\alpha)>0$ such that for any $u\in W^3_{\alpha+1,B}(\Omega)$ with \(\bar{u} = M\) and $\min_{x\in\bar{\Omega}} u = \delta \in (0,M/2)$ we have
\begin{equation*}
\int_\Omega u^n |\partial_x^3u|^{\alpha+1}\dd x \geq \frac{C}{ \log^{\alpha+1}(M/\delta)} \min\{\delta^{n-1 - 2\alpha},1\}.
\end{equation*}
Further, for any $\delta>0$, there exists $u_\delta\in W^3_{\alpha+1,B}(\Omega)$ with these properties such that
\begin{equation*}
    \int_\Omega u_\delta^n |\partial_x^3u_\delta|^{\alpha+1}\dd x \leq C \min\{\delta^{n-1-2\alpha},1\}.
\end{equation*}

\end{lemma}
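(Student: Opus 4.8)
The plan is to prove the two inequalities separately: the lower bound by a weighted Hölder argument after localising near a minimiser of $u$, and the reverse inequality by an explicit profile.

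\emph{Lower bound.} Fix $x_0\in\bar\Omega$ with $u(x_0)=\delta$; since $x_0$ is either an interior minimiser or lies on $\partial\Omega$, in both cases $\partial_x u(x_0)=0$. Because $\fint_\Omega u=M$ there is a point at which $u\ge M$, and, after the reflection $x\mapsto 1-x$ (which preserves all hypotheses and the functional under consideration) and after replacing that point by $x_1:=\inf\{x>x_0:u(x)=M\}$, we may assume $x_0<x_1$ and $\delta\le u\le M$ on $I:=[x_0,x_1]$. A second-order Taylor expansion at $x_0$, using $\partial_x u(x_0)=0$ and $|x_1-s|\le 1$, gives $\int_I|\partial_x^2 u|\ge M-\delta\ge M/2$; since $\partial_x^2 u$ has a zero in $[0,1]$ (it is nonnegative at $x_0$ and has vanishing integral on $\Omega$), integrating $\partial_x^3 u$ from such a zero yields $\int_0^1|\partial_x^3 u|\gtrsim M$, and in fact $\int_I|\partial_x^3 u|\gtrsim M$ once the zero is localised. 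Now split the range of $u$ on $I$ dyadically, $A_k:=\{x\in I:2^k\delta\le u<2^{k+1}\delta\}$ for $k=0,\dots,K-1$ with $2^K\delta\simeq M$ and $K\simeq\log_2(M/\delta)$, so that $\sum_k\int_{A_k}|\partial_x^3 u|\gtrsim M$. Applying Hölder on each $A_k$ with the weight $u^n$, using $u\simeq 2^k\delta$ there and $|A_k|\le 1$, gives
\[
\int_{A_k}|\partial_x^3 u|\dd x\le\Big(\int_{A_k}u^{n}|\partial_x^3 u|^{\alpha+1}\dd x\Big)^{\frac1{\alpha+1}}\big((2^k\delta)^{-n/\alpha}\big)^{\frac{\alpha}{\alpha+1}},
\]
and a further Hölder inequality in the sum over $k$, against the convergent geometric series $\sum_k(2^k\delta)^{-n/\alpha}\lesssim_{n,\alpha}\delta^{-n/\alpha}$, yields $M\lesssim\big(\int_\Omega u^n|\partial_x^3 u|^{\alpha+1}\dd x\big)^{1/(\alpha+1)}\delta^{-n/(\alpha+1)}$, i.e. $\int_\Omega u^n|\partial_x^3 u|^{\alpha+1}\dd x\gtrsim_M\delta^{n}$. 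To upgrade this to the stated exponent $\delta^{n-1-2\alpha}$ and to recover the logarithmic loss, one must replace the crude bound $|A_k|\le1$ by the fact that near $x_0$ the very low level sets carry the $L^1$-mass of $\partial_x^3 u$ only on a set whose length is controlled by a power of $2^k\delta$ (through the parabolic expansion of $u$ at the minimum and the vanishing of $\partial_x u$ there), and then balance these two competing contributions. This bookkeeping is the genuinely delicate point of the proof; for $\delta$ bounded away from $0$ the crude estimate $\gtrsim_M\delta^n$ already gives the claim since the right-hand side is then bounded.

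\emph{Reverse inequality.} Fix a smooth reference profile $\phi\in C^\infty([0,1];[0,\infty))$ with $\phi'(0)=\phi'(1)=0$ and $\min\phi=\phi(1)=0$, and set $u_\delta:=\delta+a_\delta\phi$ with $a_\delta:=(M-\delta)/\bar\phi$, so that $\bar u_\delta=M$, $\min u_\delta=\delta$, $u_\delta\in W^3_{\alpha+1,B}(\Omega)$, and, since $\delta\le a_\delta\|\phi\|_\infty$ and $a_\delta\le M/\bar\phi$,
\[
\int_\Omega u_\delta^{n}|\partial_x^3 u_\delta|^{\alpha+1}\dd x\le a_\delta^{\alpha+1}(a_\delta\|\phi\|_\infty)^{n}\|\phi'''\|_{L_{\alpha+1}(\Omega)}^{\alpha+1}\le C(M,n,\alpha),
\]
which is the assertion whenever $\min\{\delta^{n-1-2\alpha},1\}=1$. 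In the complementary regime $n-1-2\alpha>0$ one keeps $u_\delta$ parabolic — hence third-derivative free, cf. the Euler--Lagrange computation in Lemma~\ref{lemma: energy vs min} — away from the minimum and concentrates the curvature change into a thin layer sitting at height $\simeq\delta$ at the appropriate (square-root) scale; running the same weighted estimate on that layer produces the factor $\delta^{n-1-2\alpha}$ while the parabolic part contributes nothing. The main obstacle throughout is matching the two bounds sharply: the easy arguments lose a $\log$ and a power of $\delta$, so the refined geometric control of the level sets of $u$ near its minimum — and the corresponding optimally tuned construction — is where the real work lies.
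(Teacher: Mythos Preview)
Your lower-bound argument has a genuine gap, and you yourself flag it: the dyadic Hölder estimate with $|A_k|\le 1$ only yields $\int_\Omega u^n|\partial_x^3 u|^{\alpha+1}\gtrsim\delta^{n}$, which for small $\delta$ is far weaker than the target $\min\{\delta^{n-1-2\alpha},1\}/\log^{\alpha+1}(M/\delta)$. In the regime $n<2\alpha+1$ the right-hand side is of order one (up to the logarithm), while your bound tends to zero; this is not a bookkeeping issue but the entire content of the lemma. The vague sentence about ``replacing the crude bound $|A_k|\le 1$'' by level-set length control does not supply the missing argument, and it is not clear that the dyadic route can recover the correct exponent without an additional ingredient. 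There is also a loose end earlier: you assert $\int_I|\partial_x^3 u|\gtrsim M$ by ``localising the zero'' of $\partial_x^2 u$, but there is no reason that zero lies in $I$, so this step needs justification.

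The paper takes a different and sharper route. Before the main proof, an auxiliary lemma (the paper's Lemma~\ref{lemma: point}) locates, via the Hamiltonian $H(x)=\tfrac12|u'(x)|^2+\log u(x)$, a point $x_0$ where simultaneously $|u'(x_0)|\gtrsim 1$ and $u(x_0)u''(x_0)\gtrsim 1/\log(M/\delta)$. One then finds $x_1>x_0$ with $u''(x_1)=-1/2$ and $u''>-1/2$ on $(x_0,x_1)$, so Taylor gives the \emph{linear} lower bound $u(x)\ge u(x_0)+\tfrac14(x-x_0)$ on $[x_0,x_1]$. A single Hölder inequality,
\[
\bigl(u''(x_0)+\tfrac12\bigr)^{\alpha+1}\le\Bigl(\int_{x_0}^{x_1}u^n|\partial_x^3 u|^{\alpha+1}\Bigr)\Bigl(\int_{x_0}^{x_1}u^{-n/\alpha}\Bigr)^{\alpha},
\]
together with the explicit evaluation of $\int u^{-n/\alpha}$ against the linear profile, gives the claim after splitting into the cases $\alpha>n$, $\alpha=n$, $\alpha<n$. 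The point you are missing is precisely this Hamiltonian lemma: it produces a large value of $u''$ \emph{at a point where $u$ is already not too small}, which is what makes the exponent $n-1-2\alpha$ appear.

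Your upper bound is also only half done. The fixed smooth profile handles the case $\min\{\delta^{n-1-2\alpha},1\}=1$, but for $n>1+2\alpha$ you only sketch the construction. The paper gives it explicitly: take $w_l$ with $w_l''(x)=-1+\tfrac1l(1-|x-1/2|/l)_+$ and $w_l(1/2)=w_l'(1/2)=0$, set $u_\delta=\delta+c\,w_l$ with the mass constraint, and choose $l=\delta$. Then $|\partial_x^3 w_l|=l^{-2}\mathbf{1}_{(1/2-l,1/2+l)}$ and $u_\delta\lesssim\delta$ on that interval, so the integral is $\lesssim\delta^n\cdot\delta^{-2(\alpha+1)}\cdot\delta=\delta^{n-1-2\alpha}$.
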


Before we prove this lemma, we show the following auxiliary result.

\begin{lemma}\label{lemma: point}
Let $u\in C^2(\bar{\Omega})$ with $\min_{x\in\bar{\Omega}} u = \delta > 0,\ \max_{x\in\bar{\Omega}} u = D$ and $u'= 0$ on $\partial \Omega$. Then there exists a point $x_0\in \Omega$ such that
\begin{equation*}
    |u'(x_0)| \geq \frac{D - \delta}{2}
    \quad \text{and} \quad
    u(x_0) u''(x_0) \geq \frac{(D-\delta)^2}{4 \log(D/\delta)}.
\end{equation*}
\end{lemma}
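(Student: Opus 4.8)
The plan is to extract a good point $x_0$ in two stages: first find a point where $|u'|$ is large, then upgrade it to a point where the product $u\,u''$ is large. For the first stage, let $x_\delta$ be a point where $u(x_\delta)=\delta$ and $x_D$ a point where $u(x_D)=D$; say $x_\delta<x_D$ (the other case is symmetric). On the interval between them $u$ increases from $\delta$ to $D$, so by the mean value theorem there is a point with $u' = (D-\delta)/(x_D-x_\delta)\ge D-\delta$ since $|x_D-x_\delta|\le 1$. Actually we want a cleaner quantitative handle, so instead I would argue via the integral $\int_{x_\delta}^{x_D} u'\,dx = D-\delta$ and also control where the mass of $u'$ sits. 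The key trick to get the second inequality is to work with $\log u$ rather than $u$: note $(\log u)' = u'/u$ and $(\log u)'' = u''/u - (u'/u)^2$, hence $u\,u'' = u^2\big((\log u)'' + ((\log u)')^2\big)$.

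For the second stage, consider $f=\log u$, which runs from $\log\delta$ (at $x_\delta$) to $\log D$ (at $x_D$) and has $f'=0$ on $\partial\Omega$. On the sub-interval $J$ between $x_\delta$ and $x_D$ we have $\int_J f' \,dx = \log D - \log\delta = \log(D/\delta)$, so since $|J|\le 1$ there is a point where $f'\ge \log(D/\delta)$; but I actually want a point where $f'' + (f')^2$ is large together with $u$ not too small. The clean way: choose $x_0$ to be a point in the open interval $(x_\delta,x_D)$ where $f$ attains the value $\tfrac12(\log\delta+\log D)$, i.e. $u(x_0)=\sqrt{\delta D}$, taken so that $f$ is increasing through $x_0$ (exists by the intermediate value theorem, picking the relevant monotone crossing). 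Then on one side of $x_0$, $f$ increases from this midpoint value to $\log D$ over a length $\le 1$; combined with $f'$ vanishing somewhere (at the boundary, or at the max of $u$), a mean-value argument on $f'$ over that side gives a point where $f''$ is bounded below by roughly $\tfrac12\log(D/\delta)$ divided by that length — but this gives $f''$, and we also keep $(f')^2\ge 0$, so $u\,u'' = u^2(f''+(f')^2)\ge u^2 f''$. Since at such a point $u\ge \sqrt{\delta D}\ge$ (comparable to the midpoint), and the geometry forces $u^2 f'' \gtrsim (D-\delta)^2/\log(D/\delta)$ after bookkeeping the lengths, we land the claimed bound. I would locate $x_0$ precisely so that $|u'(x_0)|\ge (D-\delta)/2$ simultaneously holds, which just needs the same mean-value reasoning applied to $u$ itself between a turning point and $x_0$.

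Concretely, here is the bookkeeping I expect to do. WLOG the minimum is attained at a boundary point or an interior point, and likewise the maximum; by $u'=0$ on $\partial\Omega$ and continuity, between any adjacent min and max of $u$ there is a monotone arc of length $\le 1$. On such an arc $A$ from the point where $u=\delta$ to the point where $u=D$: $\int_A u' = D-\delta$ gives a point $x_1\in A$ with $|u'(x_1)|\ge D-\delta \ge (D-\delta)/2$; and for the second derivative, apply the same to $f=\log u$ but split $A$ at the point $x_0$ where $u=\sqrt{\delta D}$. On the half of $A$ between $x_0$ and the endpoint where $u=D$ (length $\le 1$, and $f'=0$ at the far end since that's a turning point of $u$, or a boundary point), $f'$ goes from some value to $0$ while $\int f' = \tfrac12\log(D/\delta)$, so there's a point where $f'\ge \tfrac12\log(D/\delta)$ and another reasoning — actually to get $f''$ I use that $f'$ must drop to $0$, so $\int f'' $ over a suitable sub-interval is $\le -$ something; that is the wrong sign, so instead I look at the half of $A$ between the endpoint where $u=\delta$ (where $f'=0$ because it's a min/boundary) and $x_0$: there $f'$ rises from $0$, with average $\ge \tfrac12\log(D/\delta)$, so $\int f'' >0$ on a sub-interval and there is $x_0'$ with $f''(x_0')\ge \tfrac12\log(D/\delta)$ (using length $\le 1$ again, twice). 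At that point $u(x_0')\ge\delta$, hmm — that's too weak; the right choice is to take $x_0'$ where $f$ is already past the midpoint, i.e. restrict to the sub-arc where $u\ge\sqrt{\delta D}$ and redo the averaging there, giving $f''\gtrsim \log(D/\delta)$ at a point with $u\ge\sqrt{\delta D}$, whence $u\,u'' \ge u^2 f'' \gtrsim \delta D\log(D/\delta)$; since $\delta D \ge$ something comparable to $(D-\delta)^2/\log^2(D/\delta)$ is false in general — so the final constant must come out as $(D-\delta)^2/(4\log(D/\delta))$ by being careful, not $\delta D$.

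The main obstacle is precisely this last point: getting the sharp form $u(x_0)u''(x_0)\ge (D-\delta)^2/(4\log(D/\delta))$ with the \emph{same} $x_0$ that works for $|u'(x_0)|\ge (D-\delta)/2$, rather than the weaker $\delta D$-type bounds. I expect the resolution is to choose the single turning-point-to-turning-point arc that realizes the full oscillation $D-\delta$ on a length $\le 1$ and to apply the $\log$-substitution on the entire arc at once: on that arc $\int (\log u)' = \log(D/\delta)$, and an elementary calculus estimate (a discrete/continuous convexity-type inequality on the ODE $u'' = u\,((\log u)'' + ((\log u)')^2)$ combined with the constraint that the arc has length $\le 1$) forces $\sup_{x\in A} u(x)u''(x) \ge (D-\delta)^2/(4\log(D/\delta))$. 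Once that $\sup$-bound is in hand, picking $x_0$ as a near-maximizer and checking $|u'(x_0)|\ge (D-\delta)/2$ separately (by the mean value theorem on $u$ over the half of $A$ from $x_0$ to the nearer turning point) completes the proof. I would present the argument in exactly this order: (1) reduce to a monotone arc of length $\le 1$ carrying the oscillation $D-\delta$; (2) substitute $f=\log u$; (3) prove the $\sup$ lower bound for $u\,u''$ via the identity $u\,u'' = u^2(f'' + (f')^2)$ and the integral constraint on $f'$; (4) extract $x_0$ and verify the $|u'|$ bound.
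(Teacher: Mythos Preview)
Your proposal identifies the right ingredient---working with $\log u$---but stops short of the idea that actually closes the argument, and the mechanism you sketch for obtaining both bounds at the \emph{same} point $x_0$ does not work as written. Two concrete gaps:

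\medskip
\textbf{1. The sharp $uu''$ bound.} You use the identity $uu'' = u^2\bigl(f'' + (f')^2\bigr)$ with $f=\log u$ and try to bound $u^2 f''$ and $u^2(f')^2$ separately via averaging. As you yourself observe, this delivers at best $\delta D\cdot\log(D/\delta)$, which is the wrong quantity. The paper avoids this by introducing the Hamiltonian
\[
H(x)=\tfrac12|u'(x)|^2+\log u(x),
\]
whose derivative factors as $H' = u'u'' + u'/u = (u'/u)\,(uu''+1)$. On an interval where $u'>0$, integrating and comparing the direct evaluation of $H$ at the endpoints with the bound $H'\le(\beta+1)\,u'/u$ (where $\beta=\max uu''$) yields $\beta\ge \tfrac{3}{8}|u'(x_1)|^2/\log(u(x_1)/u(x_{-1}))$, which immediately gives the sharp form. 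The crucial point is that $H$ already combines $(u')^2$ and $\log u$, so there is no need to estimate $f''$ and $(f')^2$ separately.

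\medskip
\textbf{2. Same point for both inequalities.} Your plan is to first locate $x_0$ as a near-maximiser of $uu''$ and then verify $|u'(x_0)|\ge (D-\delta)/2$ via a mean-value argument ``from $x_0$ to the nearer turning point''. But the mean value theorem produces a \emph{new} point, not $x_0$. The paper handles this by first finding $x_1$ with $|u'(x_1)|\ge D-\delta$, then backing up to $x_{-1}<x_1$ where $u'(x_{-1})=u'(x_1)/2$ and $u'\ge u'(x_1)/2$ on all of $[x_{-1},x_1]$. The $uu''$ bound is then obtained as a maximum over this interval, so its maximiser $x_0\in[x_{-1},x_1]$ automatically satisfies $|u'(x_0)|\ge(D-\delta)/2$.
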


\begin{proof}
Let $x_1\in \Omega$ with $|u'(x_1)| \geq D - \delta$. Without loss of generality assume that $u'(x_1) > 0$. By the Neumann boundary conditions, there exists $x_{-1} < x_1$ such that $u'(x_{-1}) = u'(x_1)/2$ and $u' \geq u'(x_1)/2$ in $(x_{-1},x_1)$.
Define the Hamiltonian 
\begin{equation*}
    H(x) = \tfrac12 |u'(x)|^2 + \log u(x). 
\end{equation*}
Then by the choice of $x_{-1}, x_1$ we have
\begin{equation}\label{eq: H difference}
    H(x_1) - H(x_{-1}) 
    = 
    \tfrac38 |u'(x_1)|^2 + \log\bigl(u(x_1)/u(x_{-1})\bigr).
\end{equation}
On the other hand, we define
\begin{equation*}
    \beta = \max_{x\in [x_{-1},x_1]} u(x)u''(x),
\end{equation*}
so that we have, using that $u'>0$ in \((x_{-1},x_1)\),
\begin{equation}\label{eq: H FTC}
    H(x_1) - H(x_{-1}) 
    = 
    \int_{x_{-1}}^{x_1} u'\, u'' + \frac{u'}{u}\dd x 
    \leq 
    (\beta+1) \int_{x_{-1}}^{x_1} \frac{u'}{u}\dd x = (\beta+1)\log\bigl(u(x_1)/u(x_{-1})\bigr).
\end{equation}
Rearranging \eqref{eq: H difference} and \eqref{eq: H FTC} yields
\begin{equation*}
    \beta \geq \frac{3|u'(x_1)|^2}{8 \log\bigl(u(x_1)/u(x_{-1})\bigr)} \geq \frac{(D-\delta)^2}{4\log(D/\delta)},
\end{equation*}
which proves the claim.
\end{proof}

\begin{proof}[Proof of Lemma \ref{lemma: dissipation lower bound}]
\noindent\textbf{Lower bound. } Let without loss of generality $M=1$. If $\min_{x\in\bar{\Omega}} u(x) = \delta \leq \frac12$, then $\max_{x\in\bar{\Omega}} u(x) \geq 1$. It follows from Lemma \ref{lemma: point} that there is $x_0\in \Omega$ with 
\begin{equation*}
|u'(x_0)| \geq \frac{1}{4} \quad \text{and}\quad  u(x_0)u''(x_0) \geq \frac{1}{16\log(1/\delta)}.
\end{equation*}
We assume without loss of generality that $u'(x_0) > 0$. Then by the Neumann boundary values there exists a point $x_1 \in (x_0,1)$ such that $u''>-1/2$ in $(x_0,x_1)$ and $u''(x_1) = -1/2$. Then by Taylor expansion
\begin{equation}\label{eq: u growth}
    u(x) \geq u(x_0) + u'(x_0)(x-x_0) + \frac{u''(\xi)}{2}(x- x_0)^2 \geq u(x_0) + \frac{x-x_0}{4},
    \quad
    x \in [x_0,x_1].
\end{equation}
By H\"older's inequality we then have
\begin{equation}\label{eq: Cauchy Schwarz dissipation}
    (u''(x_0)+\frac12)^{\alpha+1} = \left(\int_{x_0}^{x_1} -\partial_x^3u \dd x \right)^{\alpha+1} 
    \leq 
    \left(\int_{x_0}^{x_1} u^n|\partial_x^3u|^{\alpha+1}\dd x\right) 
    \left(\int_{x_0}^{x_1} u^{-\frac{n}{\alpha}}\dd x\right)^{\alpha}.
\end{equation}
Thanks to the lower bound \eqref{eq: u growth} on $u$, we have
\begin{equation*}
\int_{x_0}^{x_1} u(x)^{-\frac{n}{\alpha}}\dd x \leq \int_{x_0}^{x_1} \left(u(x_0) + \frac{x-x_0}{4} \right)^{-\frac{n}{\alpha}}\dd x \leq C(n,\alpha)\begin{cases}
1 & \alpha>n\\
\log\left(\frac{u(x_0)+1}{u(x_0)}\right) & \alpha=n\\
u(x_0)^{1-\frac{n}{\alpha}} & \alpha<n
\end{cases}.
\end{equation*}
%\textcolor{magenta}{I believe, we need \(1-n/\alpha <0\) to proceed. That is \(n> \alpha\). Maybe I am missing something?} 

We treat the three cases separately:

\noindent\textbf{Case 1: $\alpha>n$.} Rearranging \eqref{eq: Cauchy Schwarz dissipation} yields
\begin{equation*}
    \int_{x_0}^{x_1} u^n|\partial_x^3u|^{\alpha+1}\dd x 
    \geq  
    C(n,\alpha) (1/2)^{\alpha+1}.
\end{equation*}

\noindent\textbf{Case 2: $\alpha=n$.} Rearranging \eqref{eq: Cauchy Schwarz dissipation} yields
\begin{equation*}
    \int_{x_0}^{x_1} u^n|\partial_x^3u|^{\alpha+1}\dd x 
    \geq  
    C(n,\alpha) (1/2)^{\alpha+1}\frac{1}{\log^\alpha\left(\frac{u(x_0)+1}{u(x_0)}\right)} \geq \frac{C(n,\alpha)}{\log^\alpha(1/\delta)}.
\end{equation*}

\noindent\textbf{Case 3: $\alpha<n$.} Rearranging \eqref{eq: Cauchy Schwarz dissipation} yields
\begin{equation*}
	\begin{split}
    \int_{x_0}^{x_1} u^n|\partial_x^3u|^{\alpha+1}\dd x 
    &\geq  
    C(n,\alpha) \left(u(x_0)^{\left(1-\frac{n}{\alpha}\right)\alpha} u''(x_0)^{\alpha+1} + u(x_0)^{n-\alpha}\right)
    \\
    &= 
    C(n,\alpha) \left(\bigl(u(x_0)u''(x_0)\bigr)^{\alpha+1} u(x_0)^{n-1-2\alpha} + u(x_0)^{n-\alpha}\right)
    \\
    & \geq  \frac{C(n,\alpha)}{\log^{\alpha+1}(1/\delta)} u(x_0)^{n-1-2\alpha} + C(n,\alpha)u(x_0)^{n-\alpha}.
	\end{split}
\end{equation*}
If the exponent $n-1-2\alpha$ is positive, we simply estimate $u(x_0)^{n-1-2\alpha}\geq \delta^{n-1-2\alpha}$. If the exponent is negative, we have either $u(x_0)\leq 1$, in which case $u(x_0)^{n-1-2\alpha}\geq 1$, or $u(x_0)>1$, in which case $u(x_0)^{n-\alpha} \geq 1$. Either way, the lower bound follows.

\noindent\textbf{Upper bound. } To show the upper bound, we construct for $l\in (0,1/2)$ the function $w_l\geq 0$ which solves 
\begin{equation*}
    w_l(1/2) = 0,
    \quad
    w_l'(1/2) = 0,
    \quad \text{and} \quad
    w_l''(x) = -1 + \frac{1}{l}\left(1-\frac{|x-1/2|}{l}\right)_+,
\end{equation*}
so that $w_l'(0) = w_l'(1) = 0$. We note that $|\partial_x^3w| = \frac{1}{l^2} \textbf{1}_{(1/2 - l, 1/2+ l)}$.
Setting now $\beta_l = \int_\Omega w_l\dd x$ we have that $\beta_l\colon [0,1/2] \to (0,\infty)$ is continuous and thus bounded from below and above.

We define $u_\delta = \delta + \frac{M-\delta}{\beta_l}w_l$ for $l=\delta$ or $l=1/2$. In the first case, since $u_\delta \leq C\delta$ in $(1/2-\delta,1/2+\delta)$, we have
\begin{equation*}
    \int_\Omega u_\delta^n |\partial_x^3u_\delta|^{\alpha+1}\dd x \leq (C\delta)^n \delta^{-1-2\alpha}.
\end{equation*}
In the second case, we have
\begin{equation*}
    \int_\Omega u_\delta^n |\partial_x^3u_\delta|^{\alpha+1}\dd x \leq (C+\delta)^n.
\end{equation*}
This concludes the proof.
\end{proof}

We are now in a position to prove Theorem \ref{theorem: liftoff}

\begin{proof}[Proof of Theorem \ref{theorem: liftoff}]
% We write in this proof 
% \begin{equation*}
%     E[u]= \int_\Omega \frac12|u''|^2\dd x \quad \text{and} \quad \mathrm{Diss}(u):= \int_\Omegau^n|\partial_x^3 u|^{\alpha+1}\dd x.
% \end{equation*}

% Rewrite proof: step 1 -> solutions are positive due to Lemma before. Step 2: this gives EDE by prior considerations. Step 3: proceed roughly as here. Step 4: replace T with locs.

\noindent\textbf{Part (i). } Let \(u_0\in H^1((0,1))\) with \(\bar{u}_0=M\) and
\begin{equation*}
    \int_{(0,1)}|\partial_x u_0|^2 \dd x < E_0 .
\end{equation*}
Let \(u\in L_{\infty}\bigl([0,\infty);H^1(\Omega)\bigr)\)
with \(\partial_x^3 u \in L_{\alpha+1,\loc}(\{u > 0\})\) and \(\partial_tu\in L_{\alpha+1}\bigl([0,\infty);\bigl(W^1_{\alpha+1}(\Omega)\bigr)'\bigr)\) a solution to \eqref{eq:power-law-tfe-lifting} in the sense of Theorem \ref{thm:solution_to_tfe}. In particular, \(u\) satisfies the energy-dissipation inequality
\begin{equation}\label{eq:EDI-lifting}
    E[u](t) + \int_{0}^{t}\int_{\{u_s>0\}} u^n|\partial_x^3 u|^{\alpha+1} \dd x \dd s \leq E[u_0] < E[v] = \frac{E_0}{2}.
\end{equation}
Combining this with the result of Lemma \ref{lemma: energy vs min}, we obtain that
\begin{equation} \label{eq: energy vs min}
	\min_{x \in \bar{\Omega}} u(t)
	\geq 
	C(M,n,\alpha)\bigl(E[v]-E[u](t)\bigr) > 0.
\end{equation}
Hence, since \(u\) is positive, we obtain the inequality
\begin{equation}\label{eq:EDE-lifting-full}
    E[v] - E[u](t) > E[u_0] - E[u](t) \geq + \int_{0}^{t} \int_{\Omega} u^n|\partial_x^3 u|^{\alpha+1} \dd x \dd t, \quad t>0.
\end{equation}
By Lemma \ref{lemma: dissipation lower bound}, we have in addition
\begin{equation}\label{eq: diss bound}
	\int_0^T \int_{\Omega} u^n|\partial_x^3 u|^{\alpha+1} \dd x\dd t 
	\geq 
	C(M,n,\alpha) f_{n,\alpha}\bigl(\min_{x \in \bar{\Omega}} u(t,x)\bigr),
\end{equation}
with $f_{n,\alpha}(\delta) = \frac{\min\{1,\delta^{n-1-2\alpha}\}}{\log^{\alpha+1}(1/\delta)}$. Observe that $f_{n,\alpha}(\delta) \geq C(n,\alpha)\delta^{1-\eps}$ for e.g. \(\eps = (\alpha+1)-\frac{n}{2}>0\) by the assumption \(2(\alpha+1)>n\). Combining \eqref{eq:EDE-lifting-full}, \eqref{eq: energy vs min} and \eqref{eq: diss bound} yields the differential inequality
\begin{equation*}
    \begin{split}
	\frac{d}{dt}\bigl(E[v] - E[u](t)\bigr) 
	& \geq 
	C(M,n,\alpha) f_{n,\alpha}\bigl(C(M,n,\alpha)(E[v] - E[u](t))\bigr) \\
    &\geq 
    C(M,n,\alpha)\bigl(E[v]-E[u](t)\bigr)^{1-\eps},
    \end{split}
\end{equation*}
and by Gronwall's Lemma
\begin{equation*}
	E[v]-E[u](t) 
	\geq 
    \left(
		(E[v]-E[u_0])^\eps + C(M,n,\alpha)t
	\right)^{1/\eps}.
\end{equation*}
Thus if $t\geq t_0$ for some $t_0>0$, depending only on $M,n,\alpha$, then $E[u](t)$ is small enough such that 
\begin{equation*}
    \min_{x \in \bar{\Omega}} u(t,x)\geq M/2.
\end{equation*}
\medskip

\noindent\textbf{Part (ii). } To show (ii), approximate $u_0$ by $ u_0^\delta = u_0+\delta\geq \delta$ and take $u$ as a limit of the solutions $u^\delta$ to \eqref{eq:power-law-tfe-lifting}. The limit function has small energy at all times $t\geq t_0$ and is thus bounded away from zero.
\end{proof}

\begin{remark}
    Using the lifting property and under the assumptions of Theorem \ref{theorem: liftoff}, we obtain convergence to the steady state \(\bar{u} = M\) and the following convergence rates, cf. \cite{jansen_long-time_2022}:
    \begin{itemize}
        \item algebraic convergence for shear-thinning fluids \(\alpha >1\);
        \item exponential convergence for Newtonian fluids \(\alpha =1\);
        \item convergence in finite time for shear-thickening fluids \(\alpha <1\).
    \end{itemize}
\end{remark}

%=============================================================================
%=============================================================================

\appendix

\section{Benamou--Brenier action with superlinear mobility}\label{app:BenamouBrenier}

In this section, we define for two non-negative functions $u_0, u_1\in L_1(\Omega)$, $u_0,u_1 \geq 0$, with $\bar{u}_0 = \bar{u}_1$, the Benamou--Brenier action functional depending on a continuous mobility function $m:[0,\infty) \to [0,\infty)$,
\begin{equation}\label{eq:Wasserstein-metric}
\begin{split}
    d_m^{\frac{\alpha+1}{\alpha}}(u_0, u_1) &\coloneq \inf\biggl\{ \int_0^1 \int_\Omega \frac{|j|^{\frac{\alpha+1}{\alpha}}}{m(u)^{\frac{1}{\alpha}}}\dd x\dd t;\, \partial_t u + \div j = 0,\,
    j\cdot n = 0\text{ on } \partial\Omega,
    \\ 
    &  \qquad \qquad u\geq 0,\, u(0,x) = u_0(x),\, u(1,x) = u_1(x)\biggr\}.
\end{split}
\end{equation}

If the mobility \(m(u)= u^n\) is superlinear, we show that, since this functional lacks convexity simultaneously in \((u,j)\), \(d_m \equiv 0\). So, the natural distance for the setting of gradient flows in metric spaces degenerates, and we have to resort to a different approach to obtain a gradient-flow scheme.

\begin{proposition}\label{prop:metric-degenerate}
	Let \(d\geq 1\), $\ \Omega \subset \R^d$ be open, bounded, and connected, with Lipschitz boundary. Let \(u_0,u_1\in L_1(\Omega)\) with \(\bar{u}_0 =\bar{u}_1\). Assume that \(m \colon [0,\infty) \to [0,\infty)\) satisfies 
		\[\lim_{u\to \infty} \frac{m(u)}{u} = \infty \quad \text{and} \quad m^{-1}(0) = \{0\}.\]
	Then
		\[d_m^{\frac{\alpha+1}{\alpha}}(u_0,u_1) = 0\]
	for all pairs $u_0,u_1\in L^1(\Omega)$ with $\bar{u}_0 = \bar{u}_1$ and $\inf_\Omega u_0, \inf_\Omega u_1 > 0$, where \(d_m\) is the metric introduced in \eqref{eq:Wasserstein-metric}.
\end{proposition}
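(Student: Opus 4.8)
The plan is to construct, for every $\epsilon>0$ and every pair of positive densities $u_0,u_1\in L_1(\Omega)$ with $\bar u_0=\bar u_1=:c$, an admissible pair $(u,j)$ on $[0,1]$ joining $u_0$ to $u_1$ whose Benamou--Brenier action is $<\epsilon$; since $\epsilon$ is arbitrary this gives $d_m(u_0,u_1)=0$. Throughout write $p=\frac{\alpha+1}{\alpha}>1$. First I would record that $d_m$ is a pseudo-metric: the pair $(u_0,0)$ is admissible with zero action, so $d_m(u_0,u_0)=0$; and the triangle inequality follows by concatenating near-optimal paths with a time split, using that the affine reparametrisation of an action-$A$ path from $[0,1]$ onto $[0,s]$ has action $s^{1-p}A$ (here $p>1$ enters), and that $\min_{\theta\in(0,1)}\bigl(\theta^{1-p}a^p+(1-\theta)^{1-p}b^p\bigr)=(a+b)^p$. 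Consequently it suffices to produce, for each $\epsilon$, a single reference density $S=S_\epsilon$ with $\bar S=c$ and paths of action $<\epsilon/2$ from $u_0$ to $S$ and from $u_1$ to $S$. For $S_\epsilon$ I would take almost all the mass packed into a tiny ball: fixing an interior point $x_*$, small $\delta>0$ and small $\eta>0$, set $S_\epsilon=\frac{\eta}{|\Omega|}\mathbf{1}_\Omega+\frac{c|\Omega|-\eta}{|B_\delta(x_*)|}\mathbf{1}_{B_\delta(x_*)}$; this depends only on the total mass $c|\Omega|$, not on $u_0$ or $u_1$.

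The engine of the construction is the observation that a thin, high-density packet can be transported essentially for free. If a blob of mass $\mu$ supported in a ball of radius $\varrho$ (hence of density $D=\mu/(\omega_d\varrho^d)$) is translated rigidly a distance $\ell$ during time $\tau$, a direct computation gives action $\lesssim \mu(\ell/\tau)^p\,(D/m(D))^{p-1}$, and since $m$ is superlinear, $D/m(D)\to0$ as $\varrho\to0$ with $\mu,\ell,\tau$ fixed. The same estimate, now with $\mu$ and $\ell$ small, bounds the cost of pinching such a packet off from, or merging it back into, an ambient density that is bounded below by a fixed positive constant: there $m(u)^{p-1}$ is bounded below while the fluxes and the volumes involved are small, so each such operation contributes only $\lesssim(\text{small mass})\times(\text{small region})$, and finitely many of them sum to something small.

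To assemble the path $u_0\leadsto S_\epsilon$ (the path $u_1\leadsto S_\epsilon$ being identical), I would partition $\Omega\setminus B_\delta(x_*)$ into finitely many small cells and process them one after another — or in finitely many parallel colour classes of disjoint corridors: for each cell, pinch off all of its mass above the uniform residual density $\eta/|\Omega|$, carry the resulting packet along a thin corridor into the central ball $B_\delta(x_*)$, and deposit it there. After all cells are processed the density equals $\eta/|\Omega|$ off the central ball and $(c|\Omega|-\eta)/|B_\delta|$ on it, i.e.\ exactly $S_\epsilon$; and $u$ stays strictly positive at every instant (the residual off-ball, a huge density on-ball), which is where the hypothesis $\inf u_0,\inf u_1>0$ is used to keep $m(u)$ bounded below during the low-density pinches. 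Choosing the partition fine and $\delta$ small makes the total action of pinching, carrying and depositing $<\epsilon/2$ by the previous paragraph; the finitely many operations are scheduled inside $[0,1/2]$ and the analogous path $S_\epsilon\leadsto u_1$ inside $[1/2,1]$, and concatenation together with the triangle inequality yields $d_m(u_0,u_1)<\epsilon$.

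The main obstacle is turning this geometric picture into a genuinely admissible pair: verifying the continuity equation in the weak sense with $j\cdot n=0$ on $\partial\Omega$, keeping $u\ge0$ (indeed $>0$) at all times, fitting everything into unit time, and — crucially — making sure the unavoidable low-density operations (pinching mass off the order-one ambient $u_0$, and the very start-up) do not accumulate an order-one cost. This is controlled by performing those operations only on small cells over short-but-not-too-short time windows, and, in dimension $d\ge2$ where naïve radial drainage toward $x_*$ produces fluxes $\sim|x-x_*|^{-(d-1)}$ that need not lie in $L_p$, by routing even the short-range mass through slightly elevated-density corridors so that superlinearity of $m$ again absorbs the cost. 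The one-dimensional case is appreciably simpler, since there the relevant potentials have bounded gradients and a fully explicit path can be written down; I would present that case first and then indicate the modifications for general $d$.
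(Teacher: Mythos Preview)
Your proposal is correct in spirit and uses the same engine as the paper: superlinear mobility makes the transport of high-density packets essentially free, so one concentrates mass, moves it cheaply, and de-concentrates. Both arguments rest on the comparison $\int |j|^p/m(u)^{p-1}\le \bigl[\esssup_{\supp j} (u/m(u))^{p-1}\bigr]\int |j|^p/u^{p-1}$ and the triangle inequality for $d_m$.

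The organisational difference is substantive, though. You funnel all mass to a \emph{single} reference state $S_\epsilon$ concentrated near one point $x_*$, processing cells via pinch--carry--deposit. The paper instead spreads the concentrated state over a fine grid $Z_\eta\subset\Omega$: it sets $u_{0,\eta,M}=\delta+\sum_{z\in Z_\eta}(M^d\alpha_z/\omega_d\eta^d)\mathbf 1_{B(z,\eta/M)}$ with one blob per grid point, and likewise $u_{1,\eta,M}$. This buys two simplifications. First, the concentration step $u_0\leadsto u_{0,\eta,M}$ moves mass only distance $\lesssim l_\eta\to0$ (the mesh width), so its cost is controlled directly by the Wasserstein distance $W_p(\tilde u_0,u_{0,\eta,M}-\delta)\lesssim l_\eta$ together with the finite factor $\sup_{s\ge\delta}(s/m(s))^{p-1}$; no cell-by-cell scheduling is needed, and the residual density is the fixed $\delta=\tfrac12\min(\inf u_0,\inf u_1)$ rather than a new small parameter $\eta$. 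Second, the long-distance step $u_{0,\eta,M}\leadsto u_{1,\eta,M}$ is handled by choosing any coupling $\Gamma$ of the two atomic measures on $Z_\eta$ and rigidly sliding each blob along a fixed $C^1$-curve in $\Omega$; on $\supp j$ one has $u\ge c_\eta M^d/(\omega_d\eta^d)\to\infty$, so $(u/m(u))^{p-1}\to0$ uniformly and the action vanishes as $M\to\infty$ with $\eta$ fixed.

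Your single-point route works but forces you to interleave local pinching with long carries and to manage the time budget: processed sequentially with $N\sim h^{-d}$ cells and $\tau\sim 1/N$, the reparametrisation factor $\tau^{1-p}$ makes the total pinching cost scale like $h^{d+p(1-d)}$, which need not vanish for $d\ge2$ and $p>d/(d-1)$. You correctly flag this and propose parallelising in colour classes or routing through elevated-density corridors; simply doing all pinches simultaneously (the cells are disjoint) and all carries simultaneously (overlaps only raise the density, which helps) removes the issue, but this is exactly the bookkeeping the paper's grid construction avoids. Your remark that the $d=1$ case admits a fully explicit construction is accurate and would make a clean warm-up.
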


\begin{remark}
	The proof uses a construction that first concentrates mass locally to achieve a very high density, allowing it to be transported very effectively over large distances, and then dissipates the mass to meet the terminal values. A similar effect occurs in \cite{santambrogio_optimal_2007}.
	
	The opposite effect occurs in dynamic entropic optimal transport and certain mean field games, where at intermediate times the mass is more spread out than at the end points, see e.g. \cite{benamou_variational_2017}.
\end{remark}

Recall the Benamou--Brenier formula for the Wasserstein distance, cf. \cite{benamou_computational_2000},
\begin{equation*}
\begin{split}
    W^{\frac{\alpha+1}{\alpha}}_{\frac{\alpha+1}{\alpha}}(u_0,u_1)  & = \inf\biggl\{ \int_0^1 \int_\Omega \frac{|j|^{\frac{\alpha+1}{\alpha}}}{u^{\frac{1}{\alpha}}}\dd x\dd t;\, \partial_t u + \div j = 0,\,
    j\cdot n = 0\text{ on } \partial\Omega,
    \\ 
    &  \qquad \qquad u\geq 0,\, u(0,x) = u_0(x),\, u(1,x) = u_1(x)\biggr\}.
\end{split}
\end{equation*}

\begin{proof}
	Define $\delta = \frac12 \min(\essinf_{x\in\Omega} u_0, \essinf_{x\in \Omega} u_1)$. For $\eta > 0$ define the grid
	\begin{equation*}
			Z_\eta = \{z\in \eta \Z^d \cap \Omega;\, \dist(x,\partial \Omega) \geq \eta\}.
	\end{equation*}
	Also define $l_\eta>0$ as the longest length of a shortest curve in $\bar{\Omega}$ connecting any point $x\in \Omega$ with some $z\in Z_\eta$, that is
	\begin{equation*}
		l_\eta = \sup_{x\in \Omega} \inf_{z\in Z_\eta} \inf\{L(\gamma);\,\gamma \subset \Omega\text{ is a }C^1\text{-curve connecting }x\text{ and }z\}. 
	\end{equation*}
	By a compactness argument we then have $\lim_{\eta \to 0} l_\eta = 0$.
	
	Define $\tilde u_0 = u_0 - \delta \geq \delta$, $\tilde u_1 = u_1 - \delta \geq \delta$. Then there are measures $\mu_{0,\eta}, \mu_{1,\eta}\in \mathcal{M}_+(Z_\eta)$ of the form
	\begin{equation*}
		\mu_{0,\eta} = \sum_{z\in Z_\eta} \alpha_z \delta_z, \quad \mu_{1,\eta} = \sum_{z\in Z_{\eta}} \beta_z \delta_z
	\end{equation*}
	that satisfy
	\begin{equation*}
		W_{\frac{\alpha+1}{\alpha}}^{\frac{\alpha+1}{\alpha}}(\mu_{0,\eta},\tilde u_0) \leq l_\eta^{\frac{\alpha+1}{\alpha}} \int_\Omega \tilde u_0\dd x \quad \text{and} \quad  W_{\frac{\alpha+1}{\alpha}}^{\frac{\alpha+1}{\alpha}}(\mu_{1,\eta},\tilde u_1) \leq l_\eta^{\frac{\alpha+1}{\alpha}} \int_\Omega \tilde u_1\dd x.
	\end{equation*}
	
	Now choose a coupling $\Gamma\in \mathcal{M}_+(Z_\eta \times Z_\eta)$ of $\mu_{0,\eta}$ and $\mu_{1,\eta}$, e.g. the product measure \(\mu_{0,\eta}\otimes\mu_{1,\eta}\). Define $c_\eta = \min_{z,z'\in Z_\eta\,:\, \Gamma(z,z')>0} \Gamma(z,z')>0$ since the infimum ranges only over finitely many points.
	
	Also for any $z,z'\in Z_\eta$ there exists a $C^1$-curve $\gamma_{z,z'}:[0,1] \to\Omega$ connecting $z$ and $z'$. Let $L_\eta>0$ be the maximal length of such a curve and $d_\eta>0$ the minimal distance from any point on any such curve to $\partial \Omega$.

	Finally, we define for $M>0$
	\begin{equation}
	u_{0,\eta,M} = \delta + \sum_{z\in Z_\eta} \frac{M^d\alpha_z}{\omega_d \eta^d}\mathbf{1}_{B(z,\frac{\eta}{M})} \quad \text{and} \quad u_{0,\eta,M} = \delta + \sum_{z\in Z_\eta} \frac{M^d\beta_z}{\omega_d \eta^d}\mathbf{1}_{B(z,\frac{\eta}{M})}.
	\end{equation}
	
	We now wish to estimate $d_m^{\frac{\alpha+1}{\alpha}}(u_0,u_1)$. Since \(d_m\) satisfies the triangle inequality, it is enough to estimate \(d_m^{\frac{\alpha+1}{\alpha}}(u_0,u_{0,\eta,M})\), \(d_m^{\frac{\alpha+1}{\alpha}}(u_{0,\eta,M},u_{1,\eta,M})\) and \(d_m^{\frac{\alpha+1}{\alpha}}(u_{1,\eta,M},u_1)\). We start with the first and last term since they can be treated analogously. To do so, we note that for any pair $(u,j)$ we have
	\begin{equation}\label{eq:ch6-mobility comparison}
	\int_0^1 \int_\Omega \frac{|j(t,x)|^{\frac{\alpha+1}{\alpha}}}{m(u(t,x))^{\frac{1}{\alpha}}}\dd x\dd t \leq \left[\esssup_{(t,x)\in \supp j} \frac{u(t,x)^{\frac{1}{\alpha}}}{m(u(t,x))^{\frac{1}{\alpha}}} \right] \int_0^1 \int_\Omega \frac{|j(t,x)|^{\frac{\alpha+1}{\alpha}}}{u(t,x)^{\frac{1}{\alpha}}}\dd x\dd t.
	\end{equation}
	
	Observe that by construction it holds
		\[W_{\frac{\alpha+1}{\alpha}} (\mu_{0,\eta},u_{0,\eta,M}-\delta) \leq \frac{\eta}{M}\int_{\Omega}\tilde{u}_0\dd x.\]
	Combining this with the triangle inequality, we obtain
	\begin{align*}
	W_{\frac{\alpha+1}{\alpha}}^{\frac{\alpha+1}{\alpha}}(\tilde u_0, u_{0,\eta, M} -\delta) & \leq  \left(W_{\frac{\alpha+1}{\alpha}}(\tilde u_0, \mu_{0,\eta}) + W_{\frac{\alpha+1}{\alpha}}(\mu_{0,\eta}, u_{0,\eta, M} -\delta) \right)^{\frac{\alpha+1}{\alpha}} \\ & \leq (l_\eta + \frac{\eta}{M})^{\frac{\alpha+1}{\alpha}} \int_\Omega \tilde u_0 \dd x.
	\end{align*}
	Hence, for any \(\eps >0\), there is a distributional solution $j\in L_{\frac{\alpha+1}{\alpha}}([0,1]\times \Omega;\R^d)$ to the continuity equation
	\begin{equation*}
	\begin{cases}
		\partial_t \tilde{u} + \div j = 0,& t>0,\ x\in \Omega,\\
		j\cdot n = 0,&t >0, \ x\in \partial\Omega,\\
		\tilde u(0,x) = \tilde{u}_0(x), & x\in \Omega,\\
		\tilde u(1,x) = u_{0,\eta,M}(x) - \delta, & x\in \Omega,
	\end{cases}
	\end{equation*}
	with
	\begin{equation}
		\int_0^1 \int_\Omega \frac{|j|^{\frac{\alpha+1}{\alpha}}}{\tilde u^{\frac{1}{\alpha}}}\dd x\dd t \leq W^{\frac{\alpha+1}{\alpha}}_{\frac{\alpha+1}{\alpha}}(\tilde{u}_0,u_{0,\eta,M}-\delta) + \eps \leq (l_\eta + \frac{\eta}{M})^{\frac{\alpha+1}{\alpha}} \int_\Omega \tilde u_0 \dd x + \eps.
	\end{equation}
	
	We see that $u(t,x) = \tilde u(t,x) + \delta$ and $j$ together solve the continuity equation with initial and terminal values $u(0,x) = u_0(x)$ and $u(1,x) = u_{0,\eta,M}(x)$. Moreover, by \eqref{eq:ch6-mobility comparison}, we have 
	\begin{equation}
	\int_0^1 \int_\Omega \frac{|j(t,x)|^{\frac{\alpha+1}{\alpha}}}{m(u(t,x))^{\frac{1}{\alpha}}}\dd x\dd t \leq \sup_{s \geq \delta} \frac{s^{\frac{1}{\alpha}}}{m(s)^{\frac{1}{\alpha}}} \left[\left(l_\eta + \frac{\eta}{M}\right)^{\frac{\alpha+1}{\alpha}} \int_\Omega \tilde u_0 \dd x + \eps \right],
	\end{equation}
	so that
	\begin{equation}
	\limsup_{\eta \to 0} \sup_{M\geq 1} d_m^{\frac{\alpha+1}{\alpha}}(u_0,u_{0,\eta,M}) \leq  \eps,
	\end{equation}
	for every \(\eps >0\) arbitrary. We conclude that 
		\[\limsup_{\eta \to 0} \sup_{M\geq 1} d_m^{\frac{\alpha+1}{\alpha}}(u_0,u_{0,\eta,M}) = 0,\]
	and likewise
		\[\limsup_{\eta \to 0} \sup_{M\geq 1} d_m^{\frac{\alpha+1}{\alpha}}(u_1,u_{1,\eta,M}) = 0.\]
	
	Next, we estimate $d_m^{\frac{\alpha+1}{\alpha}}(u_{0,\eta,M},u_{1,\eta,M})$. To this end, we define for $M\geq \frac{\eta}{d_{\eta}}$
	\begin{equation*}
	u(t,x) = \delta + \sum_{z,z'} \Gamma(z,z') \frac{M^d}{\omega_d \eta^d} \mathbf{1}_{B(\gamma_{z,z'}(t),\frac{\eta}{M})}(x),\quad\text{and}\quad
		j(t,x) = \sum_{z,z'} \Gamma(z,z') \frac{M^d}{\omega_d \eta^d} \mathbf{1}_{B(\gamma_{z,z'}(t),\frac{\eta}{M})}(x) \dot \gamma_{z,z'}(t).
	\end{equation*}
	This is clearly a curve connecting $u_{0,\eta,M}$ and $u_{1,\eta,M}$, and
	\begin{equation*}
		\essinf_{(x,t)\in \supp j} u(t,x) \geq c_\eta \frac{M^d}{\omega_d \eta^d} \longrightarrow \infty
	\end{equation*}
	as $M\to \infty$, for every fixed $\eta>0$. By the superlinear growth condition on $m$, it follows that
	\begin{equation*}
		\esssup_{(x,t)\in \supp j} \frac{u(t,x)^{\frac{1}{\alpha}}}{m(u(t,x))^{\frac{1}{\alpha}}} \longrightarrow 0
	\end{equation*}
	as $M\to \infty$, for every fixed $\eta>0$. By \eqref{eq:ch6-mobility comparison}, we have that
	\begin{equation*}
		d_m^{\frac{\alpha+1}{\alpha}}(u_{0,\eta,M}, u_{1,\eta,M}) \leq \left[\esssup_{(x,t)\in \supp j} \frac{u(t,x)^{\frac{1}{\alpha}}}{m(u(t,x))^{\frac{1}{\alpha}}}\right] L_\eta^{\frac{\alpha+1}{\alpha}} \int_{\Omega} \tilde u_0(x)\dd x  \longrightarrow 0
	\end{equation*}
	as $M\to \infty$, for every fixed $\eta>0$.
	
	Finally, we concatenate the curves connecting $u_0$ with $u_{0,\eta,M}$, $u_{0,\eta,M}$ with $u_{1,\eta,M}$ and $u_{1,\eta,M}$ with $u_1$, and estimate
	\begin{align*}
		& d_m^{\frac{\alpha+1}{\alpha}}(u_0,u_1) \\
		&\leq \limsup_{\eta \to 0} \lim_{M\to \infty} C\left(d_m^{\frac{\alpha+1}{\alpha}}(u_0,u_{0,\eta,M}) + d_m^{\frac{\alpha+1}{\alpha}}(u_{0,\eta,M}, u_{1,\eta,M}) + d_m^{\frac{\alpha+1}{\alpha}}(u_{1,\eta,M},u_1) \right) \\
	 & = 0.
	\end{align*}
	This concludes the proof.
\end{proof}

\begin{remark}
	If $u_0$ and $u_1$ are smooth, the connecting curve $(u,j)$ can be chosen smooth in space-time via mollification and the dominated convergence theorem.
\end{remark}

%----------------------------------------------------

%=============================================================================
%=============================================================================
% \section*{Acknowledgements}
% \noindent --

\printbibliography

\end{document}